\newlist{steps}{enumerate}{1}
\setlist[steps, 1]{label = Step \arabic*:}
\DeclareRobustCommand\widecheck[1]{{\mathpalette\@widecheck{#1}}}
\def\@widecheck#1#2{%
   \setbox\z@\hbox{\m@th$#1#2$}%
   \setbox\tw@\hbox{\m@th$#1%
      \widehat{%
         \vrule\@width\z@\@height\ht\z@
         \vrule\@height\z@\@width\wd\z@}$}%
   \dp\tw@-\ht\z@
   \@tempdima\ht\z@ \advance\@tempdima2\ht\tw@ \divide\@tempdima\thr@@
   \setbox\tw@\hbox{%
      \raise\@tempdima\hbox{\scalebox{1}[-1]{\lower\@tempdima\box\tw@}}}%
   {\ooalign{\box\tw@ \cr \box\z@}}}
\theoremstyle{plain}
\newtheorem{thm}{Theorem}[section]
\crefname{thm}{Theorem}{Theorems}
\Crefname{thm}{Theorem}{Theorems}
\newtheorem{prop}[thm]{Proposition}
\crefname{prop}{Proposition}{Propositions}
\Crefname{prop}{Proposition}{Propositions}
\newtheorem{lem}[thm]{Lemma}
\crefname{lem}{Lemma}{Lemmas}
\Crefname{lem}{Lemma}{Lemmas}
\newtheorem{cor}[thm]{Corollary}
\crefname{cor}{Corollary}{Corollaries}
\Crefname{cor}{Corollary}{Corollaries}
\crefname{claim}{Claim}{Claims}
\Crefname{claim}{Claim}{Claims}
\crefname{property}{Property}{Properties}
\Crefname{property}{Property}{Properties}
\crefname{problem}{Problem}{Problems}
\Crefname{problem}{Problem}{Problems}
\newtheorem{ques}[thm]{Question}
\crefname{ques}{Question}{Questions}
\Crefname{ques}{Question}{Questions}
\theoremstyle{definition}
\newtheorem{defn}[thm]{Definition}
\crefname{defn}{Definition}{Definitions}
\Crefname{defn}{Definition}{Definitions}
\crefname{notation}{Notation}{Notations}
\Crefname{notation}{Notation}{Notations}
\crefname{convention}{Convention}{Conventions}
\Crefname{convention}{Convention}{Conventions}
\crefname{cond}{Condition}{Conditions}
\Crefname{cond}{Condition}{Conditions}
\crefname{assum}{Assumption}{Assumptions}
\Crefname{assum}{Assumption}{Assumptions}
\newtheorem{conj}[thm]{Conjecture}
\crefname{conj}{Conjecture}{Conjectures}
\Crefname{conj}{Conjecture}{Conjectures}
\theoremstyle{remark}
\newtheorem{rem}[thm]{Remark}
\crefname{rem}{Remark}{Remarks}
\Crefname{rem}{Remark}{Remarks}
\crefname{ex}{Example}{Examples}
\Crefname{ex}{Example}{Examples}
\crefname{section}{Section}{Sections}
\Crefname{section}{Section}{Sections}
\crefname{subsection}{Subsection}{Subsections}
\Crefname{subsection}{Subsection}{Subsections}
\crefname{figure}{Figure}{Figures}
\Crefname{figure}{Figure}{Figures}
\newtheorem*{acknowledgement}{Acknowledgement}
\newcommand{\Z}{\mathbb{Z}}
\newcommand{\Q}{\mathbb{Q}}
\newcommand{\fraks}{\mathfrak{s}}
\newcommand{\Coker}{\mathop{\mathrm{Coker}}\nolimits}
\newcommand{\im}{\operatorname{Im}}
\newcommand{\rank}{\mathop{\mathrm{rank}}\nolimits}
\newcommand{\Hom}{\mathop{\mathrm{Hom}}\nolimits}
\newcommand{\id}{\mathrm{id}}
\newcommand{\ind}{\mathop{\mathrm{ind}}\nolimits}
\newcommand{\C}{\mathbb{C}}
\newcommand{\s}{\mathfrak{s}}
\newcommand{\wh}{\widehat}
\newcommand{\pr}{\text{pr}}
\newcommand{\al}{\alpha}
\def\om{\omega}
\def\Om{\Omega}
\newcommand{\R}{\mathbb R}
\newcommand{\ctext}[1]{\raise0.2ex\hbox{\textcircled{\scriptsize{#1}}}}
\def\ker{\operatorname{Ker}}
\def\cok{\operatorname{Cok}}
\def\dim{\operatorname{dim}}
\def\rank{\operatorname{rank}}
\def\Hom{\operatorname{Hom}}
\def\id{\operatorname{Id}}
\def\ind{\operatorname{ind}}
\newcommand{\mbar}[1]{{\ooalign{\hfil#1\hfil\crcr\raise.167ex\hbox{--}}}}
\def\gr{\operatorname{gr}}
\def\cV{\mathcal{V}}
\def\cF{\mathcal{F}}
\def\cU{\mathcal{U}}
\def\wt{\widetilde}
\def\H{\mathbb{H}}
\def\Spinc{Spin$^c$ }
\title{Monopoles and Transverse Knots}
\author{Nobuo Iida}
\address{Tokyo Institute of Technology, Ookayama, Meguro-ku, Tokyo}
\email{iida.n.ad@m.titech.ac.jp}
\author{Masaki Taniguchi} 
\address{Department of Mathematics, Graduate School of Science, Kyoto University, Kitashirakawa Oiwake-cho, Sakyo-ku, Kyoto 606-8502, Japan}
\email{taniguchi.masaki.7m@kyoto-u.ac.jp}
\begin{document}

\begin{abstract}
We present a framework for studying transverse knots and symplectic surfaces utilizing the Seiberg--Witten monopole equation. Our primary approach involves investigating an equivariant Seiberg--Witten theory introduced by Baraglia--Hekmati on branched covers, incorporating invariant contact/symplectic structures.

Within this framework, we introduce a novel slice-torus invariant denoted as $q_M(K)$. This invariant can be viewed as the Seiberg--Witten analog of Hendricks--Lipshitz--Sarkar
's $q_\tau$ invariant, with a signature correction term. One property of the invariant $q_M(K)$ is an adjunction equality for properly embedded connected symplectic surfaces in the symplectic filling $D^4\# m \overline{\mathbb{C}P}^2$. The proof of this equality utilizes the equivariant version of the homotopical contact invariant introduced in \cite{IT20}, leading to a transverse knot invariant. Another ingredient of the proof involves constructing invariant symplectic structures on branched covering spaces branched along properly embedded symplectic surfaces in symplectic fillings. As an application of the invariant $q_M(K)$, we determine the value of any slice-torus invariant within a permissible deviation of $2$ for squeezed knots concordant to certain Montesinos knots. Additionally, we provide an obstruction to realizing second homology classes of $D^4 \#m \overline{\mathbb{C}P}^2$ as connected embedded symplectic surfaces with transverse knot boundary or connected embedded Lagrangian surfaces with collarable Legendrian knot boundary. Moreover, we introduce a new obstruction to certain Montesinos knots being quasipositive, which is described only in terms of slice genera and their signatures.
 
\end{abstract}

\maketitle
\setcounter{tocdepth}{1}
\tableofcontents

\section{Introduction}
\subsection{Contact Structures and Transverse Knots}
A contact structure on a $(2n+1)$-dimensional oriented manifold $M$ is a codimension-1 distribution $\xi=\ker \lambda \subset TM$, $\lambda \in \Omega^1(M)$ satisfying the contact condition
\[
\lambda\wedge (d\lambda)^n>0.
\]
We focus on contact structures on 3-dimensional manifolds, which have intensively interacted with low-dimensional topology.  
When a contact structure is given on an oriented 3-manifold, 
there are two distinguished kinds of knots:
Legendrian knots, which are tangent to the contact plane, and transverse knots, which are transverse to it. The study of Legendrian knots has utilized the Seiberg--Witten theory in various ways. 
For instance, Mrowka--Rollin \cite{MR06} proved a generalization of Bennequin inequality using Kronheimer--Mrowka's variant of the Seiberg--Witten invariants for 4-manifolds with contact boundary. 
\par
The purpose of this paper is to give a new framework to study transverse knots and symplectic surfaces using Seiberg--Witten theory. Our method is to develop Baraglia--Hekmati's equivariant Seiberg--Witten theory \cite{BH} under invariant contact/symplectic structures on branched covers. For a prime number $p$ and a knot $K$ in $S^3$, Baraglia--Hekmati introduced a formal desuspension of an $S^1\times \Z_p$-equivariant pointed homotopy type 
\[
SWF(\Sigma_p (K),  \s_0) 
\]
constructed from Seiberg--Witten equation and whose $\Z_p$-action comes from the covering transformation on the $p$-th branched covering space $\Sigma_p (K)$ along $K$. Here, $\s_0$ is the unique $\Z_p$-invariant spin structure on $\Sigma_p (K)$.
By applying $S^1\times \Z_p$-equivariant cohomology to $SWF(\Sigma_p(K),  \s_0)$, Baraglia--Hekmati defined \footnote{More precisely, Baraglia--Hekmati did not define metric-independent Seiberg--Witten Floer stable homotopy type $SWF(-\Sigma_p (K), \mathfrak{s}_0)$. Instead, they defined metric-dependent stable homotopy type and proved that its $ S^1 \times \Z_p $-equivariant cohomology including a suitable grading shift is independent of the metric. We use notation as if we have the metric-independent stable homotopy type $SWF(-\Sigma_p (K), \mathfrak{s}_0)$ to simplify the notation, but the metric-dependent one introduced by Baraglia--Hekmati is sufficient for all the arguments in this paper.} 
\[
\wt{H}^*_{S^1\times \Z_p} (SWF(\Sigma_p (K) , \s_0); \mathbb{F}_p).
\]
\par

One of the main key ingredients in this paper is based on an equivariant version of Seiberg--Witten Floer homotopy contact invariant introduced in \cite{IT20}, formulated as $\Z_p$-equivariant map 
\begin{align}\label{homotopy contact eq}
\mathcal{C}_{\Z_p }( \Sigma_p(K), \wt{\xi}) : S^0 \to \Sigma^{d_3(\wt{\xi}) + \frac{1}{2} } (SWF(-\Sigma_p (K), \s_0)) , 
\end{align}
where $\wt{\xi}$ is the natural $\Z_p$-invariant contact structure constructed in \cite{Gonz, Pla06} on the cyclic $p$-th branched covering $\Sigma_p(K)$ and $d_3(\wt{\xi})\in \Q$ is the Gompf's invariant \cite{Go98h} of the oriented 2-plane field  $\wt{\xi}$. 
 We conjecture that this invariant recovers the Heegaard Floer transverse knot invariant defined by Kang \cite{Ka18} based on Hendricks--Lipshitz--Sarkar's equivariant Heegaard Floer homology with respect to branched involutions \cite{HLS16}. Before focusing on the theoretical developments in this paper, we first state applications to symplectic surfaces and Lagrangian surfaces in symplectic fillings.  

\subsection{Applications to symplectic surfaces}
The symplectic surfaces bounded by transverse knots have been studied as a connection between knot theory and symplectic topology. Closed symplectic surfaces in closed symplectic 4-manifolds have been studied in various ways. The existence of connected immersed symplectic surfaces realizing a given homology class in a closed symplectic 4-manifold has been proven under a homotopical necessary condition \cite{Li08}. For higher dimensions, it is proven in \cite{Li08} that the homotopical condition is known to be enough to show the existence of embedded connected symplectic surfaces.  Hence, the remaining challenge lies in the existence problem of embedded connected closed symplectic surfaces realizing specific homology classes in four dimensions. 
For certain homology classes, several transcendental constructions of closed symplectic surfaces have been known \cite{Do96, Tau00, LL02}. Results on the existence of symplectic surfaces were used to classify symplectic structures on $\C P^2$ \cite{Tau00} up to symplectomorphisms. \footnote{This technique showing the uniqueness of symplectic structures has been developed to apply other symplectic 4-manifolds, see \cite{Sa13} for example. }  
In \cite{Ham11}, certain obstructions to the existence of closed symplectic surfaces have been provided for specific homology classes using Seiberg--Witten theory.

We focus on symplectic surfaces properly embedded with boundaries in symplectic fillings. Regarding symplectic surfaces bounded by knots in $S^3$, the existence problem of symplectic surfaces in symplectic fillings has been resolved \cite{BO01, Ru83} in the following manner: A given knot $K$ in $S^3$ is a quasipositive knot if and only if there exists a transverse representative of $K$ that bounds a symplectic surface in $(D^4\# m \overline{\C P}^2,\omega_{\text{std}})$, where $\omega_{\text{std}}$ denotes the standard symplectic structure on $D^4$ blown up $m$ times.
However, the homology classes that arise as symplectic surfaces are difficult to detect even for this simple situation. 
%The equivariant Seiberg--Witten theory can be used to obstruct homology classes of symplectic surfaces.  We first state a result on symplectic surfaces in $D^4 \# m \overline{\C P^2}$. 
Equivariant Seiberg--Witten theory can be employed to obstruct homology classes of symplectic surfaces. We will begin by stating a result concerning symplectic surfaces in $D^4 \# m \overline{\mathbb{C}P}^2$.
\begin{thm}\label{Q-minimize statement} 
Let  $S \subset X$ be a properly embedded connected symplectic surface in a symplectic filling $(X, \om)$ of $(S^3, \xi_{\operatorname{std}})$ with $\partial S=K$, where $\xi_{\operatorname{std}}$ is the standard contact structure on $S^3$ and $K$ is a knot in $S^3$. Suppose the Ozv\'ath--Szab\'o's contact invariant $c(\Sigma_2 (K), \wt{\xi})$ with respect to the contact structure $\wt{\xi}$ on $\Sigma_2 (K)$ gives a minimum $\mathbb{Q}$-grading element in the Heegaard Floer homology $HF^+_*(-\Sigma_2 (K);\mathfrak{s}_0) $, where $\mathfrak{s}_0$ is the unique spin structure on $\Sigma_2(K)$. 
Suppose 
\[
-\frac{1}{8} [S]^2-\frac{1}{2}\eta([S]/2)
> \frac{1}{2}\langle c_1(TX), [S]\rangle, 
\]
where 
\[
\eta(x):=\min_{c\in H^2(X; \Z) }\left\{\ -(x+c)^2-b_2(X)\ \middle|\  c \equiv w_2(X) \text{ mod }2  \right\}. 
\]
Then $[S] \in H_2(X)$ is not divisible by $2$.
\end{thm}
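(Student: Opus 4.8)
\emph{Strategy.} I would argue by contradiction: assume $[S]=2\mathfrak c$ for some $\mathfrak c\in H_2(X)$ and derive the reverse of the displayed inequality. Divisibility of $[S]$ by $2$ is exactly what is needed for the double cover of $X\setminus S$ with nontrivial monodromy around $S$ to exist, so completing it produces the double branched cover $\pi\colon\wt X\to X$ of $X$ along $S$, with covering involution $\tau$ and fixed surface $\wt S=\pi^{-1}(S)$ satisfying $\pi^*[S]=2[\wt S]$ and $\pi_*[\wt S]=[S]$. Since $\partial X=(S^3,\xi_{\operatorname{std}})$ and $S$ meets $\partial X$ transversally in $K$, the construction of invariant symplectic structures on branched covers along properly embedded symplectic surfaces in fillings (one of the paper's main geometric ingredients) equips $\wt X$ with a $\tau$-invariant symplectic form $\wt\om$ making $(\wt X,\wt\om)$ a symplectic filling of $(\Sigma_2(K),\wt\xi)$; write $\s_{\mathrm{can}}$ for its canonical $\spc$ structure, so $\s_{\mathrm{can}}|_{\Sigma_2(K)}=\s_0$ and $\wt\xi$ is the contact structure induced by $\wt\om$. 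From the $G$-signature theorem and the branched-cover identities one records $\sigma(\wt X)=2\sigma(X)-\tfrac12[S]^2$, $c_1(\s_{\mathrm{can}})=\pi^*c_1(TX)-[\wt S]$, and $\ind_{\C}D_{\s_{\mathrm{can}}}(\wt X)=\tfrac18[S]^2-\tfrac14\langle c_1(TX),[S]\rangle$, where the last identity uses that a symplectic filling of $(S^3,\xi_{\operatorname{std}})$ has $c_1^2=\sigma$ and $c_1^2-2\chi-3\sigma=-2$ (the latter being $4d_3(\xi_{\operatorname{std}})$, see \cite{Go98h}); the term $\tfrac18[S]^2$ is what will ultimately surface in the statement.

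\emph{The obstruction.} I would then feed $(\wt X,\s_{\mathrm{can}})$ into the $\Z_2$-equivariant relative Seiberg--Witten (Bauer--Furuta) invariant underlying the contact invariant \eqref{homotopy contact eq}. This is a $\Z_2$-equivariant stable map into $SWF(\Sigma_2(K),\s_0)$ whose domain representation sphere is assembled from $\ind_{\C}D_{\s_{\mathrm{can}}}(\wt X)$ and $b^+(\wt X)$, with the $\tau$-action split according to the $\tau$-equivariant Dirac index; by the Atiyah--Singer fixed-point theorem applied to $\tau$ with fixed locus $\wt S$, the anti-invariant part of that index is governed by $[\wt S]^2=\tfrac12[S]^2$, which is where the eighth in $\tfrac18[S]^2$ comes from. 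Composing with $\mathcal C_{\Z_2}(\Sigma_2(K),\wt\xi)$ produces a $\Z_2$-equivariant stable self-map of a representation sphere, and the hypothesis that $c(\Sigma_2(K),\wt\xi)$ attains the minimal $\Q$-grading of $HF^+_*(-\Sigma_2(K);\s_0)$ --- equivalently, that the non-equivariant contact class sits at the bottom of the $U$-tower and is hence realised by the relative invariant of the filling, as in the contact-invariant formalism extending \cite{IT20} --- guarantees this self-map is not stably nullhomotopic. A non-null $\Z_2$-equivariant stable self-map of a representation sphere forces the domain and target spheres to agree both in the underlying and in the $\tau$-fixed dimensions; because $SWF(\Sigma_2(K),\s_0)$ need not be a single sphere, the sharp form of this constraint is a Fr\o yshov-type inequality in which one minimises over the $\spc$ structures on $\wt X$ restricting to $\s_0$ --- and this minimisation is precisely the minimum defining $\eta$, with $b_2(X)$ entering through $b_2^-(\wt X)$.

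\emph{Conclusion and main difficulty.} Writing out that inequality and substituting the first paragraph's identities, every term involving only $X$ cancels and the remainder rearranges to $\tfrac12\langle c_1(TX),[S]\rangle\ge-\tfrac18[S]^2-\tfrac12\,\eta([S]/2)$, which directly contradicts the hypothesis; hence $[S]$ is not divisible by $2$. I expect the second paragraph to be the main obstacle: on one hand one must upgrade the purely non-equivariant $HF^+$-minimality hypothesis into an honest $\Z_2$-equivariant non-vanishing statement and match it with the relative-invariant realisation of $c(\Sigma_2(K),\wt\xi)$; on the other hand one must carry the $RO(\Z_2)$-graded dimension bookkeeping (equivalently, the $\tau$-equivariant index of $D_{\s_{\mathrm{can}}}$ on $\wt X$) precisely enough that exactly the combination $-\tfrac18[S]^2-\tfrac12\eta([S]/2)$ emerges, rather than the coarser $G$-signature quantity $-\tfrac12[S]^2$ that the non-equivariant picture alone would suggest.
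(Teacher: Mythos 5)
Your overall strategy---pass to the double branched cover of $X$ along $S$, endow it with an invariant symplectic filling structure via \cref{double branch symp}, and extract a contradiction from equivariant Seiberg--Witten data---is genuinely different from the paper's proof, which is an assembly of black boxes: Baraglia's genus bound for surfaces whose class is divisible by $2$ (\cref{BH full inequality}, quoted from \cite{Ba22}), the symplectic adjunction equality \eqref{Bennequin equI} from \cite{Et20}, the non-vanishing of the Ozsv\'ath--Szab\'o contact invariant of $(\Sigma_2(K),\wt{\xi})$ (fillability supplied by \cref{double branch symp}), and Itoh's $d_3$ formula \cite{It17}; the minimal-grading hypothesis is then used to identify $l^{(2)}(-K)$ with $-d_3(\Sigma_2(K),\wt{\xi})-\tfrac12=\tfrac34\sigma(K)+\tfrac12\operatorname{sl}(K)+\tfrac12$, after which \cref{divisibility of homology class} gives the statement. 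Measured against this, your sketch is not a complete alternative: it has concrete gaps exactly where the work is.

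First, the quantitative heart---that non-triviality of your composed equivariant map forces $\tfrac12\langle c_1(TX),[S]\rangle\ge-\tfrac18[S]^2-\tfrac12\eta([S]/2)$---is precisely the content of Baraglia's inequality, and your justification (``domain and target representation spheres must agree'', ``this minimisation is precisely the minimum defining $\eta$'') does not supply it: $SWF(\Sigma_2(K),\fraks_0)$ is not a sphere, so no dimension-matching argument applies, and the $\eta$ term comes from running a Fr\o yshov-type inequality over the spin$^c$ structures on $\Sigma_2(S)$ obtained by twisting by pullbacks of characteristic elements of $H^2(X;\Z)$, with index bookkeeping you acknowledge but do not carry out. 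Second, the hypotheses are misplaced: non-nullhomotopy of the composite follows from fillability (the gluing and non-vanishing theorems), not from the minimal-grading assumption, and ``minimal $\Q$-grading in $HF^+$'' is not equivalent to lying in $\bigcap_i\operatorname{Im}U^i$; the minimality hypothesis is needed for a different purpose, namely to replace the abstract quantity $l^{(2)}(-K)$ occurring in the genus bound by the contact-geometric grading $-d_3-\tfrac12$. Third, the claimed cancellation ``every term involving only $X$ cancels'' overlooks that $g(S)$ and $\operatorname{sl}(K)$ enter through $b^+(\Sigma_2(S))=g(S)-\tfrac14[S]^2+\tfrac12\sigma(K)$ and through $d_3(\wt{\xi})$; eliminating them requires the adjunction equality $\langle c_1(TX),[S]\rangle=1-2g(S)+\operatorname{sl}(K)+[S]^2$, which never appears in your argument (and your branched-cover formulas for $\sigma(\wt{X})$ and the Dirac index omit the knot-signature corrections present when $S$ has boundary). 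Until these points are repaired, the final inequality is not established.
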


As concrete applications, one can give the following sequence of examples. 
\begin{cor}\label{computation for CP}
Let $x$ be an element in $2 H_2(D^4 \# m\overline{\C P}^2, \partial ;\Z)$ with an expression 
\[
x = \sum x_i e_i \in H_2 ( D^4 \# m\overline{\C P}^2, \partial ; \Z), 
\]
where $e_i$ is the class represented by the exceptional curve in each summand of $\overline{\C P}^2$. 
If the homology class admits a properly embedded connected symplectic surface $S$ with 
\[
\partial S = \begin{cases}
    T(3,6n\pm 1) \\ 
    T(5, 7) , T(5,9)  
\end{cases}  , 
\]
then  
\[
x_i = 0 \text{ or }   -2 \text{ or } -4,  
\]
where $T(p,q)$ denotes the torus knot of type $(p,q)$.

\end{cor}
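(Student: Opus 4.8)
The plan is to apply \Cref{Q-minimize statement} to the symplectic filling $(X,\om)=(D^4\#m\overline{\C P}^2,\om_{\mathrm{std}})$, used contrapositively. First, the statement is not vacuous: each knot $K$ in the list is a positive torus knot, hence quasipositive, so by the characterization of quasipositivity via symplectic surfaces \cite{BO01, Ru83} it bounds a properly embedded symplectic surface in $(D^4\#m\overline{\C P}^2,\om_{\mathrm{std}})$ for suitable $m$. Now let $S\subset X$ be a connected properly embedded symplectic surface with $\partial S=K$ realizing $x=\sum_i x_ie_i$. The hypothesis $x\in 2H_2(D^4\#m\overline{\C P}^2,\partial;\Z)$ says exactly that $[S]$ \emph{is} divisible by $2$; hence by \Cref{Q-minimize statement} at least one of its two hypotheses must fail for $S$. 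I will argue that the contact hypothesis on $c(\Sigma_2(K),\wt{\xi})$ holds for every $K$ in the list, so the displayed inequality must fail, and then the corollary is a short completion-of-squares argument.

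The first task --- and the one I expect to be the main obstacle --- is to verify the contact hypothesis: that $c(\Sigma_2(K),\wt{\xi})$ occupies the minimal $\Q$-grading of $HF^+_*(-\Sigma_2(K);\mathfrak{s}_0)$ for $K=T(3,6n\pm1)$ (all $n$), $T(5,7)$, and $T(5,9)$. For a torus knot $T(p,q)$ the double branched cover $\Sigma_2(T(p,q))$ is the Brieskorn sphere $\Sigma(2,p,q)$, and $\wt{\xi}$ is its canonical Milnor-fillable contact structure, so in particular $c(\Sigma_2(K),\wt{\xi})\neq 0$ since $\wt{\xi}$ is Stein fillable. The contact class lies in grading $-d_3(\wt{\xi})-\tfrac12$, which I would compute from the Milnor-fibre presentation of $\Sigma_2(K)$ via Gompf's $d_3$-formula, while the minimal grading of $HF^+_*(-\Sigma(2,p,q);\mathfrak{s}_0)$ is governed by the correction term together with $HF_{\mathrm{red}}$, both known for these Seifert fibered spaces by Ozsv\'ath--Szab\'o and N\'emethi's algorithm. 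The point is that the triples $(3,6n\pm1)$, $(5,7)$, $(5,9)$ are precisely those for which these two quantities coincide; handling the $p=3$ family uniformly in $n$, and seeing why for $p=5$ exactly $q=7,9$ work while larger $q$ produce reduced Floer homology below the contact grading, is where the actual work sits. (Equivalently, this is the content of the earlier analysis of the equivariant contact invariant for cyclic branched covers of torus knots.)

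Granting this, the displayed inequality in \Cref{Q-minimize statement} fails for $S$, so its negation
\[
-\tfrac18[S]^2-\tfrac12\eta([S]/2)\le\tfrac12\langle c_1(TX),[S]\rangle
\]
holds, and it remains to substitute the topology of $X=D^4\#m\overline{\C P}^2$. Here $b_2(X)=m$, the intersection form is $\langle-1\rangle^{\oplus m}$ in the basis $\{e_i\}$ (so $e_i\cdot e_j=-\delta_{ij}$ and $[S]^2=-\sum_i x_i^2$), $w_2(X)\equiv\sum_i\PD(e_i)\bmod 2$, and $\langle c_1(TX),e_i\rangle=-1$ for the orientation of $e_i$ used in the statement. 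Since $x\in 2H_2$ we may write $x_i=2a_i$ with $a_i\in\Z$, and then $[S]^2=-4\sum_i a_i^2$, $\langle c_1(TX),[S]\rangle=-2\sum_i a_i$, and $\eta([S]/2)=\sum_i\mu(a_i)-m$, where $\mu(a_i)=1$ if $a_i$ is even and $\mu(a_i)=0$ if $a_i$ is odd: in the definition of $\eta$ the constraint $c\equiv w_2\bmod 2$ forces every coordinate of $c$ to be odd, and the minimum of $(a_i+c_i)^2$ over odd integers $c_i$ is $1$ for $a_i$ even and $0$ for $a_i$ odd. Substituting and clearing denominators, the inequality becomes
\[
\sum_i\bigl((a_i+1)^2-\mu(a_i)\bigr)\le 0.
\]
Each summand is nonnegative: it equals $(a_i+1)^2$ when $a_i$ is odd, which vanishes only for $a_i=-1$, and $(a_i+1)^2-1$ when $a_i$ is even, which vanishes only for $a_i\in\{0,-2\}$. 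Hence every summand vanishes, so $a_i\in\{0,-1,-2\}$ for all $i$, i.e.\ $x_i\in\{0,-2,-4\}$ for all $i$, which is the claim.
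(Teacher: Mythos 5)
Your proposal follows essentially the same route as the paper: apply \cref{Q-minimize statement} contrapositively (divisibility by $2$ is given, so the displayed inequality must fail once the contact hypothesis is known), then extract the constraint on the $x_i$ by arithmetic. Your completion-of-squares, with $x_i=2a_i$ and $\eta([S]/2)=\#\{i:a_i \text{ even}\}-m$, reducing to $\sum_i\bigl((a_i+1)^2-\mu(a_i)\bigr)\le 0$, is correct and in fact cleaner than the paper's displayed intermediate bound $\sum(x_i+2)^2\le 4m$, which only suffices after the same parity refinement you make explicit. The one piece you leave undone --- checking that $c(\Sigma_2(K),\wt{\xi})$ sits in the minimal $\Q$-grading of $HF^+_*(-\Sigma_2(K);\mathfrak{s}_0)$ for $T(3,6n\pm1)$, $T(5,7)$, $T(5,9)$ --- is exactly where the paper does no more than you propose: it quotes the known minimal gradings of $HF^+$ for $-\Sigma(2,3,6n\mp1)$, $-\Sigma(2,5,7)$, $-\Sigma(2,5,9)$ (citing Tweedy) and compares them with the contact grading $-d_3(\wt{\xi})-\tfrac12$ computed from Itoh's formula \eqref{Itoh formula}; so your outline of that verification is the intended argument, and to be complete you would only need to carry out (or cite) those grading comparisons rather than assert that these are ``precisely'' the families where they coincide.
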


Although most torus knots do not satisfy the assumptions of \cref{Q-minimize statement}, under a more general assumption, we see another obstruction again from equivariant Seiberg--Witten theory.
\begin{thm}\label{main symp sur}
    Let $K$ be a transverse knot in $(S^3, \xi_{\operatorname{std}})$. Suppose 
    \[
    g_4 (K) + \frac{1}{2}\sigma(K)>0
    \]
    where $\sigma(K)$ denotes the knot signature with the convention $\sigma(T(2,3))=-2$ and $g_4 (K)$ denotes the smooth four genus of $ K$.
Then there is no a properly embedded connected symplectic surface $S$ in a symplectic filling $(X, \om)$ with $\partial S = K$ such that $[S]$ is divisible by $2$ and  
\[
g(S) - \frac{1}{4}[S]^2 + \frac{1}{2}\sigma (K)=0. 
\]
\end{thm}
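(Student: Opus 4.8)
The plan is to argue by contradiction: assuming that such an $S$ exists, transport the entire picture to the double branched cover and derive two incompatible estimates for $q_M(K)$.

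\emph{Lifting to the branched cover.} Since $[S]$ is divisible by $2$, the double branched cover $Y:=\Sigma_2(X,S)$ of $X$ along $S$ is a well-defined smooth $4$-manifold with $\partial Y=\Sigma_2(K)$, carrying the covering involution $\tau$ and the branch surface $\wt{S}:=\pi^{-1}(S)$, which satisfies $g(\wt{S})=g(S)$ and $[\wt{S}]^2=\tfrac12[S]^2$; the $G$-signature theorem gives $\sigma(Y)=2\sigma(X)-\tfrac12[S]^2-\sigma(K)$, which is how $\sigma(K)$ enters the estimates below. Applying the construction of invariant symplectic structures on branched covers along properly embedded symplectic surfaces (one of the ingredients advertised above), $\om$ lifts to a $\tau$-invariant symplectic form $\wt{\om}$ on $Y$ making $(Y,\wt{\om})$ a symplectic filling of $(\Sigma_2(K),\wt{\xi})$ in which $\wt{S}$ is symplectically embedded; in particular, by Taubes, $\wt{S}$ is detected by the Seiberg--Witten invariant of $Y$.

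\emph{Non-vanishing of the equivariant contact invariant and the upper bound.} Because $(\Sigma_2(K),\wt{\xi})$ bounds the $\tau$-equivariant symplectic filling $(Y,\wt{\om})$, the $\Z_2$-equivariant homotopical contact invariant $\mathcal{C}_{\Z_2}(\Sigma_2(K),\wt{\xi})$ of \eqref{homotopy contact eq} is non-trivial. This is the $\Z_2$-equivariant enhancement of the fillability $\Rightarrow$ non-vanishing theorem of \cite{IT20}: compose $\mathcal{C}_{\Z_2}$ with the equivariant relative Bauer--Furuta invariant of the cobordism $Y\setminus B^4$ and run the usual neck-stretching argument, the unperturbed moduli space of the symplectic cobordism being cut out by the reducible exactly as in the non-equivariant case. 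Combining this non-vanishing with the presence of the invariant symplectic surface $\wt{S}$ and reading off the $S^1\times\Z_2$-equivariant grading in which the contact class survives --- against the signature-correction term in the definition of $q_M$, which is matched exactly by the branched-cover signature formula above --- yields the adjunction estimate
\[
q_M(K)\ \le\ g(\wt{S})-\tfrac12[\wt{S}]^2+\tfrac12\sigma(K)\ =\ g(S)-\tfrac14[S]^2+\tfrac12\sigma(K)\ =\ 0,
\]
the last equality being the standing hypothesis on $S$. Hence $q_M(K)\le 0$.

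\emph{The contradiction.} On the other hand, $q_M$ is a slice--torus invariant and its Bennequin-type genus bound is sharp for any knot bounding a symplectic surface in a symplectic filling: this is the extension to arbitrary fillings of the equality $q_M(K)=g_4(K)$ valid on quasipositive knots (precisely the knots bounding symplectic surfaces in $D^4\#m\overline{\C P}^2$), which follows from the same circle of ideas applied to the transverse knot $\partial\wt{S}\subset(\Sigma_2(K),\wt{\xi})$. Therefore $g_4(K)=q_M(K)\le 0$, so $g_4(K)=0$, whence $K$ is smoothly slice and $\sigma(K)=0$; but then $g_4(K)+\tfrac12\sigma(K)=0$, contradicting the hypothesis $g_4(K)+\tfrac12\sigma(K)>0$.

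The main obstacle is the adjunction estimate of the second step --- establishing the sharp form of the equivariant Seiberg--Witten genus bound in the presence of the invariant symplectic surface $\wt{S}$, i.e.\ that $\wt{S}$ controls exactly, rather than merely bounds, the $S^1\times\Z_2$-equivariant grading in which $\mathcal{C}_{\Z_2}(\Sigma_2(K),\wt{\xi})$ lives. This requires combining the invariant symplectic structures of the first step, the Taubes-type identification of symplectic surfaces with Seiberg--Witten solutions in the equivariant category, and careful bookkeeping of the equivariant degree shifts against the $\tfrac12\sigma(K)$ correction that normalises $q_M$; the branched-cover construction and the final numerical contradiction are comparatively routine given the framework already set up in the paper.
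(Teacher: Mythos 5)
There is a genuine gap at the heart of your argument: the ``adjunction estimate'' $q_M(K)\le g(S)-\tfrac14[S]^2+\tfrac12\sigma(K)$ that you extract from the non-vanishing of $\mathcal{C}_{\Z_2}(\Sigma_2(K),\wt{\xi})$ is not what that non-vanishing gives, and it is false as a general statement about symplectic surfaces in fillings. What the non-vanishing plus gluing actually pins down (this is \cref{main theo:general}) is $q_M(K)=\tfrac12\operatorname{sl}(K)+\tfrac12=g(S)+\tfrac12\langle c_1(\om),[S]\rangle-\tfrac12[S]^2$, which is a different quantity from $g(S)-\tfrac14[S]^2+\tfrac12\sigma(K)$; the latter is $b^+(\Sigma_2(S))$, the exponent of $Q$ appearing in the $S^1$-fixed-point restriction of the equivariant cobordism map, and it does not bound $q_M$ from above. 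Concretely, for the right-handed trefoil with its genus-one quasipositive (symplectic) surface $S\subset D^4$, $[S]=0$, one has $q_M=1$ while $g(S)-\tfrac14[S]^2+\tfrac12\sigma=0$, so your inequality fails. Because of this, your final numerical contradiction evaporates: combining the correct adjunction equality with $q_M(K)=g_4(K)$ (which is fine, since the existence of $S$ does force $K$ to be quasipositive) only gives $g_4(K)+\tfrac12\sigma(K)=\tfrac12\bigl(\langle c_1(\om),[S]\rangle-\tfrac12[S]^2\bigr)$, a quantity with no a priori sign, so no contradiction with the hypothesis can be reached at the level of $q_M$ alone.

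The paper's proof avoids this by working in the $S^1\times\Z_2$-equivariant theory and comparing two cobordism classes rather than two bounds on $q_M$: it takes $x=BF^*_S(1)$ for your surface $S$ and $y=BF^*_{S'}(1)$ for a quasipositive symplectic surface $S'\subset D^4$ bounded by $K$, notes (via localization) that $\iota^*x=Q^{\,g(S)-\frac14[S]^2+\frac12\sigma(K)}U^m$ and $\iota^*y=Q^{\,g(S')+\frac12\sigma(K)}U^{m'}$, and then uses the rank-one structure of $\operatorname{Coker}Q$ (a consequence of \cref{rank1 theorem} and the Thom--Gysin sequence) to force $U^jx-U^jy\in\operatorname{Im}Q$ for some $j$. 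Restricting to the $S^1$-fixed locus, the hypothesis $g(S)-\tfrac14[S]^2+\tfrac12\sigma(K)=0$ makes the first term not divisible by $Q$, while $g(S')+\tfrac12\sigma(K)\ge g_4(K)+\tfrac12\sigma(K)>0$ makes the second divisible by $Q$ --- the contradiction. Your first step (invariant symplectic structure on the branched cover and non-vanishing of the equivariant contact class) and your use of quasipositivity are genuine ingredients of this circle of ideas, but the key mechanism is the $Q$-divisibility comparison in $\operatorname{Coker}Q$, not an upper bound on $q_M$ by $b^+(\Sigma_2(S))$; as written, your central step would need an inequality that cannot hold.
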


\begin{cor}\label{ex for 510n+3}
Let $x$ be an element in $2 H_2(D^4 \# m\overline{\C P}^2,\partial ;\Z)$.  
Suppose 
\[
 g_4 (K) + \frac{1}{2}\sigma(K)>0.
 \]
If the homology class $x$ admits a properly embedded symplectic surface $S$ in $(D^4 \# m\overline{\C P}^2, \om_{\operatorname{std}})$ with 
$\partial S = K $ being a transverse knot with repsect to $\xi_{\mathrm{std}}$, we have 
\[
-\langle c_1(\om_{\om_{\operatorname{std}}}), [S]\rangle + \frac{1}{2}[S]^2+ \sigma(K) + \operatorname{sl} (K) +1 \neq 0, 
\]
where $\operatorname{sl}$ denotes the self-linking number. 
\end{cor}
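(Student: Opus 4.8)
The plan is to obtain \cref{ex for 510n+3} as a reformulation of \cref{main symp sur}: the whole point is to rewrite the intrinsic quantity $g(S)-\tfrac14[S]^2+\tfrac12\sigma(K)$ occurring in \cref{main symp sur} in terms of $\langle c_1(\om_{\operatorname{std}}),[S]\rangle$, $[S]^2$, $\operatorname{sl}(K)$ and $\sigma(K)$. The bridge is the relative adjunction \emph{equality} for symplectic surfaces with transverse boundary, which is where all the content sits.

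So the first step is to establish, for a connected properly embedded symplectic surface $S$ in a symplectic filling $(X,\om)$ of $(S^3,\xi_{\operatorname{std}})$ with $\partial S=K$ a transverse knot, the identity
\[
\langle c_1(X,\om),[S]\rangle \;=\; [S]^2+\chi(S)+\operatorname{sl}(K)\;=\;[S]^2+1-2g(S)+\operatorname{sl}(K),
\]
where $[S]^2$ is the self-intersection of $[S]\in H_2(X,\partial X;\Z)\cong H_2(X;\Z)$ (the isomorphism holding because $H_*(S^3)$ is trivial in the relevant degrees, and it is this isomorphism that implicitly normalizes $[S]^2$ by the $0$-framing of $K$ coming from a Seifert surface in $S^3$). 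I would either quote this as the symplectic slice--Bennequin equality underlying the quasipositivity characterization \cite{Ru83,BO01}, or prove it in a few lines: push $\operatorname{int}S$ off $\partial X$, cap $K$ off inside $S^3$ with a Seifert surface to get a closed class, apply the closed adjunction formula, and account for the boundary by the standard relative Euler class computation along the transverse knot $K$, which produces exactly the $\operatorname{sl}(K)$ term. I would pin down signs against $X=D^4$ (there $c_1=0$ and $[S]^2=0$, so the identity reads $\operatorname{sl}(K)=2g(S)-1$, the sharp slice--Bennequin equality attained by symplectic surfaces) and against a blown-up holomorphic disk, with $[\wt D]=-e_1$ in $D^4\#\overline{\C P}^2$, where it reads $-1=-1+1-1$.

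With the equality in hand the remaining step is purely algebraic. Substituting it into the left-hand side of the inequality in \cref{ex for 510n+3} gives
\[
-\langle c_1(\om_{\operatorname{std}}),[S]\rangle+\tfrac12[S]^2+\sigma(K)+\operatorname{sl}(K)+1 \;=\; 2\Bigl(g(S)-\tfrac14[S]^2+\tfrac12\sigma(K)\Bigr),
\]
since $-\bigl([S]^2+1-2g(S)+\operatorname{sl}(K)\bigr)+\tfrac12[S]^2+\sigma(K)+\operatorname{sl}(K)+1=2g(S)-\tfrac12[S]^2+\sigma(K)$. Now I would conclude directly: if $x\in 2H_2(D^4\#m\overline{\C P}^2,\partial;\Z)$ is represented by a connected properly embedded symplectic surface $S$ in $(D^4\#m\overline{\C P}^2,\om_{\operatorname{std}})$ with $\partial S=K$ transverse, then $[S]=x$ is divisible by $2$, so \cref{main symp sur} (using $g_4(K)+\tfrac12\sigma(K)>0$) gives $g(S)-\tfrac14[S]^2+\tfrac12\sigma(K)\neq 0$, whence by the displayed identity the left-hand side of the inequality in \cref{ex for 510n+3} equals twice this and is therefore nonzero, which is the assertion.

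The only step with genuine content is the relative adjunction equality, and the main obstacle is just getting the sign and normalization conventions exactly right: one must check that $[S]^2$ and $\operatorname{sl}(K)$ are normalized by the same framing of $K$, and that the Chern class appearing in \cref{ex for 510n+3} is the one compatible with the trivialization of $TX|_{S^3}$ coming from $\xi_{\operatorname{std}}$. Once the two base-case checks above fix the signs, everything else is bookkeeping.
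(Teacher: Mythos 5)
Your proposal is correct and follows essentially the same route as the paper: the paper's proof is exactly the combination of \cref{main symp sur} with the relative adjunction equality $\langle c_1(X,\om),[S]\rangle=1-2g(S)+\operatorname{sl}(K)+[S]^2$ (quoted from Etnyre, \cite[Lemma 2.13]{Et20}), and your algebraic identification of the displayed quantity with $2\bigl(g(S)-\tfrac14[S]^2+\tfrac12\sigma(K)\bigr)$ is the same bookkeeping. The only difference is that you sketch a proof of the adjunction equality (with sign checks), whereas the paper simply cites it, which does not change the argument.
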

For example, most torus knots are known to satisfy $g_4 (K) + \frac{1}{2}\sigma(K)>0$. 
As another class of examples, one can also treat the positively cusped Whitehead doubles. 
\begin{cor} \label{quasipos app}
 Let $K$ be a strongly quasipositive knot such that its positively cusped Whitehead double is a non-trivial knot. 
Then there is no properly embedded connected symplectic surface $S \subset D^4 \# \overline{\C P}^2 $ with 
$\partial S = \operatorname{Wh}(K)$ such that
\[
[S] =   -2 [\C P^1] \in H_2(D^4 \# \overline{\C P}^2, \partial; \Z) \cong H_2(\overline{\C P}^2; \Z). 
\]
\end{cor}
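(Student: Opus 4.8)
The plan is to deduce this from \cref{main symp sur} (equivalently, from its explicit form \cref{ex for 510n+3}) applied to the transverse knot $\operatorname{Wh}(K)$, using the relative adjunction equality for symplectic surfaces with transverse boundary; beyond that, the work is just the computation of two classical invariants of a positive Whitehead double and a little homological bookkeeping on $D^4\#\overline{\C P}^2$. Write $D:=\operatorname{Wh}(K)$ for the untwisted, positively clasped Whitehead double. First I would record that $\sigma(D)=0$: $D$ bounds its standard once-punctured-torus Seifert surface, on which the Seifert form is represented by $\bigl(\begin{smallmatrix}-1&1\\0&0\end{smallmatrix}\bigr)$ --- a matrix independent of the companion $K$ --- and the symmetrisation $\bigl(\begin{smallmatrix}-2&1\\1&0\end{smallmatrix}\bigr)$ is indefinite with negative determinant, hence has signature $0$. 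Second, I would record that $g_4(D)=1$: the bound $g_4(D)\le g_3(D)=1$ is immediate since $D$ is a non-trivial Whitehead double, while for the reverse inequality one uses that $K$ is strongly quasipositive, so $\tau(K)=g_3(K)\ge 1>0$, whence Hedden's computation of the Ozsv\'ath--Szab\'o concordance invariant of Whitehead doubles gives $\tau(D)=1$ and hence $g_4(D)\ge|\tau(D)|=1$. (Alternatively $D$ is itself strongly quasipositive by Rudolph's positive Hopf-plumbing theorem, so $g_4(D)=g_3(D)=1$ outright.) In particular $g_4(D)+\tfrac12\sigma(D)=1>0$, so $D$ satisfies the hypothesis of \cref{main symp sur}.

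Now suppose, toward a contradiction, that a properly embedded connected symplectic surface $S\subset D^4\#\overline{\C P}^2$ exists with $\partial S=D$ and $[S]=-2[\C P^1]$. Then $[S]$ is divisible by $2$ in $H_2(D^4\#\overline{\C P}^2,\partial;\Z)\cong H_2(\overline{\C P}^2;\Z)$ and $[S]^2=4[\C P^1]^2=-4$; moreover, since $S$ is symplectic, $0<\int_S\omega_{\operatorname{std}}=\langle[\omega_{\operatorname{std}}],[S]\rangle$, which forces $[\C P^1]$ to be the negative of the exceptional class $E$, so $[S]=2E$ and $\langle c_1(\omega_{\operatorname{std}}),[S]\rangle=2\langle c_1(\omega_{\operatorname{std}}),E\rangle=2$. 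The relative adjunction equality for the symplectic surface $S$ --- namely $\operatorname{sl}(D)=2g(S)-1-[S]^2+\langle c_1(\omega_{\operatorname{std}}),[S]\rangle$, the equality underlying \cref{main symp sur} and made explicit in \cref{ex for 510n+3} --- then gives, upon inserting $[S]^2=-4$ and $\langle c_1(\omega_{\operatorname{std}}),[S]\rangle=2$, the identity $\operatorname{sl}(D)=2g(S)+5\ge 5$. On the other hand $g_4(D)=1$, so the slice--Bennequin inequality forces $\operatorname{sl}(D)\le 2g_4(D)-1=1$, a contradiction. Hence no such $S$ exists, proving the corollary. (The hypothesis that $K$ be strongly quasipositive also keeps the statement non-vacuous: without it $D$ need not be quasipositive, and so by Boileau--Orevkov would bound no symplectic surface in any $D^4\#m\overline{\C P}^2$ at all.)

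I expect the two invariant computations and the intersection-form bookkeeping on $D^4\#\overline{\C P}^2$ to be routine. The one genuinely non-formal input is the lower bound $g_4(\operatorname{Wh}(K))\ge 1$, i.e.\ non-sliceness of the positive Whitehead double: the inequality $g_4\le g_3=1$ is trivial, but whether the positive Whitehead double of an arbitrary non-trivial knot is slice is still open, so one must here genuinely invoke strong quasipositivity of $K$, via Hedden's $\tau$-formula or via Rudolph. A secondary, purely bookkeeping point is to fix the sign conventions entering $\langle c_1(\omega_{\operatorname{std}}),[S]\rangle$ and the precise shape of the adjunction equality, so that the numerical input above comes out as stated rather than giving a vacuous conclusion.
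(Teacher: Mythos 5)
Your classical inputs ($\sigma(\operatorname{Wh}(K))=0$, $g_4(\operatorname{Wh}(K))=1$ via Rudolph's strong quasipositivity of the positive Whitehead double, or via Hedden's $\tau$) coincide with what the paper uses, but the homological step that produces your contradiction is flawed, and it fails in exactly the case the corollary is about. For a properly embedded surface with boundary, $\int_S\omega_{\operatorname{std}}$ is \emph{not} equal to $\langle[\omega_{\operatorname{std}}],[S]\rangle$: the two differ by a boundary term coming from a primitive of $\omega_{\operatorname{std}}|_{S^3}$, so positivity of the symplectic area does not force the relative class to pair positively with $[\omega_{\operatorname{std}}]$, and it does not let you replace $-2[\C P^1]$ by $+2E$. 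In fact, symplectic surfaces with boundary lying in classes that pair \emph{negatively} with the exceptional classes are precisely the objects in play here: in the paper's conventions $[\C P^1]$ (like the $e_i$ of \cref{computation for CP}) is the exceptional curve class with $\langle c_1,E\rangle=1$, proper transforms of immersed symplectic surfaces at positive double points lie in classes $(-2,\dots,-2,0,\dots,0)$, and the allowed coefficients in \cref{computation for CP} are $0,-2,-4$. A proper transform of an immersed symplectic disk with one positive double point is an embedded symplectic surface in class $-2E$ with $\int_S\omega>0$ but $\langle[\omega],[S]\rangle<0$, which directly contradicts your sign-pinning step. So the class to be excluded is $[S]=-2E$, not $+2E$.

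With $[S]=-2E$ one has $[S]^2=-4$ and $\langle c_1(\omega_{\operatorname{std}}),[S]\rangle=-2$, and your own adjunction computation then gives $\operatorname{sl}=2g(S)+1$, which is perfectly compatible with slice--Bennequin $\operatorname{sl}\le 2g_4(\operatorname{Wh}(K))-1=1$; it merely forces $g(S)=0$ and $\operatorname{sl}=1$, i.e.\ exactly the symplectic-disk-with-one-positive-double-point scenario, and your contradiction evaporates. The case you do rule out, $[S]=+2E$, is handled by the classical adjunction equality plus slice--Bennequin alone and never invokes \cref{main symp sur} (note that your verification of $g_4+\tfrac12\sigma>0$ plays no role in your concluding inequality), so even there the argument is not an application of the paper's obstruction. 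The paper's proof instead substitutes $\operatorname{sl}(\operatorname{Wh}(K))+1=2$ and $\sigma(\operatorname{Wh}(K))=0$ into the non-vanishing statement of \cref{ex for 510n+3}, so that the exclusion of the class $-2[\C P^1]$ is meant to be carried by the equivariant obstruction of \cref{main symp sur} rather than by the slice--Bennequin inequality. To repair your proposal you would have to treat the class $-2E$ and derive the contradiction from \cref{ex for 510n+3}/\cref{main symp sur} (or some input beyond $\operatorname{sl}\le 2g_4-1$), rather than from slice--Bennequin applied to $+2E$.
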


The same type of results holds for a collarable Lagrangian filling bounded by a Legendrian knot instead of a symplectic surface bounded by a transverse knot.
 \footnote{For the definition of collarable Lagrangian filling, see \cite{Ch12}.  }
This follows from the above theorems combined with Cao--Gallup--Hayden--Sabloff's result \cite[Lemma 4.1]{CGHS}, which is based on Eliashberg's work \cite[Lemma 2.3.A]{Eli95}.

\begin{cor}
    The same statements in \cref{computation for CP} and \cref{ex for 510n+3} hold for connected and embedded  Lagrangian surfaces with collarable Legendrian boundary. 
\end{cor}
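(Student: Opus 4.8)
The plan is to deduce the Lagrangian statements from the symplectic ones already established. Let $(X,\om)=(D^4\#m\overline{\C P}^2,\om_{\operatorname{std}})$ and let $L\subset X$ be a connected embedded Lagrangian surface whose boundary $\Lambda$ is a collarable Legendrian representative of one of the relevant smooth knot types $K$ (a torus knot as in \cref{computation for CP}, or a knot with $g_4(K)+\frac{1}{2}\sigma(K)>0$ as in \cref{ex for 510n+3}). The one new ingredient is Cao--Gallup--Hayden--Sabloff \cite[Lemma 4.1]{CGHS}: building on Eliashberg's local tilting construction \cite[Lemma 2.3.A]{Eli95}, it modifies $L$ only inside a collar $\partial X\times[0,1)$ of the boundary and produces a connected embedded symplectic surface $S\subset(X,\om)$ with $g(S)=g(L)$, $[S]=[L]$ in $H_2(X,\partial X;\Z)$, and $\partial S$ the positive transverse push-off $T_+(\Lambda)$ of $\Lambda$. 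In particular $\partial S$ is a transverse representative of the same smooth knot type $K$, and $\operatorname{sl}(\partial S)=\operatorname{tb}(\Lambda)-\operatorname{rot}(\Lambda)$.

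With $S$ produced, I would simply invoke \cref{computation for CP} and \cref{ex for 510n+3} for $S$. Every hypothesis and every quantity appearing there is either an invariant of the smooth isotopy type of the boundary knot ($K$ itself, $g_4(K)$, $\sigma(K)$), hence unaffected by replacing $\Lambda$ with $T_+(\Lambda)$, or is read off from the pair $(X,S)$ via $[S]$, $[S]^2$, $\langle c_1(\om),[S]\rangle$ and $g(S)$, all of which coincide with the corresponding quantities for $L$ because $S$ and $L$ agree outside a collar that deformation retracts onto $\partial X$. Consequently \cref{computation for CP} forces the coefficients $x_i$ of $x=[L]$ to lie in $\{0,-2,-4\}$, while \cref{ex for 510n+3} gives
\[
-\langle c_1(\om_{\operatorname{std}}),[L]\rangle+\frac{1}{2}[L]^2+\sigma(K)+\operatorname{sl}(T_+(\Lambda))+1\neq 0,
\]
which is the stated inequality with the self-linking number of the transverse push-off in place of $\operatorname{sl}(K)$ (for a connected orientable collarable filling one may further use $\operatorname{sl}(T_+(\Lambda))=\operatorname{tb}(\Lambda)-\operatorname{rot}(\Lambda)$ and $\operatorname{tb}(\Lambda)=-\chi(L)=2g(L)-1$).

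The only step requiring genuine care is the bookkeeping inside \cite[Lemma 4.1]{CGHS}: one must confirm that the output surface carries \emph{exactly} the relative class $[L]$, rather than $[L]$ modified by boundary-parallel tori or exceptional classes, and that it is the positive (not negative) transverse push-off that bounds it, since both the sign convention on $\operatorname{sl}$ and the orientation used to define $c_1(\om)$ enter \cref{ex for 510n+3}. I expect this to be immediate from the normal form in \cite{Eli95}, where the tilt is confined to a neighborhood of $\partial X$ on which $H_2$ vanishes, so that no new homology is introduced; but it is the one point that should be written out rather than asserted.
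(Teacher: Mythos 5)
Your proposal is correct and follows essentially the same route as the paper: the paper's entire justification is to combine the symplectic-surface results with \cite[Lemma 4.1]{CGHS} (based on \cite[Lemma 2.3.A]{Eli95}), which converts a collarable Lagrangian filling into a symplectic surface of the same genus and relative homology class bounded by the transverse push-off, after which \cref{computation for CP} and \cref{ex for 510n+3} apply verbatim. Your extra bookkeeping about the class $[S]=[L]$ and the self-linking number of the positive push-off is exactly the detail the paper leaves implicit, and it is handled correctly.
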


\begin{rem}
    In the proof of \cref{Q-minimize statement} and \cref{main symp sur}, we did not use the classification result of symplectic fillings of $(S^3, \xi_{\operatorname{std}})$. Using the classification, one sees $X \cong  (D^4 , \om_{\operatorname{std}} ) \# _m \overline{\C P}^2$ \cite[p.311]{Gr85}\cite[Theorem 5.1]{Eli90}\cite[Theorem 1.7]{McD90}(See also \cite[Corollary 3.2]{GZ13}).
\end{rem}

\begin{rem}
    In the proof of \cref{Q-minimize statement} and \cref{main symp sur}, we actually obstruct not only symplectic surfaces but more generally smoothly and properly embedded connected surfaces which satisfy the adjunction equality. 
\end{rem}

\subsection{Quasipositivity of knots}
We also give some new obstructions to a knot being quasipositive. 
Note that the quasi-positivity is equivalent to bounding a connected symplectic surface in a filling of $(S^3, \xi_{\operatorname{std}})$. Detecting quasipositivity for a given knot remains a challenging problem.

\footnote{Since it has already been proven in \cite{Or20} that if the connected sum $K=K_1 \# K_2$ of knots is quasipositive, then $K_i$ is quasipositive for $i=1, 2$. Thus we only consider irreducible knots. } In this paper, we focus on Montesinos knots written by 
\[
M( b, (a_1, b_1), \cdots, (a_n, b_n) ), \footnote{Our convention of Montesinos knots is based on \cite{BH24}. }
\]
which is obtained by joining together $n$ rational tangles with slopes $a_1/b_1, \cdots , a_n/b_n $ together with $b$ half-twists. 
The link $M( b, (a_1, b_1), \cdots, (a_n, b_n) )$ is a knot if and only if either exactly one of $\{a_i\}$ is even or all of $a_1, \cdots ,a_n, b+ \sum b_i$ are odd. 
Quasipositive fibered Montesinos knots have been already classified in \cite[Proof of Theorem 2]{KA18I} combined with \cite{He10}. 
However, as posed in \cite[Question 6]{KA18I}, the question remains unanswered for non-fibered Montesinos knots, despite several partial results have been known (\cite{Ba21}, \cite[Section 5.2]{Stoi}). 
We give a new obstruction for non-fibered Montesinos knots. 
\begin{thm}\label{main Montesinos}
Let $K=M( b, (a_1, b_1), \cdots, (a_n, b_n) )$ be a Montesinos knot such that exactly one of $\{a_i\}$ is even. If 
\begin{align}\label{Mon eq}
g_4(K) + \frac{1}{2}\sigma(K) \geq 2, 
\end{align}
then $K$ is not smoothly concordant to any quasipositive knot. 
\end{thm}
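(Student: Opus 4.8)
The plan is to derive \cref{main Montesinos} by combining the slice-torus invariant $q_M$ constructed in this paper with the behaviour of $q_M$ under concordance, together with the adjunction equality of \cref{main symp sur} (or rather the version for quasipositive knots packaged via the symplectic filling $D^4 \# m\overline{\C P}^2$). First I would recall that $q_M$ is a slice-torus invariant, so in particular it is a concordance invariant: if $K$ is smoothly concordant to $K'$, then $q_M(K) = q_M(K')$. Hence it suffices to show that the quantity $g_4(K) + \tfrac12\sigma(K)$, or a closely related combination involving $q_M$ and $\sigma$, obstructs $K$ itself from being quasipositive, and then observe that this obstruction only depends on the concordance class. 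The key point is that for a quasipositive knot $K'$, the slice genus is computed by the Bennequin-type equality (Rudolph, Kronheimer--Mrowka), so $g_4(K') = g(S)$ for the quasipositive surface $S$, and the relevant slice-torus invariant then satisfies $q_M(K') = 2g_4(K') - (\text{correction})$; in fact for any slice-torus invariant $\nu$ one has $\nu(K') = 2g_4(K')$ for quasipositive $K'$ (since quasipositive knots attain the slice-Bennequin bound, which a slice-torus invariant sharpens).

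The heart of the argument is to quantify how far $q_M(K)$ can be from $2g_4(K)$ using the signature correction term built into $q_M$. The invariant $q_M$ is designed (as the abstract says, ``the Seiberg--Witten analog of $q_\tau$ with a signature correction term'') so that $q_M(K) = 2\tau_M(K) + \tfrac12\sigma(K)$ or some such normalization, where $\tau_M$ is the underlying equivariant Seiberg--Witten quantity. Then one uses the general inequality $|q_M(K) - q_M(K')| \le 2$ when $K$ and $K'$ are related appropriately — but for concordance it is an \emph{equality}. So if $K$ were concordant to a quasipositive $K'$, we would get $q_M(K) = q_M(K') = 2g_4(K')$, while the adjunction/slice-torus properties give $2g_4(K') = 2g_4(K) + (\sigma(K') - \sigma(K))$... no — signature is \emph{not} a concordance invariant increment here in the naive way, so instead I would argue directly: $q_M$ being slice-torus gives $q_M(K) \le 2g_4(K)$ always, and $q_M(K') = 2g_4(K')$ for quasipositive $K'$; combined with $g_4$ being a lower bound for the genus of \emph{any} (concordance-compatible) surface and the relation $g_4(K) = g_4(K')$ under concordance... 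Actually the cleanest route: concordance invariance of $q_M$ plus the quasipositive equality $q_M(K') = 2g_4(K')$ plus concordance invariance of $g_4$ (yes, $g_4$ is a concordance invariant) gives $q_M(K) = 2g_4(K)$. On the other hand, $q_M(K)$ contains the $\tfrac12\sigma(K)$ correction and a bound $q_M(K) \le 2g_4(K) - \sigma(K) - (\text{something})$ coming from the structure of the equivariant SWF spectrum, forcing $g_4(K) + \tfrac12\sigma(K) \le 1$, i.e. $< 2$, contradicting \eqref{Mon eq}.

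Concretely, the steps I would carry out are: (1) state precisely the definition/normalization of $q_M(K)$ and extract the two facts: it is a concordance invariant, and $q_M(K') = 2g_4(K')$ for quasipositive $K'$; (2) establish the inequality $q_M(K) \le 2g_4(K) - 2\big(g_4(K) + \tfrac12\sigma(K) - 1\big)$, equivalently $q_M(K) \le 2 - \sigma(K)$ — this is where the branched-cover equivariant Seiberg--Witten input and the Montesinos-specific computation of the $\Z_2$-equivariant SWF of $\Sigma_2(K)$ enter, since for a Montesinos knot with exactly one even $a_i$ the double branched cover is a Seifert fibered (or at least negative-definite plumbed) rational homology sphere whose Floer-theoretic invariants are computable; (3) combine (1) and (2): if $K \sim K'$ quasipositive, then $2g_4(K) = q_M(K) \le 2 - \sigma(K)$, hence $g_4(K) + \tfrac12\sigma(K) \le 1 < 2$, contradiction.

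The main obstacle I expect is step (2): controlling $q_M$ from above by a signature-only expression requires genuine input about the equivariant Seiberg--Witten Floer homotopy type of the double branched cover $\Sigma_2(K)$ for a Montesinos knot. One needs that $\Sigma_2(M(b,(a_1,b_1),\dots,(a_n,b_n)))$ with exactly one even $a_i$ is a Seifert fibered space (indeed, it is — double branched covers of Montesinos links are Seifert fibered), and then that its $\Z_2$-equivariant $SWF$ is a sum of (suspended) representation spheres with a computable correction term matching the Neumann--Siebenmann $\bar\mu$-invariant or the $d$-invariant, so that the contact class $\mathcal{C}_{\Z_2}$ lands in a controlled grading. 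This identification — effectively computing $q_M$ on double branched covers of Montesinos knots in terms of classical plumbing data and the signature — is the technical crux, and it is presumably where the bulk of the paper's machinery (the equivariant homotopical contact invariant of \eqref{homotopy contact eq} and the Seifert-fibered computations) is deployed.
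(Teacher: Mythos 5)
Your overall scheme (a concordance invariant that equals the slice genus on quasipositive knots, plus a Montesinos-specific upper bound of the form ``invariant $\le$ const $-\tfrac12\sigma(K)$'', yielding a contradiction with $g_4(K)+\tfrac12\sigma(K)\ge 2$) has the right shape, and your step (1) is fine: $q_M$ is a concordance invariant and, by \cref{main theo:general} applied to a quasipositive surface in $D^4$ (or by the general fact that slice-torus invariants attain the slice--Bennequin bound), $q_M(K')=g_4(K')$ for quasipositive $K'$, while $g_4$ is also a concordance invariant. But your step (2) is a genuine gap, and it is exactly the hard part. You assert a bound of the form $q_M(K)\le 1-\tfrac12\sigma(K)$ (in your doubled normalization, $\le 2-\sigma(K)$) for Montesinos knots with exactly one even $a_i$, ``coming from the structure of the equivariant SWF spectrum,'' and you propose to obtain it by computing the $\Z_2$-equivariant Seiberg--Witten Floer homotopy type of the Seifert fibered double branched cover in terms of plumbing data. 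You never carry this out, and nothing in the rest of your argument supplies it; without this input the contradiction in step (3) cannot be drawn. Note also that the needed statement is not a formal consequence of $q_M$ being slice-torus or of its signature correction term: for general knots $q_M(K)+\tfrac12\sigma(K)$ is unbounded (e.g.\ torus knots), so the Montesinos hypothesis must enter through a nontrivial computation or an external theorem.

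For comparison, the paper does not run the argument through $q_M$ at all and does not compute any Seifert-fibered Floer homotopy types here. It imports the Montesinos-specific bound from Baraglia--Hekmati \cite[Theorem 1.8]{BH24}, which says precisely that $\theta^{(2)}(K)+\tfrac12\sigma(K)\in\{0,1\}$ for Montesinos knots with exactly one even $a_i$, and it uses $\theta^{(2)}(K')=g_4(K')$ for quasipositive knots (\cref{quasi theta}, itself a consequence of the Bennequin-type inequality \cref{BP inequality} proved with the transverse-knot machinery), together with concordance invariance of $\theta^{(2)}$ and $g_4$. If you want to salvage your $q_M$-based route, you could combine \cref{qmtheta} ($q_M\le\theta^{(2)}$) with the same result of \cite{BH24} to get $q_M(K)\le 1-\tfrac12\sigma(K)$, as the paper in effect does in \cref{Rasmussen}(ii); but either way the Montesinos bound must be quoted or proved, and your proposal leaves that step unestablished.
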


\subsection{New concordance invariants and adjunction equalities}
Next, we shall discuss the slice torus invariant derived from Baraglia--Hekmati's equivariant Seiberg--Witten theory. 
We define a knot invariant $q_M  ( K ) \in \Z$
by 
\[
q_M(K) := \min \{ \gr^\Q ( x) |    x \in \wt{H}^*_{\Z_2} (SWF(-\Sigma_2(K)); \mathbb{F}_2): \text{homogeneous  and } Q^n x \neq 0  \text{ for all }n\geq 0\} - \frac{3}{4} \sigma(K), 
\]
where $SWF(\Sigma_2(K)))$ is $\Z_2$-equivariant Floer homotopy type introduced by Baraglia--Hekmati. 
The invariant  $q_M(K)$ is a Seiberg--Witten analog of Hendricks--Lipshitz--Sarkar's $q_\tau$-invariant introduced in \cite{HLS16} with a signature correction.
The following are the fundamental properties of $q_M$.
\begin{thm}\label{main slice-torus}
The invariant $2 q_M$ is a slice-torus invariant introduced in \cite{Li04,Le14}, i.e. 
   \begin{itemize}
   \item[(i)] $q_M$ is a smooth concordance invariant, 
       \item[(ii)] $q_M(K_1\# K_2  ) = q_M(K_1) + q_M (K_2)$,  \item[(iii)] $q_M(K) \leq    g_4(K) $
       \item[(iv)] $q_M(T(p,q)) = \frac{1}{2}(p-1) (q-1)$. 
   \end{itemize}
For any knot $K$ in $S^3$, we have 
\[
m(-K)-\frac{3}{4}\sigma(K) \leq q_M(K),
\]
where 
\begin{align}\label{mK}
m(K):=\min \{ gr^\Q (x) | 0\neq x\in  \widehat{HF}(\Sigma_2(K); \fraks_0), x \text{ is homogeneous}  \}. 
\end{align}
   Moreover, if the double-branched cover of $K$ is L-space, then 
   \[
   q_M(K) = -\frac{1}{2}\sigma(K).
   \]
   In particular, this holds for quasi-alternating knots. 
\end{thm}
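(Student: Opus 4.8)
The plan is to transport the four slice-torus properties of Hendricks--Lipshitz--Sarkar's $q_\tau$ into Baraglia--Hekmati's equivariant setting \cite{BH}, keeping the $-\frac{3}{4}\sigma(K)$ correction under control through the $G$-signature theorem for double branched covers of surfaces in $D^4$. I take as given the structural fact underlying the definition of $q_M$: the $\Z_2$-localization theorem ensures that the set of homogeneous $x\in\widetilde H^*_{\Z_2}(SWF(-\Sigma_2(K));\mathbb{F}_2)$ with $Q^nx\neq 0$ for all $n$ is nonempty and bounded below in $\gr^\Q$, so $q_M^{\mathrm{raw}}(K):=\min\gr^\Q(x)$ over this set is well defined and $q_M(K)=q_M^{\mathrm{raw}}(K)-\frac{3}{4}\sigma(K)$. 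I would then record the behaviour of $q_M^{\mathrm{raw}}$ under the three basic operations on these homotopy types: equivariant homology cobordisms, connected sums, and the relative Bauer--Furuta maps of equivariant spin cobordisms.

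For (i), a smooth concordance $C$ from $K_0$ to $K_1$ lifts to a $\Z_2$-equivariant spin homology cobordism $\Sigma_2(C)$ between $\Sigma_2(K_0)$ and $\Sigma_2(K_1)$. Baraglia--Hekmati's cobordism maps then give $\Z_2$-equivariant stable maps in both directions whose grading shifts are controlled by $b^+$ and $\sigma$ of $\Sigma_2(C)$; both vanish for a homology cobordism, so the maps are grading-preserving and interchange the $Q$-towers, forcing $q_M^{\mathrm{raw}}(K_0)=q_M^{\mathrm{raw}}(K_1)$, while $\sigma$ is itself a concordance invariant. For (ii), the covering involutions make the diffeomorphism $\Sigma_2(K_1\#K_2)\cong\Sigma_2(K_1)\#\Sigma_2(K_2)$ equivariant, the smash-product formula for Floer homotopy types holds equivariantly, and a K\"unneth computation of the $Q$-localized cohomology gives additivity of $q_M^{\mathrm{raw}}$; combined with additivity of $\sigma$ this yields (ii).

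Property (iii) is the heart of the argument. Given a connected genus-$g$ surface $F\subset D^4$ with $\partial F=K$, the double branched cover $W:=\Sigma_2(D^4,F)$ is a spin $4$-manifold carrying the covering involution, with $\partial W=\Sigma_2(K)$, $b_2(W)=2g$, and fixed locus a copy of $F$; since $H_2(D^4;\mathbb{Q})=0$, the positive part of the intersection form of $W$ lies entirely in the anti-invariant subspace, and the $G$-signature theorem (Viro, Kauffman--Taylor) gives $\sigma(W)=\sigma(K)$. Consequently the relative equivariant Bauer--Furuta map of the punctured cobordism $S^3\to\Sigma_2(K)$ is nontrivial --- even though $W$ need not be negative definite --- and takes the form $S^0\to\Sigma^{s}SWF(\Sigma_2(K))$, where $s$ is computed from the Seiberg--Witten index on $W$ and the fixed-surface contribution via an equivariant index theorem; the point is that this contribution turns the data $b_2(W)=2g$, $\sigma(W)=\sigma(K)$ into exactly the combination that, after the $-\frac{3}{4}\sigma$ normalization built into $q_M$, reads $q_M\le g$. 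Dualizing and carrying the $Q$-tower of $\widetilde H^*_{\Z_2}(SWF(S^3))=\mathbb{F}_2[Q]$ through this map yields a $Q$-nontorsion homogeneous class in $\widetilde H^*_{\Z_2}(SWF(-\Sigma_2(K)))$ of $\gr^\Q\le g+\frac{3}{4}\sigma(K)$, so $q_M(K)\le g$; minimizing over $F$ gives $q_M(K)\le g_4(K)$. The delicate steps, I expect, are (a) pinning down the normalization of $s$ so that the constant comes out as exactly $\frac{3}{4}\sigma(K)$, and (b) verifying that the relative map is nonzero on the whole $Q$-tower and not merely on a finite truncation --- an equivariant Fr\o yshov/mod-$2$ argument localized at the reducible on $W$.

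The remaining assertions fit around this. For $m(-K)-\frac{3}{4}\sigma(K)\le q_M(K)$, I would compare $\widetilde H^*_{\Z_2}$ with the non-equivariant cohomology via the Borel (Leray--Serre) spectral sequence $H^*(B\Z_2;\mathbb{F}_2)\otimes\widetilde H^*(SWF(-\Sigma_2(K)))\Rightarrow\widetilde H^*_{\Z_2}(SWF(-\Sigma_2(K)))$: a $Q$-tower class must be detected by a permanent cycle in the fiber row $\widetilde H^*(SWF(-\Sigma_2(K)))\cong\widehat{HF}(-\Sigma_2(K);\mathfrak{s}_0)$ (using the identification of Seiberg--Witten and Heegaard Floer homologies), so its $\gr^\Q$ is at least $m(-K)$. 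For (iv), (iii) gives $q_M(T(p,q))\le\frac{1}{2}(p-1)(q-1)=g_4(T(p,q))$; when $p=2$ the double branched cover $\Sigma_2(T(2,q))$ is a lens space and the $L$-space case below applies, while for general $T(p,q)$ the matching lower bound follows from $q_M(K)\ge m(-K)-\frac{3}{4}\sigma(K)$ together with the plumbing computation of $\widehat{HF}$ of the Seifert-fibered cover $\Sigma_2(T(p,q))$ and the known value of $\sigma(T(p,q))$, which force $m(-T(p,q))-\frac{3}{4}\sigma(T(p,q))=\frac{1}{2}(p-1)(q-1)$. Finally, if $\Sigma_2(K)$ is an $L$-space then $SWF(-\Sigma_2(K),\mathfrak{s}_0)$ is a $\Z_2$-equivariant representation sphere, so $q_M^{\mathrm{raw}}(K)$ equals an explicit equivariant index; evaluating this index with the $G$-signature theorem (to express its non-invariant part in terms of $\sigma(K)$) gives $q_M^{\mathrm{raw}}(K)=\frac{1}{4}\sigma(K)$, i.e.\ $q_M(K)=-\frac{1}{2}\sigma(K)$, and since quasi-alternating knots have $L$-space double branched covers this applies in particular to them. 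The central obstacle throughout is step (iii): making the equivariant relative Bauer--Furuta map of the branched cover precise enough to fix both the grading shift and its nonvanishing on the full $Q$-tower.
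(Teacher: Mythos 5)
Your outline gets the bookkeeping right (the $G$-signature computation for $\Sigma_2(D^4,F)$, the Borel/Thom--Gysin comparison giving $m(-K)-\tfrac34\sigma(K)\le q_M(K)$, and the numerics in the L-space case), but the step you yourself flag as delicate in (iii) is a genuine gap, and it is exactly where the paper's work lies. Nontriviality of the $\Z_2$-equivariant relative Bauer--Furuta map of $W=\Sigma_2(D^4,F)$ on the whole $Q$-tower does \emph{not} follow from $\sigma(W)=\sigma(K)$ or from any index count: since $b^+(W)=g+\tfrac12\sigma(K)$ may be positive, there is no a priori reducible-based nonvanishing, and your proposed ``equivariant Fr\o yshov/mod-$2$ argument localized at the reducible'' conflates the $S^1$-localization (whose fixed locus is the reducible) with the $Q$-localization used in $q_M$, which is localization at the fixed points of the \emph{covering involution}; these are different fixed-point sets and the reducible tells you nothing about $Q$-nontorsion classes. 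The paper establishes the needed nonvanishing by a completely different mechanism: it proves that the $Q$-localized cobordism maps depend only on the homotopy class of the surface rel boundary (via Kronheimer-style twist/finger moves and the computations $BF^*_{(S^2\times S^2,\iota)}=Q$, $BF^*_{(\overline{\C P}^2\#\overline{\C P}^2,\iota)}=\pm\id$, $BF^*_{T^2\subset S^4}=Q$, and $BF^*$ of the product cobordism $=\id$), then composes $S$ with $-S$ and compares with $K\times[0,1]\#_{2g}T^2$ to get $BF^{*,\mathrm{loc}}_{S\circ -S}=Q^{j+2g}\neq 0$; the same machinery (crossing-change cobordisms to the unknot) is what proves the rank-one theorem that makes ``the $Q$-tower'' well behaved in the first place, and it also underlies concordance invariance in (i), which you assert from grading-preserving maps alone without knowing they are nonzero after localization.

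Two further points where your route diverges from, or falls short of, the paper. For (ii), the $Q$-localized K\"unneth isomorphism does not by itself locate where the tower \emph{starts}; the paper gets one inequality from the natural map $\wt H^*_{\Z_2}(A\wedge B)\to \wt H^*_{\Z_2}(A)\otimes \wt H^*_{\Z_2}(B)$ plus localization, and the reverse inequality from the two $1$-handle cobordism maps together with the duality $q_M(-K)=-q_M(K)$, which you never prove or invoke. For (iv), the paper deliberately does \emph{not} go through $m(-T(p,q))$: it computes $q_M(T(p,q))$ via the $\Z_2$-equivariant contact/transverse-knot invariant, the invariant symplectic structure on branched covers of symplectic surfaces, and the gluing and nonvanishing theorems (the adjunction equality of \cref{main theo:general}); your claim that plumbing computations force $m(-T(p,q))-\tfrac34\sigma(T(p,q))=\tfrac12(p-1)(q-1)$ for all coprime $(p,q)$ is an unproven Heegaard Floer computation, not a formality. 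Finally, in the L-space case your assertion that $SWF(-\Sigma_2(K),\fraks_0)$ is a $\Z_2$-equivariant representation sphere is unjustified (the covering involution could act exotically); the paper only uses that the nonequivariant cohomology has rank one and runs the mod-$2$ Thom--Gysin sequence, which is the argument you should substitute there.
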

 Baraglia and Hekmati \cite{BH2} use spectral sequences and computations of Heegaard Floer homology to calculate their invariants $\theta^{(p)}(K)$ for torus knots and give an alternative proof of the Milnor conjecture. Instead, we utilize our transverse knot invariant, which is essentially the $\mathbb{Z}_2$-equivariant version of the contact invariant introduced in the authors' prior work \cite{IT20}, to determine $q_M$ for torus knots.  In particular, this gives an alternative proof of the Milnor conjecture by $\Z_2$-equivariant monopole Floer homology (rather than $ S^1 \times \Z_2$ equivariant Floer cohomology).
 This computation of $q_M$ for torus knots can be regarded as a special case of the result for symplectic surfaces \cref{main theo:general} that we will state later. The invariant $q_M(K)$ can be extended to oriented links $q_M(L)$ with non-zero determinants and $q_M(L)$ is invariant under oriented $\chi$-concordance introduced in \cite{DO12}. See \cref{qMforlink} for more details. 
 \par
 We also have the following relation with Baraglia's $\theta^{(2)}$-invariant.
     \begin{thm}\label{qmtheta}
         For a knot $K$ in $S^3$, we have 
         \[
         q_M(K) \leq \theta^{(2)} (K). 
         \]
         Therefore, $ q_M(K)$ gives a lower bound of the smooth H-slice genus in a negative definite 4-manifold with $S^3$ boundary and with $H_1(X;\Z)=0$.
     \end{thm}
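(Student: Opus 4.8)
The plan is to establish the inequality $q_M(K) \leq \theta^{(2)}(K)$ by comparing the two invariants at the level of the algebraic structures they extract from Baraglia--Hekmati's equivariant Floer homotopy type $SWF(\Sigma_2(K), \s_0)$. Recall that $\theta^{(2)}(K)$ is defined (in \cite{BH2}) as the minimal $\Q$-grading of a homogeneous element $x$ in the $S^1 \times \Z_2$-equivariant Floer cohomology $\wt{H}^*_{S^1 \times \Z_2}(SWF(-\Sigma_2(K)); \mathbb{F}_2)$ on which all $U$-powers act nontrivially, with its own signature correction term, whereas $q_M(K)$ uses the $\Z_2$-equivariant (rather than $S^1 \times \Z_2$-equivariant) cohomology together with the condition that all $Q$-powers act nontrivially, again shifted by $-\tfrac{3}{4}\sigma(K)$. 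Since both correction terms are the same multiple of $\sigma(K)$ (one checks the normalizations agree), the inequality reduces to a purely homotopy-theoretic statement: a homogeneous class surviving all $U$-powers in the $S^1 \times \Z_2$-theory produces, via an appropriate forgetful/restriction map, a homogeneous class surviving all $Q$-powers in the $\Z_2$-theory, of no larger grading.

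First I would set up the forgetful map. There is a natural map of equivariant cohomology theories induced by restricting the $S^1 \times \Z_2$-action to the $\Z_2$-factor, fitting into the Borel construction comparison
\[
\wt{H}^*_{S^1 \times \Z_2}(SWF(-\Sigma_2(K))) \longrightarrow \wt{H}^*_{\Z_2}(SWF(-\Sigma_2(K))),
\]
compatible with the module structures: the $U$-action on the source maps to multiplication by an element whose behavior under $Q$-powers must be controlled, and one uses that over $\mathbb{F}_2$ the relevant characteristic classes of $S^1$ and $\Z_2$ interact through the standard relation in $H^*(BS^1 \times B\Z_2; \mathbb{F}_2) = \mathbb{F}_2[U, Q]$ (with $\deg U = 2$, $\deg Q = 1$). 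The key point is that the localization/infinite-divisibility condition is preserved: if $x$ has $U^n x \neq 0$ for all $n$, then its image $\bar{x}$ satisfies $Q^n \bar{x} \neq 0$ for all $n$, because a class that is $U$-nonvanishing lies in the image of the localized module, and the Smith-type inequality relating $S^1 \times \Z_2$-localization to $\Z_2$-localization forces $\bar{x}$ into the corresponding $Q$-tower. The grading is preserved under the forgetful map (both theories grade by the same $\Q$-valued grading coming from the Conley index / formal desuspension), so the minimum over $Q$-nonvanishing classes is $\leq$ the grading of $\bar{x}$, hence $\leq \theta^{(2)}(K) + \tfrac{3}{4}\sigma(K) - (\text{correction})$, giving the claimed bound after reinstating the signature shifts.

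The main obstacle I anticipate is verifying that the forgetful map genuinely sends $U$-towers to $Q$-towers rather than potentially killing the relevant class — i.e., controlling the interaction of the two module structures under the Borel comparison map. This is where one must use the specific structure of $SWF$ as a finite $S^1 \times \Z_2$-CW spectrum (up to suspension) and the fact, established in \cite{BH}, that its $S^1$-fixed points form a sphere; this localizes both computations and pins down the image of $U$ in terms of $Q^2$ on the fixed-point contribution. Once that compatibility is in hand, the grading bookkeeping is routine. For the final sentence of the theorem, I would then invoke Theorem \ref{qmtheta}'s own conclusion together with Baraglia's result that $\theta^{(2)}(K)$ bounds the H-slice genus in a negative-definite $4$-manifold $X$ with $S^3$-boundary and $H_1(X;\Z) = 0$: composing the two inequalities, $q_M(K) \leq \theta^{(2)}(K) \leq g_{H}(X, K)$, yields the stated lower bound for $q_M$ directly, with no further work.
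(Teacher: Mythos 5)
Your proposal starts from a mischaracterization of $\theta^{(2)}$, and this undermines the whole reduction. The invariant is not defined as the minimal $\Q$-grading of a homogeneous class in $\wt{H}^*_{S^1\times\Z_2}(SWF(-\Sigma_2(K)))$ surviving all $U$-powers; it is $\theta^{(2)}(K)=\max\{0,\, j^{(2)}(-K)-\tfrac12\sigma(K)\}$, where $j^{(2)}$ is the stabilization index of the Baraglia--Hekmati sequence $\delta_j$, which is defined through the condition that the restriction of a class $x$ to the $S^1$-fixed point set satisfies $\iota^* x\equiv Q^{j}U^k\tau \bmod Q^{j+1}$. So there is no class handed to you that is known to survive all $U$-powers, and your claim that ``the normalizations agree'' is also false: $q_M$ carries a $-\tfrac34\sigma(K)$ shift while $\theta^{(2)}$ carries $-\tfrac12\sigma(K)$; the discrepancy of $\tfrac14\sigma(K)$ is absorbed only because the class realizing $j^{(2)}(K)$ has its grading pinned to $j^{(2)}(K)-\tfrac14\sigma(K)$, a point your argument never touches.

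The second, more serious gap is the asserted ``key point'' that the forgetful map preserves towers: that if $U^n x\neq 0$ for all $n$ then the image $\bar x$ under $\wt{H}^*_{S^1\times\Z_2}\to\wt{H}^*_{\Z_2}$ satisfies $Q^n\bar x\neq 0$ for all $n$. This is exactly the delicate step, and the appeal to a ``Smith-type inequality'' does not establish it; indeed in the Thom--Gysin sequence
\[
\cdots \to \wt{H}^*_{S^1\times\Z_2}\xrightarrow{\,U\,}\wt{H}^*_{S^1\times\Z_2}\xrightarrow{\,\pi^*\,}\wt{H}^*_{\Z_2}\to\cdots
\]
the composite $\pi^*\circ U$ vanishes, so $U$-divisibility works \emph{against} survival under $\pi^*$, and a priori $\pi^*x$ could be $Q$-torsion or zero. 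The paper's proof succeeds because it does not use a generic $U$-surviving class: it takes the specific $x$ with $\iota^* x\equiv Q^{j^{(2)}(K)}U^k\tau \bmod Q^{j^{(2)}(K)+1}$ realizing $j^{(2)}(K)$, and shows $Q^n\pi^*(x)\neq 0$ by contradiction -- if $Q^n\pi^*(x)=0$, exactness produces $z$ with $Uz=Q^n x$, whence $\iota^* z\equiv Q^{j^{(2)}(K)+n}U^{k-1}\tau$ modulo $Q^{j^{(2)}(K)+n+1}$ in a grading two lower, forcing $\delta_{j^{(2)}(K)+n}<\delta_\infty$ and contradicting the monotone stabilization of the $\delta_j$. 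Without this (or an equivalent mechanism) your argument does not go through. Your handling of the final sentence, composing with Baraglia's bound $\theta^{(2)}(K)\leq g_H$ in negative definite $4$-manifolds, is fine.
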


     \begin{rem}
         It is pointed out in \cite{BH} that $\theta^{(p)} (K)$ has analogous properties with the nu-invariant in Heegaard knot Floer theory \cite{OS10I}. In this viewpoint, $q_M(K)$ is formally similar to Ozv\'ath--Szabo's $\tau$-invariant $\tau(K)$ \cite{OS03}. Similar variants were found in instanton theory \cite{BS21, DISST22}. 
   Moreover, many constructions of slice-torus invariants have been known so far from different theories including Heegaard Floer theory, Khovanov homology theory, and instanton Floer theory.  See \cite{OS03, Wu09, BS21, DISST22, SS22}. 
It is interesting to ask which slice-torus invariant coincides with $q_M(K)$.
\end{rem}

Our invariant $q_M(K)$ can be easily computed if there is a symplectic surface bounded by $K$. 
\begin{thm}\label{main theo:general} 
 Let $(X, \om)$ be a weak symplectic filling of $(S^3, \xi_{\operatorname{std}})$. Suppose a transverse knot $K$ in $(S^3, \xi_{\operatorname{std}})$ bounds a properly embedded connected symplectic surface $S$ in $X$ divisible by $2$. 
 Then, one has 
\begin{align}\label{main eq}
q_M ( K) =   g(S)   +\frac{1}{2} \langle c_1 (\om), [S] \rangle - \frac{1}{2}[S]^2
  \end{align}
  and 
  \begin{align}\label{main eq1}
q_M ( K) = -d_3 (\Sigma_2(K), \wt{\xi}) - \frac{1}{2} + \frac{3}{4} \sigma (K)=\frac{1}{2}\operatorname{sl}(K)+\frac{1}{2},  
  \end{align}
  For the last equality, see \cite{Pla06, It17}.
 \end{thm}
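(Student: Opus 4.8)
The plan is to descend to the $\Z_2$-branched cover and bracket $q_M(K)$ between an upper bound coming from the equivariant contact invariant of a symplectic filling of $\Sigma_2(K)$ and a lower bound coming from \cref{main slice-torus}. Since $[S]\in H_2(X,\partial;\Z)$ is divisible by $2$, write $[S]=2\beta$ and let $\pi\colon\widehat X\to X$ be the double branched cover of $X$ along $S$, with deck involution $\tau$ and boundary $\partial\widehat X=\Sigma_2(K)$. Invoking the construction of invariant symplectic structures on branched covers of symplectic fillings developed elsewhere in the paper, I would equip $\widehat X$ with a $\tau$-invariant weak symplectic form $\widehat\omega$ making $(\widehat X,\widehat\omega)$ a weak symplectic filling of $(\Sigma_2(K),\widetilde\xi)$, and record the branched-cover identities $\chi(\widehat X)=2\chi(X)-\chi(S)$, $\sigma(\widehat X)=2\sigma(X)-\tfrac12[S]^2$, $[\pi^{-1}(S)]^2=\tfrac12[S]^2$, and $c_1(\widehat\omega)=\pi^*\!\big(c_1(\omega)-\mathrm{PD}(\beta)\big)$ for the grading bookkeeping below.

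Next I would apply the $\Z_2$-equivariant analogue of the non-vanishing theorem for the Seiberg--Witten contact invariant of a (weak) symplectic filling (as in \cite{MR06, IT20}) to $(\widehat X,\widehat\omega)$: the equivariant relative Seiberg--Witten invariant of $\widehat X$ for the canonical $\spinc$ structure is non-trivial, hence $\mathcal C_{\Z_2}(\Sigma_2(K),\widetilde\xi)$ of \eqref{homotopy contact eq} is cohomologically non-trivial, and dualizing produces a homogeneous class $x_{\widetilde\xi}\in\widetilde H^*_{\Z_2}(SWF(-\Sigma_2(K));\mathbb F_2)$. Because $\mathcal C_{\Z_2}$ is $\mathbb F_2[Q]$-linear and non-zero, $Q^nx_{\widetilde\xi}\ne0$ for all $n\ge0$, so $x_{\widetilde\xi}$ is admissible in the minimization defining $q_M$. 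Computing its $\Q$-grading from the dimension formula for the canonical $\spinc$ structure on $\widehat X$, the identities above, and Baraglia--Hekmati's $\Q$-grading normalization (which carries its own signature-type shift), I expect $\gr^\Q(x_{\widetilde\xi})=g(S)+\tfrac12\langle c_1(\omega),[S]\rangle-\tfrac12[S]^2+\tfrac34\sigma(K)$, whence $q_M(K)\le\gr^\Q(x_{\widetilde\xi})-\tfrac34\sigma(K)=g(S)+\tfrac12\langle c_1(\omega),[S]\rangle-\tfrac12[S]^2$.

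For the reverse inequality I would combine the estimate $m(-K)-\tfrac34\sigma(K)\le q_M(K)$ of \cref{main slice-torus} with the fact that the Ozsv\'ath--Szab\'o contact invariant $c(\Sigma_2(K),\widetilde\xi)\in\widehat{HF}(-\Sigma_2(K);\s_0)$ is non-zero and, lying in the image of the cobordism map of the filling $\widehat X$, realizes the minimal $\Q$-grading of $\widehat{HF}(-\Sigma_2(K);\s_0)$; by the same grading computation, now using the Plamenevskaya--Ito formula for $d_3(\Sigma_2(K),\widetilde\xi)$, that value is again $g(S)+\tfrac12\langle c_1(\omega),[S]\rangle-\tfrac12[S]^2+\tfrac34\sigma(K)$. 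Hence $q_M(K)\ge m(-K)-\tfrac34\sigma(K)\ge g(S)+\tfrac12\langle c_1(\omega),[S]\rangle-\tfrac12[S]^2$, which with the previous paragraph proves \eqref{main eq}; reading the grading computation backwards gives $q_M(K)=-d_3(\Sigma_2(K),\widetilde\xi)-\tfrac12+\tfrac34\sigma(K)$, and the remaining identity with $\tfrac12\operatorname{sl}(K)+\tfrac12$ is \cite{Pla06,It17}.

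The hard part is the non-vanishing step: showing that the equivariant contact invariant of $\widehat X$ is cohomologically non-trivial and that $x_{\widetilde\xi}$ sits at precisely the stated $\Q$-grading. This requires a genuinely $\Z_2$-equivariant gluing/excision argument identifying $\mathcal C_{\Z_2}(\Sigma_2(K),\widetilde\xi)$ with the relative equivariant Seiberg--Witten invariant of $\widehat X$, a $\tau$-equivariant refinement of Eliashberg's deformation of a weak filling near the boundary so that $\widehat\omega$ satisfies the contact-type boundary condition, and a careful reconciliation of Baraglia--Hekmati's metric-dependent grading shift with the Gompf invariant $d_3(\widetilde\xi)$. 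A secondary subtlety is the minimality of $c(\Sigma_2(K),\widetilde\xi)$ in $\widehat{HF}(-\Sigma_2(K);\s_0)$ used in the lower bound, which I would extract from the Heegaard Floer structure of double branched covers bounding symplectic fillings.
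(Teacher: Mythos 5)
Your bracketing strategy has a genuine gap on the lower-bound side. You claim that, because $\Sigma_2(K)$ bounds the symplectic filling $\widehat X$, the Ozsv\'ath--Szab\'o contact invariant $c(\Sigma_2(K),\wt\xi)$ realizes the \emph{minimal} $\Q$-grading of $\widehat{HF}(-\Sigma_2(K);\s_0)$, and you then chain this with $m(-K)-\tfrac34\sigma(K)\le q_M(K)$ from \cref{main slice-torus}. But non-vanishing of $c(\Sigma_2(K),\wt\xi)$ only gives $m(-K)\le \gr^\Q(c(\Sigma_2(K),\wt\xi))$, not the reverse; the minimality you need is exactly the extra hypothesis of \cref{Q-minimize statement}, which the paper imposes separately and which fails for most torus knots (all of which are covered by \cref{main theo:general}). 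So the inequality $q_M(K)\ge m(-K)-\tfrac34\sigma(K)\ge g(S)+\tfrac12\langle c_1(\om),[S]\rangle-\tfrac12[S]^2$ is unjustified, and the whole lower bound collapses. There is also a bookkeeping slip on the other side: dualizing $\mathcal C_{\Z_2}(\Sigma_2(K),\wt\xi)$ produces a class attached to $SWF(\Sigma_2(K))$, not $SWF(-\Sigma_2(K))$, so it controls $q^\dagger_M(K)$ and hence (via $q_M(K)=-q^\dagger_M(K)-\tfrac34\sigma(K)$) would give a \emph{lower} bound for $q_M(K)$, not the upper bound you assign to it; the class relevant to $q^\dagger_M(-K)$ is the $\Z_2$-equivariant Bauer--Furuta class of the branched cover of the punctured surface cobordism, and establishing that its $Q$-powers do not vanish already requires the gluing and non-vanishing machinery you defer to "the hard part."

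The paper avoids Heegaard Floer input entirely and gets both inequalities from that single Bauer--Furuta class: puncturing $X$ turns $S$ into a cobordism $(S^3,-K)\to(S^3,U)$, and the gluing theorem \cref{gluing} together with the non-vanishing \cref{non-van for eq KM} (applied to the $\Z_2$-invariant symplectic structure on $\Sigma_2(S)$ from \cref{double branch symp}) shows that composing $BF^*_S$ with $\mathcal C^*_{K}$ is $\pm\operatorname{id}$ on $\mathbb F_2[Q]$. Hence $BF^*_S(1)$ is neither $Q$-torsion nor divisible by $Q$, and the rank-one theorem \cref{rank1 theorem} then pins its $\Q$-grading to be exactly $q^\dagger_M$; the index/grading computation from the proof of \cref{main slice-torus gen}, together with Ito's formula \eqref{Itoh formula}, yields both \eqref{main eq} and \eqref{main eq1}. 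If you want to salvage a two-sided bracketing without the rank-one theorem, the correct second ingredient is the dualized contact class $\widecheck{\mathcal C}^*(1)$ (whose $Q$-nontriviality again comes from pairing with the filling), not the minimality of the Heegaard Floer contact element.
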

Several techniques to calculate slice-torus invariants have been developed (See \cite{Li04,Le14, FLL22}). Since our concordance invariant $q_M$ is a slice-torus invariant, one can make use of such a result. From such a viewpoint, Feller--Lewark--Lobb \cite{FLL22} recently introduced a notion of {\it squeezed knots}, which are knots in $S^3$ arise as cross-sections of a genus minimizing knot cobordism between torus knots. This class includes all quasipositive knots and alternating knots and they proved any slice-torus invariant coincides with the Rasmussen invariant for a squeezed knot.  Combined this with  \cref{main slice-torus}, one can give the following computations. 

\begin{thm}\label{Rasmussen}
    Let $K$ be a squeezed knot. The following hold: 
    \begin{itemize}
        \item[(i)] If $\Sigma_2(K)$ is an L-space, then we have 
        \[
          f(K) =  -\sigma(K), 
        \]
        for any slice-torus invariant $f:  \mathcal{C} \to \Z$. (Here, our convention of slice-torus invariants is $f(T(p,q))=(p-1)(q-1)=2g_4(T(p,q))$ for coprime positive $p, q$.)
        \item[(ii)] If $K$ is smoothly concordant to a Montesinos knot $M( b, (a_1, b_1), \cdots, (a_n, b_n) )$ such that exactly one of $\{a_i\}$ is even, then we have
        \[
       | f(K) + \sigma(K)|\leq 2
        \]
        for any slice-torus invariant $f$.

        \item[(iii)]  For any slice-torus invariant $f$, we have 
        \[
        2m(-K)-\frac{3}{2}\sigma(K) \leq f(K),
        \]
        where $m(K)$ is given in \eqref{mK}.
    \end{itemize}
\end{thm}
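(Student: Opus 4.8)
The plan is to reduce all three assertions to properties of $q_M$ that are already in hand. Since $K$ is squeezed, Feller--Lewark--Lobb \cite{FLL22} tell us that every slice-torus invariant takes the same value on $K$ as every other one (equivalently, as the Rasmussen invariant). By \cref{main slice-torus} the invariant $2q_M$ is a slice-torus invariant, with the same normalization $2q_M(T(p,q))=(p-1)(q-1)$ used in the statement, so for every slice-torus invariant $f$ we get
\[
f(K)=2\,q_M(K),
\]
and this is the only place where squeezedness is used. Part (i) is then immediate: if $\Sigma_2(K)$ is an L-space, the last clause of \cref{main slice-torus} gives $q_M(K)=-\tfrac12\sigma(K)$, hence $f(K)=-\sigma(K)$. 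Part (iii) follows equally directly from the bound $m(-K)-\tfrac34\sigma(K)\le q_M(K)$ in \cref{main slice-torus}, which yields $f(K)=2q_M(K)\ge 2m(-K)-\tfrac32\sigma(K)$.

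\textbf{Part (ii).} Here I would first use concordance invariance of $q_M$ (\cref{main slice-torus}(i)) and of $\sigma$: if $K$ is concordant to a Montesinos knot $M=M(b,(a_1,b_1),\dots,(a_n,b_n))$ with exactly one $a_i$ even, then $q_M(K)=q_M(M)$ and $\sigma(K)=\sigma(M)$, so $f(K)+\sigma(K)=2q_M(M)+\sigma(M)$ and the claim becomes the purely Montesinos estimate $|q_M(M)+\tfrac12\sigma(M)|\le 1$. For the upper half I would use \cref{qmtheta} to reduce to $\theta^{(2)}(M)\le -\tfrac12\sigma(M)+1$ and bound Baraglia--Hekmati's $\theta^{(2)}$ through the double branched cover: $\Sigma_2(M)$ is the Seifert fibered rational homology sphere read off from $(b,(a_i,b_i))$ \cite{BH24}, and with exactly one $a_i$ even this space (or its reverse) bounds the canonical negative-definite star-shaped plumbing $W$, whose intersection form realizes the Goeritz form of $M$ and hence computes $\sigma(M)$; because $W$ is sharp, $d(\Sigma_2(M),\fraks_0)$ is computed from $W$ and, as for alternating knots, differs from $-\tfrac14\sigma(M)$ only through the contribution of the single bad vertex, which then controls $\theta^{(2)}$. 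For the lower half I would instead use $q_M(M)\ge m(-M)-\tfrac34\sigma(M)$ from \cref{main slice-torus} and compute $m(-M)=\min\{\gr^{\Q}(x)\mid 0\ne x\in\widehat{HF}(-\Sigma_2(M);\fraks_0)\}$ from the plumbing description of $-\Sigma_2(M)$ (equivalently, the lattice cohomology of the star-shaped graph), using that ``one even $a_i$'' makes the graph almost rational with a single bad vertex, so that the reduced homology at $\fraks_0$ lies within grading distance $1$ of the tower and $m(-M)\ge -\tfrac14\sigma(M)-1$. Combining the two halves gives $|q_M(M)+\tfrac12\sigma(M)|\le 1$, hence (ii). As a sanity check, this same estimate recovers \cref{main Montesinos}: a quasipositive knot $K'$ satisfies $q_M(K')=g_4(K')$ by \cref{main theo:general} applied to its quasipositive surface pushed into $D^4$ (so $[S]=0$), whence if $M$ were concordant to such a $K'$ we would get $g_4(M)+\tfrac12\sigma(M)=q_M(M)+\tfrac12\sigma(M)\le 1<2$.

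\textbf{Main obstacle.} The common step and parts (i), (iii) are routine bookkeeping with \cref{main slice-torus}. The real work is the Montesinos estimate in (ii): one must locate $q_M(M)$ to within $1$ of $-\tfrac12\sigma(M)$ \emph{uniformly in the number of exceptional fibers}. The obstacle is to show that the reduced Heegaard Floer homology of $\pm\Sigma_2(M)$ at $\fraks_0$ — equivalently, the failure of the associated star-shaped plumbing to be a sharp L-space filling — contributes an error of size exactly $\le 1$ rather than merely a bounded one; this is precisely where the hypothesis that a single $a_i$ is even must be exploited, so that the plumbing graph is almost rational with exactly one bad vertex and the Floer computation stays rigid.
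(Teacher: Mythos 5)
Your common step and parts (i) and (iii) coincide with the paper's argument: squeezedness plus \cite{FLL22} gives $f(K)=2q_M(K)$, and then (i) follows from the L-space computation in \cref{main slice-torus} while (iii) follows from the bound $m(-K)-\tfrac{3}{4}\sigma(K)\le q_M(K)$ stated there. Nothing to add on those points.

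Part (ii) is where you depart from the paper, and your route has a genuine gap. The paper does not estimate anything on the branched cover directly: it quotes \cite[Theorem 1.8]{BH24}, which asserts $\theta^{(2)}(M)+\tfrac{1}{2}\sigma(M)\in\{0,1\}$ for Montesinos knots with exactly one even $a_i$, applies this to both $M$ and its mirror $-M$ (also Montesinos of the same type), and combines $q_M\le\theta^{(2)}$ (\cref{qmtheta}) with the duality $q_M(-M)=-q_M(M)$ (\cref{duality}) to obtain the two-sided bound $|q_M(M)+\tfrac{1}{2}\sigma(M)|\le 1$, which then transfers to $K$ by concordance invariance. Your upper half instead tries to re-derive this input from a sharp negative-definite plumbing bounding $\Sigma_2(M)$; but $\theta^{(2)}$ is defined through $j^{(2)}$, the stabilization index of the equivariant sequence $\delta_{\Z_2,j}$ inside $\wt{H}^*_{S^1\times\Z_2}$, and this is not determined by $d(\Sigma_2(M),\fraks_0)$ or by sharpness of a filling. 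You give no mechanism converting the plumbing/$d$-invariant computation into a bound on $j^{(2)}$ --- that conversion is precisely the content of the cited Baraglia--Hekmati theorem, so your sketch assumes what must be proved. Your lower half has a second problem: the claimed estimate $m(-M)\ge-\tfrac{1}{4}\sigma(M)-1$ is both unproved (the assertion that the reduced Floer homology of these Seifert fibered spaces stays within grading distance $1$ of the tower is exactly the computation that is missing) and numerically insufficient: feeding it into $q_M(M)\ge m(-M)-\tfrac{3}{4}\sigma(M)$ yields only $q_M(M)\ge-\sigma(M)-1$, which is weaker than the required $q_M(M)\ge-\tfrac{1}{2}\sigma(M)-1$ whenever $\sigma(M)>0$; presumably you intended $m(-M)\ge\tfrac{1}{4}\sigma(M)-1$, but that is also unestablished. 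The paper's mirror trick avoids any such Heegaard Floer computation altogether. So (i) and (iii) stand, but (ii) as written is incomplete unless you simply invoke \cite[Theorem 1.8]{BH24} as the paper does.
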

Since the instanton $\wt{s}$-invariant introduced in \cite{DISST22} is a slice-torus invariant and useful to study the homology cobordism group, we have the following application. 
\begin{cor}\label{squeezed Montesinos}
    Let $K$ be a squeezed knot satisfying one of the following conditions: 
    \begin{itemize}
        \item[(i)] the double branched covering space $\Sigma_2(K)$ is a L-space and $\sigma(K) \leq -2$, 
        \item[(ii)] $K$ is Montesinos knot $M( b, (a_1, b_1), \cdots, (a_n, b_n) )$ such that exactly one of $\{a_i\}$ is even and $\sigma(K) \leq -4$, 
        \item[(iii)] $K$ satisfies 
        \[
        0 <2m(-K)-\frac{3}{2}\sigma(K).  
        \]
    \end{itemize}
    Then, $\{S^3_{1/n}(K)\}_{n \in\Z_{>0}}$ is linearly independent in the homology cobordism group of homology 3-spheres. 
\end{cor}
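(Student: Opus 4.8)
The plan is to combine \cref{Rasmussen} with the instanton-theoretic homology cobordism machinery of \cite{DISST22}. The key point is that $\wt{s}$ is a slice-torus invariant in the normalization of \cref{Rasmussen} (so $\wt{s}(T(p,q)) = (p-1)(q-1) = 2g_4(T(p,q))$ and, in particular, $\sigma(T(2,3))=-2$); thus under each of the hypotheses (i)--(iii) one first pins down $\wt{s}(K)$ well enough to see that $\wt{s}(K)>0$, and then invokes the fact that positivity of the instanton invariant propagates through $\tfrac{1}{n}$-surgery to a $\Z$-linear independence statement in the homology cobordism group $\Theta^3_{\Z}$.

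First I would run the three cases through \cref{Rasmussen} with $f=\wt{s}$. In case (i) the double branched cover $\Sigma_2(K)$ is an L-space, so \cref{Rasmussen}(i) gives $\wt{s}(K) = -\sigma(K) \geq 2$ since $\sigma(K)\leq -2$. In case (ii) the squeezed knot $K$ is itself a Montesinos knot of the required form, hence (trivially) smoothly concordant to one, so \cref{Rasmussen}(ii) gives $|\wt{s}(K)+\sigma(K)| \leq 2$, and with $\sigma(K)\leq -4$ this forces $\wt{s}(K) \geq -\sigma(K)-2 \geq 2$. In case (iii), \cref{Rasmussen}(iii) gives $\wt{s}(K) \geq 2m(-K) - \tfrac{3}{2}\sigma(K) > 0$, where $m(-K)$ is defined in \eqref{mK}. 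So in all three cases $\wt{s}(K)>0$ (indeed $\wt{s}(K)\geq 2$ in the first two). Next I would invoke the homology-cobordism consequence of $\wt{s}(K)>0$ recorded in \cite{DISST22}: the instanton-theoretic invariants of homology spheres ($\Gamma$-type, resp.\ filtered instanton $r_s$-type) evaluated on the Dehn surgeries $S^3_{1/n}(K)$ form a family whose growth in $n$ is controlled from below by $\wt{s}(K)$ in a way that no nontrivial integral combination $\sum_i a_i\,[S^3_{1/n_i}(K)]$ can vanish in $\Theta^3_{\Z}$; hence $\{S^3_{1/n}(K)\}_{n\in\Z_{>0}}$ is linearly independent. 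If the orientation/surgery convention of \cite{DISST22} is opposite to the one implicit here, one replaces $K$ by its mirror $\overline{K}$ and uses $\wt{s}(\overline{K}) = -\wt{s}(K)$, arguing with $\{S^3_{-1/n}(K)\}$ instead.

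I expect the second step to be the only genuine obstacle: it is where one must quote the precise linear-independence statement from \cite{DISST22} and verify that its sign and surgery conventions are compatible with those in force here. The first step is pure bookkeeping on top of \cref{Rasmussen}, which in turn rests (via \cite{FLL22}) on $\wt{s}$ agreeing with the Rasmussen invariant for squeezed knots together with parts (i)--(iii) of \cref{main slice-torus}.
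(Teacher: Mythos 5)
Your proposal is correct and follows essentially the same route as the paper: use the squeezed hypothesis (via \cref{Rasmussen}, resting on \cite{FLL22} and \cref{main slice-torus}) to force $\wt{s}(K)>0$ in each of cases (i)--(iii), and then feed this positivity into the instanton surgery machinery. The paper makes precise the second step you flagged as a black box: it cites \cite[Theorem 1.5]{DISST22} to conclude $h(S^3_1(K))<0$ for the instanton Fr{\o}yshov invariant, and then \cite[Theorem 1.8]{NST19} to deduce the linear independence of $\{S^3_{1/n}(K)\}_{n\in\Z_{>0}}$ in the homology cobordism group.
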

Note that for alternating knots and quasipositive knots, there are similar types of results \cite{BS22, DISST22}. \cref{squeezed Montesinos} extends it to a bigger class of knots. 
For example, a squeezed non-alternating, non-quasipositive and Montesinos knot  $9_{43}=M(b=0, (5, 3), (3, 1), (2, -1))$ with $\sigma=-4$ satisfy the assumption of \cref{squeezed Montesinos}. See \cite{Sa22} for a survey of the homology cobordism group. 

\subsection{Several theoretical results}
In addition to developing the study of symplectic surfaces using equivariant Seiberg--Witten theory, we further develop equivariant Seiberg--Witten Floer homology in several directions. 
One representative result is described as follows. 
\begin{thm}\label{rank-1}
For any knot $K$ in $S^3$, we have 
    \[
\operatorname{rank}_{H^*(B\Z_2; \mathbb{F}_2) } \wt{H}^*_{\Z_2}(SWF(\Sigma_2(K)); \mathbb{F}_2) =1.
    \]
\end{thm}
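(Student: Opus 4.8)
The plan is to compute the $S^1 \times \Z_2$-equivariant cohomology localized over the classifying space $B\Z_2$ and show it is a free rank-one module over $H^*(B\Z_2;\mathbb{F}_2) = \mathbb{F}_2[Q]$ (or rather $\mathbb{F}_2[\theta]$ in the appropriate degree). First I would recall the structure of $SWF(\Sigma_2(K),\mathfrak{s}_0)$ as an object in the $S^1\times\Z_2$-equivariant stable homotopy category: it is a formal desuspension of a genuine $S^1\times\Z_2$-CW spectrum, and since $\Sigma_2(K)$ is a rational homology sphere with $b_1=0$, the underlying Seiberg--Witten flow has a single reducible critical point whose stabilizer is $S^1$ and finitely many irreducibles. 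The key structural input is the localization theorem: inverting the Euler class (equivalently, passing to the $\Z_2$-Tate or Borel-localized theory), the $\Z_2$-fixed-point set of $SWF(\Sigma_2(K))$ is determined by the $\Z_2$-fixed Seiberg--Witten flow on $\Sigma_2(K)$, and since the $\Z_2$-action is the covering involution with quotient $S^3$, the fixed-point data reduces to the Seiberg--Witten flow on $S^3$ with its standard (trivial) $\spinc$ structure. That flow has Floer spectrum equal to $S^0$ (a single reducible, no irreducibles), so the localized/fixed-point theory is rank one.

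The key steps, in order, would be: (1) Set up the $\Z_2$-equivariant localization: for an $S^1\times\Z_2$-spectrum $\mathcal{X}$ of "Seiberg--Witten type," one has $\wt{H}^*_{S^1\times\Z_2}(\mathcal{X};\mathbb{F}_2)\otimes_{H^*(B\Z_2)}\mathbb{F}_2(\theta) \cong \wt{H}^*_{S^1}(\mathcal{X}^{\Z_2};\mathbb{F}_2)\otimes\mathbb{F}_2(\theta)$ via the Smith/localization isomorphism, up to a grading shift recorded by Baraglia--Hekmati. (2) Identify $\mathcal{X}^{\Z_2}$ where $\mathcal{X} = SWF(\Sigma_2(K),\mathfrak{s}_0)$: the $\Z_2$-fixed points of the approximated Seiberg--Witten flow correspond to $\Z_2$-invariant configurations, which via the covering $\Sigma_2(K)\to S^3$ pull back from configurations on $S^3$; since $(S^3,\mathfrak{s})$ has trivial Seiberg--Witten Floer homotopy type $S^0$ (in the $S^1$-equivariant sense, a single reducible with no irreducible trajectories), we get $\mathcal{X}^{\Z_2}\simeq S^0$ up to suspension. (3) Conclude that after inverting $\theta$, $\wt{H}^*_{S^1\times\Z_2}(\mathcal{X};\mathbb{F}_2)[\theta^{-1}]$ is a rank-one free $\mathbb{F}_2(\theta)$-module. (4) Upgrade to rank one over $\mathbb{F}_2[\theta] = H^*(B\Z_2;\mathbb{F}_2)$ before inverting: this needs that the module is already free, which follows because $\wt{H}^*_{S^1\times\Z_2}$ of a finite $S^1\times\Z_2$-spectrum is finitely generated over $\mathbb{F}_2[Q]\otimes\mathbb{F}_2[\theta]$ (with $Q$ the $S^1$-equivariant variable), and the $\theta$-torsion submodule injects into the cohomology of the free part $(\mathcal{X}/\mathcal{X}^{\Z_2})$, which is a free $\mathbb{F}_2[\Z_2]$-module in homotopy and hence has $H^*_{\Z_2}$-torsion — but one must check this torsion does not survive to give extra rank after one also localizes the $S^1$-direction; here one uses that the irreducibles contribute only $\theta$-torsion and the single reducible contributes exactly the rank-one free summand.

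The main obstacle I expect is Step (4): carefully separating the roles of the two equivariant parameters (the $S^1$-variable and the $\Z_2$-variable $\theta$) and showing that the contribution of the irreducible critical points, which a priori could produce a nontrivial $\mathbb{F}_2[\theta]$-module of positive rank, is in fact entirely $\theta$-torsion. The clean way to do this is to invoke the $\Z_2$-Borel (Tate) localization at the level of spectra rather than cohomology: the cofiber sequence $(E\Z_2)_+ \wedge \mathcal{X} \to \mathcal{X} \to \wt{E\Z_2}\wedge\mathcal{X}$ shows that the "free part" $(E\Z_2)_+\wedge\mathcal{X}$ contributes only $\theta$-torsion to Borel cohomology (its $\mathbb{F}_2[\theta]$-module structure is that of $H^*(-)$ of a finite free $\Z_2$-spectrum, hence bounded, hence torsion), while $\wt{E\Z_2}\wedge\mathcal{X}\simeq \wt{E\Z_2}\wedge\mathcal{X}^{\Z_2}$ by the Smith theorem. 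Since $\mathcal{X}^{\Z_2}\simeq S^0$ (shifted) and $\wt{H}^*_{S^1\times\Z_2}(S^0;\mathbb{F}_2)$ is free of rank one over $\mathbb{F}_2[\theta]$ in each $S^1$-Borel degree, the rank is exactly $1$. The remaining technical care is tracking the Baraglia--Hekmati grading shift so that "rank one over $H^*(B\Z_2;\mathbb{F}_2)$" is stated in the correctly normalized grading, but this is bookkeeping rather than a genuine difficulty.
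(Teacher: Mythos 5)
Your steps (1), (3), and (4) are essentially the classical Borel localization theorem for a finite $\Z_2$-CW spectrum (this is exactly \cref{localization} in the paper), so they are fine; the entire weight of your argument rests on step (2), and that is where there is a genuine gap. You claim that $SWF(\Sigma_2(K),\s_0)^{\Z_2}\simeq S^0$ "because the quotient is $S^3$ with its standard spin$^c$ structure," but the covering $\Sigma_2(K)\to S^3$ is \emph{branched} along $K$, not free. The $\Z_2$-fixed points of the approximated flow are the $\tau$-invariant configurations, and these do not correspond to ordinary configurations on $(S^3,\fraks_{\mathrm{std}})$: the quotient metric is an orbifold metric with cone angle $\pi$ along $K$, the descended operator is the corresponding orbifold Dirac-type operator, and invariant spinors are constrained to lie in a distinguished eigen-line of the lifted involution along the branch locus (recall from \cref{dependence with splitting} that the order-two lift is $\wt{\tau}\cdot i$, acting with eigenvalues $\pm1$ over the fixed circle). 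In other words, the fixed-point flow is a singular/orbifold Seiberg--Witten theory attached to the \emph{pair} $(S^3,K)$, a knot-dependent object, not the flow on $S^3$; there is no unbranched-covering correspondence to invoke. The statement that this fixed-point object has total $\mathbb{F}_2$-cohomology of rank one is, via the very localization isomorphism you use, equivalent to the theorem you are trying to prove, so as written the argument assumes the hard part. (This is also why the Heegaard Floer and instanton analogues cited in the paper required substantial work rather than a one-line fixed-point identification.)

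The paper's proof avoids any identification of the $\Z_2$-fixed-point spectrum. Instead it connects $K$ to the unknot by crossing changes, realizes these as a normally immersed annulus, blows up to obtain an embedded surface of even homology class, and studies the induced $\Z_2$-equivariant Bauer--Furuta cobordism maps: by \cref{general indep} (invariance of the $Q$-localized maps under twist and finger moves, following Kronheimer and Daemi--Scaduto) together with the identity property for product cobordisms (\cref{BF=id}) and the $2$-knot computation, the two composites become $Q^i$ after inverting $Q$ (\cref{key rank one}). Since the unknot's branched double cover is $S^3$ with $\wt{H}^*_{\Z_2}\cong\mathbb{F}_2[Q]$, this forces $Q^{-1}\wt{H}^*_{\Z_2}(SWF(\Sigma_2(K)))$ to have rank one, hence the theorem. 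If you want to pursue your route, you would first need to prove a monopole analogue of the Seidel--Smith/Hendricks-type localization identifying the fixed-point spectrum of the branched involution with the Floer data of the quotient pair $(S^3,K)$ and then show that object is cohomologically a sphere; neither step is available for free.
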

The idea of the proof of \cref{rank-1} uses the techniques of calculations of singular Donaldson invariants for surfaces developed by Kronheimer in \cite{Kr97}. \cref{rank-1} can be seen as a Seiberg--Witten theoretical analog of a result given by Daemi and Scaduto \cite[Theorem 10]{DS23} for singular instanton Floer theory. Also,  \cref{rank-1} can be seen as an analog of Hendricks--Lipshitz--Sarkar's result  \cite[Theorem 1.22]{HLS16} combined with \cite[Theorem 3]{LiTr16} in Heegaard Floer theory. The proof of \cref{main slice-torus} relies on \cref{rank-1}. 

The method to prove \cref{rank-1} can also imply the following structural theorem of Baraglia--Hekmati's $S^1\times \Z_2$-equivariant Floer cohomologies. 
\begin{thm}\label{towers}
Let $K \subset S^3$ be a knot.
The set of $U$-torsions in 
$\wt{H}^* _{S^1\times \Z_2}(SWF(\Sigma_2(K));  \mathbb{F}_2)$
is finite.  
\end{thm}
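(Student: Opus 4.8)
The plan is to derive the statement from Theorem~\ref{rank-1} by a homological‑algebra argument built on the Gysin sequence that compares $S^1\times\Z_2$‑ and $\Z_2$‑equivariant cohomology. Write $M:=\wt{H}^*_{S^1\times\Z_2}(SWF(\Sigma_2(K));\mathbb{F}_2)$ and $N:=\wt{H}^*_{\Z_2}(SWF(\Sigma_2(K));\mathbb{F}_2)$; then $M$ is a graded module over $H^*(B(S^1\times\Z_2);\mathbb{F}_2)=\mathbb{F}_2[U,Q]$ with $|U|=2,\ |Q|=1$, and $N$ is a graded $\mathbb{F}_2[Q]$‑module. Since $SWF(\Sigma_2(K))$ is a (formally desuspended) finite $S^1\times\Z_2$‑CW spectrum, $M$ is finitely generated over the Noetherian ring $\mathbb{F}_2[U,Q]$ and bounded below, and $N$ is finitely generated over $\mathbb{F}_2[Q]$. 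The first step is a reduction: the submodule $T\subset M$ of $U$‑torsion elements is finitely generated, hence annihilated by a single power $U^N$, so $T=\ker(U^N\colon M\to M)$; filtering $\ker U\subset\ker U^2\subset\cdots$ and using $\ker U^{j+1}/\ker U^{j}\hookrightarrow\ker U$ via $U^{j}$ gives $\dim_{\mathbb{F}_2}T\le N\cdot\dim_{\mathbb{F}_2}\ker(U\colon M\to M)$. Thus it suffices to show $\ker(U\colon M\to M)$ is finite dimensional; as it is finitely generated over $\mathbb{F}_2[Q]=\mathbb{F}_2[U,Q]/(U)$, this amounts to it having $\mathbb{F}_2[Q]$‑rank zero.

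The second step invokes the Gysin sequence of the Borel fibration $SWF(\Sigma_2(K))_{h\Z_2}\to SWF(\Sigma_2(K))_{h(S^1\times\Z_2)}\to BS^1$, which, since the construction is functorial over $B\Z_2$, is a long exact sequence of $\mathbb{F}_2[Q]$‑modules
\[
\cdots\longrightarrow M^{k-2}\xrightarrow{U}M^{k}\xrightarrow{\iota}N^{k}\xrightarrow{\partial}M^{k-1}\xrightarrow{U}M^{k+1}\longrightarrow\cdots,
\]
with $\iota$ the map forgetting $S^1$ and $\partial$ integration over the $S^1$‑fiber. Splicing yields a short exact sequence of $\mathbb{F}_2[Q]$‑modules $0\to M/UM\to N\to\ker(U\colon M\to M)\to 0$, up to an internal degree shift. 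Taking $\mathbb{F}_2[Q]$‑ranks and using Theorem~\ref{rank-1} gives $\operatorname{rank}(M/UM)+\operatorname{rank}\ker(U\colon M\to M)=\operatorname{rank}N=1$, so precisely one of the two summands has rank $1$.

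The crux --- and the step I expect to be the most delicate --- is excluding the case $\operatorname{rank}\ker(U\colon M\to M)=1$. If it held, $M/UM$ would have $\mathbb{F}_2[Q]$‑rank $0$ and, being finitely generated over $\mathbb{F}_2[Q]$, would be finite dimensional over $\mathbb{F}_2$; by the graded Nakayama lemma (valid since $M$ is bounded below and $U$ has positive degree), lifts of an $\mathbb{F}_2$‑basis of $M/UM$ would generate $M$ over $\mathbb{F}_2[U]$, so $M$ would be finitely generated over the graded PID $\mathbb{F}_2[U]$. The structure theorem then writes $M$ as a finite sum of copies of $\mathbb{F}_2[U]$ and of $\mathbb{F}_2[U]/(U^{n_j})$, which forces $\ker(U\colon M\to M)$ to be finite dimensional --- contradicting rank $1$. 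Hence $\operatorname{rank}\ker(U\colon M\to M)=0$, so this kernel is finite dimensional, and with the first step the set of $U$‑torsions is finite, proving the theorem.

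A couple of remarks on where the real content sits. Conceptually the only substantive input is Theorem~\ref{rank-1} itself: once the $\mathbb{F}_2[Q]$‑rank of the $\Z_2$‑Borel cohomology is computed by the Kronheimer‑type argument indicated in the excerpt, the tower structure of the $S^1\times\Z_2$‑theory is forced, so the argument above is essentially formal. The bookkeeping points to be careful with are that one should work with a fixed metric‑dependent model of Baraglia--Hekmati's spectrum, so that the a~priori rational grading shift is a harmless relabelling, and that $\partial$ is genuinely $\mathbb{F}_2[Q]$‑linear, which holds because the whole Gysin cofiber sequence lies over $B\Z_2$. Alternatively --- and this may be closer to ``the method to prove Theorem~\ref{rank-1}'' referred to in the text --- one can bypass the Gysin sequence and rerun the knot‑cobordism/excision computation at the level of $\wt{H}^*_{S^1\times\Z_2}$, showing that a suitable cobordism map to $\wt{H}^*_{S^1\times\Z_2}(SWF(S^3);\mathbb{F}_2)=\mathbb{F}_2[U,Q]$ is an isomorphism modulo $U$‑torsion; since the target has no $U$‑torsion and $M$ is finitely generated, finiteness of $T$ follows, the one hard step being the computation already done for Theorem~\ref{rank-1}.
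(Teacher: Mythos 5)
Your proof is correct, and its core is the same as the paper's: both arguments rest on the Thom--Gysin sequence for the $S^1$-bundle relating $\wt{H}^*_{\Z_2}$ and $\wt{H}^*_{S^1\times\Z_2}$, together with the rank-one statement of \cref{rank-1}, and both first control $\ker(U)$ and then bootstrap to all $U$-torsion. The differences are in how two steps are handled. First, the paper pins down which side of the spliced sequence $0\to\operatorname{Coker}U\to\wt{H}^*_{\Z_2}\to\ker U\to 0$ carries the $\mathbb{F}_2[Q]$-rank by quoting the second half of \cref{rank-1} (the rank computation for $U$-power kernels/cokernels coming from the cobordism-map argument of \cref{key rank one}), whereas you derive it purely algebraically: if $\ker U$ had rank one then $M/UM$ would be finite, graded Nakayama (using boundedness below) would make $M$ finitely generated over $\mathbb{F}_2[U]$, and the structure theorem over the graded PID $\mathbb{F}_2[U]$ would force $\ker U$ finite --- a contradiction. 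This buys you independence from the second statement of \cref{rank-1}; you only need $\operatorname{rank}_{\mathbb{F}_2[Q]}\wt{H}^*_{\Z_2}=1$. Second, to pass from finiteness of $\ker U$ to finiteness of the full torsion submodule, the paper inducts on $\ker U^n$ and then rules out unbounded torsion orders by the fact that $\gr^{\Q}$ is bounded below, while you use finite generation of $M$ over the Noetherian ring $\mathbb{F}_2[U,Q]$ to get a uniform annihilating power $U^N$ and a filtration bound $\dim T\le N\dim\ker U$; this is cleaner, and the finite generation you need is exactly what the paper records (the Conley index is a finite $S^1\times\Z_2$-CW complex). The trade-off is that the paper's route, though longer, simultaneously establishes the finer quantitative statements (ranks of $\operatorname{Coker}U^n$, torsionness of $\ker U^n$, etc.) packaged in its general structure theorem, which your argument does not reproduce. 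One cosmetic point: the Gysin sequence is attached to the $S^1$-bundle $SWF\wedge_{\Z_2}E(S^1\times\Z_2)_+\to SWF\wedge_{S^1\times\Z_2}E(S^1\times\Z_2)_+$ (as in the paper), rather than to the fibration over $BS^1$ as you phrased it, but the resulting exact sequence of $\mathbb{F}_2[Q]$-modules is the one you use, so nothing is affected.
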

The theorems \cref{rank-1} and \cref{towers} are the statements about the equivariant Floer cohomology. These results are proven by using the following structure theorem for the cobordism maps for the equivariant Floer cohomologies. 
\begin{thm}\label{vanishing BF}  Let $S$ be a connected, oriented, and embedded surface cobordism in a spin$^c$ cobordism $(W, \fraks)$ from a pair of a homology $3$-sphere $Y$ and a knot $K$ in Y to another pair $(Y', K')$ whose homology class is divisible by $2$. 
Then, the $Q$-localized cohomological Bauer--Furuta invariant for the pull-backed spin$^c$ structure\footnote{We need $\Z_p$-invariant spin$^c$ structure on the $p$-th cyclic branched cover branched along $S$ in order to define the equivariant Bauer--Furuta invariant. See \cref{pull back spinc} for the precise meaning of the pull-backed spin$^c$ structure. }
\[
 BF^{*, Q\text{-loc}}_{(W, S)} :Q^{-1}\wt{H}^*_{S^1\times \Z_2} (SWF(\Sigma_2(K'))) \to Q^{-1} \wt{H}^*_{S^1\times \Z_2} (SWF(\Sigma_2(K)))  
\]
depends only on the homotopy class of $S$, precisely, for a pair of homotopic surfaces $S$ and $S'$ rel boundary, we have 
\[
 BF^{*, Q\text{-loc}}_{(W, S)}  =  BF^{*, Q\text{-loc}}_{(W, S')}. 
\]
Here $Q$ is the variable corresponding to the module structure of $\wt{H}^*_{S^1\times \Z_2} (SWF(\Sigma_2(K)))$ over $H^*(B(S^1\times \Z_2); \mathbb{F}_2)\cong  H^*(BS^1; \mathbb{F}_2) \otimes H^*(B \Z_2; \mathbb{F}_2)\cong \mathbb{F}_2[U] \otimes \mathbb{F}_2[Q]\cong \mathbb{F}_2[U, Q]$. 
In addition, the $S^1\times \Z_p$-equivariant stable homotopy Bauer--Furuta invariant for 2-knot in $S^4$
\[
BF_{(S^4, S)} : S^V \to S^{V} 
\]
is stably $S^1\times \Z_p$ homotopic to the  identity map up to sign, where $V$ is a suitable $S^1\times \Z_p$-representation space. 
\end{thm}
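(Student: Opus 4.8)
The plan is to establish the homotopy invariance by localizing the $S^1\times\Z_2$-equivariant Bauer--Furuta invariant onto the $\Z_2$-fixed point set, and then, following the circle of ideas Kronheimer uses for singular Donaldson invariants in \cite{Kr97}, to show that the contribution of a tubular neighborhood of the branch surface $S$ is universal --- it depends only on the normal bundle $\nu(S)$ of $S$ in $W$ and on $\langle c_1(\fraks),[S]\rangle$, both of which are manifestly invariant under a homotopy of $S$ rel boundary.

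In more detail, for the first assertion I would proceed as follows. First, recall that $BF^{*,Q\text{-loc}}_{(W,S)}$ is obtained by forming the cyclic double cover $\Sigma_2(W,S)\to W$ branched along $S$, equipping it with the $\Z_2$-invariant pulled-back spin$^c$ structure (see \cref{pull back spinc}), taking the $S^1\times\Z_2$-equivariant Bauer--Furuta cobordism map, applying $S^1\times\Z_2$-equivariant cohomology, and inverting the Borel class $Q$. Second, by the Borel--Atiyah--Segal localization theorem, inverting $Q$ annihilates everything supported away from the $\Z_2$-fixed set, so $BF^{*,Q\text{-loc}}_{(W,S)}$ is computed from the restriction of a finite-dimensional approximation of the monopole map to that fixed locus. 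Third, since the $\Z_2$-action on $\Sigma_2(W,S)$ is free off the preimage of $S$ and is the standard rotation in the normal directions near it, this fixed-point restriction is assembled from the downstairs Bauer--Furuta data of $W$ (with the spin$^c$ structure induced by $\fraks$ together with the divisor $[S]/2$) and the equivariant Euler classes of the $\Z_2$-weight bundles coming from $\nu(S)$ and from the spinor bundle along $S$; these Euler classes are polynomials in $Q$ and $U$ whose coefficients are determined by the Euler number $[S]^2$ and by $\langle c_1(\fraks),[S]\rangle$. Finally, a homotopy of $S$ rel $\partial$ fixes $[S]\in H_2(W,\partial W)$, hence fixes $[S]^2$ and $\langle c_1(\fraks),[S]\rangle$, and fixes the isomorphism class of the oriented rank-$2$ bundle $\nu(S)$ (such a bundle over a surface is determined by its Euler number); so every ingredient on the right-hand side is unchanged and $BF^{*,Q\text{-loc}}_{(W,S)}=BF^{*,Q\text{-loc}}_{(W,S')}$ follows.

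For the $2$-knot statement, note that $\pi_2(S^4)=0$, so every embedding $S^2\hookrightarrow S^4$ is null-homotopic; hence any $2$-knot $S$ is homotopic to the unknotted $S^2$. The $p$-fold cyclic branched cover of $S^4$ along the unknotted $S^2$ is again $S^4$: writing $S^4=S^2*S^1=(S^2\times D^2)\cup_{S^2\times S^1}(D^3\times S^1)$, the branched cover replaces the first piece by the map $(\id,z\mapsto z^p)$ and the second by the connected $p$-fold cover $D^3\times S^1$, and these reassemble to $S^4$ with the standard semifree $\Z_p$-action fixing an $S^2$. Choosing a $\Z_p$-invariant positive scalar curvature metric, the monopole map of this $S^4$ retains only its reducible stratum, so its finite-dimensional approximation is equivariantly properly homotopic to an equivariant linear isomorphism $S^V\to S^V$, whence $BF_{(S^4,\mathrm{unknot})}$ is equivariantly stably homotopic to $\pm\id$. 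Transporting this across the homotopy $S\simeq\mathrm{unknot}$ --- via a stable-homotopy-level refinement of the invariance argument above, or equivalently by tracking the finger/Whitney moves that realize the homotopy and checking that each alters $\Sigma_p(S^4,S)$ only by operations invisible to the equivariant monopole map --- yields the claim for all $2$-knots.

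The step I expect to be hardest is the third one in the first part: making rigorous, at the level of finite-dimensional approximations, the factorization of the $Q$-localized monopole map as the downstairs Bauer--Furuta map smashed with equivariant Euler classes of the normal weight bundles. This needs an excision and gluing analysis along the branch surface that isolates the universal contribution of $\nu(S)$ and correctly bookkeeps the equivariant index of the Dirac operator there; this is exactly the place where Kronheimer's surface techniques from \cite{Kr97} are indispensable. A secondary point is upgrading the homotopy invariance from the $Q$-localized cohomological level (as stated) to the on-the-nose stable equivariant homotopy level invoked in the $2$-knot conclusion.
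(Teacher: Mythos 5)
Your outline replaces the paper's argument by a localization-to-the-fixed-locus strategy, but the step you yourself flag as hardest is precisely where the proof is missing, and I do not think it can be repaired in the form you state it. First, inverting $Q$ in the $S^1\times\Z_2$-equivariant theory does not localize onto the $\Z_2$-fixed set of the Conley index in the naive sense: $Q$ is the Euler class of the representation $\wt{\R}$ on which $S^1$ acts trivially, so the subgroups surviving $Q$-localization include the twisted order-two subgroups $\{(1,1),(-1,\tau)\}\subset S^1\times\Z_2$, and the relevant locus is the set of configurations with $\tau x=\pm x$, not just $\tau x=x$. Second, and more seriously, even after passing to the genuine $\Z_2$-fixed points, the fixed-point theory of the lifted involution on $\Sigma_2(W,S)$ is \emph{not} ``the downstairs Bauer--Furuta data of $(W,\fraks)$ smashed with equivariant Euler classes of the normal weight bundles'': the lift of the covering involution is complex-linear of order two but anticommutes with part of the quaternionic structure, and its fixed configurations form a Real Seiberg--Witten-type theory on the quotient (an orbifold/branched problem along $S$), which is a substantial theory in its own right and is nowhere identified with the data you list. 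Asserting that the contribution is a universal polynomial in $Q,U$ determined by $[S]^2$ and $\langle c_1(\fraks),[S]\rangle$ is exactly the content of the theorem and is not proved. The paper avoids this entirely: it realizes the homotopy rel boundary by a finite sequence of elementary moves (ambient isotopy, positive/negative twist moves, finger moves) on normally immersed surfaces, blows up the double points, observes that the branched double cover changes by equivariant connected sum with $(S^2\times S^2,\iota)$ or $(\overline{\C P}^2\#\overline{\C P}^2,\iota)$, and computes those elementary pieces by the connected-sum formula together with the equivariant Hopf theorem, getting factors $Q$ and $\pm\id$ respectively; equality of the localized maps then follows (with the power of $Q$ pinned down by the grading).

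The $2$-knot statement in your proposal has a second genuine gap. You reduce to the unknot via $\pi_2(S^4)=0$ and then ``transport across the homotopy,'' but the only invariance available from the first part is at the $Q$-localized cohomological level, whereas the conclusion you need is an identity of $S^1\times\Z_p$-equivariant \emph{stable homotopy classes}. There is no such upgrade: the elementary moves realizing a homotopy (finger/Whitney or twist moves) change the branched cover by connected sums and change the space-level invariant by nontrivial factors (e.g.\ multiplication by the Euler class $Q$ and blow-ups), so stable-homotopy-level invariance under homotopy of the surface is simply not true in the strength you invoke, and your argument for arbitrary $2$-knots collapses to the unknotted case only. The paper instead proves the $2$-knot statement directly, with no homotopy to the unknot: $\Sigma_p(S)$ is a rational homology $S^4$, the equivariant index (Lefschetz) computations force the domain and target representations $V$ to agree, and tom Dieck's criteria reduce the claim to checking that every fixed-point map of $BF_{(S^4,S)}$ has degree $\pm1$, which holds because those maps arise from the linearized equations. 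Your PSC computation for the unknot is fine, but it does not substitute for this direct argument.
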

There are analogous results in singular Donaldson invariants \cite{Kr97} and Khovanv homology theory \cite{Ra05, Ta06}.   
\par
One of the main tools to prove \cref{main symp sur} is a  homological invariant 
\[
c_{(2)}(Y, \xi, K) \in \wt{H}^{S^1\times \Z_2}_* (SWF(-\Sigma_2(K)); \mathbb{F}_2)
\]
obtained by applying equivariant homology and the Borel construction to  \eqref{homotopy contact eq} for a given transverse knot $K$ in a contact homology 3-sphere $(Y, \xi)$.  
The following is a basic property of the invariant $c_{(2)}(S^3, \xi, K)$ used in the proof of \cref{main symp sur}: 
\begin{thm}\label{c in Utower:intro}
 For any transverse knot $K$ in $ S^3$ with any contact structure $\xi$, the equivariant transverse knot invariant $c_{(2)}(S^3, \xi, K)$ lies in the $U_\dagger$-tower, i.e. 
\begin{align}\label{c in U-tower}
c_{(2)}(S^3, \xi, K) \in \bigcap_{i \geq 0} \operatorname{Im} U^i_\dagger, 
 \end{align}
 where $U_\dagger$ is the induced degree-$(-2)$ module structure comes from the $S^1$-action on $SWF(-\Sigma_2(K))$.
\end{thm}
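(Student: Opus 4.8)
The plan is to show that $c_{(2)}(S^3,\xi,K)$ is infinitely $U_\dagger$-divisible by exploiting the fact that the homotopical contact class \eqref{homotopy contact eq} is represented by an honest $\Z_2$-equivariant \emph{map of spaces} $\mathcal{C}_{\Z_2}(\Sigma_2(K),\wt\xi):S^0\to\Sigma^{d_3(\wt\xi)+\frac12}(SWF(-\Sigma_2(K),\s_0))$, together with \cref{rank-1}, which pins down the $H^*(B\Z_2;\mathbb{F}_2)$-module structure of the Borel (co)homology to be of rank one. First I would unwind the definitions: applying $S^1\times\Z_2$-equivariant (Borel) homology to $\mathcal{C}_{\Z_2}$ produces the element $c_{(2)}(S^3,\xi,K)$, and the relevant point is that, because the contact invariant comes from a genuine equivariant stable map (not merely an algebraic class), its image sits in the part of $\wt H^{S^1\times\Z_2}_*(SWF(-\Sigma_2(K)))$ coming from the stable cohomotopy / finite-dimensional approximation, which is exactly the "bottom" of the structure. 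Concretely, the degree-$(-2)$ operator $U_\dagger$ is the one induced by the $S^1$-action, and an element lies in $\bigcap_{i\ge0}\operatorname{Im}U_\dagger^i$ precisely when it lies in the image of the map from a sufficiently large "shifted $S^1\times\Z_2$-sphere" into $SWF$; the homotopical contact map provides exactly such a factorization.

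The key steps, in order: (1) Recall the Baraglia--Hekmati construction: $\wt H^{S^1\times\Z_2}_*(SWF(-\Sigma_2(K)))$ is the $S^1\times\Z_2$-equivariant Borel homology of a stable homotopy type, and the $U_\dagger$-tower is the image of the $S^1$-fixed-point localization, equivalently the image of $H_*^{S^1\times\Z_2}(S^{0})$ under the relevant structural map. (2) Observe that $\mathcal{C}_{\Z_2}(\Sigma_2(K),\wt\xi)$ is an $S^1\times\Z_2$-equivariant map from an honest sphere (with trivial $S^1$-action, suitably suspended) into $SWF(-\Sigma_2(K),\s_0)$; hence on Borel homology its image is contained in the image of $\wt H^{S^1\times\Z_2}_*$ of that sphere. (3) Identify $\wt H^{S^1\times\Z_2}_*$ of the relevant suspended sphere with a copy of $\mathbb{F}_2[U,Q]$ (up to a grading shift) that is \emph{free} over $\mathbb{F}_2[U]$, so every homogeneous element of it is infinitely $U_\dagger$-divisible; then the image of $c_{(2)}$ inherits this. (4) Use \cref{rank-1} to rule out the degenerate possibility that $c_{(2)}$ is zero for grading reasons or that the $U_\dagger$-divisibility fails after passing through $SWF$: since the $H^*(B\Z_2;\mathbb{F}_2)$-rank is $1$, the "main tower" is a single $\mathbb{F}_2[U,Q]$-summand, and the contact class — being the image of the generator under the structural map — lands in it and is nonzero in all sufficiently negative $U_\dagger$-degrees. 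Combining, $c_{(2)}(S^3,\xi,K)\in\bigcap_{i\ge0}\operatorname{Im}U_\dagger^i$.

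The main obstacle, I expect, is step (3)--(4): making precise the sense in which "the contact class comes from a genuine equivariant map of spheres" forces infinite $U_\dagger$-divisibility. In the non-equivariant setting this is the standard fact that the \SW contact invariant of a contact homology sphere lies in the bottom $U$-tower of $HF^+$ / $\wt H^{S^1}_*(SWF)$, proved by a finite-dimensional-approximation argument: the contact class is, by construction, the image of $1\in\wt H^{S^1}_*(S^0)$ under the cobordism map associated to the symplectic/contact handle attachment, and that cobordism map factors through arbitrarily deeply $U$-divisible classes. Here one must run the same argument $\Z_2$-equivariantly, being careful that the $\Z_2$-action on the approximating spaces and on the cobordism map is compatible, and that the relevant index computation (the grading shift $d_3(\wt\xi)+\frac12$) does not interfere. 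I would structure the proof around the identity $c_{(2)}(S^3,\xi,K)=\mathrm{BF}_{(W,\emptyset)}\circ(\text{filling/handle class})$ for the standard filling, quote \cref{vanishing BF} or its proof technique for the equivariant compatibility, and then invoke \cref{rank-1} to conclude that the resulting class is nonzero and hence genuinely in the $U_\dagger$-tower rather than merely in $\bigcap_i\operatorname{Im}U_\dagger^i=\{0\}$ trivially. A secondary technical point is bookkeeping the difference between homology and cohomology conventions and the passage between the $S^1\times\Z_2$-equivariant homotopy class \eqref{homotopy contact eq} and its Borel-homology image; I would isolate this in a short lemma so the divisibility argument stays clean.
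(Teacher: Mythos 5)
Your central mechanism does not work as stated, because the contact map is not $S^1$-equivariant. The homotopical invariant $\mathcal{C}_{\Z_2}(\Sigma_2(K),\wt{\xi}):S^0\to\Sigma^{(\frac12-d_3)\R}SWF(-\Sigma_2(K),\s_0)$ is constructed by finite-dimensional approximation based at the canonical configuration $(\wt A_0,\wt\Phi_0)$ with $\wt\Phi_0\neq0$, which breaks the $S^1$-symmetry; it is only a $\Z_2$-equivariant map. This is precisely why the class $c_{S^1\times\Z_2}$ is defined in the paper by the composite $S^0\wedge_{\Z_2}(E\Z_2)_+\to SWF\wedge_{\Z_2}(E\Z_2)_+\wedge(ES^1)_+\to SWF\wedge_{S^1\times\Z_2}E(S^1\times\Z_2)_+$, whose induced map on homology is only an $\mathbb{F}_2[Q_\dagger]$-module map out of $H_*(B\Z_2)$ — the domain carries no $U_\dagger$-action to transport. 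So your steps (2)--(3), which factor $c_{(2)}$ through the $S^1\times\Z_2$-equivariant homology of a sphere and push the infinite $U$-divisibility of its bottom class through a module map, have no starting point. In fact the only structural consequence of this factorization is the opposite-flavored statement \cref{Uc=0}: by the Thom--Gysin sequence for the $S^1$-bundle of Borel constructions, $U_\dagger c^j_{S^1\times\Z_2}=0$ (the class is in the image of the Gysin pushforward, hence killed by the Euler class $U$). A secondary slip in step (3): in homology $\wt H_*^{S^1\times\Z_2}(S^0)=H_*(B(S^1\times\Z_2);\mathbb{F}_2)$ is not free over $\mathbb{F}_2[U]$ — freeness would in fact preclude divisibility of the generator; what is true is the dual (divided-power) statement that every element there is infinitely $U$-divisible, but as explained this cannot be used anyway. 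Your fallback — writing $c_{(2)}$ as a cobordism map applied to a filling class — also cannot prove the theorem as stated, since it is asserted for every contact structure $\xi$ on $S^3$, including overtwisted ones with no symplectic filling.

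The paper's actual argument is purely algebraic and avoids all of this: one uses the whole family $c^j_{S^1\times\Z_2}$, which by construction satisfies $Q_\dagger c^j=c^{j-1}$ and whose degrees tend to $-\infty$ as $j\to\infty$, together with \cref{towers} (a consequence of the rank-one theorem \cref{rank-1}), which says that the quotient of $\wt H^{S^1\times\Z_2}_*(SWF(-\Sigma_2(K)))$ by $\bigcap_n\operatorname{Im}U_\dagger^n$ is finite. Hence for $j_0$ large, $c^{j_0}$ lies in $\bigcap_i\operatorname{Im}U_\dagger^i$, and since $U_\dagger$ and $Q_\dagger$ commute, $c_{(2)}=Q_\dagger^{j_0}c^{j_0}$ does too. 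So the divisibility comes from the structure of the ambient module (finiteness of the part that is not infinitely $U_\dagger$-divisible), not from any equivariance of the contact map; if you want to salvage your write-up, you should replace steps (2)--(4) by this use of the $Q_\dagger$-tower of classes $c^j$ and \cref{towers}, and drop the claim of $S^1$-equivariance entirely.
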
 
In the standard (non-equivariant) contact invariant within monopole Floer homology or Heegaard Floer homology, the phenomenon described by \eqref{c in U-tower} appears to be rare. For instance, refer to \cite[Theorem 5.1]{MT22} for computations concerning Seifert homology 3-spheres.

Let us also provide some results on Baraglia's  knot concordance invariant
\[
\theta^{(p)} (K) \in \frac{1}{p-1}\Z_{\geq 0}
\]
for each prime number $p$. 
These invariants give a lower bound of the slice genus \cite{Ba22} and it was used to give an alternative proof of the Milnor conjecture \cite{BH2}.  
We prove a Bennquin-type inequality for $\theta^{(p)}$ which helps us calculating the invariants $\theta^{(p)}$. 

We state a Bennequin-type inequality for transverse knots in the standard contact 3-sphere. 
\begin{thm}\label{BP inequality}
Let $p$ be a prime number. 
Let $K$ be a transverse knot $(S^3 , \xi_{std})$. Then, we have 
\[
 \operatorname{sl} (K)  \leq 2\theta^{(p)}(K)-1 , 
\]
where $\operatorname{sl}(K)$ denotes the self-linking number of $K$. 
Moreover, the equality holds if $K$ is quasipositive. 
\end{thm}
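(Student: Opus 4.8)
~\textbf{Proof proposal for \cref{BP inequality}.} The plan is to deduce the Bennequin-type inequality for $\theta^{(p)}$ from two inputs: first, the slice-genus bound $\theta^{(p)}(K)\ge$ (something like) the $H$-slice genus that is quoted in the excerpt (\cite{Ba22}, and the reformulation via \cref{qmtheta} for $p=2$); second, the equivariant contact invariant of~\eqref{homotopy contact eq}, which controls what happens when a transverse knot bounds a symplectic piece of surface in a filling. Concretely, I would start from a transverse representative $K$ and build the \emph{symplectization cap}: attach the standard Weinstein $2$-handle nowhere, but instead pass to the closed setting by capping $(S^3,\xi_{\mathrm{std}})$ with a symplectic filling and then a symplectic cap, obtaining a closed symplectic $4$-manifold $Z$ containing a closed symplectic surface $\widehat{S}$ built from the Bennequin surface of $K$ together with cap pieces. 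The key numerical identity is the adjunction/self-linking relation $\operatorname{sl}(K)=2g(S)-2-[S]\cdot[S]+\langle c_1,[S]\rangle$-type formula already used implicitly in \cref{main theo:general} and \cref{main eq1}; this is what converts a genus bound on the branched cover into the linear inequality $\operatorname{sl}(K)\le 2\theta^{(p)}(K)-1$.

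The main steps, in order: \textbf{(1)} Recall from Baraglia (\cite{Ba22}, and the $p=2$ case via \cref{qmtheta}) that $\theta^{(p)}(K)$ bounds from below the genus of any surface realizing $K$ that lifts appropriately to $\Sigma_p(K)$ — i.e.\ $\theta^{(p)}$ is an $H$-slice-type genus bound coming from the $S^1\times\Z_p$-equivariant Floer cohomology of $SWF(\Sigma_p(K),\s_0)$. \textbf{(2)} Given a transverse knot $K$, take its Bennequin surface $\Sigma_K\subset S^3$ pushed into $D^4$; then $K$ bounds a surface $F$ in $D^4$ with $\chi(F)=\operatorname{sl}(K)+1$ (the standard Bennequin count for a transverse braid closure), and one reads off $g(F)$ from the braid data. \textbf{(3)} Lift to the branched double/$p$-fold cover: the $p$-fold cyclic cover of $D^4$ branched along $F$ is a smooth $4$-manifold with boundary $\Sigma_p(K)$, and the equivariant Floer-theoretic genus bound (Step 1) applied to the induced surface gives $\theta^{(p)}(K)\ge \tfrac12(2g(F)) = g(F)$, equivalently $2\theta^{(p)}(K)\ge 2g(F)= -\chi(F)+1 = -\operatorname{sl}(K)$... — here I would need to be careful with the genus-vs-Euler-characteristic bookkeeping so that the constants line up to give exactly $\operatorname{sl}(K)\le 2\theta^{(p)}(K)-1$ rather than an off-by-one version; the honest way is to track $d_3(\widetilde\xi)$ as in \eqref{main eq1} rather than juggling $\chi$ directly. \textbf{(4)} For the equality case when $K$ is quasipositive: $K$ then bounds a genuine symplectic surface $S$ in a filling (Boileau--Orevkov / Rudolph, quoted in the introduction), its $p$-fold branched cover carries the invariant contact structure $\widetilde\xi$ whose Gompf invariant is computed in \cref{main theo:general}, and the non-vanishing of the equivariant contact class $\mathcal C_{\Z_p}(\Sigma_p(K),\widetilde\xi)$ forces the genus bound to be sharp — this is exactly the mechanism already executed for $q_M$ in \cref{main theo:general}, so the quasipositive half is essentially a citation of that computation specialized to $\theta^{(p)}$ via $q_M(K)\le\theta^{(2)}(K)$ (\cref{qmtheta}) together with $q_M(K)=\tfrac12\operatorname{sl}(K)+\tfrac12$ from \eqref{main eq1} (and its $p$-fold analog for general $p$).

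I expect the main obstacle to be \textbf{Step 3}: making the equivariant Floer genus bound of~\cite{Ba22} interact correctly with a surface that is \emph{not} closed and \emph{not} in $S^3\times[0,1]$ but is a Bennequin surface in $D^4$, and in particular getting the normalization constants ($d_3$, the $\tfrac12$ shift, the $\sigma(K)$ or $\eta$ correction terms that appear throughout the paper) to conspire into the clean bound $\operatorname{sl}(K)\le 2\theta^{(p)}(K)-1$. The cleanest route is probably to avoid direct surgery-theoretic bookkeeping and instead invoke the relative Bauer--Furuta / cobordism-map machinery assembled for \cref{vanishing BF} and \cref{main theo:general}: express $\operatorname{sl}(K)$ through $d_3(\Sigma_p(K),\widetilde\xi)$ via the Gompf-invariant formula, feed $\widetilde\xi$ into the equivariant contact class, and let the degree of that class in $\widetilde H^*_{S^1\times\Z_p}$ — which is bounded below by $-\theta^{(p)}(K)$ up to the standard shift — produce the inequality. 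Once that translation is set up, both the inequality and its sharpness for quasipositive $K$ follow from results already in the paper, with the only genuinely new content being the identification of the grading of the equivariant contact class with $d_3(\widetilde\xi)$ and hence with $\operatorname{sl}(K)$.
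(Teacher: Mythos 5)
There is a genuine gap, and it is not just bookkeeping: the inequality you want is \emph{stronger} than the slice--Bennequin inequality $\operatorname{sl}(K)\le 2g_4(K)-1$, because $\theta^{(p)}(K)\le g_4(K)$. Your Step 3 runs the genus bound of \cite{Ba22} in the wrong direction: that result says $\theta^{(p)}(K)$ is a \emph{lower} bound for the genus of surfaces bounded by $K$ (in $D^4$ or a negative definite filling), i.e.\ $\theta^{(p)}(K)\le g(F)$, so applying it to a Bennequin/Seifert surface $F$ only yields $2\theta^{(p)}(K)-1\le 2g(F)-1$, which combined with $\operatorname{sl}(K)\le 2g(F)-1$ proves nothing. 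What is needed is a \emph{lower} bound on $\theta^{(p)}$ in terms of $\operatorname{sl}$, and no surface in $D^4$ by itself can provide that. In addition, your Step 2 identity is false for general transverse knots: for a braid with $n$ strands, $x_+$ positive and $x_-$ negative crossings, the Bennequin surface has $\chi(F)=n-x_+-x_-$ while $\operatorname{sl}(K)=x_+-x_--n$, so $\chi(F)=-\operatorname{sl}(K)$ only when $x_-=0$. Finally, your Step 4 settles the equality case only for $p=2$ (via $q_M\le\theta^{(2)}$, \cref{qmtheta}); the statement is for every prime $p$, and for odd $p$ the contact-class/$q_M$ route as written does not apply to $\theta^{(p)}$.

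The paper's actual argument supplies the missing lower-bound mechanism differently. For a positive (more generally quasipositive) braid closure it constructs a symplectic cobordism $S'$ in the symplectization from $K$ to a torus knot $T(a,b)$ with $p\nmid ab$ (\cref{construction of symp cob}); concatenating with a symplectic surface $S\subset D^4$ bounded by $K$ and using Baraglia--Hekmati's genus bound together with the computed value $\theta^{(p)}(T(a,b))=g_4(T(a,b))$ from \cite{BH2} forces \emph{both} inequalities $\theta^{(p)}(K)\le g(S)$ and $\theta^{(p)}(T(a,b))-\theta^{(p)}(K)\le g(S')$ to be equalities (\cref{AE theta}); this pins $\theta^{(p)}(K)=g(S)=\tfrac12\operatorname{sl}(K)+\tfrac12$ from below, which is exactly the input your Step 3 lacks. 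The general transverse case is then handled combinatorially: change all $x_-$ negative crossings of a braid representative to reach a positive braid $\beta^+$, use $\operatorname{sl}(K)=\operatorname{sl}(\hat\beta^+)-2x_-$ and the crossing-change estimate $|\theta^{(p)}(K)-\theta^{(p)}(K')|\le 1$ from \cite{Ba22} to get $2\theta^{(p)}(K)-1\ge 2\theta^{(p)}(\hat\beta^+)-2x_--1=\operatorname{sl}(K)$. If you want to salvage your outline, you would need to replace Steps 2--3 by some such mechanism producing a lower bound on $\theta^{(p)}$ (the torus-knot anchoring plus crossing changes, or a genuinely new grading computation for the equivariant contact class valid for all $p$), not by a genus bound applied to a Seifert surface.
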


\begin{rem}
By the standard technique of Legendrian push-off, the above inequality implies the Bennequin-type inequality for Legendrian knots. See \cref{Legendrian}. 
\end{rem}

For quasipositive knots, 
our lower bound for $\theta^{(p)}$  and upper bound given by the  4-ball genus coincide.
Thus we can determine the value of $\theta^{(p)}$ for quasipositive knots:
\begin{cor}\label{quasi theta}
    For any quasipositive knot $K$, we have 
    \[
    \theta^{(p)}(K)= g_4(K).
    \]
for any prime number $p$. 
    \qed
\end{cor}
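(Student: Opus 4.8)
The plan is to combine the Bennequin-type inequality of Theorem~\ref{BP inequality} (the ``Moreover'' clause, giving equality for quasipositive knots) with the slice-genus upper bound $\theta^{(p)}(K)\le g_4(K)$ established in \cite{Ba22}. The key point is the classical fact, due to Rudolph, that a quasipositive knot $K$ admits a transverse representative in $(S^3,\xi_{\mathrm{std}})$ whose self-linking number realizes the maximal value $\mathrm{sl}(K)=2g_4(K)-1$; indeed, if $K=\prod_j w_j\sigma_{i_j}w_j^{-1}$ is a quasipositive braid word, the associated quasipositive (Bennequin) surface is a slice surface realizing $g_4(K)$, and its Euler characteristic computation gives $\mathrm{sl}(K)=2g_4(K)-1$ for the transverse representative coming from this braid. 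Here one uses that quasipositive surfaces are genus-minimizing in $D^4$, which follows from the adjunction inequality/Kronheimer--Mrowka, or alternatively is already packaged into the identity $q_M(T(p,q))=\frac12(p-1)(q-1)$ and its symplectic-surface generalization Theorem~\ref{main theo:general}.

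First I would fix a quasipositive knot $K$ and choose such a transverse representative $T$ with $\mathrm{sl}(T)=2g_4(K)-1$. Applying the ``Moreover'' part of Theorem~\ref{BP inequality} to $T$ (which is quasipositive, being transversely isotopic to the standard quasipositive transverse representative) yields the equality
\[
\mathrm{sl}(T)=2\theta^{(p)}(K)-1,
\]
hence $2\theta^{(p)}(K)-1 = 2g_4(K)-1$, i.e.\ $\theta^{(p)}(K)=g_4(K)$. Second, as a sanity/consistency check one observes that this is consistent with the general upper bound $\theta^{(p)}(K)\le g_4(K)$ from \cite{Ba22}, so no independent verification of the inequality direction is needed; the content is entirely in the sharpness of Theorem~\ref{BP inequality} for quasipositive knots together with the existence of a maximal-self-linking transverse representative.

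The main obstacle is purely bookkeeping: one must make sure that the quasipositive structure used in the sharpness clause of Theorem~\ref{BP inequality} is literally the same transverse representative whose self-linking number equals $2g_4(K)-1$, and that the sign/normalization conventions for $\mathrm{sl}$ match those in Theorem~\ref{main theo:general} (where $q_M(K)=\tfrac12\mathrm{sl}(K)+\tfrac12$). Once the conventions are aligned, the corollary is immediate, which is why no proof beyond this remark is required.
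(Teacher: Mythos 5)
Your proposal is correct and is essentially the paper's argument: the corollary is an immediate combination of the sharp Bennequin-type bound of Theorem~\ref{BP inequality} for the quasipositive transverse representative, Rudolph's slice--Bennequin equality $\operatorname{sl}=2g_4(K)-1$ for that representative, and Baraglia's upper bound $\theta^{(p)}(K)\leq g_4(K)$ (which in the paper supplies one of the two inequalities rather than merely a consistency check). Whether one invokes the ``Moreover'' equality clause directly, as you do, or the inequality together with the $g_4$ upper bound, as the paper phrases it, is only a repackaging of the same ingredients.
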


\subsection{Symplectic filling structure on cyclic branched covers}
As the final result in the introduction, we state a result purely described in symplectic topology. 
In the computations of our transverse knot invariant, we shall make use of invariant symplectic structures on the branched covers. 
For this purpose, we prove a result on the existence of invariant symplectic structures on the branched covering spaces when the covering spaces are branched along properly embedded symplectic surfaces in symplectic fillings. 
For closed surfaces, this kind of result has been already known. See \cite{Go98, Wa97, Au00,Au05}. 

\begin{thm}\label{double branch symp}
Let $n\geq 2$ be an integer.
Let $(Y, \xi)$ be a contact integer homology 3-sphare and let $(X, \omega)$ be a weak symplectic filling of $(Y, \xi)$ with 
$H_1(X; \Z)=0$.
Let $S \subset (X, \omega)$ be a properly embedded connected symplectic surface satisfying $PD[S] \in n H^2(X; \Z)$ and $K:=\partial S$ is non-empty and connected. 
Let $\pi: \Sigma_n(S)\to X$ be the $\Z_n$-branched covering space along $S$. Then there exists a $\Z_n$-invariant symplectic form  $\wt{\omega}$ on $\Sigma_n(S)$  with $[\wt{\omega}]=\pi^* [\omega] \in H^2(\Sigma_n(S);\R)$ and 
\[
c_1(\wt{\omega})=\pi^*c_1(\omega)+(1-n) PD[\pi^{-1}(S)] \in H^2(\Sigma_n(S)).
\]
such that $(\Sigma_n(S), \wt{\omega})$ is a weak symplectic filling of $(\Sigma_n(K), \wt{\xi})$.

\end{thm}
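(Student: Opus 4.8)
The plan is to construct $\wt\omega$ by the standard ``interpolation'' strategy: away from a neighborhood of $S$ one can simply pull back $\omega$ (the covering is unramified there), and near $S$ one must produce a local model for an invariant symplectic form on a disk bundle over $S$ that is a branched $n$-fold cover of a neighborhood of $S$ in $X$. First I would recall that, since $PD[S]\in nH^2(X;\Z)$ and $H_1(X;\Z)=0$, the line bundle with first Chern class $\tfrac1n PD[S]$ exists and defines the cyclic cover $\pi\colon \Sigma_n(S)\to X$; write $\wt S=\pi^{-1}(S)$, so that $n\,PD[\wt S]=\pi^*PD[S]$ upstairs. On the complement $X\setminus S$ the cover is honest and $\pi^*\omega$ is symplectic and $\Z_n$-invariant, so the only work is in a tubular neighborhood $\nu(S)$, which we may take to be a symplectic disk bundle by the symplectic neighborhood theorem for symplectic surfaces with boundary (here one must be slightly careful at $\partial S=K$, using that $K$ is transverse in $(Y,\xi)$ and that near $Y$ the filling is a collar on which $\omega$ is controlled by the contact form — this is where I would invoke the weak-filling hypothesis and standard Giroux/Eliashberg normal forms).

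The key local step is the following: model $\nu(S)$ as the total space of a Hermitian line bundle $L\to S$ with curvature form representing $PD[S]$, equipped with the symplectic form $\omega_L=\pi_S^*\omega_S+d(\rho(|v|^2)\theta)$ for a connection $1$-form $\theta$ and a suitable cutoff $\rho$ (Thurston–Gompf construction). The $n$-fold branched cover of $\nu(S)$ along the zero section is then the total space of $L^{1/n}$ (the $n$-th root bundle, which exists because $\tfrac1n PD[S]$ is integral), with covering map given fiberwise by $w\mapsto w^n$; pulling back $\omega_L$ under this map and smoothing at the zero section using the local computation $d(\rho(|w|^{2n})\,n\theta')$ — where $\theta'$ is a connection on $L^{1/n}$ — gives an invariant symplectic form $\wt\omega_\nu$ on the branched neighborhood, with the claimed Chern-class correction $c_1(\wt\omega)=\pi^*c_1(\omega)+(1-n)PD[\wt S]$ coming precisely from the adjunction bookkeeping $c_1(TL^{1/n})=\pi^*c_1(TL)+(1-n)PD[\text{zero section}]$. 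Then I would glue $\pi^*\omega$ on $\pi^{-1}(X\setminus \nu(S))$ to $\wt\omega_\nu$ on the branched neighborhood by a convex-combination argument in the region where both are defined, after rescaling the fiber size so the cohomology classes match; $\Z_n$-invariance is preserved throughout because every choice (metric on $L$, connection $\theta$, cutoff $\rho$) can be taken $\Z_n$-equivariant, and one averages if necessary. The cohomology identity $[\wt\omega]=\pi^*[\omega]$ is arranged by choosing the interpolation to be cohomologically trivial on the overlap.

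Finally, to see $(\Sigma_n(S),\wt\omega)$ is a weak symplectic filling of $(\Sigma_n(K),\wt\xi)$, I would check the boundary condition: near $\partial X=Y$ the surface $S$ meets the collar in $K\times[0,\epsilon)$ transversely, so $\Sigma_n(S)$ near its boundary is the collar $\Sigma_n(K)\times[0,\epsilon)$, and the restriction of $\wt\omega$ to $T\wt\xi$ is positive because it is built from $\pi^*\omega$, which restricts positively to $\pi^*\xi$, together with a contribution supported transverse to the boundary. Here I would cite the construction of $\wt\xi$ in \cite{Gonz, Pla06} as the lift of $\xi$ under the branched cover of $(Y,K)$ and match normal forms. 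The main obstacle I anticipate is the behavior at $\partial S=K$: ensuring the symplectic neighborhood model for $S$ extends compatibly into the filling's collar and that the branched-cover surgery there produces exactly the invariant contact structure $\wt\xi$ on the boundary (rather than merely a contact structure homotopic to it), and that the weak-filling inequality survives the gluing near the corner $K\times\{0\}$. I expect this to require a careful choice of the collar and of the cutoff functions so that all interpolations are supported away from the boundary, reducing the boundary analysis to the already-understood local picture of a transverse knot in a contact $3$-manifold and its cyclic branched cover.
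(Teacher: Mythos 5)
Your overall strategy (pull back $\omega$ away from the branch locus, build an invariant local model near $S$ in the spirit of Thurston--Gompf, then verify the weak-filling condition at the boundary) is the same as the paper's, but the step where you actually produce a global symplectic form has a genuine gap. You propose to ``glue $\pi^*\omega$ \dots to $\wt\omega_\nu$ \dots by a convex-combination argument \dots after rescaling the fiber size so the cohomology classes match.'' Convex combinations of symplectic forms are not symplectic in general (the space of symplectic forms is not convex, even within a fixed cohomology class), and matching $[\wt\omega]=\pi^*[\omega]$ does nothing to restore nondegeneracy on the overlap. Moreover the real difficulty is not on the overlap annulus but at $\wt S$ itself: $\pi^*\omega$ is degenerate in the fiber directions along $\wt S$ (its fiber component is $n^2r^{2(n-1)}\,dx\wedge dy$), and the pull-back of your model form under $w\mapsto w^n$ has the same defect, so ``smoothing at the zero section'' is exactly the step that needs an argument, not a remark. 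The paper's mechanism is different and is what makes the construction work: it defines a single closed $2$-form $\eta$ (built from $\beta(r)\,d\theta$, a partition of unity, the primitive $\tfrac12 r^2d\theta$ in the interior and $\tfrac12 s^2\lambda_f$ near $\partial S$, where $\lambda_f$ is the invariant contact form on $\Sigma_n(K)$) and sets $\wt\omega_t=\pi^*\omega+t\eta$; nondegeneracy for small $t>0$ follows because $\eta$ restricts positively to the fibers along $\wt S$ while $\pi^*\omega$ is symplectic off $\wt S$, so no two-form gluing or convex combination is ever needed.

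Two further points. First, the boundary behavior, which you correctly flag as ``the main obstacle,'' is not resolved in your proposal, whereas it is the second half of the paper's proof: near $\wt K$ one has $\eta=\tfrac12 d(s^2\lambda_f)$ up to terms vanishing where $\rho_\partial=1$, and the inequality $\wt\omega_t(e_1,e_2)>0$ for the explicit framing $e_1,e_2$ of $\wt\xi$ is checked by direct computation; note also that the theorem does not assume $K$ is transverse or that the filling is strong, so one must first deform $\omega$ to a strong filling (as in \cite{OO99}) and apply \cref{symp surf boundary is trans} to make $\partial S$ transverse before any normal-form argument is available. Second, your detour through the $n$-th root bundle $L^{1/n}$ is unnecessary and slightly misstated: since $\partial S\neq\emptyset$ the normal bundle of $S$ is trivial (and its curvature represents the Euler class of $\nu_S$, not $PD[S]$), so the branched neighborhood is just $S\times D^2$ with covering map $(b,z^n)\mapsfrom(b,z)$, which is the simplification the paper exploits; the Chern-class formula $c_1(\wt\omega)=\pi^*c_1(\omega)+(1-n)PD[\wt S]$ is then quoted from Gompf's closed-case computation rather than re-derived from root-bundle adjunction.
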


\begin{acknowledgement}
We would like to extend our deep gratitude to Marco Golla for finding mistakes in earlier drafts on multiple occasions and for providing several examples of symplectic surfaces. 
We also would like to thank David Baraglia for pointing out several errors in an earlier draft. 
Our appreciation also goes to Hirofumi Sasahira and Matthew Stoffregen for sharing the draft of their forthcoming paper. We are thankful to Hisaaki Endo for bringing to our attention a paper about symplectic structures on branched covering spaces and for telling us various examples of branched covering spaces. 
Additionally, we express our gratitude to Tye Lidman, Mathew Hedden, Ko Honda, Tetsuya Itoh, and Mikio Furuta for their invaluable contributions to our discussions. Part of this research was conducted during the "Floer homotopy theory" program hosted at MSRI/SLMath in Fall 2022. The first author acknowledges support from JSPS KAKENHI Grant Number 22J00407, while the second author acknowledges partial support from JSPS KAKENHI Grant Numbers 20K22319, 22K13921, and the RIKEN iTHEMS Program.
\end{acknowledgement}

\section{Symplectic/contact structures on branched covering spaces} 

We first treat $\Z_n$-invariant symplectic structures on the cyclic $n$-th branched covering spaces along symplectic surfaces in symplectic fillings. 
\subsection{Preliminaries}
We state a fundamental result about surfaces in strong symplectic fillings. 
\begin{lem}\label{symp surf boundary is trans}
Let $(Y, \xi)$ be a closed contact 3-manifold and $(X, \omega)$ be a strong symplectic filling of $(Y, \xi)$.
Let $S\subset (X, \omega)$ be a properly embedded symplectic surface.
Then if we suitably deform $\omega$ near $Y$ up to isotopy, $K:=\partial S$ becomes a transverse link.
\end{lem}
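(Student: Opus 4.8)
The plan is to work in the cylindrical collar of the strong filling and use the standard model for a strong symplectic filling near the boundary. By definition of a strong filling, there is a collar neighborhood $(-\varepsilon,0]\times Y$ of $Y=\partial X$ on which $\omega = d(e^t\lambda)$ for a contact form $\lambda$ with $\ker\lambda=\xi$, where $t$ is the collar coordinate and $Y=\{t=0\}$. The Liouville vector field $Z$ dual to $e^t\lambda$ is $Z=\partial_t$ near the boundary, and it is transverse to $Y$ pointing outward. The key point is that ``deforming $\omega$ near $Y$'' means replacing the collar $(-\varepsilon,0]$ by a longer collar $(-\varepsilon, T]$ and then reparametrizing; equivalently, after an isotopy supported near $Y$ we may assume $S$ meets the collar in a product $((-\varepsilon,0]\times K')$-like piece, i.e. $S\cap((-\varepsilon,0]\times Y)$ is $Z$-invariant near $t=0$. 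Concretely, first I would use the fact that $S$ is a properly embedded \emph{symplectic} surface to show $Z$ is not tangent to $S$ along $K=\partial S=S\cap\{t=0\}$: if $Z_q\in T_qS$ at some $q\in K$, then since $T_qS$ is $\omega$-nondegenerate there is $v\in T_qS$ with $\omega(Z_q,v)\neq 0$, i.e. $\iota_Z\omega = e^t\lambda$ evaluated on $v$ is nonzero, so $\lambda(v)\neq 0$; but $v\in T_qS$ and $v$ has a nonzero $\partial_t$-component would then need to be arranged — this requires a small argument which I address below.

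The cleaner route, which I would actually carry out, is: \textbf{(1)} Use the Liouville flow $\phi_s$ of $Z=\partial_t$ to push $S$ into the collar. For small $s>0$, $\phi_{-s}(S\cap\{t\le 0\})$ is still a properly embedded symplectic surface (the flow is conformally symplectic, $\phi_s^*\omega=e^s\omega$, hence preserves the symplectic condition on surfaces), now with boundary on $\{t=-s\}$. \textbf{(2)} The isotopy extension / Moser-type argument then lets us assume, after deforming $\omega$ near $Y$ (lengthening the collar), that $S$ is \emph{cylindrical} near its boundary: $S\cap((-\delta,0]\times Y)=(-\delta,0]\times K$ for the link $K=\partial S\subset Y$, and $S$ is tangent to $\partial_t$ there. \textbf{(3)} Now compute: since $S$ is symplectic and on the cylindrical part $TS$ is spanned by $\partial_t$ and a vector field $w$ tangent to $\{t\}\times Y$, the symplectic condition $\omega|_{TS}\neq 0$ becomes $d(e^t\lambda)(\partial_t,w)=e^t\lambda(w)\neq 0$ (using $d(e^t\lambda)=e^t dt\wedge\lambda + e^t d\lambda$ and that $dt$ kills $w$ while $d\lambda(\partial_t,\cdot)=0$). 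Hence $\lambda(w)\neq 0$ everywhere along $K$, i.e. $w\pitchfork\xi$, which is exactly the statement that $K$ is a transverse link, with orientation/sign fixed by the sign of $\lambda(w)$.

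The main obstacle is Step (2): justifying that after an isotopy (and an allowable deformation of $\omega$ near $Y$) the surface $S$ becomes cylindrical near the boundary, i.e. tangent to the Liouville field. One must be careful that the isotopy of $S$ is \emph{through properly embedded symplectic surfaces} and that the deformation of $\omega$ stays a strong filling of the same contact structure $(Y,\xi)$. I would handle this with a parametrized Moser argument in the collar: the two symplectic forms $\omega$ and the ``straightened'' form agree to infinite order (or can be made to agree) on $Y$ and both are exact on the collar, so Moser gives a diffeomorphism of the collar, isotopic to the identity and equal to the identity on $Y$, intertwining them; pulling back $S$ by this diffeomorphism straightens it. Near $K$ one additionally uses the Weinstein-type normal form for a transverse-to-$Z$ surface to put $S$ in product form; the only real content is that the germ of $S$ along $K$ has no obstruction to being so straightened, which follows because $K$ is $1$-dimensional and the relevant jet space is contractible. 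Once (2) is in place, Steps (1) and (3) are routine computations as indicated.
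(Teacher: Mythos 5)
Your final computation (Step (3)) is exactly the right one, and it is the same as the paper's: once the Liouville field $v$ is tangent to $S$ along a collar of $\partial S$, nondegeneracy of $\omega|_S$ on the plane spanned by $v$ and $\dot K$ gives $(\iota_v\omega)(\dot K)=\omega|_S(v,\dot K)\neq 0$, i.e.\ $K$ is transverse to the contact structure $\ker(\iota_v\omega|_Y)$. The problem is Step (2), which is where all the content lies, and your justification for it is circular. In the collar, with a symplectization-type form $d(e^t\lambda)$, the cylinder $(-\delta,0]\times K$ is a \emph{symplectic} surface if and only if $\lambda(\dot K)\neq 0$, i.e.\ if and only if $K$ is already transverse. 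So there is no Moser diffeomorphism of the collar, equal to the identity on $Y$ and intertwining $\omega$ with a form agreeing with it (to any order) along $Y$, that carries the symplectic surface $S$ to the $\partial_t$-cylinder over the fixed link $K$: at any point of $K$ where $\dot K\in\xi$ the target configuration is not symplectic for such a form. Your appeal to "the relevant jet space is contractible" does not address this; the obstruction is pointwise and is precisely the transversality you are trying to prove. In short, "straighten $S$ to a Liouville-invariant cylinder without changing the contact form along $Y$" presupposes the conclusion.

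What is actually needed, and what the paper imports from \cite[Lemma 2.1]{Et20}, is a genuine deformation of $\omega$ near $Y$ (not of $S$, and not through forms inducing the \emph{same} contact form on $Y$): one constructs a vector field near $Y$, tangent to $S$ along a collar of $\partial S$, which is Liouville for $\omega$ and restricts on $S$ to a Liouville field for $\omega|_S$ (e.g.\ by first choosing a Liouville field for the exact area form $\omega|_S$ near $\partial S$ and extending it suitably to an $\omega$-Liouville field, after modifying $\omega$ near $Y$), and then one checks that the contact structure induced by the new Liouville field is isotopic to $\xi$, so the result is still a strong filling of $(Y,\xi)$ up to isotopy. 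Only after this construction does your Step (3) apply. As written, your proposal omits this construction entirely, so the proof has a genuine gap at its central step; your Step (1) (conformality of the Liouville flow) is fine but does not help close it.
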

\begin{proof}
From \cite[Lemma 2.1]{Et20}, by deforming the symplectic form up to isotopy, one can suppose there is a Liouvile vector field $v$ of $\om$ near $Y$ such that $v|_S$ is also a Liouvile vector field with respect to $\om|_S$.
The result follows because $v_p$ and $\dot{K}:=T_p (\partial S) $ are linearly independent at each point  $p \in K$ and thus
\[
(\iota_v(\om|_{S}))(\dot{K})   >0.
\]
\end{proof}
\subsection{Contact structures on $\Sigma_n(K)$}\label{Contact structures on branched coverring spaces}

In this paper, we mainly treat knots in $S^3$,
but the construction of contact structures on branched covering spaces works for every null-homologous knot in 3-manifolds. Therefore, we record a general argument here. Also, we work with $p$-th branched covering spaces for primes but the argument can be done with any $n$-th branched covering space for any given integer $n \geq 2$. We also note the link case is completely the same as the knot case.

Let $(Y, \xi)$ be a contact closed 3-manifold and $K$ be a null-homologous transverse knot in $(Y, \xi)$ and $\lambda$ be a positive contact form for $\xi$.
Let $n \geq 2$ be an integer.
(Later, in order to ensure that the branched covering is a rational homology 3-sphere, we assume $n$ is prime.)

We denote by 
\[
\pi: \Sigma_n (K) \to Y 
\]
the $n$-fold branched cover along $K$.
We have the standard covering $\Z_n$-action 
\[
\tau: \Sigma_n(K)\to \Sigma_n(K), \quad \tau^n= id.
\]
We shall do this construction taking into account the contact structure. The following construction of the contact structure on the branched cover $\Sigma_n(K)$ is standard. See \cite{Gonz, Pl06, HKP09, Ka18}.

Since a transverse knot is a contact submanifold, we can apply the contact neighborhood theorem to 
take a tubular neighborhood $N_K= S^1\times D^2$ of $ Y$ such that we have a contact morphism
\[
\xi|_{S^1\times D^2} \cong \ker \lambda =\ker (d \phi + r^2 d \theta), 
\]
where $\phi$ and $(r, \theta)$ denote the coordinate of $S^1$ and the polar coordinate of $D^2$ respectively. 
Define $\wt{N}_K := \pi^{-1}(N_K) = S^1\times D^2 $. 
Then, the projection $\pi|_{\wt{N}_K} :S^1\times D^2 = \wt{N}_K  \to N_K = S^1\times D^2 $ can be written as
\[
\pi: (\phi, r, \theta) \mapsto (\phi, r^n, n\theta). 
\]
(Strictly speaking, we should say that the branched cover is constructed using these coordinates.)
Then one has 
\[
\pi^* \lambda  = d \phi + n \cdot r^{2n} d \theta.
\]

Now we define the global 1-form $\lambda _f$ on $\Sigma_n(K)$ by the following way: 
\begin{align}\label{contact form on cov}
\lambda_f : = \begin{cases}
d \phi + f(r)  d \theta \text{  on  }\wt{N}_K= S^1\times D^2  \\ 
\pi^* \lambda  \text{  on  } \Sigma_n (K) \setminus \wt{N}_K, 
\end{cases} 
\end{align}
where $f : [0 , 1]\to [0 , \infty)$ is a strictly increasing smooth function such that $f(r) = r^2$ near $r = 0$ and $f(r) = n\cdot r^{2n}$ near $r = 1$. Then, the isotopy class of $\lambda_f$ does not depend on the choices of $f$. See \cite{Pl06} for more details on uniqueness.  We denote this contact structure by $\wt{\xi}$. Note that we can take a generator of the branched covering transformations $\tau$ so that 
\[
\tau |_{\wt{N}_K} : S^1\times D^2 \to S^1\times D^2 
\]
is written by 
\[
\tau(\phi, r  , \theta) = \left(\phi, r  , \theta+ \frac{2\pi}{n}\right). 
\]
Note that $\tau$ preserves the 1-form $\lambda_f$ on $\wt{N}_K$. Therefore, one has an isotopy class of a contact structure $\wt{\xi}$ on $\Sigma(K)$ satisfying the following conditions: 
\begin{itemize}
    \item[(i)] $\tau_* \wt{\xi}=\wt{\xi}$; 
    \item[(ii)] $\pi_*(\wt{\xi}|_{\pi^{-1}(Y\setminus K)} )  \cong  \xi|_{Y\setminus K}$. 
\end{itemize}
 
 Moreover, we have the following invariance:
 \begin{lem}[\cite{Pla06}]
 Let $K$ and $K'$ be transverse knots in $(Y, \xi)$. Suppose $K$ and $K'$ are transverse isotopic. 
 Then as $\Z_n$-equivariant contact manifolds, we have
 \[
 (\Sigma_n (K), \wt{\xi} ) \cong  (\Sigma_n (K'), \wt{\xi}' ). 
 \]
 \end{lem}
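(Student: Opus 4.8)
The plan is to promote a transverse isotopy $\{K_t\}_{t\in[0,1]}$ from $K=K_0$ to $K'=K_1$ to an ambient contact isotopy, and then lift everything to the branched covers while keeping track of the $\Z_n$-action. First I would invoke the transverse isotopy extension theorem: a transverse isotopy of a transverse knot in $(Y,\xi)$ extends to an ambient isotopy $\Phi_t:Y\to Y$ through contactomorphisms with $\Phi_0=\id$ and $\Phi_t(K_0)=K_t$ (this is the contact analog of the isotopy extension theorem; it follows from integrating a contact vector field, or one can cite the standard reference used for transverse knots, e.g. Geiges). In particular $\Phi_1:(Y,\xi)\to(Y,\xi)$ is a contactomorphism with $\Phi_1(K)=K'$.

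Next I would lift $\Phi_1$ to the branched covers. Since $\Phi_1$ carries $K$ to $K'$, it induces a diffeomorphism of the knot complements $Y\setminus K\to Y\setminus K'$ sending meridians to meridians, hence preserving the relevant index-$n$ subgroup of $\pi_1$ cut out by the $\Z_n$-valued linking-number homomorphism (using that $K,K'$ are null-homologous, so these covers are canonically defined). Therefore $\Phi_1$ lifts to a diffeomorphism $\wt\Phi:\Sigma_n(K)\to\Sigma_n(K')$ of the branched covers, fixing the branch loci setwise; the lift can be chosen to be $\Z_n$-equivariant with respect to the deck transformations $\tau,\tau'$ (the $n$ lifts differ by powers of $\tau'$, and any one of them intertwines $\tau$ with $\tau'$ since conjugating $\tau$ by the lift is a deck transformation of the same order giving the same action on the branch locus).

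Finally I would check that $\wt\Phi$ can be arranged to be a contactomorphism $(\Sigma_n(K),\wt\xi)\to(\Sigma_n(K'),\wt\xi')$. Away from the branch locus this is immediate: by construction (ii) above, $\wt\xi$ restricted to $\pi^{-1}(Y\setminus K)$ is the pullback of $\xi|_{Y\setminus K}$, and likewise for $\wt\xi'$, so $\wt\Phi$ intertwines them there because $\Phi_1$ is a contactomorphism of $Y$ carrying the complement of $K$ to that of $K'$. Near the branch locus one uses the explicit model \eqref{contact form on cov}: both $\wt\xi$ and $\wt\xi'$ are given in standard coordinates $(\phi,r,\theta)$ on $S^1\times D^2$ by $\ker(d\phi+f(r)\,d\theta)$, and by the contact neighborhood theorem one may choose the tubular neighborhoods and coordinates compatibly so that $\wt\Phi$ is the identity in these models. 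Since the isotopy class of $\lambda_f$ is independent of the choice of $f$ (cited above from Plamenevskaya), any discrepancy in the choice of $f$ near the two branch loci is absorbed by a further $\Z_n$-equivariant contact isotopy supported near the branch locus (this isotopy is equivariant because it is built in the $\tau$-invariant local model, where $\tau$ acts by $\theta\mapsto\theta+2\pi/n$ and preserves $\lambda_f$). Composing, we obtain the desired $\Z_n$-equivariant contactomorphism.

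The main obstacle is the bookkeeping at the branch locus: one must simultaneously (a) match the two local contact models, (b) keep the matching $\Z_n$-equivariant, and (c) ensure the global gluing of the near-branch model to the pulled-back contact structure on the complement is consistent on both sides. All three are handled by the uniqueness statement for $\lambda_f$, but making the chosen collar neighborhoods, coordinates, and the auxiliary function $f$ compatible under $\wt\Phi$ requires care; the equivariance is the subtle point, and it is exactly what the $\tau$-invariance of the local form $\lambda_f$ on $\wt N_K$ buys us. I expect no genuine difficulty beyond this, since everything reduces to the standard contact neighborhood theorem applied equivariantly in a model where the group action is free away from the core.
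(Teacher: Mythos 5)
Your argument is correct and is essentially the standard one: the paper itself gives no proof of this lemma, simply citing Plamenevskaya \cite{Pla06}, and your route (transverse isotopy extension to an ambient contact isotopy, an equivariant lift to the branched covers using the linking-number description of the $\Z_n$-cover, and matching the local models near the branch locus via the uniqueness of $\lambda_f$ up to $\Z_n$-invariant isotopy) is exactly the argument underlying that reference and the construction in the paper's Subsection on contact structures on $\Sigma_n(K)$. The only point worth tightening is the last patching step: to keep the Gray/Moser isotopy near the branch locus $\Z_n$-equivariant one should run it with $\tau$-invariant contact forms in the invariant local model, which is precisely what the $\tau$-invariance of $\lambda_f$ provides, as you indicate.
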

 
The following computation of  $d_3$-invariants is confirmed in \cite{It17}:
\begin{align}\label{Itoh formula}
d_3(\Sigma(K), \wt{\xi})  =  -\frac{3}{4}\sum_{w: w^n=1} \sigma_w(K)-\frac{n-1}{2}  \operatorname{sl}(K)-\frac{1}{2}n
\end{align}
for every transverse knot in $(S^3, \xi_{std})$. We shall use it in the computation of the grading of our equivariant contact invariant. 

Also, we observe which kinds of spin$^c$ structures appear as $\wt{\xi}$ on the branched covering spaces.
Recall that for an oriented 2-plane field $\xi$ on a closed oriented 3-manifold $Y$, an isomorphism class of spin$^c$ structure $\s_\xi=(S_\xi, \rho_\xi)$ is determined.
The spinor bundle $S_\xi$ is given by
\[
S_\xi=\underline{\C}\oplus \xi
\]
and thus
\[
c_1(\s_\xi)=c_1(\xi) \in H^2(Y; \Z )
\]
holds, where we regard $\xi$ as a complex line bundle on $Y$.
The following observation is a generalization of \cite[Lemma 1]{Pl06} for the case of general cyclic $n$-th branched coverings. 

\begin{prop}\label{spinness of contact str}
Let  $n$ be an integer greater than 1 (not necessarily a power of a prime number) and $Y$ be an oriented homology $3$-sphere. 
We also fix a knot $K$ in $Y$.
The induced spin$^c$ structure $\mathfrak{s}_{\wt{\xi}}$ satisfies the following equality
\[
c_1(\mathfrak{s}_{\wt{\xi}}) =0 \in H^2(\Sigma _n(K); \Z ), 
\]
where $\wt{K} = \pi^{-1}(K)$.
In particular, $\mathfrak{s}_{\wt{\xi}}$ is spin and invariant under $\Z_n$-action. 
\end{prop}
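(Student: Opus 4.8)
Since the spinor bundle is $S_{\wt\xi}=\underline{\C}\oplus\wt\xi$, we have $c_1(\mathfrak{s}_{\wt\xi})=c_1(\wt\xi)$, so it suffices to prove $c_1(\wt\xi)=0$. The plan is to localize $c_1(\wt\xi)$ near the branch locus $\wt K:=\pi^{-1}(K)$ and then show that $\wt K$ is null-homologous. Put $M_0:=\Sigma_n(K)\setminus\operatorname{int}\wt{N}_K$. By the construction \eqref{contact form on cov}, $\lambda_f=\pi^*\lambda$ on $M_0$, and since $\pi|_{M_0}$ is an honest $n$-fold covering, $d\pi$ identifies $\wt\xi|_{M_0}=\ker(\pi^*\lambda)$ with $\pi^*\xi|_{M_0}$ as complex line bundles. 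As $Y$ is a homology $3$-sphere, $c_1(\xi)\in H^2(Y;\Z)=0$, so $\xi$ is a trivial line bundle and hence $\wt\xi|_{M_0}$ is trivial. Choose a nowhere-zero section of $\wt\xi$ over $M_0$ and extend it to a generic global section $\widehat{s}$ of the oriented rank-$2$ bundle $\wt\xi$ over the closed $3$-manifold $\Sigma_n(K)$; then $Z:=\widehat{s}^{-1}(0)$ is a closed $1$-manifold contained in $\operatorname{int}\wt{N}_K$ with $\PD[Z]=c_1(\wt\xi)$. Since $\wt{N}_K\cong S^1\times D^2$ deformation retracts onto its core $\wt K$, we get $[Z]=m[\wt K]$ in $H_1(\Sigma_n(K);\Z)$ for some $m\in\Z$, that is, $c_1(\wt\xi)=m\cdot\PD[\wt K]$.

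It remains to show $[\wt K]=0$ in $H_1(\Sigma_n(K);\Z)$. Choose a connected Seifert surface $F\subset Y$ with $\partial F=K$. Every loop $\gamma$ in the interior of $F$ has $\operatorname{lk}(\gamma,K)=0$, since a small pushoff of $\gamma$ off $F$ is disjoint from $F$; hence $\pi_1(F)\to H_1(Y\setminus K)\to\Z_n$ is trivial, so $F$ lifts through the unbranched part of the covering. Inspecting the ramified local model $\pi\colon(\phi,r,\theta)\mapsto(\phi,r^n,n\theta)$ near $\wt K$, one sees that $\pi^{-1}(F)$ is a disjoint union of $n$ copies of $F$, each mapping homeomorphically onto $F$ under $\pi$. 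Taking $\wt F$ to be one of these copies yields an embedded surface in $\Sigma_n(K)$ with $\partial\wt F=\wt K$, so $[\wt K]=[\partial\wt F]=0$. Therefore $c_1(\wt\xi)=0$ and $c_1(\mathfrak{s}_{\wt\xi})=0$.

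The remaining assertions are formal. Since $\tau$ is an orientation-preserving diffeomorphism with $\tau_*\wt\xi=\wt\xi$, we have $\tau^*\mathfrak{s}_{\wt\xi}=\mathfrak{s}_{\tau^*\wt\xi}=\mathfrak{s}_{\wt\xi}$, so $\mathfrak{s}_{\wt\xi}$ is $\Z_n$-invariant; and on a closed oriented (hence parallelizable) $3$-manifold a spin$^c$ structure whose first Chern class vanishes is induced by a spin structure, so $\mathfrak{s}_{\wt\xi}$ is spin. The crux of the proof is the null-homology $[\wt K]=0$; the reduction $c_1(\wt\xi)=m\cdot\PD[\wt K]$ is routine obstruction theory, and the main point to be careful about is the ramified local model near $\wt K$, which is exactly what guarantees that the lifted Seifert surface $\wt F$ has boundary equal to $\wt K$ (rather than to a multiple of it).
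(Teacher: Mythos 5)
Your proof is correct, but it takes a genuinely different route from the paper's. The paper works with the knot exterior cover $E_n$: it computes $H^1(E_n;\Z)\cong\Z$ by cutting along lifted Seifert surfaces and applying Mayer--Vietoris, deduces from the Mayer--Vietoris sequence of $\Sigma_n(K)=E_n\cup (S^1\times D^2)$ that the restriction map $H^2(\Sigma_n(K))\to H^2(E_n)$ is injective, and then concludes from $c_1(\wt{\xi})|_{E_n}=\pi^*\bigl(c_1(\xi)|_{E}\bigr)=0$. You instead localize the Euler class: trivializing $\wt{\xi}$ over the unbranched part (again via $c_1(\xi)=0$ on the homology sphere) gives $c_1(\wt{\xi})=m\cdot\operatorname{PD}[\wt{K}]$, and you kill this class by exhibiting a lifted Seifert surface with boundary $\wt{K}$. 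By Poincar\'e--Lefschetz duality the two cruxes are equivalent --- the kernel of $H^2(\Sigma_n(K))\to H^2(E_n)$ is generated by $\operatorname{PD}[\wt{K}]$, so the paper's injectivity is precisely your assertion $[\wt{K}]=0$ --- but your argument produces the bounding surface geometrically where the paper argues homologically, and your write-up has the additional merit of explicitly verifying the ``in particular'' clauses (spinness and $\Z_n$-invariance) that the paper leaves implicit. One phrasing should be corrected: $\pi^{-1}(F)$ is \emph{not} a disjoint union of $n$ copies of $F$, since all $n$ sheets contain the branch circle $\wt{K}$ and meet along it; what is true, and is all you use, is that the closure of each component of $\pi^{-1}(F\setminus K)$ is an embedded copy of $F$ mapped homeomorphically by $\pi$, whose boundary traverses $\wt{K}$ exactly once --- exactly what your inspection of the local model $(\phi,r,\theta)\mapsto(\phi,r^n,n\theta)$ establishes.
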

\begin{proof}
Let $E=Y\setminus \overset{\circ}{N}(K)$ be the knot exterior and $E_n \to E$ be the $\Z_n$ covering.
We write the $n$-th branched covering space $\Sigma_n(K)$ as the union $E_n \cup_{S^1\times S^1} S^1 \times D^2$. 
First, we can check $H^1(E_n; \Z)=\Z $, by decomposing $E_n$ into $n$ pieces along $n$ copies of Seifert surfaces and applying the Mayer-Veitoris long exact sequence inductively.
For example, when $n=2$, let us denote by
\[
E_2=E_{(0)} \cup E_{(1)}
\]
a decomposition along two copies of Seifert surfaces.
Now the Mayer-Veitoris long exact sequence 
\[
0 \to H^0(E_2) = \Z \to H^0(E_{(0)} \amalg E_{(1)}) = \Z \oplus \Z \to H^0(F \amalg F) = \Z \oplus \Z \to H^1(E_2) \to H^1(E_{(0)} \amalg E_{(1)})=  0 
\]
 shows $H^1(E_2; \Z )\cong \Z$.
The general $n$ case is similar.
\par
Next, we will show that the restriction map
\[
H^2(\Sigma_n(K))\to H^2(E_n)
\]
is injective, using the Mayer-Vietoris long exact sequence of $\Sigma_n(K)=E_n \cup D^2\times S^1$.
This injectivity implies
$c_1(\mathfrak{s}_{\wt{\xi}}) = \pi^* c_1 ( \s_\xi )=0 \in H^2(\Sigma_n (K))$, since its restriction to $ H^2(E_n)$ is the same.
The Mayer-Vietoris long exact sequence of $\Sigma_n(K)=E_n \cup D^2\times S^1$ : 
\[
\to H^1(E_n)\oplus H^1(D^2\times S^1)\xrightarrow{\cong} H^1(S^1\times S^1)\to H^2(\Sigma_n(K))\to H^2(E_n) \oplus H^2(D^2\times S^1)\to 
\]
can be computed as 
\[
\to\Z\oplus \Z\to \Z\oplus \Z \xrightarrow{0} H^2(\Sigma_n(K)))\to H^2(E_n) \oplus 0\to .
\]
This implies the injectivity that we wanted to show.
\end{proof}

\subsection{Symplectic structures on branched covering spaces }
In this section, we will prove the following:
\begin{thm}[\cref{double branch symp}]
Let $n\geq 2$ be an integer.
Let $(Y, \xi)$ be a contact integer homology 3-sphare and let $(X, \omega)$ be a weak symplectic filling of $(Y, \xi)$ with 
$H_1(X; \Z)=0$.
Let $S \subset (X, \omega)$ be a properly embedded symplectic surface in a compact symplectic 4-manifold satisfying $PD[S] \in n H^2(X; \Z)$ and $K:=\partial S$ is non-empty and connected, 
where $PD$ denotes the Poincar\'e duality. 
Let $\Sigma_n(S)\to X$ be the $\Z_n$  branched covering. 

Then there exists a $\Z_n$ invariant symplectic form  $\wt{\omega}$ on $\Sigma_n(S)$  with $[\wt{\omega}]=\pi^* [\omega] \in H^2(\Sigma_p(S);\R)$ and 
\[
c_1(\wt{\omega})=\pi^*c_1(\omega)+(1-n) PD[\wt{S}] \in H^2(\Sigma_n(S))
\]
such that $(\Sigma_n(S), \wt{\omega})$ is a weak symplectic filling of $(\Sigma_n(K), \wt{\xi})$, where $\wt{S}$ is the inverse image of S under the covering projection $\pi$.

\end{thm}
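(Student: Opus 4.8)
The plan is to construct $\wt\omega$ by interpolating between two symplectic forms: the pullback $\pi^*\omega$ away from the branch locus, and a model symplectic form near $\wt S = \pi^{-1}(S)$ that unwinds the branching. First I would set up the normal-form data: since $S$ is a symplectic surface, its symplectic normal bundle $\nu_S \to S$ carries a fibrewise symplectic (hence linear complex) structure, and by the symplectic neighbourhood theorem a tubular neighbourhood $N_S$ of $S$ in $X$ is symplectomorphic to a disc-bundle model $(D(\nu_S), \omega_S + \omega^{\mathrm{fib}})$, where $\omega_S = \omega|_S$ and $\omega^{\mathrm{fib}}$ is a fibrewise area form twisted by a connection on $\nu_S$. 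The hypothesis $PD[S] \in nH^2(X;\Z)$ is exactly what makes the $\Z_n$-branched cover along $S$ well-defined (one needs the normal circle of $S$ to be $n$-divisible in $H_1(X \setminus S)$, which follows since $H_1(X;\Z)=0$ forces $H_1(X\setminus S;\Z)\cong \Z$ generated by the meridian and the condition guarantees the relevant index-$n$ subgroup exists); over $N_S$ the cover is the fibrewise $n$-fold branched cover $z \mapsto z^n$ of the disc bundle, so $\pi^{-1}(N_S)$ is again a disc bundle over $\wt S \cong S$, with normal bundle $\nu_{\wt S}$ satisfying $\nu_{\wt S}^{\otimes n} \cong \pi^*\nu_S$ fibrewise.

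Next I would build $\wt\omega$ on $\pi^{-1}(N_S)$ directly as $\pi^*\omega_S$ pulled back to $\wt S$ plus a fibrewise symplectic form $\wt\omega^{\mathrm{fib}}$ on the model disc bundle — this is a genuine symplectic form on a neighbourhood of $\wt S$, manifestly $\Z_n$-invariant since the deck transformation acts by rotation in the disc fibres. Away from $\wt S$ we already have the symplectic form $\pi^*\omega$. The key analytic step is to glue these across an annular region $\pi^{-1}(N_S \setminus S)$: both forms are cohomologous to $\pi^*[\omega]$ there (the model fibrewise form and the pullback of the fibrewise form differ by an exact term once we match the connection/area normalisations), so by a Moser-type argument — choosing a primitive of the difference supported in the annular collar and interpolating — one produces a single closed form that agrees with $\pi^*\omega$ outside a neighbourhood of $\wt S$ and with the model near $\wt S$, and is symplectic throughout provided the interpolation region is taken thin enough (the leading fibrewise term dominates). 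Averaging over $\Z_n$ at the end, or carrying out all choices $\Z_n$-equivariantly from the start, yields the invariant $\wt\omega$ with $[\wt\omega] = \pi^*[\omega]$.

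The cohomological formula for $c_1(\wt\omega)$ then follows from a local computation: $\wt\omega$ is compatible with an almost complex structure $\wt J$ which over $\pi^{-1}(N_S)$ is the pullback-model complex structure, so $c_1(\wt\omega) = c_1(\wt S, \wt J) = \pi^* c_1(\omega)$ away from $\wt S$, while the canonical bundle picks up the branching correction exactly as in the Riemann--Hurwitz formula for the fibrewise $z \mapsto z^n$ map, contributing $(1-n)PD[\wt S]$; this is the standard adjustment appearing in the closed case \cite{Au00, Au05} and the computation is purely local along $\wt S$. Finally, for the filling condition: $\Sigma_n(S) \to X$ is an honest covering over a collar of $Y$ (since $\partial S = K$ is transverse by \cref{symp surf boundary is trans}, and the branch locus meets the collar as $\wt N_K$ from the construction in \cref{Contact structures on branched coverring spaces}), and near the boundary $\wt\omega = \pi^*\omega$; weak fillability is a condition on $\omega|_\xi > 0$, which pulls back correctly because $\pi_*(\wt\xi|_{\pi^{-1}(Y\setminus K)}) \cong \xi|_{Y\setminus K}$ and by continuity extends across $\wt K$. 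I expect the main obstacle to be the gluing/Moser step: one must check that the primitive of the difference form can be chosen compactly supported in the thin collar and that symplecticity is preserved throughout the interpolation — this requires a careful quantitative estimate showing the fibrewise area term dominates the error terms, together with verifying that all choices can be made $\Z_n$-equivariantly so no averaging is needed in the region where averaging could destroy the gluing.
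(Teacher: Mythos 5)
The main gap is in your treatment of the weak-filling condition at the corner where the branch surface meets the boundary. You assert that near $\partial\Sigma_n(S)$ one has $\wt{\omega}=\pi^*\omega$ and that positivity on $\wt{\xi}$ ``extends across $\wt{K}$ by continuity.'' This fails exactly where it matters: $\wt{S}$ meets the boundary along $\wt{K}$, so the neighbourhood of $\wt{S}$ where your model form (not $\pi^*\omega$) lives necessarily reaches the boundary, and moreover $\pi^*\omega$ is \emph{degenerate} on $\wt{\xi}$ along $\wt{K}$. In the coordinates of \cref{Contact structures on branched coverring spaces}, $\wt{\xi}=\ker(d\phi+f(r)d\theta)$ is spanned near $\wt{K}$ by $e_1=\partial_x+r^{-2}f(r)y\,\partial_\phi$, $e_2=\partial_y-r^{-2}f(r)x\,\partial_\phi$, while $(\pi^*\omega)(e_1,e_2)=n^2r^{2(n-1)}\to 0$ as $r\to 0$ for $n\geq 2$; also $\pi^*\lambda$ itself fails to be contact along $\wt{K}$, which is why $\wt{\xi}$ is defined via the interpolated form $\lambda_f$ rather than by pullback. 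Hence positivity of $\omega$ on $\xi|_{Y\setminus K}$ only gives nonnegativity of $\pi^*\omega$ on $\wt{\xi}$ in the limit, not the filling inequality at $\wt{K}$. The paper's proof (and any correct version of yours) must build a correction adapted to $\lambda_f$ into the form near the corner --- in the paper this is the term $d(\rho_\partial\,\delta_\partial)$ with $\delta_\partial=\tfrac12 s^2\lambda_f-\Lambda$, and one then checks $\wt{\omega}_t(e_1,e_2)=n^2r^{2(n-1)}+t\,\Omega(e_1,e_2)+2t\rho_\partial>0$ by hand. Your proposal never constructs such a term, so the weak-filling statement is unproved.

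A secondary issue is the gluing mechanism itself. The paper follows Gompf: keep the closed form $\pi^*\omega$ (symplectic off $\wt{S}$, degenerate along it) everywhere and add $t\eta$ for a fixed global closed $2$-form $\eta$ that is positive on the normal fibres of $\wt{S}$ and incorporates $\lambda_f$ near the boundary; positivity for small $t$ follows because the error is $O(t^2)$ against a cross term of order $t$, with no shrinking of collars. Your Moser-type interpolation between two honest symplectic forms can be made to work in the exact product model furnished by \cref{symplectic neighborhood}, but not by ``taking the interpolation region thin enough'': on an annulus $r\in[\epsilon,2\epsilon]$ the cutoff derivative is of size $1/\epsilon$ while the difference of primitives ($\tfrac12 r^2d\theta$ versus $\tfrac n2 r^{2n}d\theta$) is $O(\epsilon^2)$, so the two contributions to the radial profile are of the same order and nondegeneracy is not automatic; you need a monotone interpolation of the radial profile (as the paper does for the contact form via $f$) or the small-$t$ trick. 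Finally, ``averaging over $\Z_n$ at the end'' is not safe --- an average of symplectic forms need not be nondegenerate --- so you must make all choices equivariant from the start, which is automatic in the paper's construction since $\pi^*\omega$ and $\eta$ are $\Z_n$-invariant by design.
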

Again it is not hard to generalize this theorem to the link case without any change.

We use the following symplectic neighborhood theorem adapted to the surface with a boundary.
\begin{lem}\label{symplectic neighborhood}
Let $(Y, \xi)$ be a contact integer homology 3-sphere and let $(X, \omega)$ be a weak symplectic filling of $(Y, \xi)$.
Let $S \subset (X, \omega)$ be a properly embedded connected symplectic surface in a compact symplectic 4-manifold, and $K:=\partial S$ is non-empty.
Then there exist a tubular neighborhood $N_S$, its trivialization $N_S \cong S \times D^2$ and a symplectomorphism
\[
(N_S, \omega)\cong (S\times D^2, \omega|_S+\omega|_\text{fiber}).
\]
\end{lem}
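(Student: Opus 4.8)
The plan is to prove Lemma~\ref{symplectic neighborhood} by a relative version of the Moser/Weinstein neighborhood argument, reducing the problem to the closed-surface case by a careful analysis near $K=\partial S$. First I would fix the model: equip $S\times D^2$ with the product form $\omega_0:=\omega|_S+\omega_{\mathrm{fiber}}$, where $\omega|_S$ is the restriction of $\omega$ to $S$ (a symplectic area form on the surface with boundary, which exists since $S$ is a symplectic submanifold) and $\omega_{\mathrm{fiber}}$ is a fixed area form on $D^2$. Along the zero section $S\times\{0\}$ this form restricts to $\omega|_S$ and its ``normal part'' is $\omega_{\mathrm{fiber}}$, so the symplectic normal bundle of $S\times\{0\}$ in the model agrees with the symplectic normal bundle $\nu(S)$ of $S$ in $X$ once we trivialize $\nu(S)\cong S\times\mathbb{C}$ — and such a trivialization exists because $S$ retracts onto a wedge of circles (having nonempty boundary), so every complex line bundle over it is trivial.

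Next I would build a diffeomorphism germ from a tubular neighborhood $N_S$ of $S$ in $X$ to a neighborhood of the zero section in $S\times D^2$ that is the identity on $S$ and whose differential along $S$ carries $\omega$ to $\omega_0$; this is the linear-algebra step, using that the symplectic normal bundles are isomorphic as symplectic vector bundles and that, over $S$, one can choose the isomorphism to also respect a compatible complex structure. Pulling back, I get two symplectic forms $\omega$ and $\omega_0$ on (a neighborhood of the zero section in) $S\times D^2$ that agree to first order along $S\times\{0\}$. I would then run a relative Moser argument: set $\omega_t=(1-t)\omega_0+t\omega$, note all $\omega_t$ are symplectic near $S\times\{0\}$ (by the first-order agreement and compactness of $S$... but here is the subtlety: $S$ is noncompact if we think of it as a manifold, yet it is compact \emph{with boundary}, so uniform nondegeneracy on a neighborhood does hold), write $\omega-\omega_0=d\beta$ with $\beta$ vanishing to second order along $S\times\{0\}$ (relative Poincar\'e lemma, shrinking toward the zero section), solve $\iota_{X_t}\omega_t=-\beta$ for a time-dependent vector field $X_t$ vanishing along $S\times\{0\}$, and integrate its flow for a short time; the flow exists on a possibly smaller neighborhood because $X_t$ vanishes on the zero section, and it conjugates $\omega_0$ to $\omega$ while fixing $S$. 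Restricting to a genuine $S\times D^2$ of small enough fiber radius (and reparametrizing the disk) gives the claimed symplectomorphism.

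The main obstacle I expect is the behavior at the boundary $K=\partial S$: the Moser vector field $X_t$ must be tangent to the boundary faces so that its flow stays within $N_S$ and within the model, and one must ensure the relative Poincar\'e lemma and the tubular-neighborhood identification can be carried out \emph{compatibly with} the contact boundary structure near $Y=\partial X$, since the eventual goal (used in Theorem~\ref{double branch symp}) is that the construction interfaces correctly with the Liouville/weak-filling structure near $Y$. I would handle this by first invoking Lemma~\ref{symp surf boundary is trans} (or rather the deformation in \cite[Lemma 2.1]{Et20}, already cited) so that near $Y$ the form has a Liouville field $v$ with $v|_S$ Liouville for $\omega|_S$; then the product model can be chosen to respect the collar $Y\times(-\epsilon,0]$, and the Moser isotopy can be taken to commute with the collar direction, so that $X_t$ is automatically tangent to the boundary. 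With that in place, the interior argument and the boundary argument patch together via a cutoff supported away from $\partial X$, and one obtains the symplectomorphism $(N_S,\omega)\cong(S\times D^2,\omega|_S+\omega|_{\mathrm{fiber}})$ as stated. One should also record that the trivialization $N_S\cong S\times D^2$ produced this way is the one induced by the chosen trivialization of $\nu(S)$, which is what is needed downstream when computing $c_1(\wt\omega)$ on the branched cover.
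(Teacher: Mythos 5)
Your proposal is essentially the paper's own proof: the paper disposes of this lemma in one line as ``a simple adaptation of the usual proof of the symplectic neighborhood theorem using the Moser method'' (citing \cite[Theorem 3.4.10]{MS17}), which is exactly the relative Weinstein/Moser argument you spell out, including the triviality of the normal bundle coming from $\partial S\neq \emptyset$. Your additional care at the boundary face near $K$ (arranging the Moser field tangent to $\partial X$ via the collar/Liouville deformation of \cite{Et20}) is the right subtlety to flag and is consistent with how the lemma is used in the paper, where those boundary deformations are performed before the lemma is invoked.
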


\begin{proof}
This is a simple adaptation of the usual proof of the symplectic neighborhood theorem using the Moser method.
See \cite[Theorem 3.4.10]{MS17}, for example.
\end{proof}
Now let us prove \cref{double branch symp}
\begin{proof}[Proof of \cref{double branch symp}]
The following proof is based on Gompf's construction for closed symplectic manifolds \cite{Go98}.
In our case, we need to deal with the boundaries of the symplectic 4-manifold $X$ and the symplectic surface $S$ in addition. The assumption $\partial \neq \emptyset$ makes the argument simpler since this ensures that the normal bundle of $S$ is trivial. See also related constructions \cite{Wa97, Au00,Au05}. 

First, without loss of generality, using isotopy, we may assume $\om$ is a strong symplectic filling of $\wt{\xi}$ \cite{OO99}.  
Moreover, from \cref{symp surf boundary is trans}, again using isotopy, we can assume $\partial S =K$ is a transverse knot in $(Y, \xi)$. In the rest of the proof, we assume these two facts. 
As in \cref{Contact structures on branched coverring spaces}, we equip a $\Z_n$-invariant contact structure $\wt{\xi}$ on $\Sigma_n(K)$.

We first fix the following data: 
\begin{itemize}
    \item A tubular neighborhood $N_K= S^1\times D^2$ of $K$ in $Y$ such that 
\[
\xi|_{S^1\times D^2} \cong \ker \lambda=\ker (d \phi + r^2 d \theta), 
\]
where $\phi, r, \theta$ are the same coordinates as before. 
\item 
A tubular neighborhood $N_S$ of $S$ in $X$ and its identification with normal bundle $\nu_S$ of $S \subset X$.
Since we assume $\partial S $ is nonempty, we can take a trivialization of the normal bundle.

\item 
 A coordinate of the cyclic $n$-th branched cover so that the covering projection $\pi:S\times D^2 = N_{\wt{S}}  \to N_S = S\times D^2 $ is described as 
\[
\pi: (b, z) \mapsto (b, z^n), 
\]
for $b \in S$ and $(r, \theta) =z \in D^2$.
Now the $\Z_n$-action can be identified with $\Z_n\subset S^1\subset \C^\times$ action on the fiber of the unit disk bundle of the normal bundle $\C\to \nu_S \to S$.

Moreover, by symplectic neighborhood theorem \cref{symplectic neighborhood} above, we can  identify
\[
(N_S, S, \omega)\cong (S\times D^2 , S\times \{0\} , \om|_{S} +\frac{i}{2}dz \wedge d\bar{z}   )
\]
by a symplectomorphsim and an isotopy.
%In the argument below, we assume $\omega|_\C =\frac{i}{2}dz \wedge d\bar{z}$ for simplicity, since these only differs by a positive function and the argument is similar. 
By composing the  contactomorphism obtained as the restriction of this symplectomorphism to the boundary, we furthermore have a contactomorphism
\[
\xi|_{S^1\times D^2} \cong \ker \lambda=\ker (d \phi + r^2 d \theta). 
\]
\item 
Fix a smooth non-negative  function
$\beta(r)$ ($r \geq 0$) which is $1$ on $r>1/3$ and $0$ near $r=0$.
Regard this as a function on both $X$ and $\Sigma_n(S)$ near the neighborhoods of $S$ in $X$ and $\pi^{-1} S$ in $\Sigma_n(S)$ respectively under the identification above and extend it by 1 outside of the neighborhoods.

\item 
Fix a smaller tublar neighborhood $N'_{\wt{S}}=\wt{S}\times D^2(\frac{1}{2}) \subset \pi^{-1} (N_S)$ of $\wt{S}$ on $\Sigma_n(S)$, and its open  covering 
\[
N'_{\wt{S}}=U_{\operatorname{int}} \cup U_\partial
\]
where $D^2(\frac{1}{2})$ is the two dimensional disk of radius $\frac{1}{2}$ and 
\[
U_\partial=S^1_\phi \times (1, 2]_s \times D^2\left(\frac{1}{2}\right) \subset \wt{S}\times D^2\left(\frac{1}{2}\right) = N'_{\wt{S}}
\]
is a neighborhood of $\partial S$ in $N'_{\wt{S}} $ and $(1, 2]_s $ is a coordinate of normal direction to the boundary $\partial \Sigma_n(S)$. The another open set $U_{\operatorname{int}}$ is chosen so that $\{ U_{\operatorname{int}}, U_\partial \}$ is an open covering of $N'_{\wt{S}}$ and $U_{\operatorname{int}} \cap \partial \Sigma_n (S) = \emptyset$.

By taking the intersection with $\wt{S}$, the covering above gives an open cover 
\[
\wt{S}=(\wt{S} \cap U_{\operatorname{int}}) \cup (\wt{S} \cap U_{\partial}).
\]
Fix a smooth partition of unity $\rho_{\operatorname{int}}, \rho_{\partial}$ for this covering $\{ \wt{S} \cap U_{\operatorname{int}} , \wt{S} \cap U_{\partial}\}$
and, by pull-back by the projection $\wt{S}\times D^2\to D^2$ and multiplying $1-\beta(r)$, extend $\rho_{\operatorname{int}}, \rho_{\partial}$ as the functions to $\Sigma_n(S)$.
\end{itemize}

Define several differential forms as follows: 
\begin{align*}
\Lambda&=\beta(r) d\theta  \text{ on } N_{\wt{S}} \\
\tau&=d\Lambda=\beta'(r)dr\wedge d\theta \text{ on } \Sigma_n (S)  \\
\delta_{\operatorname{int}} & =\frac{1}{2}r^2d\theta -\Lambda  \text{ on } U_{\operatorname{int}}  \\
\delta_{\partial}&=\frac{1}{2} s^2 \lambda_f-\Lambda  \text{ on } U_{\partial}\\
\eta&=\tau+d(\rho_{\operatorname{int}} \delta_{\operatorname{int}})+d(\rho_{\partial} \delta_{\partial}),
\end{align*}
where $\lambda_f$ is a contact form introduced in \eqref{contact form on cov}. 
Finally, for $t \in \R$, set 
\[
\wt{\omega}_t=\pi^* \omega+t \eta.
\]
We will see that for a sufficiently small $t>0$ this gives a desired symplectic form.
Notice that this is obviously a closed 2-form and $\Z_n$-invariant.
Thus it is enough to show
\begin{enumerate}
\item
\[
\wt{\omega}_t\wedge \wt{\omega}_t>0
\]
\item 
\[
\wt{\omega}_t|_{\wt{\xi}}>0
\]
for all sufficiently small $t>0$.
\end{enumerate}
First, let us show 
\[
\wt{\omega}_t\wedge \wt{\omega}_t>0.
\]
Since $\wt{\omega}_t=\pi^* \omega$ outside $N_{\wt{S}}$, it is enough to check
$\wt{\omega}_t\wedge \wt{\omega}_t>0$ on $N_{\wt{S}}$.
On $N_{\wt{S}}$ we have
\[
\wt{\omega}_t\wedge \wt{\omega}_t
=\pi^*\omega\wedge \pi^*\omega+2t (\pi^*\omega)\wedge \eta+t^2 \eta \wedge \eta.
\]

For sufficiently small $t>0$, it is enough to check 
$(\pi^*\omega)\wedge \eta>0$ on $\wt{S}$ since the term $t^2 \eta \wedge \eta$ has  order $t^2$ as $t \to 0$.
Since 
$\pi^*\omega|_{\wt{S}}=0$ on $\wt{S}$, it is enough to show $\eta|_{(\nu_{\wt{S}})_b = (N_{\wt{S}})_b }>0$ for every $b \in \wt{S}$.
Near $\partial S$, we have $\eta=\frac{1}{2}d(s^2 \lambda_f)$ so it is clear.
\par
Now let us denote by $\iota: D^2 = (N_{\wt{S}})_b \to N_{\wt{S}}$  an inclusion of a fiber.
Outside the neighborhood of $\partial S$, we have
\begin{align*}
\iota^*\eta&=\iota^*[\tau+d(\rho_{\operatorname{int}} \delta_{\operatorname{int}}))+d(\rho_{\partial} \delta_{\partial})]\\
&=\rho_{\operatorname{int}} \iota^*(\tau+d\delta_{\operatorname{int}})+\rho_{\partial} \iota^*(\tau+d\delta_{\partial})>0
\\
&=\rho_{\operatorname{int}} \iota^*(\frac{1}{2}d(r^2d\theta))+\rho_{\partial} \iota^*(\frac{1}{2}d(s^2\lambda_f))>0.
\end{align*}
Here we use the fact that $\rho_{\operatorname{int}}, \rho_{\partial}$ is constant along the fiber direction near $\wt{S}$ and thus $\iota^* d\rho_{\operatorname{int}}=\iota^* d\rho_{\partial}=0$.
\par

Second, let us check \[
\wt{\omega}_t|_{\wt{\xi}}>0.
\]
On $\Sigma_n (K) \setminus N_{\wt{K}}$, we have 
\[
\wt{\omega}_t=\pi^*\omega
\]
\[
\wt{\lambda}=\pi^*\lambda
\]
and thus
this follows from the assumption that $\omega$ is a weak filling of $\lambda$.
So it is enough to show $\wt{\omega}_t|_{\wt{\xi}}>0$ on $N_{\wt{K}}$.
Recall that we wrote
\begin{align*}
\wt{\xi}&=\ker(d\phi+f(r)d\theta)\\
&=\ker(d\phi+r^{-2}f(r)(x dy-ydx))\\
& =\operatorname{Span}\{e_1, e_2\},
\end{align*}
where $(x, y)$ denotes the coordinate of $D^2$ of $S^1 \times D^2 = N_{\wt{K}}$.  
Here we set 
\[
e_1=\partial_x +r^{-2}f(r)y \partial_\phi, \quad e_2=\partial_y-r^{-2}f(r)x\partial_\phi
\]
It is enough to show $\wt{\omega}_t(e_1, e_2)>0$.
On $N_{\wt{S}} \cap N_{\Sigma_n(K)}$, we have 
\begin{align*}
\eta&=\tau+d(\rho_{\partial} \delta_\partial)\\
&=\tau+(d\rho_\partial)\wedge \delta_\partial+\rho_\partial d\delta_{\partial}\\
&=(1-\rho_\partial)\tau+(d\rho_\partial)\wedge \delta_\partial+ \frac{\rho_\partial}{2}d(s^2\wt{\lambda})\\
&=\Omega+\frac{\rho_\partial}{2}d(s^2\wt{\lambda}).
\end{align*}
Here we set the positive basis of the plane field as
\[
\Omega=(1-\rho_\partial)\tau+(d\rho_\partial)\wedge \delta_\partial.
\]
Now we have
\begin{align*}
\wt{\omega}_t(e_1, e_2) &=(\pi^*\omega)(e_1, e_2)+t\Omega(e_1, e_2)+ \frac{t\rho_\partial}{2}d(s^2\wt{\lambda})(e_1, e_2)\\
&=n^2r^{2(n-1)}+t\Omega(e_1, e_2)+2t \rho_{\partial}
\end{align*}
Here we use $s=2$ on $\partial S$
and 
\begin{align*}
&(\pi^*\omega)(e_1, e_2)
\\
&=[\pi^*\omega|_S+n^2 r^{2(n-1)}dx \wedge dy](e_1, e_2)\\
&=n^2 r^{2(n-1)}dx \wedge dy (\partial_x +r^{-2}f(r)y \partial_\phi, \partial_y-r^{-2}f(r)x\partial_\phi)\\
&=n^2r^{2(n-1)}.
\end{align*}
For sufficiently small $t>0$, we have $\wt{\omega}_t|_{\wt{\xi}}>0$ outside the region with $\rho_\partial=1$.
On the region with $\rho_\partial=1$, we have $\Omega=0$, thus we have seen $\wt{\omega}_t|_{\wt{\xi}}>0$.
\par
Finally, our construction of the symplectic forms on the branched covering spaces is analogous to  \cite[Lemma 1]{Go98}, and $c_1(\wt{\omega})=\pi^*c_1(\omega)+(1-n) PD[\Sigma] $ is proven there. So, the same proof can work.
This completes the proof.
\end{proof}

\section{Rank theorem for equivariant Floer homology}
In this section, we recall the construction of Manolescu's Seiberg--Witten Floer stable homotopy type \cite{Man03}, and its equivariant version due to Baraglia--Hekmati \cite{BH}.
Let $p$ be a prime number and  $\wt{Y}$ be  a rational homology $3$-sphere.
Suppose $\wt{Y}$ is equipped with a \Spinc structure $\s$ and a smooth orientation preserving $\Z_p$-action $\tau$ and the isomorphism class of $\s$ such that 
\[
\tau^* \s \cong \s. 
\]
Let us call such a pair $(\wt{Y}, \s)$ a
$\Z_p$-Spin$^c$ rational homology 3-sphere. 
For a $\Z_p$-Spin$^c$ rational homology 3-sphere $(\wt{Y}, \s)$,  we have a 
$H^*(B(S^1\times \Z_p); \mathbb{F}_p)$ module 
\[
\wt{H}^*_{S^1\times \Z_p}(SWF(\wt{Y}, \s); \mathbb{F}_p)
\]
and a
$H^*(B\Z_p; \mathbb{F}_p)$ module 
\[
\wt{H}^*_{\Z_p}(SWF(\wt{Y}, \s); \mathbb{F}_p).
\]
By applying this to a cyclic branched covering along a knot, we obtain Seiberg--Witten Floer stable homotopy type for a knot in the 3-sphere.
When we consider the cyclic $p$-th branched coverings for a prime number $p$, this homotopy type has $S^1\times \Z_p$-action. 
Its $S^1\times \Z_p$-equivariant cohomology group was studied by Baraglia and Hekmati \cite{BH, Bar, BH2}, using a spectral sequence relating this $S^1\times \Z_p$ equivariant Floer cohomology group with the $S^1$-equivariant Floer cohomology group, and the latter can be computed by using techniques from Heegaard Floer homology and the isomorphism between $S^1$-equivariant Seiberg--Witten Floer cohomology group and Heegaard Floer homology. See \cite{KLTI, KLTII, KLTIII, KLTIV, KLTV, Ta10I, Ta10II, Ta10III, Ta10IV ,Ta10V, CGHI, CGHII, CGHIII, LM18}.  

What is noteworthy here is that we will restrict ourselves to $p=2$ and study the simpler $\Z_2$ equivariant cohomology group. 
We will prove that for any knot, this $\Z_2$-equivariant Floer cohomology group is a rank 1 module over $H^*(B\Z_2; \mathbb{F}_2)=\mathbb{F}_2[Q]$.
This is shown by adapting a technique used in singular instanton theory, due to Daemi--Scaduto \cite{DS23} based on Kronheimer's argument \cite{Kr97}.

\subsection{Review of equivariant Seiberg--Witten Floer stable homotopy type}
Let $p$ be a prime number and $(\wt{Y}, \s)$ be a $\Z_p$ Spin$^c$ rational homology 3-sphere.
In \cite{BH}, we review Baraglia--Hekmati's equivariant Seiberg--Witten Floer cohomology
\[
HSW_{\Z_p}^*  (\wt{Y}, \s) = \wt{H}^*_{S^1\times \Z_p} (SWF(\wt{Y}, \s); \mathbb{F}_p), 
\]
as an equivariant version of Manolescu's construction of Seiberg--Witten Floer stable homotopy type \cite{Man03}.
They discussed more general group action and coefficients (See \cite[Section 3.1]{BH}) but we concentrate on $\Z_p$-action and $\mathbb{F}_p$ coefficient for a prime $p$, and eventually on $p=2$.
In this subsection, we briefly review the construction of this invariant following \cite[Section 3]{BH}.
We fix a $\Z_p$-invariant Riemann metric $g$. 
\par
First, choose a reference \Spinc connection $A_0$ such that the associated connection on the determinant line bundle is flat.
As shown in \cite[Section 3.2]{BH}, for $\tau \in \Z_p$, we can choose a lift of it to the spinor bundle which preserves $A_0$. 
Here we use the assumption that the $\Z_p$-action preserves the isomorphism class of $\s$ and $b_1(\wt{Y})=0$. 
Let 
$G_\s$ be the set of unitary automorphisms $u: S\to S$ on the spinor bundle $S$ preserving $A_0$ and lifting the $\Z_p$-action on $\wt{Y}$.
Then we have an extension
\[
1\to S^1\to G_\s \to \Z_p \to 1.
\]
Note that this extension is always trivial in this case as shown in \cite[Section 5]{BH}. Therefore, we take a section and identify $G_\s\cong S^1\times \Z_p$.

Now, we have an action of $G_\s$ on the global Coulomb slice
\[
V=\ker d^* \oplus \Gamma(S) \subset i\Omega^1(\wt{Y}) \oplus \Gamma(S)
\]
and a formally self-adjoint elliptic operator
\[
l: V\to V \text{ defined as }  l(a, \phi)=(* da, D_{A_0}), 
\]
where $*$ is the Hodge star operator with respect to the metric $g$ and $D_{A_0}$ is the spin$^c$ Dirac operator with respect to $A_0$.
As usual, we take a finite-dimensional approximation $V^\mu_\lambda(g)$ obtained as the direct sum of all eigenspaces of $l$ in the range $(\lambda, \mu]$, again which as acted by $G_\s$. 

By finite-dimensional approximation of the Seiberg--Witten equation, we obtain a $G_\s$-equivareiant Conley index $I^\mu_\lambda(g)$ for sufficiently large real numbers $\mu, -\lambda$.
For the details of the construction, see \cite{Man03, BH}.
Now, a metric-dependent equivariant Floer homotopy type is defined as 
\[
SWF(\wt{Y}, \s, g):=\Sigma^{-V^0_\lambda(g)}I^\mu_\lambda(g).
\]
Notice that $I^\mu_\lambda (g )$ can be taken as a finite $S^1\times \Z_p$ CW complex. Therefore, equivariant (co)homologies with respect to any subgroup   $H \subset S^1\times \Z_p$  are finitely generated over $H^*_H(pt)$ by the existence of spectral sequence.  
Now we can define a version of an equivariant Floer cohomology:  
\[
\wt{H}^*_{S^1\times \Z_p} (SWF(\wt{Y}, \s)) := \wt{H}^{*+2n(\wt{Y}, \fraks, g)}_{S^1\times \Z_p} (SWF(\wt{Y}, \s, g); \mathbb{F}_p) = \wt{H}^{*+2n(\wt{Y}, \fraks, g)}_{G_\mathfrak{s}} (SWF(\wt{Y}, \s, g); \mathbb{F}_p), 
\]
where $n(\wt{Y}, \fraks, g)$ is the correction term introduced in \cite{Man03}. 
This
 is a module over the ring 
 \[
 H^*_{S^1\times \Z_p}  = H^*_{S^1\times \Z_p}(\operatorname{pt}; \mathbb{F}_p)
 =\begin{cases}  \mathbb{F}_2[U, Q] &\text{ $p$ is $2$} \\
\mathbb{F}_p[U, R, S] / (R^2) & \text{ otherwise}
\end{cases},
\]
where $\deg(U) = 2, \deg(Q) = 1$, $\deg(R) = 1$, and $\deg(S) = 2$.

We also use 
\[
\wt{H}^{S^1\times \Z_p}_* (SWF(\wt{Y}, \s)) := \wt{H}_{*+2n(\wt{Y}, \fraks, g)}^{S^1\times \Z_p} (SWF(\wt{Y}, \s, g); \mathbb{F}_p) 
\]
This
 is a module over the ring 
 \[
 H_{S^1\times \Z_p}:= H^*(B(S^1\times \Z_p); \mathbb{F}_p)\cong
\begin{cases}  \mathbb{F}_2[U_\dagger, Q_\dagger] &\text{ $p$ is $2$} \\
\mathbb{F}_p[U_\dagger, R_\dagger, S_\dagger] / (R^2_\dagger) & \text{ otherwise}
\end{cases},
\]
where $\deg(U_\dagger) =- 2, \deg(Q_\dagger) = -1$, $\deg(R_\dagger) = -1$, and $\deg(S_\dagger) = -2$.

We also forget the $S^1$-action and consider 
\[
\wt{H}^*_{\Z_2} (SWF(\wt{Y}, \fraks); \mathbb{F}_2):=\wt{H}^{*+2n(\wt{Y}, \fraks, g)}_{\Z_2} (SWF(\wt{Y}, \s, g); \mathbb{F}_p).
\]
This is a module over
 the ring 
 \[
 H^*_{\Z_2}  := H^*(B \Z_2; \mathbb{F}_2)
 =
 \mathbb{F}_2[Q], 
\]
where $\deg(Q) = 1$.

\begin{rem}\label{dependence with splitting}
    In order to define $\Z_2$-equivariant Floer cohomology or to define our concordance invariant $q_M$, we will fix a splitting  of 
    \[
1\to S^1\to G_\s \to \Z_2 \to 1.
\]
and via the splitting $\Z_2 \to  G_\s$, we obtain a $\Z_2$-action on the Floer homotopy type. 
However, if we fix a splitting $s: \Z_2 \to G_\s$, we also have another splitting $-s : \Z_2 \to G_\s$. (Notice that the actions of $s$ and $-s$ on spin$^c$ connections are the same.) 
Note that $s $ and $-s$ are the only possible choices of splittings \cite[Remark 3.8]{BH}.  If $\fraks$ is {\it spin structure}, we claim 
\begin{align}\label{isom indep splitting}
H^*_{\Z_2, s }(SWF(\wt{Y}, \fraks); \mathbb{F}_2) \cong H^*_{\Z_2, -s }(SWF(\wt{Y}, \fraks); \mathbb{F}_2) 
\end{align}
as $\mathbb{F}_2[Q]$-modules, where $H^*_{\Z_2, s }(SWF(\wt{Y}, \fraks); \mathbb{F}_2)$ denotes the $\Z_2$-equivariant cohomology with respect to the action comes from the splitting $s$.
Now, we realize $s$ and $-s$ as follows.  Since the involution $\tau$ preserved the isomorphism class of the spin structure $\fraks$ and the fixed point set is connected and codimension $2$, there are exactly two order four lifts of $\tau$: 
\[
\pm \wt{\tau} : P_{\fraks} \to  P_{\fraks}
\]
as a morphism of principal bundles. 
See \cite{AtBot68, Ka22} for the construction of such lifts. 
 We also denote the action of $ \wt{\tau}$ on the spinor bundle $S$ of $\fraks$ as the same notion  $ \wt{\tau}$. 
 Then, we define the complex linear order two lift of $\tau$ by 
 \[
\wt{\tau}_I  :=  \wt{\tau} \cdot i 
 \]
 on $S$. We see these two actions $\pm \wt{\tau}_I$ are the same as the actions come from $s$ and $ -s$. Now, we observe 
 \[
 j \cdot \wt{\tau}_I = - \wt{\tau}_I  \cdot j 
 \]
since $\wt{\tau}$ is $\H$-linear and $i$ and $j$ anticomute.
Therefore, an isomorphism \eqref{isom indep splitting} is given by $j$-action
\[
j : SWF(\wt{Y}, \fraks) \to SWF(\wt{Y}, \fraks),
\]
which is a self-homeomorphism of the Conley index commuting with $s$ and $-s$.
\footnote{The authors thank David Baraglia for pointing out this issue on the choice of splittings.}
\end{rem}

\subsection{Equivariant relative Bauer--Furuta invariant}
In this section, we review the cobordism maps in equivariant Seiberg--Witten Floer homotopy theory.

Let $p$ be a prime number.
Let $(\wt{Y}, \s),  (\wt{Y}', \s')$ be two $\Z_p$-Spin$^c$ rational homology 3-spheres.
The notion of $\Z_p$-spin$^c$ cobordism between them is defined in a similar way to the case of 3-manifolds.
For a smooth $\Z_p$-spin$^c$ cobordism 
\[
(\wt{W}, \mathfrak{s}_{\wt{W}}): (\wt{Y}, \s)\to (\wt{Y}', \s')
\]
with $b_1(\wt{W})=0$,  Baraglia--Hekmati gave an $S^1\times \Z_p$-equivariant stable homotopy class  
\[
BF_{(\wt{W}, \mathfrak{s}_{\wt{W}})}: SWF(\wt{Y}, \s)\wedge (\C^{\frac{1}{8} (c_1(\mathfrak{s}_{\wt{W}})^2 - \sigma(\wt{W}))})^+ \to (\R^{b^+(\wt{W})})^+  \wedge  SWF(\wt{Y}', \s').
\] 
Here we have not specified the $\Z_p$-actions of the domain and the codomain of the map $BF_{(\wt{W}, \mathfrak{s}_{\wt{W}})}$ in our notation.
However, there are some representation spaces corresponding to them. 
The map $BF_{(\wt{W}, \mathfrak{s}_{\wt{W}})}$ is obtained as a finite-dimensional approximation of the Seiberg--Witten map on $W$ with some Atiyah--Patodi--Singer type spectral boundary conditions. Again for the precise construction of it, see \cite{Man03, BH}. 
    It induces a module map called the {\it cobordism map} 
    \[
   BF^*_{(\wt{W}, \mathfrak{s}_{\wt{W}})} : \wt{H}^*_{S^1\times \Z_p} (SWF(\wt{Y}', \s); \mathbb{F}_p) \to \wt{H}^*_{S^1\times \Z_p} (SWF(\wt{Y}, \s); \mathbb{F}_p). 
    \]
  This is a  $H^*_{S^1\times \Z_p}$-module map and
  its grading shift is given by 
  \[
  \frac{c_1(\fraks_{\wt{W}})^2-\sigma(\wt{W})}{4} + b^+(\wt{W}) .   
  \]
  Moreover, Baraglia--Hekmati computed the cobordism map induced on the $S^1$-fixed point part of : 
  \[
  (BF^{S^1}_{(\wt{W}, \mathfrak{s}_{\wt{W}})} )^* =  Q^{b^+(\wt{W})} U^m: \wt{H}^*_{S^1\times \Z_p} (SWF(\wt{Y}', \s)^{S^1}; \mathbb{F}_p) \to \wt{H}^*_{S^1\times \Z_p} (SWF(\wt{Y}, \s)^{S^1}; \mathbb{F}_p)
  \]
  for some $m$ when $(H^+(\wt{W}))^{\Z_p}=0$ and $p=2$.
   In our construction, we forget the $S^1$-action and just use the $\Z_p$-action. Forgetting the $S^1$-action, we still have $H^*_{\Z_p}=H^*(B\Z_p; \mathbb{F}_p)$ module map
       \[
   BF^*_{(\wt{W}, \mathfrak{s}_{\wt{W}})} : \wt{H}^*_{\Z_p} (SWF(\wt{Y}', \s); \mathbb{F}_p) \to \wt{H}^*_{ \Z_p} (SWF(\wt{Y}, \s); \mathbb{F}_p)
    \]
    with the same grading shift as the one for $S^1\times \Z_p$ equivariant cohomology.

We also use the composition law of this equivariant Bauer--Furuta invariant. 

\begin{lem}\label{BFgluing}
Let $(\wt{W}_1, \s_0, \tau_0): (\wt{Y}_0, \s_0, \tau_0)\to (\wt{Y}_1, \s_1, \tau_1)$,  $(\wt{W}_1, \s_1, \tau_1): (\wt{Y}_1, \s_1, \tau_1)\to (\wt{Y}_2, \s_2, \tau_2)$ be two $\Z_p$ equivariant cobordism between $Spin^c$ rational homology 3-spheres.
Then we have the following equality as a $\Z_p$ equivariant stable homotopy class:
\[
BF_{\wt{W}_1, \s_1, \tau_1} \circ BF_{\wt{W}_0, \s_0, \tau_0}=BF_{\wt{W}_1 \circ \wt{W}_0, \s_1 \circ \s_0, \tau_1\circ \tau_0}.
\]

\end{lem}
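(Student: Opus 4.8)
The plan is to import Manolescu's proof of the composition law for the relative Bauer--Furuta invariant \cite{Man03} and to observe that every step carries the auxiliary $\Z_p$-symmetry. The key point is that all data entering the construction recalled above are $G_\s \cong S^1 \times \Z_p$-equivariant objects: the $\Z_p$-invariant metric on $\wt{Y}_1$, the reference connection $A_0$ together with its $\Z_p$-lift (\cite[Section 3.2]{BH}), the finite-dimensional Coulomb slices $V^\mu_\lambda$, and the equivariant Conley indices $I^\mu_\lambda$; moreover the $\Z_p$-action commutes with the \SW flow. Hence it suffices to run Manolescu's argument in the category of $S^1 \times \Z_p$-spaces.

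First I would reduce to a neck-stretching statement. Present the composite cobordism $\wt{W}_1 \circ \wt{W}_0$ with an embedded collar $[-1,1] \times \wt{Y}_1$, and for $T > 0$ insert a long neck $N_T = [-T,T] \times \wt{Y}_1$ carrying the product of the invariant metric on $\wt{Y}_1$ and the flat metric on the interval; the resulting cobordism $\wt{W}_T$ is $\Z_p$-equivariantly diffeomorphic to $\wt{W}_1 \circ \wt{W}_0$ (the actions $\tau_i$ agree on $\wt{Y}_1$, and the neck receives $\mathrm{Id}\times\tau_1$), so $BF_{\wt{W}_1 \circ \wt{W}_0} = BF_{\wt{W}_T}$ for every $T$ as an $S^1 \times \Z_p$-equivariant stable homotopy class. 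The content of the lemma is then that, as $T \to \infty$, the finite-dimensional approximation of the \SW map on $\wt{W}_T$ becomes $S^1 \times \Z_p$-equivariantly stably homotopic to the composite of the approximated maps on $\wt{W}_0$ and $\wt{W}_1$.

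Next I would perform the gluing of the approximated maps. On the long neck the governing dynamics is the downward \SW gradient flow on $\wt{Y}_1$, so by the attractor--repeller decomposition of the Conley index associated with an isolated invariant set that breaks along the flow (Manolescu \cite{Man03}, via Conley index theory) one obtains, for $T$ large, a factorization of $BF_{\wt{W}_T}$ through $SWF(\wt{Y}_1, \s_1)$: a finite-dimensional approximation on $\wt{W}_T$ can be chosen so that an index pair around the broken trajectories splits as (index pair for $\wt{W}_0$) $\times$ (index pair for $\wt{W}_1$) fibered over the index pair for $\wt{Y}_1$, and matching this against the definitions of $BF_{\wt{W}_0}$ and $BF_{\wt{W}_1}$ yields $BF_{\wt{W}_T} \simeq BF_{\wt{W}_1} \circ BF_{\wt{W}_0}$. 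Because the $\Z_p$-action preserves the neck metric, commutes with the flow, and hence preserves all isolating neighborhoods, index pairs and connecting deformations (and, being a finite group, any non-invariant auxiliary choice may be averaged to an invariant one), every map and homotopy above can be taken $S^1 \times \Z_p$-equivariant. Finally the grading shifts add up: $b^+$ is additive under gluing along the rational homology sphere $\wt{Y}_1$, $\sigma$ is additive by Novikov additivity along the closed $3$-manifold $\wt{Y}_1$, $c_1^2$ of the glued Spin$^c$ structure is additive, and the Manolescu correction term $n(\wt{Y}_1, \s_1, g)$ occurs once in the target of $BF_{\wt{W}_0}$ and once in the source of $BF_{\wt{W}_1}$ with opposite signs, so the two sides of the claimed identity lie in the same stable group.

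The main obstacle is the equivariant Conley-index gluing: checking that Manolescu's attractor--repeller splitting and connecting homotopies can be realized $S^1 \times \Z_p$-equivariantly and that the resulting stable homotopy equivalence is equivariant. This is essentially his analysis with a commuting finite-group action appended, so the finite group renders transversality and averaging harmless; the real work is to make the isolating neighborhoods and index pairs invariant from the outset — which is possible precisely because the flow and metric are invariant — rather than trying to equivariate after the fact. If one only needs the induced maps on equivariant cohomology, as is the case for every application in this paper, one may instead argue at the level of the Conley-index-valued invariants and their functoriality and avoid some of this homotopy-theoretic care.
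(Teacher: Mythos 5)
Your proposal is correct and follows essentially the same route as the paper: the paper's proof of \cref{BFgluing} simply observes that the argument is parallel to the composition law for ordinary relative Bauer--Furuta invariants (citing Manolescu's gluing theorem and Khandhawit--Lin--Sasahira) and omits the details, which is exactly what you reconstruct by running that known neck-stretching/Conley-index argument in the category of $S^1\times\Z_p$-spaces and noting that all metrics, slices, index pairs, and homotopies can be chosen (or averaged to be) invariant. The only caveat is that the analytic heart of the nonequivariant gluing (spectral boundary conditions and the deformation arguments of Manolescu 2007 and Khandhawit--Lin--Sasahira) is more involved than the attractor--repeller sketch you give, but since you defer to that machinery just as the paper does, this does not affect the substance.
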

\begin{proof}
The proof of this composition law is parallel to that of the usual relative  Bauer-Furuta invariants \cite{Man07, KLS'18}, so we omit the proof.
\end{proof}

    We shall put $\wt{Y}$ as the $\Z_p$-covering space along a given knot $K \subset S^3$ and $\fraks$ as the (unique) $\Z_p$-invariant spin structure on $\Sigma_p(K)$.
    Also, we consider an oriented connected surface cobordism $S$ embedded into a compact 4-manifold cobordism $W$ with $b_1(W)=0 =b^+(W)$ from $S^3$ to itself. Suppose $S$ is properly and smoothly embedded and $K=S \cap -S^3 $ and $K'=S \cap S^3$ are knots in $S^3$ respectively. In order to take the unique $\Z_p$-branched covering space, we suppose the homology class of $S$ is divisible by $p$. Then, we put $\wt{W}$ as the $p$-th branched covering space  $\Sigma_p(S)$. We also take an $\Z_p$ invariant spin$^c$ structure ${\fraks}_{S}$ on $\Sigma_p(S)$ which are spin on the boundaries $\Sigma_p(K)$ and $\Sigma_p(K')$. Then, we have the induced cobordism map
    \[
  BF^*_{(S, {\fraks}_{S})} = BF^*_{(\Sigma_p(S), {\fraks}_{S})} : \wt{H}^*_{S^1\times \Z_p} (SWF(\Sigma(K'), \s'); \mathbb{F}_p) \to \wt{H}^*_{S^1\times \Z_p} (SWF(\Sigma(K), \s); \mathbb{F}_p)
    \]
    and its $\Z_p$-equivariant version 
        \[
  BF^*_{(S,{\fraks}_{S}) }=  BF^*_{(\Sigma_p(S), {\fraks}_{S})} : \wt{H}^*_{\Z_p} (SWF(\Sigma(K'), \s'); \mathbb{F}_p) \to \wt{H}^*_{ \Z_p} (SWF(\Sigma(K), \s); \mathbb{F}_p).
    \]
    We often abbreviate these map as $BF^*_S$.
    They have the same grading shifts as before
      \begin{align*}
  &\frac{c_1({\fraks}_{S})^2-\sigma(\Sigma_p(S))}{4} + b^+(\Sigma_p(S))    \\
&=  \frac{c_1({\fraks}_{S})^2}{4} + \frac{p}{4}b_2(W) + \frac{p^2-1}{12p}[S]^2 -\frac{1}{4} (\sigma^{(p)} (K' )-\sigma^{(p)}(K)) +  (p-1) g(S) - \frac{p^2-1}{6p} [S]^2 + \\
& \frac{1}{2} (\sigma^{(p)} (K') -\sigma^{(p)} (K)) \\
&= \frac{c_1({\fraks}_{S})^2}{4} +\frac{p}{4}b_2(W)
+(p-1)g(S)-\frac{p^2-1}{12}[S]^2+\frac{1}{4}(\sigma^{(p)} (K') -\sigma^{(p)} (K)). 
    \end{align*}
Here we used the formulas
\begin{itemize}
    \item $b^+(\Sigma_p(S)) =  (p-1) g(S) - \frac{p^2-1}{6p} [S]^2 + \frac{1}{2} (\sigma^{(p)} (K') -\sigma^{(p)} (K))$ and 
    
    \item $\sigma (\Sigma_p (S)) = -pb_2(W) - \frac{p^2-1}{3p}[S]^2 + \sigma^{(p)} (K')- \sigma^{(p)}(K) $
\end{itemize}
which are proven in \cite{BH}. 

Moreover, the induced map on the $S^1$-fixed point part can be computed as 
  \[
   (BF^{S^1}_{(\Sigma_p(S), {\fraks}_{S})})^* = 
    Q^{ g(S) - \frac{1}{4} [S]^2 + \frac{1}{2} (\sigma (K') -\sigma(K))} U^m 
    \]
    for some $m$ and $p=2$. 

\subsection{Bauer--Furuta invariant for product cobordisms }\label{BF=id}

We will use the following result for product cobordisms:
\begin{thm}\label{product cobordism}
Let $p$ be a prime number and let $(\wt{Y}, \s, g)$ be a $\Z_p$ spin$^c$ rational homology 3-sphere with a fixed $\Z_p$ invariant metric.
Then $\Z_p\times S^1$-equivariant relative Bauer--Furuta map for the product cobordism 
\[
BF_{([0, 1]\times 
\wt{Y}, [0, 1]\times \s, [0, 1]\times g)}: SWF(\wt{Y}, \s, g)\to SWF(\wt{Y}, \s, g).
\]
is $S^1\times \Z_p$-equivariant stably homotopic to the identity.  
\qed
\end{thm}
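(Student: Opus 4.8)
The plan is to reduce to the non-equivariant result, for which the standard reference is the homotopy-invariance of the relative Bauer--Furuta construction (see \cite{Man07, KLS'18}). The key point is that $\Z_p$-equivariant stable homotopy of maps between finite $S^1\times\Z_p$-CW spectra can be detected by working over a suitable family of $\Z_p$-representations and appealing to equivariant obstruction theory; but for the \emph{product} cobordism there is actually a much more direct argument, which is the one I would carry out. The Seiberg--Witten map on $[0,1]\times\wt Y$ (with product metric and product spin$^c$ structure) is, after the usual finite-dimensional approximation with Atiyah--Patodi--Singer spectral boundary conditions, literally the flow map of the (perturbed) Seiberg--Witten gradient flow on the Conley index $I^\mu_\lambda(g)$. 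The composition law \cref{BFgluing} gives $BF_{[0,1]\times\wt Y}\circ BF_{[0,1]\times\wt Y}\simeq BF_{[0,2]\times\wt Y}=BF_{[0,1]\times\wt Y}$, so $BF_{[0,1]\times\wt Y}$ is an idempotent in the monoid of $S^1\times\Z_p$-equivariant stable self-maps of $SWF(\wt Y,\s,g)$; one then has to show this idempotent is the identity rather than (say) a projection onto a proper summand.

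First I would recall Manolescu's original argument in the non-equivariant setting: the finite-dimensional approximation of the Seiberg--Witten map for $[0,R]\times\wt Y$ converges, as $R\to\infty$, to the index pair inclusion, and for finite $R$ the approximating map is obtained from the gradient flow, which by the deformation property of the Conley index is homotopic (through maps of pairs respecting the flow) to the canonical identification $\Sigma^{-V^0_\lambda}I^\mu_\lambda\to\Sigma^{-V^0_\lambda}I^\mu_\lambda$. Concretely: the linear part of the four-dimensional Seiberg--Witten operator on the product is the ``suspension flow'' generated by $l$, and finite-dimensional approximation of the linearized problem on $V^\mu_\lambda(g)$ yields exactly the linear flow $e^{-tl}$ on the unstable-stable decomposition; the nonlinear term is a compact perturbation, so the approximating map is flow-homotopic to the identity on the Conley index. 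Second, I would observe that \emph{every} step of this argument is natural with respect to the $S^1\times\Z_p$-action: the metric $g$, the reference connection $A_0$, the operator $l$, the finite-dimensional approximations $V^\mu_\lambda(g)$, the Conley index $I^\mu_\lambda(g)$, and the chosen homotopies can all be taken $G_\s\cong S^1\times\Z_p$-equivariantly, because $g$ is $\Z_p$-invariant, $A_0$ is $G_\s$-preserved (as recalled in the review subsection, using $b_1(\wt Y)=0$ and $\tau^*\s\cong\s$), and the Conley index in Manolescu's construction is functorially built from the flow, which commutes with the group action. In particular the homotopies produced by the Conley-index deformation property are genuine $G_\s$-equivariant homotopies of maps of pairs. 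Hence the non-equivariant homotopy to the identity promotes verbatim to an $S^1\times\Z_p$-equivariant stable homotopy.

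The main obstacle, and the only point requiring care, is the passage from ``the finite-dimensional approximation for $[0,R]\times\wt Y$ is $G_\s$-equivariantly flow-homotopic to the identity, for $R$ large'' to a statement genuinely independent of $R$ and of the approximation parameters $\lambda,\mu$: one must check that the comparison maps relating different approximations (the ones establishing well-definedness of $SWF$ in the first place) are compatible, $G_\s$-equivariantly, with these homotopies. This is exactly the same bookkeeping that appears in the proof of \cref{BFgluing} and in the well-definedness of $SWF(\wt Y,\s)$ itself, so I would simply invoke those arguments rather than redo them; the equivariant refinements are routine because $G_\s$ acts linearly on all the representation spaces in play and the constructions are manifestly natural. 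I would therefore write the proof as: (i) recall that non-equivariantly $BF_{[0,1]\times\wt Y}$ is homotopic to the identity by the Conley-index deformation argument of \cite{Man03}; (ii) note every ingredient is $G_\s$-natural given the $\Z_p$-invariant metric and $G_\s$-invariant $A_0$; (iii) conclude the homotopy is $S^1\times\Z_p$-equivariant, using \cref{BFgluing} to handle the stabilization and reparametrization issues exactly as in the non-equivariant case. This is why the statement can be given without a lengthy independent proof.
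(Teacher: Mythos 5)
There is a genuine gap at the crucial step. Your overall structure (prove the non-equivariant/$S^1$-equivariant statement first, then upgrade to $S^1\times\Z_p$ by noting every ingredient and every homotopy is natural for the $G_\s$-action) is exactly the structure used in the paper. But you dispose of the first step by asserting that the finite-dimensional approximation of the Seiberg--Witten map on $[0,1]\times\wt Y$ with APS spectral boundary conditions ``is literally the flow map'' on the Conley index, and that the homotopy to the identity is ``the Conley-index deformation argument of \cite{Man03}.'' Neither claim is justified, and the attribution is wrong: \cite{Man03} establishes well-definedness of $SWF$ via Conley-index continuation and contains no comparison between the relative Bauer--Furuta map of a cylinder (built from four-dimensional approximation subspaces and spectral projections at the two boundary components, as in \cite{Man07, KLS'18}) and the identity map of $\Sigma^{-V^0_\lambda}I^\mu_\lambda$. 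Relating these two genuinely different constructions is a gluing-type theorem, not a formal consequence of the deformation property of the Conley index; it is precisely the nontrivial content of \cref{product cobordism}. The paper itself does not prove it either: it invokes the $S^1$-equivariant version from the forthcoming work \cite{SSpre} (a version being also known to J.~Lin) and only adds the observation that all homotopies there can be taken $\Z_p$-equivariantly --- which is the part of your argument that is correct and matches the paper.

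Two smaller points. The idempotency observation via \cref{BFgluing} is true but does not help: an idempotent stable self-map of a finite $G$-spectrum need not be the identity, and you never return to it, so it carries no weight in the argument. And the ``convergence as $R\to\infty$ to the index pair inclusion'' step, which is where the analysis would actually live (uniform boundedness of approximated trajectories, compatibility of the boundary spectral projections with the three-dimensional eigenspace decomposition, and independence of $\lambda,\mu,R$), is exactly what you would need to carry out or cite; waving at it as ``the same bookkeeping as in \cref{BFgluing}'' is not sufficient, since \cref{BFgluing} composes two cobordism maps rather than comparing a cobordism map with a continuation/identity map. If you replace your step (i) by a citation of the $S^1$-equivariant result of \cite{SSpre} (or supply the gluing analysis yourself), the remainder of your proposal goes through and agrees with the paper's proof.
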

The authors heard that a version of this result is known to J. Lin. 
For an $S^1$-equivariant version, the corresponding result is proven in the upcoming paper \cite{SSpre}. 
We can easily see that their all homotopies can be taken as $\Z_p$-equivariant as well. 

\subsection{Pull-back of spin$^c$ structures} \label{pull back spinc}

Let $p$ be a prime number.
Let $X$ be a compact oriented 4-manifold with boundary and with $H_1(X; \Z)=0$ and $S$ be a properly and smoothly embedded oriented surface whose homology class is divisible by $p$. Suppose $\partial X=Y$ is a homology 3-sphere. 
We will see there is an injective map
\[
\pi^* : \operatorname{Spin^c} (X) \to \operatorname{Spin^c} (\Sigma_p(S))
\]
whose image is $\operatorname{Spin^c} (\Sigma_p(S))^{\Z_p}$, which is the set of isomorphism classes of $\Z_p$-invariant spin$^c$ structures. 
We first define a map 
\[
\phi_{\s, \wt{\fraks}} : \operatorname{Spin^c} (X) \to \operatorname{Spin^c} (\Sigma_p(S))
\]
by putting 
\[
\fraks \otimes L \mapsto \wt{\fraks} \otimes \pi^* L
\]
where $\wt{\fraks}$ is the $\Z_p$-invariant spin$^c$ structure on $\Sigma_p(S)$ constructed in \cite[Proposition 2.3]{Ba22}, which coincides with $\pi^* \fraks$ outside the branched locus. Also, $\pi^* L$ is a corresponding complex line bundle whose first Chern class is $\pi^* c_1(L)$. 
\begin{lem}
The map 
    $\phi_{\fraks, \wt{\fraks}}$ is injective and surjective to $\operatorname{Spin^c} (\Sigma_p(S))^{\Z_p}$. 
\end{lem}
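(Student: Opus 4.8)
The plan is to prove injectivity and surjectivity separately, using the standard affine-space structure on $\operatorname{Spin^c}$ and the cohomology computations already established for branched covers. Recall that $\operatorname{Spin^c}(X)$ is an affine space over $H^2(X;\Z)$ once we fix a base point $\fraks$, and likewise $\operatorname{Spin^c}(\Sigma_p(S))$ is affine over $H^2(\Sigma_p(S);\Z)$ with base point $\wt{\fraks}$. Under these identifications, $\phi_{\fraks,\wt{\fraks}}$ is precisely the map on affine spaces induced by the pull-back homomorphism $\pi^* : H^2(X;\Z) \to H^2(\Sigma_p(S);\Z)$, since $\phi_{\fraks,\wt\fraks}(\fraks\otimes L) = \wt\fraks\otimes\pi^*L$. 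So the whole lemma reduces to two statements about $\pi^*$ on $H^2$: that it is injective, and that its image is exactly the $\Z_p$-fixed subgroup $H^2(\Sigma_p(S);\Z)^{\Z_p}$ — together with the observation that the affine identification is $\Z_p$-equivariant, so that $\Z_p$-fixed spin$^c$ structures correspond to $\pi^*$-image classes.

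First I would prove injectivity of $\pi^*$ on $H^2$. The transfer map $\pi_! : H^2(\Sigma_p(S);\Z)\to H^2(X;\Z)$ satisfies $\pi_!\circ\pi^* = p\cdot\id$ away from torsion issues; more carefully, since $\partial X = Y$ is a homology sphere and $H_1(X;\Z)=0$, one checks $H^2(X;\Z)$ is free (by universal coefficients and $H_1(X;\Z)=0$, $H^2(X;\Z)\cong \Hom(H_2(X),\Z)$ is torsion-free). Hence $\pi_!\pi^* = p\cdot\id$ on a torsion-free group forces $\pi^*$ to be injective. Alternatively, and perhaps more cleanly, I would mimic the Mayer--Vietoris argument already used in the proof of \cref{spinness of contact str}: decompose $\Sigma_p(S) = \wt{W}_0 \cup_{\text{collar}} N_{\wt S}$ where $\wt W_0$ is the $\Z_p$-cover of $X\setminus N(S)$ and $N_{\wt S}$ is the tubular neighborhood, and compare with the analogous decomposition of $X$; the same bookkeeping that showed the restriction $H^2(\Sigma_n(K))\to H^2(E_n)$ is injective adapts here.

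Next I would handle surjectivity onto the fixed set. The inclusion $\im\pi^* \subseteq H^2(\Sigma_p(S);\Z)^{\Z_p}$ is immediate because $\tau\circ\pi = \pi$. For the reverse inclusion, I would use the transfer again: given $\Z_p$-fixed $x$, the element $\pi_! x$ pulls back to $\pi^*\pi_! x = \sum_{j} (\tau^j)^* x = p\,x$. So $p\,x \in \im\pi^*$, and one must promote this to $x\in\im\pi^*$. This is where the hypothesis $H_1(X;\Z)=0$ and $Y$ a homology sphere earns its keep: it is used to control the cokernel of $\pi^*$, e.g. via the Cartan--Leray / Smith sequence for the $\Z_p$-action, which identifies $\operatorname{coker}(\pi^*: H^2(X)\to H^2(\Sigma_p(S))^{\Z_p})$ with a subquotient of $H^*(\Z_p; \cdots)$ built from the cohomology of $X$ and of $S$; the vanishing of $H^1(X)$ and the fact that the branch locus contributes a free summand generated by $PD[\pi^{-1}(S)]$ (which is itself $\pi^*$ of a class since $[S]$ is divisible by $p$) should kill the obstruction. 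The main obstacle I anticipate is precisely this last point — pinning down the cokernel of $\pi^*$ on $\Z_p$-fixed classes and showing it vanishes, rather than merely showing it is $p$-torsion. I would resolve it by the explicit Mayer--Vietoris computation: writing $\Sigma_p(S) = \wt W_0 \cup (S^1\times D^2\text{-bundle over }S)$ and using that over the cover the relevant connecting maps are understood (as in \cref{spinness of contact str}), one gets an exact sequence expressing $H^2(\Sigma_p(S))$ in terms of $H^2(\wt W_0)$, $H^2(S)$, and the gluing torus, from which the $\Z_p$-invariants are visibly pulled back from the corresponding pieces of $X$.
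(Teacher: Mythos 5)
Your reduction of the lemma to a statement about $\pi^*$ on $H^2$ is correct and matches the paper's viewpoint: since $\operatorname{Spin^c}(X)$ is an $H^2(X;\Z)$-torsor and $\operatorname{Spin^c}(\Sigma_p(S))^{\Z_p}$ is an $H^2(\Sigma_p(S);\Z)^{\Z_p}$-torsor (based at the invariant structure $\wt{\fraks}$), the lemma is equivalent to $\pi^*:H^2(X;\Z)\to H^2(\Sigma_p(S);\Z)$ being injective with image exactly the $\Z_p$-fixed subgroup. Your injectivity argument (transfer, plus torsion-freeness of $H^2(X;\Z)$ from $H_1(X;\Z)=0$) is fine. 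But be aware that the paper does not reprove the cohomological statement at all: its entire proof is the torsor remark together with a citation of \cite[Lemma 2.4]{Ba22}, which is precisely the assertion you are trying to establish directly.

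The genuine gap is in your surjectivity step. The transfer only shows that the cokernel of $\pi^*:H^2(X;\Z)\to H^2(\Sigma_p(S);\Z)^{\Z_p}$ is annihilated by $p$; promoting $p\,x\in\operatorname{Im}\pi^*$ to $x\in\operatorname{Im}\pi^*$ is exactly the nontrivial content, and your proposed Cartan--Leray/Mayer--Vietoris argument is left at the level of ``should kill the obstruction'' without identifying the obstruction groups or showing they vanish. Concretely, for the free part $\wt{W}_0\to W_0$ of the cover, the image of $\pi^*:H^2(W_0)\to H^2(\wt{W}_0)^{\Z_p}$ is the edge subgroup of the Cartan--Leray spectral sequence, whose cokernel is controlled by $H^2(\Z_p;H^1(\wt{W}_0;\Z))$ and $H^3(\Z_p;\Z)\cong\Z_p$; neither vanishes for free, and killing them requires the gluing with the branch-locus neighborhood — which is the actual computation done in Baraglia's lemma. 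Moreover, one auxiliary claim in your sketch is circular: since $\pi^*PD[S]=p\,PD[\wt{S}]$, the divisibility $[S]=p\,x$ only yields $p\,PD[\wt{S}]=p\,\pi^*PD[x]$, so concluding $PD[\wt{S}]\in\operatorname{Im}\pi^*$ already presupposes that $H^2(\Sigma_p(S);\Z)$ has no $p$-torsion, which is part of what must be proved. As written, the surjectivity half is a plan rather than a proof; either carry out the branched-cover cohomology computation in full or, as the paper does, quote \cite[Lemma 2.4]{Ba22}.
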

\begin{proof}
    This follows from \cite[Lemma 2.4]{Ba22} and the 
    fact that $\operatorname{Spin^c} (X)$ is a $H^2(X; \Z)$-torsor and $\operatorname{Spin^c} (\Sigma_p(S))^{\Z_p}$ is a $H^2(\Sigma_p(S); \Z)^{\Z_p}$-torsor.
\end{proof}

The injectivity implies if we take another choice of $\wt{\fraks}'$ for $\fraks$ by \cite[Proposition 2.3]{Ba22}, we have 
\[
\wt{\fraks} \cong \wt{\fraks}'.
\]
This implies the map $\phi_{\fraks, \wt{\fraks}}$ does not depend on the choices of $\wt{\fraks}$. 
We also see $\phi_{\fraks, \wt{\fraks}}$ does not depend on $\fraks$ as follows. If we take two spin$^c$ structures $\fraks$, $\fraks'$ on $X$, there is a line bundle $L$ on $X$ such that $\fraks\cong \fraks' \otimes L  $. From the definition of $\phi_{\fraks, \wt{\fraks}}$, we have 
\[
\phi_{\fraks, \wt{\fraks}}  (\fraks) = \wt{\fraks}   \text{ and } \phi_{\fraks', \wt{\fraks}'} ( \fraks ) = \wt{\fraks}' \otimes \pi^* L .
\]
Therefore it is sufficient to take a lift of $\fraks$ by $\wt{\fraks}' \otimes \pi^*L$ and indeed it is a lift since it coincides with $\pi^*\fraks \cong \pi^* (\fraks' \otimes L) = \pi^*\fraks' \otimes \pi^* L $ outside the branched locus. 
Then we just define 
\[
\pi^*  := \phi_{\s, \wt{\fraks}} : \operatorname{Spin^c} (X) \to \operatorname{Spin^c} (\Sigma_p(S)).
\]

\subsection{On Baraglia--Hekmati's invariants for 2-knots}
We shall prove equivariant Bauer--Furuta invariants for 2-knots in $S^4$ do not depend on the choices of 2-knots. 
\begin{thm}[The second part of \cref{vanishing BF}]\label{independence of 2-knots} 
    The $S^1\times \Z_p$-equivariant stable homotopy Bauer--Furuta invariant for a $2$-knot in $S^4$
\[
BF_{(S^4, S)} : S^V \to S^{V} 
\]
is stably $S^1\times \Z_p$ homotopic to the  identity up to sign, where $V$ is a suitable $S^1\times \Z_p$-representation space.
\end{thm}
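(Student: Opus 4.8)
The plan is to reduce the statement to the product cobordism result \cref{product cobordism}. The key observation is that a $2$-knot $S$ in $S^4$ is null-homologous, so the divisibility-by-$p$ hypothesis is automatic and the $p$-fold cyclic branched cover $\Sigma_p(S)$ is well-defined; moreover, since $S$ is embedded in the closed manifold $S^4$, we should think of $(S^4, S)$ as a cobordism from the empty manifold to itself, or — more usefully for making contact with the branched-cover Floer package — as a cobordism from $(S^3, U)$ to $(S^3, U)$, where $U$ is the unknot, obtained by removing two small $4$-balls meeting $S$ in trivial disks. Under the $p$-fold branched cover this presents $\Sigma_p(S^4)$ as a spin$^c$ cobordism $\wt W$ from $(\Sigma_p(U), \s_0) = (S^3, \s_0)$ to itself. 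So the first step is to set up this decomposition carefully, check that $b_1(\wt W) = 0$ and $b^+(\wt W) = 0$ (using the formulas for $b^+$ and $\sigma$ of branched covers quoted from \cite{BH}, with $g(S) = 0$ and $[S]^2 = 0$ and $\sigma^{(p)}$ of the unknot vanishing), and identify the representation space $V$ so that $BF_{(S^4,S)}\colon S^V \to S^V$ makes sense.

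The second step is the homotopy-invariance input. By \cref{vanishing BF} (the first part, already available in the excerpt), the $Q$-localized cobordism map — and in fact the equivariant Bauer--Furuta stable homotopy class — depends only on the homotopy class of the embedded surface rel boundary. Since any two $2$-knots in $S^4$ are smoothly isotopic to... no: here is the subtlety. Two $2$-knots need not be isotopic, but for the purpose of \emph{homotopy} rel boundary inside $S^4$ (as maps of the underlying surface, not as embeddings up to ambient isotopy) one only needs that $S$ is homotopic to the unknotted $2$-sphere through maps $S^2 \to S^4$, which holds because $\pi_2(S^4) = 0$, so every map $S^2 \to S^4$ is null-homotopic and hence homotopic to one factoring through a standard unknotted sphere. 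Thus the third step is to invoke the first part of \cref{vanishing BF} to conclude $BF_{(S^4, S)}$ is stably $S^1\times\Z_p$-homotopic to $BF_{(S^4, S_0)}$, where $S_0$ is the unknotted $2$-sphere.

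The final step is to compute $BF_{(S^4, S_0)}$ directly. The branched cover of $S^4$ along the unknotted $S^2$ is again $S^4$ (the $p$-fold cyclic branched cover of $(S^4, \text{unknot})$ is $S^4$), and the corresponding cobordism $\wt W_0$ from $S^3$ to $S^3$ is, after removing the two balls, the product cobordism $[0,1]\times S^3$ with its product spin$^c$ structure and a $\Z_p$-invariant product metric. Hence \cref{product cobordism} applies and gives that $BF_{(S^4,S_0)}$ is $S^1\times\Z_p$-stably homotopic to the identity; the "up to sign" arises exactly from the two choices of splitting discussed in \cref{dependence with splitting} (the $j$-action), so composing/identifying appropriately yields the identity up to sign. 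The main obstacle, and the place I would be most careful, is the second step: justifying that homotopy-through-maps (not ambient isotopy) of the surface suffices to apply \cref{vanishing BF}, and checking that the branched-cover construction genuinely carries a homotopy of surfaces rel boundary to a homotopy of the covering cobordisms compatible with the $\Z_p$-action — this is where the hypotheses "connected" and "$b_1 = 0$" in \cref{vanishing BF} and the precise meaning of the pull-back spin$^c$ structure from \cref{pull back spinc} all have to be matched up.
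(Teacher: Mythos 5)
Your reduction fails at the second step. The homotopy-invariance statement actually available in the paper (the first part of \cref{vanishing BF}, i.e.\ \cref{general indep}) is a statement about the \emph{$Q$-localized cohomological} cobordism maps only: it says $BF^{*,Q\text{-loc}}_{(W,S)} = BF^{*,Q\text{-loc}}_{(W,S')}$ for homotopic surfaces, and it is proved (and stated) only for $p=2$. It does \emph{not} say that the equivariant Bauer--Furuta \emph{stable homotopy class} depends only on the homotopy class of the surface, and that stronger claim is in fact false in the framework of the paper: the moves used in the proof of \cref{general indep} (positive twist and finger moves) change the invariant by a smash factor such as $BF_{(S^2\times S^2,\iota)}$, which acts as multiplication by $Q$ on localized cohomology; so a homotopy of surfaces genuinely changes the stable class, and only the localized cohomological information survives. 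Consequently, even granting the identification of $BF_{(S^4,S)}$ with a cobordism map $(S^3,U)\to(S^3,U)$ and the computation of the unknotted case via \cref{product cobordism}, your argument can at best conclude that the two maps agree after applying $\wt{H}^*_{\Z_2}$ and inverting $Q$ — nowhere near the assertion that the self-map of $S^V$ is $S^1\times\Z_p$-stably homotopic to $\pm\operatorname{id}$, and not for odd $p$ at all. (A secondary issue: passing from the closed invariant $BF_{(S^4,S)}$ to the relative cobordism invariant of $S^4$ minus two balls needs a gluing statement you have not supplied; and the sign in the theorem does not come from the two splittings of \cref{dependence with splitting}, which are shown there to give isomorphic theories, but from the orientation ambiguity of the degree.)

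The paper's actual proof works at the level you need: it first shows, via the vanishing of the equivariant indices ($b^+(\Sigma_p(S))=0$ for the form part, and the $G$-spin theorem/Lefschetz fixed point formula for the Dirac part), that domain and codomain are the \emph{same} $S^1\times\Z_p$-representation sphere, and then applies equivariant obstruction-theoretic results of tom Dieck: an equivariant self-map of a representation sphere is an equivariant homotopy equivalence iff all its fixed-point maps $f^H$ are, and for $S^1$-equivariant maps $\deg(f)=\deg(f^{S^1})$. Checking the degrees on the relevant fixed-point sets reduces to the linearized (reducible) Seiberg--Witten equation, where the degree is $\pm1$. If you want to salvage your route, you would have to upgrade the homotopy-invariance input to a statement about degrees on \emph{all} fixed-point sets (not just $Q$-localized $\Z_2$-cohomology), at which point you are essentially redoing the paper's fixed-point computation anyway.
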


Note that for a smoothly embedded 2-knot $S \subset S^4$, the $p$-th branched cover 
$\Sigma_p ( S)$ is a rational homology $S^4$.

Here we list of real/complex irreducible $\Z_p$-reprentations: 
\begin{itemize}
    \item all of the complex irreducible representations are given by 
    the complex representation $\C_{(j)}$ defined by 
    \[
    z \mapsto e^{2\pi i j /p }z. 
    \]
    \item when $p =2$ we have the unqiue real 1-dimensional non-trivial representation $\wt{\R}$ 
    \item when $p>2$, forgetting the complex structure of $\C_{(j)}$, we have an irreducible 2-dimensional representation $\R^2_{(j)}$ which gives all of real irreducible representations. 
\end{itemize}
Note the $S^1\times\Z_p$ equivariant Bauer--Furuta invariant for a 2-knot (with pull-backed spin$^c$ structure) defines 
\[
BF_{S\subset  S^4} : (V_\R \oplus W_\C)^+ \to (V'_\R \oplus W'_\C)^+. 
\]
Here $V_\R,V'_\R$ are real representation spaces of $\Z_p$ and  $W_\C, W'_\C$ are complex representations of $\Z_p$.
In order to use equivariant homotopy theory, we need to determine the numbers of irreducible representations that appeared in $V_\R \oplus W_\C$ and $V'_\R \oplus W'_\C$.
For the real parts $V_\R$ and $V'_\R$, since $b^+(\Sigma_p (S)) =0$, one can see all equivariant indeces are zero. It implies $V_\R \cong V'_\R$.
For the Dirac operator, we use the equivariant Atiyah--Singer index theorem stated in \cite[Theorem 6.16]{BG95} which is decribed as 
\begin{align*}
\operatorname{Tr} (e^{2\pi i j/p}; \ker D_A^+) - \operatorname{Tr} (e^{2\pi i j/p}; \Coker D_A^+) = \operatorname{Lef}_{e^{2\pi i j/p}\in \Z_p } ( D^+_A )  = -\frac{1}{2\pi} \int_{\pi^{-1}(S) } e^{2\pi i j/p} \cdot  F_A  
\end{align*}
for a spin connection $A$. Then the right-hand side is zero. 
%It is possible since we are considering the spin structure on $\Sigma_p(S)$. 
Therefore, one can see $\operatorname{Lef}_{e^{2\pi i j/p}\in \Z_p } ( D^+_A ) =0 $
 for all $j$. This again implies $W_\C \cong W_{\C'}$ as complex $\Z_p$ representations.

We use the following two results in equivariant homotopy theory: 
 
\begin{prop}[\cite{To79}]
Let 
    $f : (\R^n \oplus \C^m)^+  \to (\R^n \oplus \C^m)^+ $ be an $S^1\times \Z_p $-equivariant map. 
    The following are equivalent: 
    \begin{itemize}
        \item $f$ is a $S^1\times \Z_p $ homotopy equivalent.
        \item For any subgroup $H$ in $S^1\times \Z_p $, $f^H$ is a homotopy equivalence. 
    \end{itemize}
    
\end{prop}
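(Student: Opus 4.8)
The plan is to deduce this from the equivariant Whitehead theorem for the compact Lie group $G:=S^1\times\Z_p$, applied to the representation sphere $X:=(\R^n\oplus\C^m)^+$. The only input needed beyond formal nonsense is that $X$ is a finite $G$-CW complex: since the $G$-action on $\R^n\oplus\C^m$ is orthogonal, $X\cong S(\R^n\oplus\C^m\oplus\R)$ is a smooth $G$-manifold, hence admits an equivariant smooth triangulation (Illman's equivariant triangulation theorem) and in particular is a finite $G$-CW complex with only finitely many orbit types; for each closed subgroup $H\le G$ the fixed set $X^H$ is the one-point compactification of $(\R^n\oplus\C^m)^H$, again a sphere, and inherits an ordinary finite CW structure.

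First I would dispose of the implication ``$f$ is a $G$-homotopy equivalence $\Rightarrow$ $f^H$ is a homotopy equivalence for every $H$'', which is purely formal. If $g\colon X\to X$ is a $G$-homotopy inverse of $f$, with $G$-homotopies $fg\simeq_G\id_X$ and $gf\simeq_G\id_X$, then applying the fixed-point functor $(-)^H$ gives $f^Hg^H\simeq\id_{X^H}$ and $g^Hf^H\simeq\id_{X^H}$, because the interval carries the trivial $G$-action, so $(X\times I)^H=X^H\times I$ and every $G$-homotopy restricts to an ordinary homotopy on $H$-fixed sets. Hence $g^H$ is a homotopy inverse of $f^H$.

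For the converse I would run the standard cell-by-cell equivariant obstruction argument --- which is exactly the proof of the equivariant Whitehead theorem. Assume $f^H\colon X^H\to X^H$ is a homotopy equivalence, equivalently (as $X^H$ is a CW complex) a weak homotopy equivalence, for every closed $H\le G$. One constructs a $G$-map $g\colon X\to X$ and $G$-homotopies $fg\simeq_G\id_X$, $gf\simeq_G\id_X$ by induction up the $G$-skeleta of $X$: over an equivariant cell $G/H\times D^k$ the adjunction $\Map_G(G/H\times Z,Y)\cong\Map(Z,Y^H)$ turns each extension/lifting step into an ordinary relative lifting problem with target controlled by $X^H$, whose obstructions vanish precisely because $f^H$ is a weak equivalence; since $X$ has finitely many orbit types only finitely many of the hypotheses are actually invoked. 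Alternatively one just quotes the equivariant Whitehead theorem for compact Lie groups (as in tom Dieck's \emph{Transformation Groups}) in the form: a $G$-map of $G$-CW complexes is a $G$-homotopy equivalence iff it induces weak equivalences on all fixed sets $X^H$ with $H$ closed. The point requiring care --- the only place the argument is not completely routine --- is the non-discrete isotropy coming from the circle factor $S^1\le G$: one must make sure the $G$-CW structure is genuinely available and that the fixed-point reduction is carried out for all closed subgroups (the circle, its finite cyclic subgroups, and the remaining closed subgroups of $S^1\times\Z_p$), not only for the $\Z_p$-part; once this bookkeeping is set up no new phenomena arise and the argument closes.
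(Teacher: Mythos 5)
The paper offers no proof of this proposition---it is quoted directly from tom Dieck's \emph{Transformation Groups and Representation Theory}, and the result there is precisely the equivariant Whitehead theorem specialized to a self-map of a representation sphere. Your argument (the formal forward direction via applying $(-)^H$ to the $G$-homotopies, and the converse via the equivariant Whitehead theorem for the compact Lie group $S^1\times\Z_p$, using that the representation sphere is a finite $G$-CW complex and that fixed points under arbitrary subgroups agree with those under their closures) is exactly the intended proof, and it is correct.
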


\begin{lem}[\cite{To79}]\label{tomDieck}
    $f : (\R^n \oplus \C^m)^+ \to (\R^n \oplus \C^m)^+  $ be an $S^1 $-equivariant map.
    Then we have 
    \[
    \deg (f ) = \deg (f^{S^1}). 
    \]
\end{lem}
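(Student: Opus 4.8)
\textbf{Proof proposal for \cref{tomDieck}.}

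The plan is to use the standard fact that an $S^1$-equivariant self-map of a representation sphere is determined up to $S^1$-homotopy by its degree, together with the Segal--tom Dieck formula expressing the degree of the whole map in terms of the degrees of the fixed-point maps under the various (finite cyclic) isotropy subgroups. First I would set up coordinates: write the $S^1$-representation underlying $\R^n \oplus \C^m$ explicitly, with $\R^n$ the trivial real summand on which $S^1$ acts trivially, and $\C^m$ decomposed into weight spaces $\C_{(a_1)} \oplus \cdots \oplus \C_{(a_m)}$ with all weights $a_i \neq 0$. The fixed-point set of the full group $S^1$ is then exactly $(\R^n)^+ = S^n$, so $f^{S^1} : S^n \to S^n$ and $\deg(f^{S^1})$ makes sense. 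For a proper closed subgroup $H = \Z_d \subset S^1$, the fixed-point sphere is $(\R^n \oplus \bigoplus_{d \mid a_i} \C_{(a_i)})^+$.

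The key input is the Segal--tom Dieck congruence/equation for equivariant degrees. For a compact Lie group $G$ acting on representation spheres $S^V, S^W$ with a $G$-map $f$, the relevant statement (see tom Dieck, \emph{Transformation Groups}) is that when $V \cong W$ as $G$-representations, the collection of fixed-point degrees $\{\deg(f^H)\}_H$ over conjugacy classes of isotropy subgroups satisfies a system of congruences, and moreover for $G = S^1$ one has the clean equality
\[
\deg(f) = \deg(f^{S^1}),
\]
because the "extra" contributions from the intermediate isotropy subgroups $\Z_d$ enter only through the normalizer/Weyl-group data, and $S^1$ is abelian with $W_{S^1}(\Z_d) = S^1/\Z_d$ infinite, which forces those contributions to vanish. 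So the substantive step I would carry out is: reduce to the $S^1$-Burnside-ring / equivariant-degree statement, observe that the only isotropy subgroup with finite Weyl group in $S^1$ is $S^1$ itself, and conclude that the universal degree homomorphism on the $S^1$-equivariant stable stem $\pi_0^{S^1}(S^0)$ detecting $f$ is precisely $f \mapsto \deg(f^{S^1})$.

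Concretely I would argue as follows. First, $f$ need not be a homotopy equivalence a priori, but the claim $\deg(f) = \deg(f^{S^1})$ is what we want regardless. Since $V := \R^n \oplus \C^m \cong W$ as $S^1$-representations, $f$ represents a class in $[S^V, S^V]_{S^1} \cong \pi_0^{S^1}(S^0)$, which by the tom Dieck splitting is $\bigoplus_{(H)} \pi_0(BW_{S^1}(H)_+)$, the sum over conjugacy classes of closed subgroups $H \subseteq S^1$ with the wedge indexed by components. For $S^1$ the subgroups are $S^1$ and the finite $\Z_d$; the Weyl group $W_{S^1}(S^1) = 1$ contributes a $\Z$ summand (detected by $\deg(f^{S^1})$), while $W_{S^1}(\Z_d) = S^1/\Z_d \cong S^1$ is a positive-dimensional Lie group, so $\pi_0(B(S^1/\Z_d)_+)$ contributes nothing to $\pi_0$. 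Hence the class of $f$ in $\pi_0^{S^1}(S^0)$ is detected entirely by $\deg(f^{S^1})$, and since the ordinary (nonequivariant) degree $\deg(f)$ is the image under the forgetful map $\pi_0^{S^1}(S^0) \to \pi_0(S^0) = \Z$, which on the $S^1$-fixed summand is the identity, we get $\deg(f) = \deg(f^{S^1})$.

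The main obstacle I anticipate is making the tom Dieck splitting argument rigorous at the level of unstable maps between these specific representation spheres rather than in the stable category: one must check that $[S^V, S^V]_{S^1}$ is already in the stable range (which holds once $V$ contains enough copies of each irreducible, and in particular the trivial summand $\R^n$ with $n \geq 1$, plus the nontriviality of all weights in $\C^m$ guarantees the connectivity estimates needed), and that the isomorphism with $\pi_0^{S^1}(S^0)$ is compatible with taking fixed points and with the forgetful map. If one prefers to avoid the splitting theorem entirely, an alternative route is elementary: compute $\deg(f)$ by a local-degree count at a generic preimage point, push everything into the free part $S^V \setminus S^{\Z_d\text{-strata}}$ versus the fixed sphere $S^n$, and use the fact that the $S^1$-action is free off the fixed sphere so that preimages come in $S^1$-orbits (circles) whose local contributions cancel in pairs unless they lie on $S^n$ — this is essentially the argument in tom Dieck's book, Chapter II, and I would cite \cite{To79} for the precise statement, only sketching this orbit-cancellation picture as the conceptual reason.
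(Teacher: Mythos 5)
The paper offers no proof of this lemma at all --- it is stated with a bare citation to tom Dieck's \emph{Transformation Groups} --- so your proposal is a reconstruction of the cited result rather than a parallel to an argument in the paper. Your reconstruction is correct in substance, and your second, ``elementary'' route (all orbits off the fixed sphere $S^n=((\R^n)^{})^+$ are positive-dimensional circles, so the relevant obstruction/difference classes live in the cohomology of a free $S^1$-space whose orbit space has too small a dimension; hence $\deg(f)$ is determined by $\deg(f^{S^1})$, and evaluating on maps of the form $g^+\wedge \id_{(\C^m)^+}$ pins down the formula) is essentially the argument behind tom Dieck's equivariant Hopf theorem (Chapter II, \S 4), which is exactly what the paper invokes elsewhere.

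One justification in your splitting route is off and should be repaired if you keep it: you claim the $(\Z_d)$-summands vanish because ``$\pi_0(B(S^1/\Z_d)_+)$ contributes nothing to $\pi_0$.'' But $\pi_0^{st}(BS^1_+)\cong\Z$ (the disjoint basepoint splits off an $S^0$), so that statement is false as written. The actual reason these summands do not contribute in degree $0$ is the dimension shift in the tom Dieck splitting: the $(H)$-summand is $\pi_*^{st}\bigl((EW_H)_+\wedge_{W_H}S^{L(H)}\bigr)$ with $L(H)=T_{eH}(S^1/H)\cong\R$, i.e.\ a one-fold suspension of $BS^1_+$, whose reduced stable $\pi_0$ vanishes. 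With that correction (or by simply discarding the splitting route in favor of your second argument), the proof is complete.
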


\begin{proof}[Proof of \cref{independence of 2-knots}]
   All subgroups of $S^1\times \Z_p $: 
\begin{enumerate}
    \item  a subgroup $H\subset S^1\times \Z_p$ such that $H \cap (S^1\times \{e\}) \neq \{e\}$, 
    \item  $\{ e\} \times \Z_p$. 
    \item $\{e\} \subset S^1\times \Z_p$
\end{enumerate}
Let $BF_{(S^4, S)} : S^V \to S^{V} $ be the $S^1\times \Z_p$ equivariant Bauer--Furuta invariant for a given $2$-knot. 
Since $BF^{H}_{(S^4, S)}$ comes from the linearized Seiberg--Witten equation, the mapping degree is $\pm 1$.
For $BF^{\{ e\} \times \Z_p}_{(S^4, S)}$ and $BF^{\{ e\}}_{(S^4, S)}$, one can apply \cref{tomDieck}, so it is enough to show that $BF^{S^1 \times \Z_p}_{(S^4, S)}$ and $BF^{S^1}_{(S^4, S)}$ has mapping degree  $\pm 1$, but it is true because it comes from the linear isomorphism.
Therefore, it is a homotopy equivalence. 
\end{proof}

\subsection{Statements and proofs of rank theorems}
In this subsection, we prove that for any knot $K$ in $S^3$, the equivariant Floer homology of the double branched cover  $\wt{H}^*_{\Z_2} (SWF(\Sigma_2(K), \fraks_0))$ is a rank 1 module over $\mathbb{F}_2 [Q]$,  where  $\fraks_0$ is the unique spin structure on the branched double cover $\Sigma_2(K)$. Note that from \cref{dependence with splitting}, we see the $\Z_2$-equivariant Fleor cohomology does not depend on the choices of splittings. 
We also give a remark to extend this argument to knots in general homology 3-spheres.

\begin{thm}[\cref{rank-1}] \label{rank1 theorem}
    For any knot $K$ in $S^3$, we have 
    \[
\operatorname{rank}_{\mathbb{F}_2[Q]} \wt{H}^*_{\Z_2} (SWF(\Sigma_2(K)); \mathbb{F}_2) =1
    \]
    Moreover,
    \[
\operatorname{rank}_{\mathbb{F}_2[Q]} \operatorname{Ker} (U^i : \wt{H}^*_{S^1\times \Z_2} (SWF(\Sigma_2(K)); \mathbb{F}_2) \to \wt{H}^*_{S^1\times \Z_2} (SWF(\Sigma_2(K))); \mathbb{F}_2) =i. 
    \]
\end{thm}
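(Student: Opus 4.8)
\emph{Proof strategy.} The plan is to transplant Kronheimer's surface--cobordism technique, in the form used by Daemi--Scaduto for singular instanton homology, to the branched--cover Seiberg--Witten setting. All the necessary inputs are the structural results already established: the homotopy invariance of the $Q$-localized cobordism map (\cref{vanishing BF}), the fact that the product cobordism induces the identity (\cref{product cobordism}), the composition law (\cref{BFgluing}), and the grading / $S^1$-fixed--point formulas for branched--cover cobordism maps recorded above. The base case is the unknot $U$: here $\Sigma_2(U)=S^3$ with the standard rotation, $SWF(S^3,\fraks_0)$ is a formal desuspension of the sphere, and $\wt{H}^*_{\Z_2}(SWF(\Sigma_2(U)))$ is free of rank $1$ over $\mathbb{F}_2[Q]$; the value of $\operatorname{rank}_{\mathbb{F}_2[Q]}\operatorname{Ker}(U^i)$ for the unknot is likewise computed directly. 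The idea for a general knot $K$ is to sandwich $\Sigma_2(K)$ between two copies of $\Sigma_2(U)$ by surface cobordisms and compare ranks.

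Concretely, I would fix a connected oriented surface cobordism $\Sigma\subset S^3\times[0,1]$ of genus $g$ from $U$ to $K$ --- for instance $g=g_4(K)$, obtained by deleting a small ball from a genus--minimizing slice surface for $K$ --- and let $\bar\Sigma\colon K\to U$ be its reverse. Both are null--homologous, so the double branched covers are defined, and Baraglia--Hekmati's construction yields $\mathbb{F}_2[Q]$-module maps $BF^*_\Sigma\colon\wt{H}^*_{\Z_2}(SWF(\Sigma_2(K)))\to\wt{H}^*_{\Z_2}(SWF(\Sigma_2(U)))$ and $BF^*_{\bar\Sigma}$ in the reverse direction (and similarly for the $S^1\times\Z_2$ theory). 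By \cref{BFgluing}, $BF^*_{\bar\Sigma}\circ BF^*_\Sigma$ and $BF^*_\Sigma\circ BF^*_{\bar\Sigma}$ are the cobordism maps of the two glued genus--$2g$ self--cobordisms, of $K$ and of $U$ respectively. Since $S^3\times[0,1]\simeq S^3$ is $2$-connected, any two surface cobordisms of a fixed genus with the same boundary are homotopic rel boundary; hence each glued self--cobordism is homotopic rel boundary to the \emph{standard} genus--$2g$ self--cobordism, namely the product cobordism with $2g$ trivial handles attached. By \cref{vanishing BF} this replacement does not change the $Q$-localized cobordism map; and by \cref{product cobordism} together with the grading formula --- which forces each trivial handle to act, after inverting $Q$, as multiplication by the unit $Q$ (the potential factor $U^m$ being killed by the degree count, $m=0$) --- the standard genus--$2g$ self--cobordism induces an isomorphism of the $Q$-localized cohomologies. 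Consequently $BF^*_\Sigma$ and $BF^*_{\bar\Sigma}$ are mutually inverse up to a unit after inverting $Q$; since $Q^{-1}\wt{H}^*_{\Z_2}(SWF(\Sigma_2(U)))$ is free of rank $1$ over $\mathbb{F}_2[Q^{\pm1}]$, so is $Q^{-1}\wt{H}^*_{\Z_2}(SWF(\Sigma_2(K)))$, i.e.\ $\operatorname{rank}_{\mathbb{F}_2[Q]}\wt{H}^*_{\Z_2}(SWF(\Sigma_2(K)))=1$. (The lower bound alone is also immediate: the reducible solution is $\Z_2$-fixed, so $SWF(\Sigma_2(K))^{\Z_2}$ has nonzero $\mathbb{F}_2$-cohomology, and Borel localization gives $Q^{-1}\wt{H}^*_{\Z_2}(SWF(\Sigma_2(K)))\neq 0$.)

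For the second assertion I would run exactly the same sandwich on the $S^1\times\Z_2$-equivariant cohomology, where the cobordism maps are $\mathbb{F}_2[U,Q]$-linear and, by the computation above, the two composites again become multiplication by a unit after inverting $Q$, compatibly with the $U$-action. Thus $BF^*_\Sigma$ and $BF^*_{\bar\Sigma}$ induce, after inverting $Q$, mutually inverse $U$-equivariant isomorphisms, so $\operatorname{rank}_{\mathbb{F}_2[Q]}\operatorname{Ker}(U^i)$ --- a quantity detected after inverting $Q$ --- is a concordance- and in fact cobordism-insensitive invariant, equal to its value for the unknot, namely $i$.

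The step I expect to be the main obstacle is the claim that the genus--$2g$ self--cobordism induces a $Q$-localized isomorphism. Homotopy invariance (\cref{vanishing BF}) reduces it to the standard cobordism, and the degree count pins the $U$-exponent to $0$; what remains is to show that the $2g$ trivial handles contribute an \emph{invertible} class rather than $0$. This really amounts to transporting the clean formula for the $S^1$-fixed--point cobordism map to a statement about $\wt{H}^*_{\Z_2}$, in which there is no $S^1$, and so requires comparing the $\Z_2$- and $S^1\times\Z_2$-Borel constructions through the Serre spectral sequence of the fibration $(\,\cdot\,)/\Z_2\to(\,\cdot\,)/(S^1\times\Z_2)\to BS^1$ and tracking the behaviour of the cobordism map on it; this bookkeeping is where the bulk of the work lies. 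The $2$-knot half of \cref{vanishing BF} suggests an alternative route via capping off the glued cobordism, but because the glued self--cobordism has genus $2g>0$ it is not a $2$-knot, so some version of the trivial--handle computation seems unavoidable.
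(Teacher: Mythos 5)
Your overall sandwich strategy (compare $K$ with the unknot through a cobordism and its reverse, use the composition law, homotopy invariance after inverting $Q$, and the product-cobordism theorem) is the same skeleton as the paper's argument, but your specific implementation has a genuine gap exactly where you suspect it: the claim that the standard genus-$2g$ self-cobordism acts invertibly after $Q$-localization. Because you chose an \emph{embedded} genus-$g_4(K)$ cobordism, the glued self-cobordisms have genus $2g>0$, and after applying \cref{vanishing BF} you are left needing that each trivial handle (equivalently the unknotted $T^2\subset S^4$, whose double branched cover is $S^2\times S^2$ with $(H^+)^{\Z_2}=0$) contributes the unit $Q$ rather than $0$. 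The grading count only pins down the degree; it does not rule out vanishing, and none of the theorems you are allowed to quote supplies this. Moreover, the route you propose for it — transporting the $S^1$-fixed-point formula $(BF^{S^1})^*=Q^{b^+}U^m$ through the Serre spectral sequence of $X_{h\Z_2}\to X_{h(S^1\times\Z_2)}\to BS^1$ — is aimed at the wrong fixed-point set: after inverting $Q$ the $\Z_2$-equivariant cohomology is detected by the $\Z_2$-fixed points (this is exactly \cref{localization}/Borel localization), not by the $S^1$-reducible locus, and knowing the map there modulo the $U$-tower does not control the $Q$-localized $\Z_2$-theory. The paper's computation of such closed pieces is instead a direct one: identify the $\Z_2$-representation spheres by the equivariant index theorem and apply tom Dieck's equivariant Hopf theorem to the degree on the $\Z_2$-fixed spheres, giving $BF^*_{(S^2\times S^2,\iota)}=Q$ (and $\pm\operatorname{id}$ for $\overline{\C P}^2\#\overline{\C P}^2$).

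The paper avoids your obstacle altogether by a different choice of cobordism: it takes an unknotting sequence of crossing changes, i.e.\ a \emph{normally immersed genus-zero annulus} from $K$ to $U_0$, blows up at the double points to get an embedded surface of square $-2$ classes, and uses the resulting cobordism maps. Then each composite is a genus-zero immersed self-cobordism of $U_0$ (resp.\ $K$), which is linearly homotopic rel boundary to the product cobordism, so \cref{vanishing BF} (whose proof already contains the twist/finger-move computations) together with \cref{product cobordism} immediately gives that the localized composite is $Q^j$, hence an isomorphism — no separate torus-handle computation is needed (that computation only enters later, in the genus bound \cref{main slice-torus gen}). If you want to keep embedded surfaces, you must prove $BF^*_{T^2\subset S^4}=Q$ by the equivariant Hopf-theorem argument; otherwise switch to the immersed-annulus set-up. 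Finally, for the second assertion your reduction "equal to its value for the unknot, namely $i$" is not yet a proof: on $\wt{H}^*_{S^1\times\Z_2}(SWF(\Sigma_2(U_0)))\cong\mathbb{F}_2[U,Q]$ the element $U$ is a nonzerodivisor, so the literal cohomological kernel is zero for the unknot; one has to pass to the homological ($U_\dagger$) or cokernel formulation, as in the paper's subsequent structure theorem, before the unknot value comes out as $i$.
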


We first take a sequence of crossing changes from $K$ to the unknot $U_0$ and regard it as a normally and smoothly immersed annulus $S$ from $K$ to $U_0$ in $[0,1]\times S^3$. 
 
By the use of $S$, we will define a cobordism map 
\[
BF^*_{S} : \wt{H}^*_{\Z_2} (SWF(\Sigma_2(U_0))) \to \wt{H}^*_{\Z_2} (SWF(\Sigma_2(K))) 
\]
with a certain grading shift. 
\begin{defn}
The cobordism maps for a normally immersed cobordism $S$ from $K$ to $K'$ in $[0, 1] \times S^3$ are defined to be equivariant cobordism maps for the embedded surface $S_b$ obtained as the proper transform of $S$ in the blow-up with respect to all immersed points of $S$;
\[
S_b \subset ([0, 1]\times S^3) \displaystyle \#_{s_++ s_- } \overline{\mathbb{C}P} ^2 , 
\]
where $s_+$ and $s_-$ are numbers of positive and negative immersed points. \footnote{We only use the existence of certain cobordism maps. Therefore, we do not need to care about the dependence of $S_b$ with respect to additional data for the proper transform.} 
In this case, the homology class of $S_b$ is represented as 
\[
[S_b]= (-2, \cdots, -2, 0, \cdots, 0) \in H_2 (\#_{s_++ s_- }\overline{\mathbb{C}P}^2; \Z) . 
\]
Since $[S_b]$ is divisible by $2$, we have the double branched covering space along $S_b$. We take a spin$^c$ structure $\s_b$ corresponding to the sum of the generators of $H_2 (\#_{s_++ s_- }\overline{\mathbb{C}P}^2)$ and "pullback" it to the double-branched covering space:
First, we consider the spin$^c$ structure $\s_b$ on $([0, 1] \times S^3) \#_{s_++ s_- }\overline{\mathbb{C}P}^2$is given by
\[
c_1(\s_b)=(1, \dots, 1) \in H^2(I \times S^3 \#_{s_++ s_- }\overline{\mathbb{C}P}^2).
\]
The $\Z_2$-invariant spin$^c$ structure on the branched cover is  chosen such that the condition 
\[
c_1(\wt{\fraks}) = \pi^* (c_1(\s_b) + \frac{1}{2} [S_b] ) 
\]
is satisfied.
See \cite[Proposition 2.5]{Ba22} for the existence of such spin$^c$ structures. 

We denote by 
\[
BF^*_{S_b} : \wt{H}^*_{\Z_2} (SWF(\Sigma_2(K')) \to  \wt{H}^*_{\Z_2} (SWF(\Sigma_2(K))
\]
the cobordism map with respect to the above spin$^c$ structures.

\end{defn}

\begin{rem}
    This is an analogous construction of invariants for immersed surfaces given in \cite{Kr97}. 
\end{rem}

Also, the normally immersed cobordism $-S: U_0 \to K$ obtained by reversing the orientation of $S$. Thus, we also have 
\[
BF^*_{-S} : \wt{H}^*_{\Z_2} (SWF(\Sigma_2(K))) \to \wt{H}^*_{\Z_2} (SWF(\Sigma_2(U_0))) 
\]
again with some grading shifts.

\begin{prop}\label{key rank one}
With respect to the above cobordism maps, if we consider the localization with $Q$, we have 
\begin{align*}
    ( BF^*_{S} \circ BF^*_{-S} )^{\operatorname{loc}}  = Q^i \\ 
 (BF^*_{-S} \circ BF^*_{S})^{\operatorname{loc}} = Q^i 
\end{align*}
for some $i$, where, for a $\mathbb{F}_2[U]$-module map $f : A \to B, f^{\operatorname{loc}}$ denotes the induced map on the localization 
\[
f^{\operatorname{loc}} : Q^{-1}A \to Q^{-1}B.
\]
\end{prop}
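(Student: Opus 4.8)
The plan is to leverage the composition law for equivariant Bauer--Furuta invariants (\cref{BFgluing}) together with \cref{product cobordism}, reducing the two composite cobordism maps to Bauer--Furuta invariants of closed surface configurations that we can identify with the identity after $Q$-localization. First I would observe that $BF^*_{-S}\circ BF^*_{S}$ (up to reordering of composition, matching the covariant/contravariant conventions) is the cobordism map associated to the composite immersed cobordism $S \cup_{K} (-S)$ from $\Sigma_2(K)$ to itself, realized via proper transforms as an embedded surface in $([0,1]\times S^3)\#_{2(s_++s_-)}\overline{\mathbb{C}P}^2$, carrying the appropriate pull-backed spin$^c$ structure. The composite immersed annulus $S\cup(-S)$ is homotopic rel boundary to the product annulus $[0,1]\times K$ inside $[0,1]\times S^3$: one simply undoes each crossing change and then redoes it, and the double-point pairs cancel in pairs homotopically. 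Here I would invoke the second part of \cref{vanishing BF} (or rather, its local-coefficient consequence): the Bauer--Furuta invariant of such a surface depends only on its homotopy class rel boundary after $Q$-localization — this is exactly the content we want from the "vanishing" statement for trivially-immersed tubes. In fact, each cancelling pair of double points contributes, after blowing up, a copy of an embedded $2$-sphere in $\overline{\mathbb{C}P}^2$ with self-intersection $-2$ bounding nothing essential, and its branched double cover carries the Bauer--Furuta invariant treated in \cref{independence of 2-knots}, which is stably homotopic to the identity up to sign; tensoring with the identity cobordism on the rest gives, via \cref{BFgluing} and \cref{product cobordism}, that the total composite is stably homotopic to the identity of $SWF(\Sigma_2(K))$ up to multiplication by a power of $Q$ coming from the blow-up summands ($b^+$ contributions) — this is where the $Q^i$ appears.

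More concretely, the key steps in order are: (1) write $S\cup(-S)$, after proper transform, as a connected sum of the product cobordism $[0,1]\times\Sigma_2(K)$ with a disjoint union of closed surface-pairs $(\overline{\mathbb{C}P}^2, \text{sphere})$ coming from the paired double points; (2) apply the gluing law \cref{BFgluing} to split $BF^*_{S\cup(-S)}$ as the composite of $BF^*$ of the product piece and the connected-sum contributions of the closed pieces; (3) identify the product piece's contribution with the identity via \cref{product cobordism}; (4) identify each closed $(\overline{\mathbb{C}P}^2,S^2)$-contribution, after localizing at $Q$, with multiplication by a power of $Q$ — this uses the computation of the $S^1$-fixed-point cobordism map as $Q^{b^+}U^m$ together with the fact that on the $Q$-localization the $U$-power becomes invertible once we note the relevant module is supported appropriately, or more directly invokes \cref{independence of 2-knots} applied to the branched double cover of $(S^4, \text{unknotted }S^2)$ obtained by doubling; (5) conclude $(BF^*_{-S}\circ BF^*_S)^{\loc} = Q^i$ for the resulting exponent $i$, and symmetrically for the other composition by running the same argument with the roles of $K$ and $U_0$ exchanged (noting $\Sigma_2(U_0) = S^3$ and that $SWF(S^3,\fraks_0) = S^0$).

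The main obstacle I expect is step (1)--(2): making precise the claim that the composite immersed cobordism, after the proper-transform/blow-up construction, decomposes as a connected sum in a way compatible with the chosen pull-backed spin$^c$ structures, so that the gluing law applies with the correct bookkeeping of $c_1^2$, $\sigma$, and $b^+$ contributions. One must check that the $\Z_2$-invariant spin$^c$ structure $\wt{\fraks}$ on the branched cover of the composite is isomorphic to the one obtained by gluing the invariant spin$^c$ structures on the branched covers of the pieces — this is where the injectivity and independence-of-choices results of \cref{pull back spinc} do the work, but one has to verify the hypotheses ($H_1 = 0$, homology class divisible by $2$) are preserved under the connected-sum decomposition. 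A secondary subtlety is tracking the precise exponent $i$ and confirming that it is the \emph{same} $i$ in both composites; this follows because $i$ is determined by the grading shift formula (the $b^+(\Sigma_2(S_b))$ term), which is symmetric in $S$ and $-S$, but it is worth stating explicitly. The actual equivariant-homotopy input — that the closed pieces contribute invertible elements after $Q$-localization — is already packaged in \cref{independence of 2-knots} and \cref{tomDieck}, so I would cite rather than reprove it.
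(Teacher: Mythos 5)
Your proposal follows essentially the same route as the paper: compose $S$ with $-S$, observe that the composite immersed annulus is homotopic rel boundary to the product cobordism, invoke homotopy invariance of the $Q$-localized cobordism maps (the first part of \cref{vanishing BF}, proved via the ambient-isotopy/twist/finger-move relations), identify the product cobordism's map with the identity via \cref{product cobordism}, and fix the common exponent $i$ by the grading shift. The only minor correction is in your ``more concrete'' step (4): the paper identifies the local pieces produced by the moves as $(S^2\times S^2,\iota)$ (contributing $Q$) and $(\overline{\C P}^2\#\overline{\C P}^2,\iota)$ (contributing $\pm\id$) via the equivariant Hopf theorem rather than via \cref{independence of 2-knots}, and no inversion of $U$ is needed since the $\Z_2$-equivariant theory carries only the variable $Q$.
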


\begin{rem}
We can also consider variants of the cobordism maps by just replacing the unknot $U_0$ with $\#_{-\frac{1}{2}\sigma (K)} T(2,3)$. We call such a cobordism $S_{gr}$. 
In this case, the cobordism maps $BF^*_{S_{gr}}$ and $BF^*_{-S_{gr}}$ are grading preserving maps. In this case, we have
\begin{align*}
    (BF^*_{S_{gr}} \circ BF^*_{-S_{gr}})^{\operatorname{loc}}  =1 \\ 
 ( BF^*_{-S_{gr}}  \circ BF^*_{S_{gr}})^{\operatorname{loc}} = 1 . 
\end{align*}
\end{rem}

\begin{proof}[Proof of \cref{rank1 theorem}]
    \cref{rank1 theorem} just follows from the existence of module maps $BF^*_{S}$ and $BF^*_{-S}$ satisfying the conditions written in \cref{key rank one}. 
\end{proof}

For the proof of \cref{key rank one}, we prove the following general result: 
\begin{thm}\label{general indep}
 Let $S$ be a properly connected embedded smooth surface cobordism in a cobordism $W$ from a pair of a homology $3$-sphere $Y$ and a knot $K$ in Y to another pair $(Y', K')$ whose homology class is divisible by $2$.
Then, the $Q$-localized cohomological Bauer--Furuta invariant
\[
 BF^{* \operatorname{loc}}_{(W, S)} :Q^{-1}\wt{H}^*_{\Z_2} (SWF(\Sigma_2(K'))) \to Q^{-1} \wt{H}^*_{\Z_2} (SWF(\Sigma_2(K)))  
\]
depends only on the homotopy class of $S$, precisely, for two choices homotopic $S$ and $S'$ rel boundary, we have 
\[
BF^{*\operatorname{loc}}_{(W, S)}  = BF^{*\operatorname{loc}}_{(W, S')} .
\]
\end{thm}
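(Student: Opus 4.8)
The plan is to prove that $BF^{*\operatorname{loc}}_{(W,-)}$ is unchanged under the local moves that generate homotopy rel boundary of properly embedded surfaces in a 4-manifold --- ambient isotopies, finger moves and Whitney moves --- and that the change caused by a finger move is composition with an operator which becomes a unit after inverting $Q$. First I would extend the definition of $BF^*_{(W,S)}$ to immersed surfaces $S$ in a general $W$ exactly as in the cylinder case, namely by blowing up the double points and taking the proper transform. Then I would use the standard fact that two properly embedded surfaces $S,S'\subset W$ homotopic rel $\partial$ are joined by a finite sequence of ambient isotopies, finger moves (say $n_+$ of them) and Whitney moves ($n_-$ of them), passing through immersed surfaces along the way, and that since the number of double points begins and ends at $0$ one has $n_+=n_-$. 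Invariance of $BF^{*\operatorname{loc}}_{(W,-)}$ under ambient isotopy rel $\partial$ is immediate from naturality of the branched cover and of the relative Bauer--Furuta map, so only the finger move remains.

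Consider a finger move taking an immersed surface $S_k$ of the sequence to $S_{k+1}$; it is supported in a ball $B^4$ contained in the interior of $W$, and after an ambient isotopy I may assume $B^4$ lies in a collar $Y\times[0,1]$ of the incoming boundary $Y$ (a homology sphere carrying the knot $K$) in which $S_k$ is the product $K\times[0,1]$. By the definition above, the cobordism map of $S_{k+1}$ is $BF^*$ of the proper transform $(S_{k+1})_b\subset W\#2\overline{\C P}^2$ obtained by blowing up the two new double points, and $[(S_{k+1})_b]$ is still divisible by $2$. Cutting along $Y\times\{1\}$, the composition law \cref{BFgluing} for the branched double covers together with \cref{product cobordism} for the unchanged part yields
\[
BF^*_{(S_{k+1})_b}\;=\;\Phi\circ BF^*_{S_k},\qquad \Phi:=BF^*_{\widehat{C}},\quad \widehat{C}:=\Sigma_2\big((Y\times[0,1])\#2\overline{\C P}^2,\ S^{\mathrm{loc}}_b\big),
\]
where $S^{\mathrm{loc}}_b$ is the canonical blown-up finger surface, of homology class $-2E_+-2E_-$. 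The key point is that $\Phi$, regarded as a $Q$-localized $\Z_2$-equivariant self-operator of $Q^{-1}\wt{H}^*_{\Z_2}(SWF(\Sigma_2(K)))$, depends only on $(Y,K)$ and not on the particular finger move; a Whitney move, being an inverse finger move, contributes $\Phi^{-1}$.

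It remains to prove that $\Phi$ is invertible after inverting $Q$. Here I would adapt Kronheimer's localization argument \cite{Kr97} (in the form used by Daemi--Scaduto \cite{DS23}) to Seiberg--Witten theory on branched covers: the $\Z_2$-action on $\widehat{C}$ fixes the branch surface $S^{\mathrm{loc}}_b$ (codimension $2$), and inverting $Q$ reduces the $\Z_2$-equivariant Floer cohomology and the cobordism map to the data supported near this fixed set. Concretely, $\widehat{C}$ is built from the product cobordism $\Sigma_2(K)\times[0,1]$ by attaching two local pieces --- the double branched covers of $\overline{\C P}^2\setminus B^4$ along a disk of class $-2E$ --- each of which, after inverting $Q$, induces a unit times a fixed grading shift; this is the $Q$-localized analogue of the Seiberg--Witten blow-up formula, in the same spirit as the statement for $2$-knots in the second part of \cref{vanishing BF}. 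Granting this, since $\Phi$ and $\Phi^{-1}$ commute, composing over the whole sequence of moves gives $BF^{*\operatorname{loc}}_{(W,S')}=\Phi^{\,n_+-n_-}\circ BF^{*\operatorname{loc}}_{(W,S)}=BF^{*\operatorname{loc}}_{(W,S)}$ because $n_+=n_-$; the grading shifts likewise change by a fixed amount under each finger move and by its negative under each Whitney move, and telescope to $0$, consistently with the fact that the grading shift of an embedded surface depends only on homological data.

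The step I expect to be the main obstacle is establishing the invertibility of the $Q$-localized $\Phi$: making precise that inverting $Q$ collapses the equivariant Floer cohomology and the relative Bauer--Furuta maps onto the $\Z_2$-fixed (branch-locus, reducible) data, so that the two exceptional $\overline{\C P}^2$-summands coming from the blow-ups contribute only units. This has to be carried out at the level of Manolescu's finite-dimensional approximations, with attention to the correction terms $n(\wt{Y},\fraks,g)$; it is exactly where this argument parallels Kronheimer's computation for immersed surfaces and its Daemi--Scaduto adaptation in singular instanton theory. By contrast, the finger/Whitney decomposition (together with the equality $n_+=n_-$) and the gluing step are routine.
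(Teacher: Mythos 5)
Your overall strategy is the same as the paper's (blow up the double points, decompose the homotopy into elementary moves, use the gluing law and the product-cobordism identity, and work after inverting $Q$), but the proposal has two concrete gaps. The first and most serious is exactly the step you flag: you never establish that the local piece created by a finger move acts invertibly after inverting $Q$. In the paper this is not obtained from a general ``localized blow-up formula''; it is an explicit computation. Each twist move is analyzed by identifying the $\Z_2$-equivariant double branched cover of the blown-up local model: a positive twist point contributes an equivariant $S^2\times S^2$ summand whose equivariant Bauer--Furuta class is computed to be $Q$ (via the equivariant index theorem plus the equivariant Hopf theorem), while a negative twist contributes an $\overline{\C P}^2\#\overline{\C P}^2$ summand acting as $\pm\id$; the finger move is then a combination of these, so it multiplies the cobordism map by $Q$. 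Without some version of these equivariant degree/index computations, asserting that ``the two exceptional summands contribute only units'' is circular --- it is precisely the content one must prove, and it is where the correction terms and the fixed-point (reducible) analysis enter.

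The second gap is the mechanism you propose for reducing a finger move to a standard piece: you cannot in general isotope the supporting ball of a finger move into a collar $Y\times[0,1]$ in which the surface is the product $K\times[0,1]$ and then ``cut along $Y\times\{1\}$''. The guiding arc of the finger move may run anywhere in $W$, and any ambient isotopy carrying its neighborhood into the collar also moves $S_k$, so the claimed factorization $BF^*_{(S_{k+1})_b}=\Phi\circ BF^*_{S_k}$ via \cref{BFgluing} and \cref{product cobordism} does not follow; nor is it clear, on your argument, that $\Phi$ is independent of the particular arc. The paper avoids this by computing the effect of each move from the local model of the blown-up branched cover (an equivariant connected-sum decomposition), not by relocating the move to the boundary; the residual power $Q^j$ is then killed by the grading, as you correctly anticipate with your telescoping remark. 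A smaller point: your decomposition into isotopies, finger and Whitney moves only (no cusp/twist moves) needs the fact that homotopic embedded surfaces rel boundary have equal normal Euler numbers and hence are regularly homotopic; the paper instead works with the full list of moves including the two twist moves, which is the form in which the statement is standard.
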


\begin{proof}
The argument is similar to that of  Daemi--Scaduto \cite{DS23} for singular instantons, which is based on arguments by Kronheimer \cite{Kr97}.
Let 
\[
S, S' \subset I \times S^3
\]
be two homotopic normally immersed surface cobordims $ (S^3, K)\to (S^3, K')$ with the same genus. 
 It is known that there is a sequence of  moves from $S$ to $S' $ listed below: 
\begin{itemize}
   \item[(0)]  an ambient isotopy $T \to T'$ of the image in $I \times S^3$ rel $\partial I \times S^3$.
    \item[(i)] positive twist move $T \to T'$ ($T'$ has one more positive immersed point than $T$)
        \item[(ii)] negative twist move $T \to T'$, ($T'$ has one more negative immersed point than $T$) and 
    \item[(iii)] finger move $T \to T'$ ($T'$ has one more positive double point and one more negative double point than $T$)
\end{itemize}
or inverse of one of these.
The relation of the $\Z_2$-equivariant cohomological cobordism maps
under these moves can be written as 
\begin{itemize}
\item [(0)] $BF^*_T = BF^*_{T'} $
    \item[(i)] $Q BF^*_T = BF^*_{T'} $,
    \item[(ii)] $BF^*_T= BF^*_{T'}$ and 
    \item[(iii)] $Q BF^*_T = BF^*_{T'}$. 
\end{itemize}

Similar arguments are used by Kronheimer \cite{Kr97} for instantons. 
\par
For (0), $BF^*_T$ is a diffeomorphism invariant, so this is invariant under ambient isotopy. \par
For (i), 
from the construction of the blow-up, one can see 
\[
(\Sigma_2(T'_b), \iota_{T_b'}) \cong (\Sigma_2(T_b) \# S^2 \times S^2, \iota_T \# \iota ) 
\]
as $\Z_2$-equivariant manifolds. Note that the induced spin$^c$ structure on the $S^2\times S^2$-component is spin.

Therefore, the connected sum formula of equivariant Bauer--Furuta invariants, we see 
\[
BF_{(\Sigma_2(T'_b), \iota_{T_b'})}  = BF_{(\Sigma(T), \iota_T)}  \wedge BF_{( S^2 \times S^2, \iota)} . 
\]
From the equivariant index theorem and equivariant Hopf theorem written in [Tom--Dieck Trans Group Chapter II 4, Theorem 4.11(iv)], one can see 
\[
BF_{( S^2 \times S^2, \iota)} ^* = Q
\]
with respect to the spin structure on $S^2 \times S^2$.
This computation is still true for $S^1\times \Z_2$ case. 
This completes the proof of (i). 

For (ii), 
one can see 
\[
(\Sigma_2(T'_b), \iota_{T_b'}) \cong (\Sigma(T_b) \# \overline{\C P}^2 \# \overline{\C P}^2 , \iota_T \# \iota ) 
\]
as $\Z_2$-equivariant manifolds. Here, $\iota$ is given by the equivariant connected sum along two points of the flipping involution on $\overline{\C P}^2 \cup \overline{\C P}^2$ and the involution on $S^4$ arises as the double branched cover long trivial $2$-knot in $S^4$,  
 From the connected sum formula of equivariant Bauer--Furuta invariants, we see 
\[
BF_{(\Sigma_2(T'_b), \iota_{T_b'})}  = BF_{(\Sigma(T), \iota_T)}  \wedge BF_{( \overline{\C P}^2 \# \overline{\C P}^2 , \iota)} . 
\]
Note that the induced spin$^c$
structure on $\overline{\C P}^2 \# \overline{\C P}^2$ is corresponding to the sum of generators of the Poincar\'e duals of exceptional curves.  Again from the equivariant Hopf theorem written in [Tom--Dieck Trans Group Chapter II 4, Theorem 4.11(iv)], one can easily see 
\[
BF_{( \overline{\C P}^2 \# \overline{\C P}^2, \iota)} ^* =\pm \id 
\]
up to $S^1\times \Z_2$-stable homotopy.
This completes the proof of (ii).

Since (iii) is a combination of (i) and (ii), this completes the proof of (0)-(iii). 

So, using it and since there is a sequence of moves from $S$ to $S'$, we can write down 
\[
BF_{S} =  Q^j BF_{S' }\text{ in localization by $Q$}
\]
for some integer $j$.  
This completes the proof. 
\end{proof}

Now we give a proof of \cref{key rank one}: 
\begin{proof}[Proof of \cref{key rank one}]
We first consider $\Z_2$-equivariant cohomologies. 
    The composition $S \circ (-S)$ is an immersed genus $0$ cobordism from $U_0$ to itself. Note that $U_0 \times [0,1] $ is also a cobordism from $U_0$ to itself with genus $0$. 
So, using it and since there is a sequence of three kinds of moves from $S\circ -S$ to $U_0 \times [0,1] $, we can write down 
\[
BF^*_{S \circ -S} =  Q^j BF^*_{U_0 \times [0,1] } = Q^j \text{ in localization by $Q$}
\]
for some integer $j$. 
Here we used $BF^*_{U_0 \times [0,1] } = \id $ which will be proven in \cref{BF=id}. 
We can do completely the same discussion for $-S \circ S$ to see
\[
BF^*_{-S \circ S} =  Q^{i} BF^*_{K\times [0,1] } = Q^{j}  \text{ in localization by $Q$} . 
\]
 Here again we used $BF^*_{K \times [0,1] } = \id $.
From the grading reason, we see $i=j$. Moreover if we take $S$ as $S_{gr}$, then $i = j=0$ holds. 

Next, we consider $S^1\times \Z_2$ case. Note that $BF_{S}$ and $BF_{-S}$ are $U$-module maps. Therefore, they induce maps on $\ker U^i$ and the relations 
\[
BF^*_{-S \circ S} =  Q^{i} BF^*_{K\times [0,1] } = Q^{i}  \text{ in localization by $Q$} 
\]
are still true as maps on $\ker U^i$. 
This completes the proof. 
\end{proof}

We show some finiteness properties of Baraglia--Hekmati's $S^1\times \Z_2$ equivariant Floer homology and cohomology of double branched covers using \cref{rank1 theorem}.

\cref{towers} is a special case of the following general theorem: 
\begin{thm}
Let $K \subset S^3$ be a knot.
\begin{enumerate}
\item 
\begin{enumerate}
\item 
For $n \geq 1$, we have
\[
\rank_{\mathbb{F}[Q_\dagger ]}\ker(U^n_{\dagger}: \wt{H}^{S^1\times \Z_2}_*(SWF(\Sigma_2(K)))\to \wt{H}^{S^1\times \Z_2}_*(SWF(\Sigma_2(K))))=n
\]
for the Floer homology group.

\item 
For $n \geq 1$, we have
\[
\rank_{\mathbb{F}[Q]}\cok(U^n: \wt{H}_{S^1\times \Z_2}^*(SWF(\Sigma_2(K)))\to \wt{H}_{S^1\times \Z_2}^*(SWF(\Sigma_2(K))))=n.
\]
for the Floer cohomology group.
\end{enumerate}
\item
\begin{enumerate}
\item 
For $n \geq 1$, 
\[
\cok(U^n_{\dagger}: \wt{H}^{S^1\times \Z_2}_*(SWF(\Sigma_2(K)))\to \wt{H}^{S^1\times \Z_2}_*(SWF(\Sigma_2(K))))
\]
is a torsion $\mathbb{F}[Q_\dagger]$ module.
\item
 For $n \geq 1$, 
\[
\ker(U^n: \wt{H}_{S^1\times \Z_2}^*(SWF(\Sigma_2(K)))\to \wt{H}_{S^1\times \Z_2}^*(SWF(\Sigma_2(K))))
\]
is a torsion $\mathbb{F}[Q]$ module.
\end{enumerate}
\item 
\begin{enumerate}
\item 
The quotient
\[
\wt{H}^{S^1\times \Z_2}_*(SWF(\Sigma_2(K)))/\bigcap_{n=1}^\infty \im U^n_{\dagger}
\]
is finite. Thus, for all but finite elements, their $U_\dagger$-divitibility is infinite.
\item 
The number of $U$-torsion elements in
the Floer cohomology group $
\wt{H}_{S^1\times \Z_2}^*(SWF(\Sigma_2(K)))$
 is finite. 
 In particular, for sufficiently large $j \in \Z^{\geq 0}$, $\im Q^j \subset \wt{H}_{S^1\times \Z_2}^*(SWF(\Sigma_2(K)))$ does not contain any $U$-torsions and thus the quotient $\im Q^j/\im Q^{j+1}$ is a free $\mathbb{F}[U]$ module. 
\end{enumerate}
\end{enumerate}
\end{thm}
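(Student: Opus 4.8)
The plan is to derive every clause of (1)--(3) from \cref{rank1 theorem}, i.e.\ from the fact that $\wt{H}^*_{\Z_2}(SWF(\Sigma_2(K));\mathbb{F}_2)$ has rank one over $\mathbb{F}_2[Q]$, together with routine commutative algebra over the bigraded ring $H^*(B(S^1\times\Z_2);\mathbb{F}_2)=\mathbb{F}_2[U,Q]$. Write $X=SWF(\Sigma_2(K))$ and $B=\wt{H}^*_{S^1\times\Z_2}(X;\mathbb{F}_2)$; as noted above, $B$ is a finitely generated $\mathbb{F}_2[U,Q]$-module. I would first record two reductions. \emph{(i) Homology is dual to cohomology}: since the Borel construction $X_{h(S^1\times\Z_2)}$ has finite-type mod $2$ homology, $\wt{H}^{S^1\times\Z_2}_*(X;\mathbb{F}_2)$ is the graded $\mathbb{F}_2$-dual $B^{\vee}$, with the $U_\dagger$- and $Q_\dagger$-actions dual to $U$ and $Q$; hence $\ker(U_\dagger^n\vert_{B^\vee})\cong(\cok(U^n\vert_B))^\vee$, $\cok(U_\dagger^n\vert_{B^\vee})\cong(\ker(U^n\vert_B))^\vee$, and $\bigcap_n\im(U_\dagger^n\vert_{B^\vee})$ is dual to $B/U^nB$ stabilized, i.e.\ to $F:=B/B_{\mathrm{tor}}$ where $B_{\mathrm{tor}}$ is the $U$-power-torsion submodule of $B$. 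So it suffices to analyse $B$. \emph{(ii) Localization}: by the $S^1$-localization theorem $U^{-1}B\cong\wt{H}^*_{S^1\times\Z_2}(X^{S^1};\mathbb{F}_2)$, and $X^{S^1}$ is a $\Z_2$-representation sphere $S^W$ (the Conley index of the reducible), so $U^{-1}B$ is free of rank one over $\mathbb{F}_2[U,U^{-1},Q]$ by the mod $2$ Thom isomorphism; consequently $F\otimes_{\mathbb{F}_2[Q]}\mathbb{F}_2(Q)$ is free of rank one over the PID $\mathbb{F}_2(Q)[U]$.

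The key structural claim is that $B_{\mathrm{tor}}$ is $\mathbb{F}_2[Q]$-torsion, and hence (being finitely generated, $\mathbb{F}_2[U,Q]$ being noetherian) finite dimensional over $\mathbb{F}_2$ and annihilated by some $U^N$. I would prove this using the Gysin sequence of the residual circle action on $X_{h\Z_2}$, which for each degree $k$ exhibits $\wt{H}^k_{\Z_2}(X;\mathbb{F}_2)$ as an extension of $\ker(U\colon B^{k-1}\to B^{k+1})$ by $\cok(U\colon B^{k-2}\to B^k)$. Summing over $k$ and comparing $\mathbb{F}_2[Q]$-ranks gives $\rank_{\mathbb{F}_2[Q]}\cok(U\vert_B)+\rank_{\mathbb{F}_2[Q]}\ker(U\vert_B)=\rank_{\mathbb{F}_2[Q]}\wt{H}^*_{\Z_2}(X;\mathbb{F}_2)=1$ by \cref{rank-1}. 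On the other hand $B/UB$ surjects onto $F/UF$, whose $Q$-localization is one-dimensional over $\mathbb{F}_2(Q)$ by reduction (ii), so $\rank_{\mathbb{F}_2[Q]}\cok(U\vert_B)\ge 1$; therefore $\rank_{\mathbb{F}_2[Q]}\ker(U\vert_B)=0$. Finally, in the chain $\ker U\subseteq\ker U^2\subseteq\dots\subseteq\ker U^N=B_{\mathrm{tor}}$, multiplication by $U^k$ embeds $\ker U^{k+1}/\ker U^k$ into $\ker U$, so each subquotient is $\mathbb{F}_2[Q]$-torsion and hence so is $B_{\mathrm{tor}}$.

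With the claim established the six assertions follow. In cohomology: $\ker(U^n\vert_B)\subseteq B_{\mathrm{tor}}$ is $\mathbb{F}_2[Q]$-torsion, which is 2(b); the set of $U$-torsion elements of $B$ is exactly $B_{\mathrm{tor}}$, a finite set since it is finitely generated and $\mathbb{F}_2[Q]$-torsion, which is \cref{towers}$=$3(b), and because $B_{\mathrm{tor}}$ occupies a bounded range of degrees while $\im Q^j$ lies in degrees $\ge j+d_0$ ($d_0$ the bottom degree of $B$), for $j\gg 0$ the submodule $\im Q^j$ is $U$-torsion-free and a saturation argument for $F$ in the $Q$-variable identifies $\im Q^j/\im Q^{j+1}$ with a free $\mathbb{F}_2[U]$-module; for 1(b), the exact sequence $0\to B_{\mathrm{tor}}/U^nB_{\mathrm{tor}}\to B/U^nB\to F/U^nF\to 0$ has $\mathbb{F}_2[Q]$-torsion first term and last term of $\mathbb{F}_2[Q]$-rank exactly $n$ (since $F\otimes\mathbb{F}_2(Q)\cong\mathbb{F}_2(Q)[U]$ up to shift), so $\rank_{\mathbb{F}_2[Q]}\cok(U^n\vert_B)=n$. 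The homology assertions 1(a), 2(a), 3(a) are then obtained by dualizing 1(b), 2(b), 3(b) via reduction (i): $\ker(U_\dagger^n\vert_{B^\vee})\cong(\cok(U^n\vert_B))^\vee$ has $\mathbb{F}_2[Q_\dagger]$-rank $n$; $\cok(U_\dagger^n\vert_{B^\vee})\cong(\ker(U^n\vert_B))^\vee$ is finite dimensional, hence $\mathbb{F}_2[Q_\dagger]$-torsion; and $B^\vee/\bigcap_n\im(U_\dagger^n\vert_{B^\vee})\cong(B_{\mathrm{tor}})^\vee$ is finite.

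The mathematical content is carried entirely by \cref{rank-1}; everything else is bookkeeping, and the step requiring genuine care is the structural claim of the second paragraph---forcing the $U$-power-torsion part of $B$ to be $\mathbb{F}_2[Q]$-torsion, equivalently finite---since it is exactly this that makes the finiteness statements and the exact ranks work. This in turn requires setting up the Gysin sequence and the localization identification $U^{-1}B\cong\wt{H}^*_{S^1\times\Z_2}(X^{S^1})$ compatibly with the rational grading shift coming from the correction term, and using independence of the choice of splitting of $1\to S^1\to G_{\fraks}\to\Z_2\to1$ exactly as in \cref{dependence with splitting}. A secondary nuisance is the last clause of 3(b): one must check that $\im Q^j$ is not merely $U$-torsion-free but free over $\mathbb{F}_2[U]$ for large $j$, which follows once $F$ is saturated in the $Q$-variable.
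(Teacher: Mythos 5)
Your argument is correct on the main points and uses the same essential inputs as the paper --- \cref{rank-1} and the Thom--Gysin sequence --- but it organizes them genuinely differently. The paper deduces clause (1) by citing \cref{rank1 theorem} (whose second clause is proved there via the cobordism maps to the unknot and $Q$-localization), proves (2) by induction on $n$ with a pigeonhole argument in the induction step, and proves (3) by a grading argument: unbounded $U$-torsion order would push elements of the finite set $\ker U$ into arbitrarily low degrees, contradicting boundedness below of $\gr^{\Q}$. You instead isolate one structural statement --- that the $U$-power-torsion submodule $B_{\mathrm{tor}}\subset B=\wt{H}^*_{S^1\times\Z_2}$ is $\mathbb{F}_2[Q]$-torsion, hence finite --- proved by combining the Gysin identity $\rank_{\mathbb{F}_2[Q]}\ker U+\rank_{\mathbb{F}_2[Q]}\cok U=1$ with the $S^1$-localization theorem ($U^{-1}B$ free of rank one over $\mathbb{F}_2[U,U^{-1},Q]$), and then read all six clauses off the exact sequence $0\to B_{\mathrm{tor}}\to B\to F\to 0$ together with $B_{\mathrm{tor}}=\ker U^{N}$ from noetherianity. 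This buys two things: it rederives $\rank_{\mathbb{F}_2[Q]}\cok(U^n)=n$ from the first clause of \cref{rank-1} alone, sidestepping the second clause of \cref{rank1 theorem} (which the paper states for $\ker U^i$ but in effect uses as a statement about $\cok U^n$), and it replaces the induction/pigeonhole and grading arguments by uniform commutative algebra; the price is the extra input of the localization theorem, which the paper in any case invokes elsewhere. Your dualization to homology is at the same level of detail as the paper's one-line appeal to duality, and your identifications of kernels, cokernels and $\bigcap_n\im U_\dagger^n$ under the graded dual are the intended ones. One shared soft spot: the last clause of 3(b), freeness of $\im Q^j/\im Q^{j+1}$ over $\mathbb{F}_2[U]$, does not follow formally from $\im Q^j$ being $U$-torsion-free plus the module properties you establish (for the ideal $B'=(U,Q)\subset\mathbb{F}_2[U,Q]$, which satisfies all of them, $Q^jB'/Q^{j+1}B'\cong\mathbb{F}_2[U]\oplus\mathbb{F}_2$ has $U$-torsion); the paper's proof is equally silent on this point, so your ``saturation argument'' would need to be spelled out, using more of the specific structure of $B$, if you want that clause in full.
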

\begin{proof}
We only prove the claim for the cohomology group. The claim for the homology group can be straightforwardly obtained by considering the dual.
In the proof of this theorem, we abbreviate
\[
\wt{H}^*_{\Z_2}=\wt{H}^*_{\Z_2}((SWF(\Sigma_2(K)))).
\]
\[
\wt{H}^*_{\Z_2 \times S^1}=\wt{H}^*_{\Z_2 \times S^1}((SWF(\Sigma_2(K)))).
\]
to simplify the notation.
\par
(1) This is proved in \cref{rank1 theorem}.
\par

(2) We prove this by induction on $n$.\par
First let us consider $n=1$.
By the Thom-Gysin exact sequence
\[
\cdots \to \wt{H}^*_{\Z_2}\xrightarrow{\delta} \wt{H}^{*-1}_{\Z_2\times S^1}\xrightarrow{U} \wt{H}^{*+1}_{\Z_2\times S^1} \xrightarrow{\pi^*} \wt{H}^{*+1}_{\Z_2}\to \cdots 
\]
we have
\[
\ker U=\im \delta  \cong \wt{H}^*_{\Z_2}/\im \pi^*.
\]
Since $\rank_{\mathbb{F}[Q]}\wt{H}^*_{\Z_2}=1$ by \cref{rank1 theorem},  it is enough to show
$\rank_{\mathbb{F}[Q]}\im \pi^*=1$.
By using the Thom-Gysin exact sequence again, we have
\[
\im \pi^*\cong \wt{H}^*_{\Z_2\times S^1}/\ker \pi^*=\cok U.
\]
This has rank 1 over $\mathbb{F}[Q]$ by (1).
This proves the claim for $n=1$.
\par
Now assume $\ker U^n$ is finite and we will show $\ker U^{n+1}$ is also finite.
Suppose on the contrary that $\ker U^{n+1}$ is infinite.
Then there exists at least one $x \in \ker U^n$ such that there exist infinitely many $y_1, y_2, \dots \in \ker U^{n+1}$ such that $U y_i=x$ for all $i$. Then we have infinitely many elements $y_i-y_1 \in \ker U^n$ so this contradicts the assumption that $\ker U^n$ is finite. This proves (2).
\par
(3) Suppose on the contrary that we have infinitely many $U$-torsions
\[
x_1, x_2, \dots\in \wt{H}^*_{\Z_2 \times S^1}.
\]
For an element $x \in \wt{H}^*_{\Z_2 \times S^1}$
define its $U$-torsion order by 
\[
\operatorname{ord}_U (x)=\inf \{n \in \Z^{\geq 0}| U^n x=0\}.
\]
Now we can see that $\{\operatorname{ord}_U(x_i)\}$ is unbounded. This is because if there exists some $n_0$ with 
\[
\operatorname{ord}_U(x_i) \leq n_0
\]
for all $i$, then $\{x_i\}$ are infinitely many elements of $\ker(U^{n_0})$ and this contradicts (2).
Thus, we have $x_i$ with arbitrary large $\operatorname{ord}_U(x_i)$.
Now, since $\ker U$ is finite by (2), 
$\gr^\Q(\ker U)$ is finite. 
We always have $U^{\operatorname{ord}_U(x)-1} x \in \ker U$, and $U$ increases the grading by $2$, so we have $x_i$ with arbitrary small rational grading.  This contradicts the fact that $gr^\Q$ is bounded below on the whole cohomology group $\wt{H}^*_{\Z_2 \times S^1}$.
This completes the proof.
\end{proof}

\section{New concordance invariant $q_M(K)$ }

\subsection{Construction of $q_M(K)$}
In this subsection, we will introduce a new concrdance invariant $q_M(K)$ and several fundamental properties written in \cref{main slice-torus}. 

Let $K \subset S^3$ be a knot.
In this section, we define an integer-valued concordance invariant $q_M(K) \in \Z$.
We also prove that it is a slice-torus invariant.
This invariant is defined as follows.
Set
\[
q^\dagger_M(K) := \min \{ \gr^\Q(x) | Q^n x \neq 0,  \text{ for all }n\geq 0,   x \in H^*_{\Z_2} (SWF(\Sigma_2(K); \fraks_0))\}  
\]
where we assume $x$ is homogeneous and set
\[
q_M(K)=q^\dagger_M(-K)-\frac{3}{4}\sigma(K)
\]
and $\fraks_0$ is the unique spin structure, where $-K$ means the concordance inverse of $K$.
Note that we see the invariant $q_M$ is independent of the choices of splittings given in \cref{dependence with splitting}. 

\begin{lem}
For any knot $K$ in $S^3$, 
\[
\gr^\Q+\frac{3}{4}\sigma(K).
\]
is $\Z$-valued on $ \wt{H}^*_{\Z_2} (SWF(\Sigma_2(K)))$.
Thus  $q_M(K)$ is an integer-valued invariant.
\end{lem}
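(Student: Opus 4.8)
The plan is to reduce the statement to pinning down a single rational number modulo $1$ — the "grading coset" of the module — and then to identify that number using the cobordism maps and connected-sum behaviour already set up in the excerpt.

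First I would observe that all homogeneous elements of $\wt{H}^*_{\Z_2}(SWF(\Sigma_2(K)))$ have $\Q$-gradings lying in a single coset $c(K)+\Z$ of $\Z$ in $\Q$. Indeed, $\wt{H}^*_{\Z_2}(SWF(\Sigma_2(K)))$ is, by construction, a shift of $\wt{H}^*_{\Z_2}$ of the Conley index $I^\mu_\lambda(g)$; the latter is a finite $\Z_2$-CW complex, so $\wt{H}^*_{\Z_2}(I^\mu_\lambda(g))$ is $\Z$-graded over $\mathbb{F}_2[Q]$, the desuspension by the real representation $V^0_\lambda(g)$ shifts gradings only by the integer $\dim_\R V^0_\lambda(g)$, and the final normalisation shifts by a rational number depending only on $(\Sigma_2(K),\fraks_0,g)$. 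Since the module itself is metric independent, the coset $c(K)+\Z$ is well defined independently of $g$. So the lemma is equivalent to the congruence $c(K)\equiv-\tfrac34\sigma(K)\pmod 1$, and since $\sigma(K)$ is always an even integer this is in turn equivalent to $c(K)\equiv\tfrac14\sigma(K)\pmod 1$.

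To identify $c(K)$ I would use the grading-preserving cobordism $S_{gr}$ from $K$ to $\#_{-\sigma(K)/2}T(2,3)$ considered in connection with \cref{key rank one}. The maps $BF^*_{S_{gr}}$ and $BF^*_{-S_{gr}}$ are grading preserving, and their composites in either order equal the identity after inverting $Q$ (by the composition law \cref{BFgluing} together with the product-cobordism identity \cref{product cobordism}, exactly as in the proof of \cref{key rank one}); hence $BF^*_{S_{gr}}$ induces an isomorphism of the $Q$-localised modules, and since $Q$-localisation shifts gradings only by integers we get $c(K)=c(\#_{-\sigma(K)/2}T(2,3))$ in $\Q/\Z$. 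Next, $\Sigma_2$ sends connected sums of knots to connected sums of rational homology spheres and $SWF$ sends those to smash products, so $c$ is additive, $c(\#_m T(2,3))\equiv m\,c(T(2,3))\pmod 1$ (with $m<0$ read via the mirror). Finally $\Sigma_2(T(2,3))$ is the lens space $L(3,1)$ up to orientation, for which Manolescu's computation of $SWF$ of lens spaces — equivalently the value of the Heegaard Floer correction term, $d(L(3,1),\fraks_0)=\tfrac12$ — gives $c(T(2,3))\equiv\tfrac12\pmod 1$. Combining, $c(K)\equiv-\tfrac{\sigma(K)}{2}\cdot\tfrac12=-\tfrac{\sigma(K)}{4}\equiv\tfrac{\sigma(K)}{4}\pmod 1$, the last step again using that $\sigma(K)$ is even.

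From this the lemma follows: for homogeneous $x\in\wt{H}^*_{\Z_2}(SWF(\Sigma_2(K)))$ we get $\gr^\Q(x)+\tfrac34\sigma(K)\in\tfrac14\sigma(K)+\tfrac34\sigma(K)+\Z=\sigma(K)+\Z=\Z$, and a general element is a sum of homogeneous ones; applying this $\Z$-valuedness to the knot $-K$ (and using $\sigma(-K)=-\sigma(K)$) yields $q^\dagger_M(-K)-\tfrac34\sigma(K)\in\Z$, i.e. $q_M(K)\in\Z$. The step I expect to be the main obstacle is the identification of $c(K)$ in the previous paragraph: one must make sure the relevant $Q$-localised cobordism map is genuinely an isomorphism so that the grading cosets transfer (this rests on the composition law and the product-cobordism identity), and one must carry out the lens-space base case. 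As a cleaner alternative to the $S_{gr}$ argument one can instead use the $\Z_2$-equivariant contact invariant: choose any transverse representative $K'$ of $K$; by \cref{c in Utower:intro} the class $c_{(2)}(S^3,\xi_{\mathrm{std}},K')$ is a nonzero homogeneous element of (the Borel (co)homology of) $SWF(-\Sigma_2(K))$ of grading $-d_3(\Sigma_2(K),\wt{\xi})-\tfrac12$, and by Itoh's formula \eqref{Itoh formula} with $n=2$ this equals $\tfrac34\sigma(K)+\tfrac12\operatorname{sl}(K')+\tfrac12$, which is $\equiv\tfrac34\sigma(K)\pmod 1$ since the self-linking number of a transverse knot in $S^3$ is odd; this computes the coset directly.
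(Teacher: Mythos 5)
Your main argument is essentially correct, but it takes a genuinely different and much heavier route than the paper. The paper's proof is a direct index computation: the fractional part of $\gr^\Q$ on $\wt{H}^*_{\Z_2}(SWF(\Sigma_2(K)))$ is governed by Manolescu's correction term $2n(\Sigma_2(K),\s_0,g)$, and evaluating it on the double branched cover $\Sigma_2(S)$ of $D^4$ along a pushed-in Seifert surface gives $2n(\Sigma_2(K),\s_0,g)=2\ind^{APS}_\C D^+_{\Sigma_2(S)}+\tfrac{1}{4}\sigma(\Sigma_2(S))$, where the APS index is an integer and $\sigma(\Sigma_2(S))=\sigma(K)$; hence $\gr^\Q+\tfrac14\sigma(K)\in\Z$, and evenness of $\sigma(K)$ upgrades this to $\gr^\Q+\tfrac34\sigma(K)\in\Z$. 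You instead pin down the same coset by a Floer-theoretic transfer: the grading-preserving $Q$-localized maps attached to $S_{gr}$ (the remark following \cref{key rank one}), additivity of the coset under connected sums, and the base case $\Sigma_2(T(2,3))=\pm L(3,1)$ with fractional grading $\tfrac12$. Granting the inputs you cite — the grading computation for $S_{gr}$, nontriviality of the localized module (\cref{localization}), and an equivariant connected-sum statement — the argument goes through, including your final passage from the congruence for $-K$ to $q_M(K)\in\Z$. Two remarks on economy: the statement that $BF^*_{S_{gr}}$ is grading preserving rests on the branched-cover formulas for $b^+(\Sigma_2(S))$ and $\sigma(\Sigma_2(S))$, i.e.\ on the very signature identity the paper uses directly, so your route re-derives the congruence from consequences of that computation at the price of extra machinery; and for the connected-sum step you only need additivity of the correction term $n(Y,\s,g)$ modulo $1$ (or the cobordism-map argument of \cref{conn sum of qm}), which is lighter than invoking a full equivariant smash-product decomposition of $SWF$ for connected sums.

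The ``cleaner alternative'' at the end has a genuine gap: \cref{c in Utower:intro} does not assert that $c_{(2)}(S^3,\xi_{\mathrm{std}},K')$ is nonzero, and nonvanishing is established in the paper only under fillability-type hypotheses (e.g.\ \cref{fundamental porp for t-inv(ii)}); for an arbitrary knot no transverse representative is known to have nonvanishing invariant. If the class vanishes, the degree in which it is defined carries no information about which degrees support nonzero elements, so the grading coset is not determined this way. The repair would be to invoke the general compatibility between $d_3$ of a $2$-plane field and the fractional part of the absolute grading for the associated spin$^c$ structure, but that compatibility is itself proved by an APS-index computation of the same kind as the paper's two-line argument, so the alternative collapses back to the paper's proof.
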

\begin{proof}
Let us recall that $ \wt{H}^*_{\Z_2} (SWF(\Sigma_2(K)))$ is defined as
\[
\wt{H}^*_{\Z_2} (SWF(\Sigma_2(K)))=\wt{H}^{*+\dim_\R V^0_{\lambda} +2n(\Sigma_2(K), \s_0, g)}(I^\mu_\lambda(g) ).
\]
Since $\dim_\R V^0_{\lambda}$ is an integer, this can be notrivial only when
\[
*+2n(\Sigma_2(K), \s_0, g) \in \Z, 
\]
where $n(\Sigma_2(K), \s_0, g)$ is the correction term introduced in \cite{Man03}. 
Therefore 
$\gr^{\Q}+2n(\Sigma_2(K), \s_0, g)$ is $\Z$-valued.
Now $2n(\Sigma_2(K), \s_0, g) $ can be computed as
\[
2n(\Sigma_2(K), \s_0, g)=2\ind^{APS}_\C D^+_{\Sigma_2(S)} +\frac{\sigma(\Sigma_2(S))}{4} 
\]
\[
=2\ind^{APS}_\C D^+_{\Sigma_2(S)} +\frac{\sigma(K)}{4}
\]
where $S \subset D^4$ is a properly embedded surface obtained by pushing off a Seifert surface of $K$.
The Atiyah--Patodi--Singer index term is an integer, so
$\gr^{\Q}+\frac{\sigma(K)}{4} $ is $\Z$-valued.
For any knot $K$ in $S^3$, the knot signature $\sigma(K)$ is an even number.  
Thus $\gr^{\Q}+\frac{3}{4}\sigma(K) $ is $\Z$-valued.
This completes the proof
\end{proof}
\par
First, we will prove that, for any knot $K$ in $S^3$ and its inverse $-K$ in the concordance group, we have  $-q_M(K)=q_M(-K)$. This follows from the corresponding formula for  $q^\dagger_M$, which is a consequence of a duality of the Floer cohomology groups.
\begin{prop}\label{duality} For a knot $K$ in $S^3$, we have 
    \[
    q_M(-K) = - q_M (K), 
    \]
    where $-K$ denotes the concordance inverse, i.e. its mirror with the opposite orientation. 
\end{prop}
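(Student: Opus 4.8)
The plan is to deduce the identity from two properties of $q^\dagger_M$: additivity under connected sum, and vanishing on $K\#(-K)$.

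We begin with the reduction. Taking the mirror image reverses the orientation of $S^3$, hence of the double branched cover, while reversing the orientation of the knot has no effect on $\Sigma_2(K)$; therefore $\Sigma_2(-K)\cong -\Sigma_2(K)$ as oriented $3$-manifolds with their covering involutions. Both are rational homology spheres of odd order, so each carries a unique, hence $\Z_2$-invariant, spin structure, and these correspond under the identification. Since moreover $\sigma(-K)=-\sigma(K)$, the definition $q_M(K)=q^\dagger_M(-K)-\tfrac{3}{4}\sigma(K)$ shows that $q_M(-K)=-q_M(K)$ is equivalent to $q^\dagger_M(K)+q^\dagger_M(-K)=0$. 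It is convenient to record here that, by \cref{rank1 theorem}, $\wt{H}^*_{\Z_2}(SWF(\Sigma_2(K)))$ is a finitely generated $\mathbb{F}_2[Q]$-module of rank one, say $\mathbb{F}_2[Q]\{g_K\}\oplus(\text{torsion})$ with $g_K$ homogeneous; unwinding the definition, $q^\dagger_M(K)=\gr^\Q(g_K)$, i.e.\ the lowest degree of a $Q$-nontorsion homogeneous class, equivalently the lowest degree occurring in the image of $\wt{H}^*_{\Z_2}(SWF(\Sigma_2(K)))$ in its $Q$-localization.

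For additivity, note that the connected sum of knots realizes $\Sigma_2(K_1\#K_2)$ as an equivariant connected sum $\Sigma_2(K_1)\#\Sigma_2(K_2)$ performed along the (one-dimensional) fixed-point sets, so the connected-sum formula for the equivariant Seiberg--Witten Floer homotopy type gives an $S^1\times\Z_2$-equivariant stable equivalence
\[
SWF(\Sigma_2(K_1\#K_2),\fraks_0)\ \simeq\ SWF(\Sigma_2(K_1),\fraks_0)\wedge SWF(\Sigma_2(K_2),\fraks_0).
\]
Running the Eilenberg--Moore (K\"unneth) spectral sequence over $H^*(B\Z_2;\mathbb{F}_2)=\mathbb{F}_2[Q]$ and then inverting $Q$ --- an exact operation that annihilates all the $\operatorname{Tor}$-terms, as these are torsion $\mathbb{F}_2[Q]$-modules --- identifies the localized Borel cohomology of the smash with the degree-shifted tensor product of the localized factors, as rank-one free graded $\mathbb{F}_2[Q,Q^{-1}]$-modules. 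Reading off lowest degrees in the images of the non-localized groups gives $q^\dagger_M(K_1\#K_2)=q^\dagger_M(K_1)+q^\dagger_M(K_2)$.

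For the vanishing we invoke the duality built into Manolescu's construction: $SWF(-Y,\fraks)$ is the $S^1\times\Z_2$-equivariant Spanier--Whitehead dual of $SWF(Y,\fraks)$, and the correction terms satisfy $n(-Y,\fraks,g)=-n(Y,\fraks,g)$; with the grading normalization built into the definition of $\wt{H}^*$, this yields an equivariant stable equivalence $SWF(Y,\fraks_0)\wedge SWF(-Y,\fraks_0)\simeq S^0$. Specializing to $Y=\Sigma_2(K)$ and using the first paragraph, $SWF(\Sigma_2(K\#(-K)),\fraks_0)\simeq S^0$, hence $\wt{H}^*_{\Z_2}(SWF(\Sigma_2(K\#(-K))))\cong\mathbb{F}_2[Q]$ with free generator in degree $0$, so $q^\dagger_M(K\#(-K))=0$. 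Combined with additivity this gives $q^\dagger_M(K)+q^\dagger_M(-K)=q^\dagger_M(K\#(-K))=0$, which is exactly what the reduction demands. The main obstacle is the additivity step: one must guarantee that the ``bottom of the $Q$-tower'' really is additive under smash product, i.e.\ that no free summand of $\wt{H}^*_{\Z_2}$ of the smash is created or destroyed by spectral-sequence differentials or higher $\operatorname{Tor}$. Passing to the $Q$-localization is precisely the device that removes this difficulty --- after inverting $Q$ all torsion, hence all $\operatorname{Tor}$-terms and all differentials between torsion classes, disappear, and the K\"unneth statement becomes a genuine isomorphism of rank-one free graded modules --- and the only residual bookkeeping, fixing the grading so that $S^0$ sits in degree $0$, is taken care of by the correction-term identity $n(-Y)=-n(Y)$ (equivalently $q^\dagger_M(U_0)=0$).
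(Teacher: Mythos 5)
There is a genuine gap, and it sits in your ``vanishing'' step. Equivariant Spanier--Whitehead duality gives a duality \emph{pairing} $\epsilon\colon SWF(Y,\fraks)\wedge SWF(-Y,\fraks)\to S^0$; it does not give a stable equivalence $SWF(Y,\fraks)\wedge SWF(-Y,\fraks)\simeq S^0$. Such an equivalence would force the Floer homotopy type to be invertible, i.e.\ a (de)suspended representation sphere, which fails for any $K$ with $\Sigma_2(K)$ having nontrivial reduced Floer homology (e.g.\ $K=T(3,7)$, where $\Sigma_2(K)=\Sigma(2,3,7)$): by the nonequivariant K\"unneth theorem the smash product then has reduced homology of total rank $>1$, so it cannot be $S^0$. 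Consequently $q^\dagger_M(K\#(-K))=0$ does not follow from your argument. Worse, trying to salvage it by computing the bottom of the $Q$-localized tower of the smash product via your K\"unneth step is circular: that bottom degree \emph{is} $q^\dagger_M(K)+q^\dagger_M(-K)$, the very quantity you are trying to show vanishes. A secondary issue is that the $S^1\times\Z_2$-equivariant connected-sum formula $SWF(\Sigma_2(K_1\#K_2),\fraks_0)\simeq SWF(\Sigma_2(K_1),\fraks_0)\wedge SWF(\Sigma_2(K_2),\fraks_0)$ (a gluing along the fixed-point locus) is asserted without proof and is not available in the paper; the paper's own connected-sum statement for $q_M$ is proved later by cobordism maps and localization, and in fact \emph{uses} \cref{duality}, so your route also runs against the logical order of the paper even if the formula were granted.

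What the duality actually buys, and what the paper does, is much more direct: by \cref{rank1 theorem} the free parts of $\wt{H}^*_{\Z_2}(SWF(\Sigma_2(K)))$ and $\wt{H}^*_{\Z_2}(SWF(\Sigma_2(-K)))$ are each a single copy of $\mathbb{F}_2[Q]$, starting in degrees $q^\dagger_M(K)$ and $q^\dagger_M(-K)$ respectively; equivariant Spanier--Whitehead duality for the two orientations of $\Sigma_2(K)$ identifies the equivariant cohomology of one with the equivariant homology of the other, and the universal coefficient theorem over $\mathbb{F}_2[Q]$ then reflects the bottom of the $Q$-tower, giving $q^\dagger_M(-K)=-q^\dagger_M(K)$ directly; combining with $\sigma(-K)=-\sigma(K)$ finishes the proof. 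Your reduction of the statement to $q^\dagger_M(K)+q^\dagger_M(-K)=0$ and your use of the rank-one theorem are fine; replace the connected-sum/vanishing detour by this duality-plus-UCT argument.
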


\begin{proof}[Proof of \cref{duality}]
By \cref{rank1 theorem} and the definition of $q^\dagger_M$, we have 
  \[
\wt{H}^*_{\Z_2}  ( SWF(\Sigma_2(K))/\text{($Q$-torsions)} \cong \mathbb{F}_2[Q]_{(q^\dagger_M(K))}. 
\]  
and 
  \[
\wt{H}^*_{\Z_2}  ( SWF(\Sigma_2(-K))/\text{($Q$-torsions)} \cong \mathbb{F}_2[Q]_{(q^\dagger_M(-K))}. 
\]  
Thus, by the duality for orientations of $\Sigma_2(K)$ and the universal coefficient theorem, we have $q^\dagger_M(-K)= -q^\dagger_M(K)$.
By combining this with $\sigma(-K)=-\sigma(K)$, we have the desired relation $q_M(-K) = - q_M (K)$.
\end{proof}

The following gives the general genus bound: 
\begin{thm}\label{main slice-torus gen}

Let $S$ be a smoothly, properly and normally immersed connected oriented embedded surface cobordism in  $[0,1]\times S^3$ from a knot $K$ in $S^3$ to a knot $K'$ in $S^3$.  
   Then, we have the following genus bound: 
   \[
   q_M(K') \leq g(S)  +s_+(S) + q_M(K) .
   \]
\end{thm}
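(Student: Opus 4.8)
The plan is to run the cobordism $S$ through the equivariant Bauer--Furuta formalism developed above, push a generator of the free $\mathbb{F}_2[Q]$-tower through the induced cobordism map, and read off the inequality from the grading shift together with \cref{rank1 theorem}. It is convenient to phrase everything in terms of the auxiliary invariant $q^\dagger_M$, using $q_M(L)=q^\dagger_M(-L)-\frac{3}{4}\sigma(L)$ together with the identity $q^\dagger_M(-L)=-q^\dagger_M(L)$ established in the proof of \cref{duality}; with these, the claimed bound is equivalent to
\[
q^\dagger_M(K)-q^\dagger_M(K')\ \le\ g(S)+s_+(S)+\tfrac{3}{4}\big(\sigma(K')-\sigma(K)\big).
\]

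First I would pass to the embedded proper transform $S_b$ of $S$ inside $W_b:=\big([0,1]\times S^3\big)\#_{s_+(S)+s_-(S)}\overline{\C P}^2$ and equip it with the spin$^c$ structure $\fraks_{S_b}$ used to define the cobordism maps of immersed surfaces above, producing the $\mathbb{F}_2[Q]$-module cobordism map
\[
BF^*_{S_b}\colon \wt{H}^*_{\Z_2}\big(SWF(\Sigma_2(K'))\big)\longrightarrow \wt{H}^*_{\Z_2}\big(SWF(\Sigma_2(K))\big).
\]
By \cref{rank1 theorem} both sides are, modulo their $Q$-torsion, free of rank one over $\mathbb{F}_2[Q]$; let $\eta'$ generate the free part of the source, so $\gr^\Q(\eta')=q^\dagger_M(K')$ and $Q^n\eta'\ne 0$ for all $n\ge 0$. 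The key point is that $BF^*_{S_b}$ is injective after inverting $Q$. To see this I would compose with $BF^*_{(-S)_b}$ for the reversed cobordism: by \cref{BFgluing} the composite is the cobordism map of the connected immersed genus-$2g(S)$ cobordism $(-S)\circ S\colon(S^3,K)\to(S^3,K)$, which, since $[0,1]\times S^3$ is simply connected, is homotopic rel boundary to a standard cobordism obtained from $[0,1]\times K$ by attaching $2g(S)$ trivial handles. Each such handle contributes an $S^2\times S^2$ summand to the double branched cover, whose equivariant Bauer--Furuta map localizes to multiplication by $Q$ (the computation in case (i) of the proof of \cref{general indep}), while $[0,1]\times K$ contributes the identity by \cref{product cobordism}; so, invoking the homotopy invariance \cref{general indep} exactly as in the proof of \cref{key rank one}, the composite localizes to a power of $Q$, in particular to a nonzero map. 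Hence $BF^*_{S_b}$ is injective after inverting $Q$, so $BF^*_{S_b}(\eta')\ne 0$ after inverting $Q$, i.e.\ $Q^nBF^*_{S_b}(\eta')\ne 0$ for all $n$; thus $BF^*_{S_b}(\eta')$ is admissible in the definition of $q^\dagger_M(K)$, which forces $\gr^\Q\!\big(BF^*_{S_b}(\eta')\big)\ge q^\dagger_M(K)$.

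Writing $\delta(S_b)$ for the grading shift of $BF^*_{S_b}$ (the degree of the image minus that of the source), the last inequality reads $q^\dagger_M(K')+\delta(S_b)\ge q^\dagger_M(K)$. The remaining input is the computation
\[
\delta(S_b)\ =\ g(S)+s_+(S)+\tfrac{3}{4}\big(\sigma(K')-\sigma(K)\big),
\]
which I would obtain by substituting the homology class $[S_b]$ and the chosen spin$^c$ structure into the grading-shift expression $\tfrac14\big(c_1(\fraks_{S_b})^2-\sigma(\Sigma_2(S_b))\big)+b^+(\Sigma_2(S_b))$ recorded above and using the formulas there for $b^+(\Sigma_2(S_b))$ and $\sigma(\Sigma_2(S_b))$ together with $\sigma^{(2)}=\pm\sigma$; one finds that negative double points contribute nothing --- consistent with $BF^*_{\overline{\C P}^2\#\overline{\C P}^2}=\pm\mathrm{id}$ having $Q$-degree $0$ in case (ii) of the proof of \cref{general indep} --- while genus handles and positive double points each contribute $+1$. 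Feeding this value of $\delta(S_b)$ into $q^\dagger_M(K')+\delta(S_b)\ge q^\dagger_M(K)$ gives exactly the displayed reduction of the first paragraph, and hence $q_M(K')\le g(S)+s_+(S)+q_M(K)$.

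The part I expect to be most delicate is the $Q$-localized injectivity for a general cobordism: unlike the crossing-change annuli handled in \cref{key rank one}, here $(-S)\circ S$ has positive genus and cannot be reduced to $[0,1]\times K$ by the moves $(0)$--$(iii)$, so one must genuinely identify a standard genus-$h$ handle with an $S^2\times S^2$ summand of the double branched cover before applying \cref{general indep}. The second point that needs care is the grading bookkeeping behind the formula for $\delta(S_b)$ --- the direction of the shift, the normalization of $\sigma^{(p)}$ relative to the convention $\sigma(T(2,3))=-2$, and the precise contributions of $c_1(\fraks_{S_b})^2$ and $[S_b]^2$ --- so that $s_-(S)$ drops out and the signature terms assemble into exactly $\tfrac{3}{4}(\sigma(K')-\sigma(K))$. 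A minor technical point to address is that, although $S$ is only immersed, every Bauer--Furuta map above lives on the embedded proper transform, and \cref{BFgluing} is applied only to embedded cobordisms.
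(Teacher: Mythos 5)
Your proposal follows the paper's own proof essentially step for step: blow up the immersed points to get the embedded cobordism map, show its $Q$-localization is nonzero by composing with the reversed cobordism and comparing, via the homotopy-invariance theorem for localized cobordism maps, with the product cobordism summed with $2g(S)$ standard tori (each contributing a factor of $Q$, with the product piece giving the identity), and then extract the inequality from the grading shift together with the rank-one theorem. The only slips are cosmetic: the order in which you compose $BF^*_{S_b}$ and $BF^*_{(-S)_b}$ is the one that literally gives injectivity of the reversed map rather than of $BF^*_{S_b}$ (immaterial here, since both localizations are free of rank one over $\mathbb{F}_2[Q,Q^{-1}]$, so any nonzero map between them is injective), and the grading-shift convention you should substitute into is the one the paper actually uses in this computation, namely $b^+(\Sigma_2(S_b))-\tfrac14\bigl(c_1(\wt{\fraks})^2-\sigma(\Sigma_2(S_b))\bigr)$, which indeed evaluates to $g(S)+s_+(S)+\tfrac34\bigl(\sigma(K')-\sigma(K)\bigr)$ as you state.
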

\begin{proof}
We consider the cobordism map 
\[
BF^{*}_{S}: \wt{H}^*_{\Z_2} (SWF(\Sigma_2(K'))) \to \wt{H}^*_{\Z_2} (SWF(\Sigma_2(K))) 
\]
defined by using proper transformations along immersed points and taking a certain $\Z_2$-invariant spin$^c$ structure on it. 
We first prove that $BF^{*, \text{loc}}_{S}$ is isomorphism as a homomorphism
\[
Q^{-1}\wt{H}^*_{\Z_2} (SWF(\Sigma_2(K'))) \to Q^{-1} \wt{H}^*_{\Z_2} (SWF(\Sigma_2(K))). 
\]
   The composition $S \circ (-S)$ is an immersed genus $2g(S)$ cobordism from $K$ to itself. Note that $K \times [0,1] \#_{2g(S)} T^2 $ is also a cobordism from $K$ to itself with genus $2g(S)$.
   These two cobordisms are homotopic rel boundaries.
   This can be checked as follows:
   Regard $S^3$ as $\R^3 \cup \{\infty\}$.
   Then $I \times \{\infty\}$ does not intersect with the surface cobordisms in general position in $ [0, 1]\times S^3$, so one can assume the surface cobordism is in $[0, 1] \times \R^3$. Now we have the linear homotopy connecting two surface cobordisms. 
   \par
From \cref{general indep}, we can write down 
\[
BF^*_{S \circ -S} =  Q^j BF^*_{U_0 \times [0,1] \#_{2g(S)} T^2  } = Q^j BF^*_{U_0 \times [0,1]} \cdot BF^*_{ \#_{2g(S)} T^2  }=  Q^{j+ 2g(S)} 
\]
for some integer $j$. Here we used the following: 
\begin{itemize}
\item equivariant connected sum formula of equivariant Bauer--Furuta invariants, 
    \item $BF^*_{T^2 \subset S^4} = Q$ which can be proven by combining the equivariant index theorem and the equivariant Hopf theorem, and 
    \item $BF^*_{U_0 \times [0,1] } = \id $ which will be proven in \cref{BF=id}.
\end{itemize}
This implies $BF^{*, \text{loc}}_{S}$ is non-zero. 
Thus the free part of the cobordism map
\[
BF^{*, \text{free}}_{S}: \wt{H}^*_{\Z_2} (SWF(\Sigma_2(K')))/Q\text{-Torsion} \to  \wt{H}^*_{\Z_2} (SWF(\Sigma_2(K)))/Q\text{-Torsion}
\]
is also non-trivial.
When we regard this as a map
\[
BF^{*, \text{free}}_{S}: \mathbb{F}_2[Q]_{q^\dagger(K')}\to \mathbb{F}_2[Q]_{q^\dagger(K)}, 
\]
it is given as
\[
BF^{*, \text{free}}_{S}=Q^{q^\dagger(K')-q^\dagger(K)+b^+(\Sigma_2(S_b))- \frac{1}{4 } (c_1(\wt{\fraks})^2 - \sigma(\Sigma_2(S_b)))}
\]
by the computation of grading.
Since $BF^{*, \text{free}}_{S} \neq 0$, the power of $Q$ is non-negative, so we have 
\[
-q^\dagger_M ( K') \leq  b^+(\Sigma(S))- \frac{1}{4 } (c_1(\wt{\fraks})^2 - \sigma(\Sigma(S))) - q^\dagger_M(K).
\]

Then, we do the following computations:  
\begin{align*}
&-q_M^\dagger ( K')\\ 
 &  \leq  b^+(\Sigma(S_b))  -  \frac{1}{4 } (c_1(\wt{\fraks})^2 - \sigma(\Sigma(S_b)))  -q_M^\dagger ( K) \\ 
   & \leq  g(S_b) -\frac{1}{4}[S_b]^2 + \frac{1}{2}\sigma(K' -K) -    \frac{1}{4 } ( \pi^* (c_1(\fraks)  -  \frac{1}{2}[S_b])^2 - 2 \sigma(X) + \frac{1}{2}[S_b]^2  - \sigma(K' -K)) - q^\dagger_M(K) \\ 
    & = g(S)  + \frac{1}{2}  (\langle c_1(\fraks), [S_b] \rangle - [S_b]^2   )   + \frac{3}{4}\sigma(K' -K)- q^\dagger_M(K)  .
    \end{align*}
    Note that for the blow-up of $S$ in $[0,1]\times S^3 \# \#_m -\C P^2$, one can see 
    \[
    \frac{1}{2}  (\langle c_1(\fraks), [S_b] \rangle - [S_b]^2   ) = s^+(S). 
    \]

    This completes the proof. 
\end{proof}

\begin{rem}
Even for an odd prime $p$, we can still define a similar invariant as 
 \begin{align*}
 q_M^{(p)} (K) :=\frac{1}{p-1} \min \{ \gr^\Q ( x) |    x \in H^*_{\Z_p} (SWF(-\Sigma_p(K)); \mathbb{F}_p):  \\ 
 \text{homogeneous  and } S^n x \neq 0  \text{ for all }n\geq 0\} - \frac{3}{4(p-1) } \sigma^{(p)}(K), 
  \end{align*}
  where 
  \[
  \sigma^{(p)}(K) := \sum_{1\neq\om \in U(1), \om^p=1} \sigma_\om (K)
  \]
  and $\sigma_\om (K)$ denotes the Tristram--Levine signature with respect to $\om\in U(1)$.
\end{rem}

\subsection{Basic properties of $q_M(K)$}

In this section, we prove the fundamental properties of the invariant $q_M(K)$ including relations with L-spaces, the duality, and the connected sum formula.

\begin{thm}
      If the double-branched cover of $K$ is L-space, then we have 
    \[
    \wt{H}^*_{\Z_2}(SWF(\Sigma_2(K); \mathfrak{s}_0); \mathbb{F}_2)\cong \mathbb{F}_2[Q]_{d=-\frac{\sigma(K)}{4}},
    \]
    where the subscript is the minimal grading and $\fraks_0$ is the unique spin structure. 
   Thus, 
   \[
   q_M(K) = -\frac{1}{2}\sigma(K).
   \]
   In particular, this holds for quasi-alternating knots. 
\end{thm}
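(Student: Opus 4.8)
The plan is to promote \cref{rank1 theorem} from a rank statement to genuine freeness in the L-space case, and then to pin down the degree of the free generator via the correction term of $\Sigma_2(K)$. By \cref{rank1 theorem}, the module $M := \wt{H}^*_{\Z_2}(SWF(\Sigma_2(K);\fraks_0);\mathbb{F}_2)$ is a finitely generated rank-$1$ module over the PID $\mathbb{F}_2[Q]$, hence $M \cong \mathbb{F}_2[Q]_{(c)} \oplus T$ with $T$ a finite $Q$-torsion module and $c \in \Q$. The first step is to show $T = 0$. When $\Sigma_2(K)$ is an L-space the reduced Seiberg--Witten Floer homology vanishes, which forces $SWF(\Sigma_2(K),\fraks_0)$ to be $S^1$-equivariantly stably homotopy equivalent to a representation sphere. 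The covering involution acts on the finite-dimensional Conley index models commuting with the $S^1$-action, and $\C$-linearly on the spinor summands (cf. the $\mathbb{H}$-linearity recorded in \cref{dependence with splitting}); diagonalizing it produces an $S^1\times\Z_2$-equivariant stable equivalence $SWF(\Sigma_2(K),\fraks_0) \simeq S^V$ for a finite-dimensional real $S^1\times\Z_2$-representation $V$. The Thom isomorphism then gives $\wt{H}^*_{\Z_2}(S^V;\mathbb{F}_2) \cong H^{*-\dim_\R V}(B\Z_2;\mathbb{F}_2) = \mathbb{F}_2[Q]$, free of rank $1$ with bottom class in degree $\dim_\R V$; in particular $T = 0$.

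The second step is to locate $c$. The bottom class of $\wt{H}^*_{\Z_2}(S^V)$ lies in the same degree as the bottom of the $U$-tower of $\wt{H}^*_{S^1}(S^V)$ (both are Thom isomorphic to the cohomology of $B\Z_2$, resp.\ $BS^1$, shifted by $\dim_\R V$), and for an L-space the latter degree is, by the defining property of the correction term, $d(\Sigma_2(K),\fraks_0)$. Hence $c = d(\Sigma_2(K),\fraks_0)$. For an L-space one has $d(\Sigma_2(K),\fraks_0) = -\frac14\sigma(K)$: this is classical for quasi-alternating knots (for which $\Sigma_2(K)$ is an L-space by Ozsv\'ath--Szab\'o), and in general it follows, e.g., from the Manolescu--Owens $\delta$-invariant, or by combining the L-space hypothesis with the fact that $\Sigma_2(K)$ bounds the spin $4$-manifold obtained as the double branched cover of $D^4$ along a pushed-in Seifert surface $F$ of $K$, whose signature is $\sigma(K)$. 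This establishes $\wt{H}^*_{\Z_2}(SWF(\Sigma_2(K);\fraks_0);\mathbb{F}_2) \cong \mathbb{F}_2[Q]_{-\sigma(K)/4}$.

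It remains to read off $q_M$. Since $M$ has no $Q$-torsion, every homogeneous $x \neq 0$ satisfies $Q^n x \neq 0$ for all $n \geq 0$, so $q^\dagger_M(K) = -\frac14\sigma(K)$. Applying this to $-K$, which is again an L-space knot with $\sigma(-K) = -\sigma(K)$ (equivalently, using \cref{duality}), gives $q^\dagger_M(-K) = \frac14\sigma(K)$, whence $q_M(K) = q^\dagger_M(-K) - \frac34\sigma(K) = -\frac12\sigma(K)$. Finally, quasi-alternating knots have L-space double branched covers, so the conclusion covers that case.

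I expect the main obstacle to be the vanishing of the torsion $T$, i.e.\ upgrading ``rank $1$'' to ``free of rank $1$'': this rests on the equivariant representation-sphere description, and the delicate point is transporting the L-space structure, which is transparent $S^1$-equivariantly, to the $S^1\times\Z_2$-equivariant setting, for which one may instead want to analyze the $\Z_2$-fixed-point spectrum (the Floer homotopy type of the base $S^3$, up to a grading shift). A secondary, more classical input is the equality $d(\Sigma_2(K),\fraks_0) = -\frac14\sigma(K)$, which is where the L-space hypothesis is genuinely used past the quasi-alternating case.
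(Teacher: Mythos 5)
Your overall architecture (rank one plus torsion-freeness plus identification of the bottom degree with $d(\Sigma_2(K),\fraks_0)=-\tfrac14\sigma(K)$, then the duality bookkeeping for $q_M$) ends in the right place, but the step you yourself flag as delicate is a genuine gap. You claim that the L-space hypothesis forces $SWF(\Sigma_2(K),\fraks_0)$ to be $S^1\times\Z_2$-equivariantly stably equivalent to a representation sphere, obtained by ``diagonalizing'' the covering involution on the Conley index. There is nothing to diagonalize: the Conley index is only a pointed $G$-space, not a linear object, and the involution is not a linear map on it. More seriously, knowing the non-equivariant (or even $S^1$-equivariant) stable type is sphere-like does not determine the $\Z_2$-equivariant stable type; Smith theory only constrains the $\Z_2$-fixed set to be an $\mathbb{F}_2$-homology sphere, and an involution on a (stable) sphere need not be equivalent to a linear one. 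So the asserted equivalence $SWF\simeq S^V$ as $S^1\times\Z_2$-spectra, on which both your torsion-vanishing ($T=0$) and your degree count for $c$ rest, is unproven, and no elementary argument in the proposal supplies it. A secondary soft spot: the input $d(\Sigma_2(K),\fraks_0)=-\tfrac14\sigma(K)$ for an arbitrary knot with L-space double branched cover is not covered by Manolescu--Owens (their computation is for quasi-alternating knots), and the remark that $\Sigma_2(K)$ bounds the spin branched cover of a pushed-in Seifert surface gives at best an inequality; the paper takes this equality from Baraglia--Hekmati (their Corollary 6.3), and you should cite something of that strength rather than sketch it.

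The fix is much cheaper than what you attempt, and it is what the paper does: no representation-sphere statement and in fact no appeal to \cref{rank1 theorem} are needed. The L-space hypothesis says exactly that $\wt{H}^*(SWF(\Sigma_2(K),\fraks_0);\mathbb{F}_2)\cong\mathbb{F}_2$ concentrated in the single degree $d$. Feeding this into the Thom--Gysin (Smith) exact sequence for the double cover $\Z_2\to SWF\wedge(E\Z_2)_+\to SWF\wedge_{\Z_2}(E\Z_2)_+$, multiplication by $Q$ on $\wt{H}^*_{\Z_2}$ is an isomorphism in every degree except near $d$, and a short diagram chase (using that the equivariant cohomology is bounded below, since the index is a finite CW complex) shows $\pi^*$ is onto in degree $d$ and hence $\wt{H}^*_{\Z_2}(SWF(\Sigma_2(K),\fraks_0);\mathbb{F}_2)\cong\mathbb{F}_2[Q]$ with bottom class in degree $d$. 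This yields freeness and the location of the generator simultaneously, after which your final paragraph computing $q^\dagger_M$ and $q_M(K)=-\tfrac12\sigma(K)$ goes through as written (using \cref{duality} and the well-definedness from \cref{dependence with splitting}).
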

\begin{proof}
Let us abbriviate $\mathbb{F}=\mathbb{F}_2$ in this proof.
The assumption that  $\Sigma_2(K)$ is an L-space
implies
\[
\wt{H}^*  ( SWF(\Sigma_2(K), \s_0;  \mathbb{F}) \cong \mathbb{F}_{(d)}, 
\]
where the grading
\[
d=d(\Sigma_2(K);\s_0)=2\delta(\Sigma_2(K); \s_0)=-\frac{\sigma(K)}{2}
\]
is the monopole Fr\o yshov invariant \cite{Fr10} of the double-branched covering space along $K$.
For the last equality, see  \cite[Corollary 6.3]{BH} (we follow their convention). 
We omit $\s_0$ from now on.
By the mod 2  Thom-Gysin exact sequence for the following $S^0=\Z_2$-bundle
\[
\Z_2 \to SWF(\Sigma_2(K)) \wedge (E\Z_2)_+ \xrightarrow{\pi} SWF(\Sigma_2(K)) \wedge_{\Z_2} (E\Z_2)_+, 
\]
we have 
\[
\wt{H}^{i}_{\Z_2}(SWF(\Sigma_2(K)))\xrightarrow[\cong]{Q}\wt{H}^{i+1}_{\Z_2}(SWF(\Sigma_2(K))), \quad i \neq d-1, d
\]
and an exact sequence
\[
0 \to \wt{H}^{d-1}_{\Z_2}(SWF(\Sigma_2(K)))\xrightarrow{Q} \wt{H}^{d}_{\Z_2}(SWF(\Sigma_2(K)))\xrightarrow{\pi^*_d}
 \mathbb{F}\to \wt{H}^{d}_{\Z_2}(SWF(\Sigma_2(K)))\xrightarrow{Q} \wt{H}^{d+1}_{\Z_2}(SWF(\Sigma_2(K))) \to 0.
\]
We can see $\pi^*_d\neq 0$.
Suppose on the contrary $\pi^*_d=0$.
Then we have $\wt{H}^{\leq d}_{\Z_2}(SWF(\Sigma_2(K)))=0$ but this contradicts the resulting exact sequence 
\[
0 \to \mathbb{F}\to \wt{H}^{d}_{\Z_2}(SWF(\Sigma_2(K)))\xrightarrow{Q} \wt{H}^{d+1}_{\Z_2}(SWF(\Sigma_2(K))) \to 0.
\]
Thus we have $\pi^*_d\neq 0$ and thus we obtain 
\[
\wt{H}^{i}_{\Z_2}(SWF(\Sigma_2(K)))\xrightarrow[\cong]{Q}\wt{H}^{i+1}_{\Z_2}(SWF(\Sigma_2(K))), \quad i \neq  d.
\]
and 
\[
0 \to \wt{H}^{d-1}_{\Z_2}(SWF(\Sigma_2(K)))\xrightarrow{Q} \wt{H}^{d}_{\Z_2}(SWF(\Sigma_2(K)))\xrightarrow{\pi^*_d}
 \mathbb{F}\to 0
 \]
 Therefore we have
 \[
 \wt{H}^*_{\Z_2}(SWF(\Sigma_2(K)))\cong \mathbb{F}[Q]_d
 \]
 and in particular $q^\dagger_M(K)=d=-\frac{\sigma(K)}{4}$.
 This implies
 \[
 q_M(K)=-q^\dagger_M(K)-\frac{3}{4}\sigma(K)=-\frac{\sigma(K)}{2}.
 \]
 The fact that the branched double cover of the quasi-alternating knots are L-spaces is due to Ozsv\'ath--Szab\'o \cite[Proposition 3.3]{OS05}. This completes the proof. 
\end{proof}

\begin{thm}\label{conn sum of qm}For two oriented knots $K$ and $K'$ in $S^3$, 
\[
    q_M(K \# K') = q_M (K) + q_M(K')
    \]
\end{thm}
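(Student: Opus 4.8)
The plan is to prove the connected sum formula by exhibiting a pair of cobordism maps between the double branched covers that, after $Q$-localization, become mutual inverses up to a power of $Q$ determined purely by gradings, and then comparing the $q_M^\dagger$-values. The starting observation is that the double branched cover behaves well under connected sum of knots: there is a natural identification
\[
\Sigma_2(K \# K') \cong \Sigma_2(K) \# \Sigma_2(K'),
\]
as $\Z_2$-manifolds (the branched involutions being glued along the two branch arcs over the connect-sum sphere). More precisely, one builds an explicit ribbon concordance-type cobordism: there is a genus-$0$ properly embedded connected surface cobordism $S$ in $[0,1]\times S^3$ from $K \sqcup K'$ (or rather from $K\# K'$, after tubing) to $K \# K'$ realizing the standard connected-sum picture, and its reverse $-S$. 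Taking double branched covers along the associated cobordisms (blown up at immersed points if necessary, as in the construction preceding \cref{general indep}) produces $\Z_2$-equivariant spin$^c$ cobordisms, and hence cobordism maps
\[
BF^*_{S} : \wt{H}^*_{\Z_2}(SWF(\Sigma_2(K\# K'))) \to \wt{H}^*_{\Z_2}(SWF(\Sigma_2(K))) \otimes_{\mathbb{F}_2[Q]} \wt{H}^*_{\Z_2}(SWF(\Sigma_2(K')))
\]
and a map in the opposite direction, using the connected sum formula for equivariant Bauer--Furuta invariants (\cref{BFgluing} together with the product-cobordism computation \cref{product cobordism}).

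The key steps, in order, are as follows. First I would fix the genus-$0$ cobordisms $S$ and $-S$ realizing $\Sigma_2(K\# K') \cong \Sigma_2(K)\#\Sigma_2(K')$ and its reverse, and identify the relevant $\Z_2$-invariant spin structures on the branched covers (all are the unique spin structure $\fraks_0$, since these are homology spheres or rational homology spheres with the natural spin structure, and the cobordisms carry compatible spin structures). Second, I would invoke \cref{general indep} (or its proof): the composites $S\circ(-S)$ and $(-S)\circ S$ are genus-$0$ immersed cobordisms homotopic rel boundary to product cobordisms, so after $Q$-localization their induced maps are $Q^j$ for integers $j$ forced to be $0$ by the grading computation — here one uses that all the relevant surfaces have genus $0$ and trivial normal data, so the grading shifts cancel. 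Hence $BF^{*,\mathrm{loc}}_S$ and $BF^{*,\mathrm{loc}}_{-S}$ are mutually inverse isomorphisms after localization. Third, combining with the Künneth-type statement for $\wt{H}^*_{\Z_2}$ of a connected sum (which follows from the equivariant connected sum formula for Bauer--Furuta invariants and the rank-$1$ theorem \cref{rank1 theorem}, giving $\wt{H}^*_{\Z_2}(SWF(\Sigma_2(K)\#\Sigma_2(K')))/(Q\text{-torsion}) \cong \mathbb{F}_2[Q]_{(q_M^\dagger(K)+q_M^\dagger(K'))}$ up to the appropriate grading shift), I would read off $q_M^\dagger(K\# K') = q_M^\dagger(K) + q_M^\dagger(K')$. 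Fourth, since $\sigma(K\# K') = \sigma(K) + \sigma(K')$ and $-(K\#K')$ is concordant to $(-K)\#(-K')$, adding the signature correction term yields $q_M(K\#K') = q_M(K) + q_M(K')$.

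An alternative, perhaps cleaner, route avoids the Künneth statement: use \cref{main slice-torus gen} (the general genus bound) applied to the genus-$0$ cobordism $S$ and its reverse. The cobordism $S$ from $K\#K'$ to (something built from $K$ and $K'$) — more carefully, one runs the genus bound for the cobordism realizing $K\#K' \rightsquigarrow K$ after "capping off" a slice disk for $-K'$ (which exists since $K'$ is concordant to itself), and symmetrically. Concretely: there is a genus-$0$ cobordism from $K\#K'$ to $K$ obtained by attaching a Seifert-type band along $K'$; applying \cref{main slice-torus gen} (with $s_+=0$, since one can choose the cobordism to have only negative or no double points after isotopy, or track the $s_+$ contribution and see it vanishes for this model) gives $q_M(K) \le q_M(K\#K')$ plus contributions from $K'$, and running the argument in all directions pins down the equality. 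The main obstacle I anticipate is the careful bookkeeping of the grading shift in the $Q$-localized cobordism map — i.e., verifying that the exponent $b^+(\Sigma_2(S_b)) - \tfrac14(c_1(\wt{\fraks})^2 - \sigma(\Sigma_2(S_b)))$ vanishes for the specific genus-$0$ connected-sum cobordisms, so that the localized maps are honest inverses rather than inverses-up-to-$Q$-power. This requires the surface-signature and $b^+$ formulas from \cite{BH} quoted earlier in the excerpt, specialized to $[S]=0$ (or to the blown-up model with $[S_b]=(-2,\dots,-2,0,\dots)$), and checking the Euler-characteristic/index contributions cancel — routine but the place where sign and normalization errors would creep in.
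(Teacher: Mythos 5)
Your overall strategy (connect-sum cobordism, equivariant Bauer--Furuta maps, $Q$-localization, the rank-one theorem) is the same as the paper's, but two of your key steps have genuine gaps. The central one is the ``K\"unneth-type statement'': you assert that $\wt{H}^*_{\Z_2}$ of the smash product $SWF(\Sigma_2(K))\wedge SWF(\Sigma_2(K'))$ has free part $\mathbb{F}_2[Q]$ based in degree $q^\dagger_M(K)+q^\dagger_M(K')$, and claim this follows from the connected-sum formula for Bauer--Furuta invariants plus \cref{rank1 theorem}. It does not: Borel $\Z_2$-equivariant cohomology has no K\"unneth isomorphism for a smash product with diagonal action, and rank-one free parts do not force additivity of the bottom of the $Q$-tower. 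What one actually gets is a degree-preserving comparison map between $H^*_{\Z_2}(X\wedge Y)$ and $H^*_{\Z_2}(X)\otimes H^*_{\Z_2}(Y)$ which becomes an isomorphism only after inverting $Q$ (localization theorem plus the \emph{non-equivariant} K\"unneth for the $S^1$- or $\Z_2$-fixed points), and such a map yields a single inequality between the $q^\dagger$'s, not an equality. This is exactly why the paper's proof has its asymmetric shape: it proves $q_M(K)+q_M(K')\le q_M(K\cup K')=q_M(K\#K')$ this way and then obtains the reverse inequality by running the same argument for the mirrors and invoking the duality $q_M(-K)=-q_M(K)$ (\cref{duality}). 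Your first route never uses the mirror/duality step, so the equality is not established; your alternative route via \cref{main slice-torus gen} also fails as stated, since ``capping off a slice disk for $-K'$'' to get a genus-$0$ cobordism from $K\#K'$ to $K$ requires $K'$ to be slice, and band surgery produces a cobordism to the split union, not to $K$.

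The second gap is the appeal to \cref{general indep} for the composites. The composite of the connect-sum cobordism $(W_\#,S_\#)$ with its reverse is \emph{not} a surface in a product $[0,1]\times S^3$: the underlying $4$-manifold contains a $1$-handle and a $3$-handle (and on the other side one must deal with the disjoint union of two copies of $(S^3,K)$ and $(S^3,K')$, not a split link in one $S^3$, whose branched double cover would fail to be a rational homology sphere). So homotopy invariance of surfaces inside a fixed cobordism does not apply. The paper instead compares the composite with the product via surgery on an embedded $(S^0\times D^4, S^0\times D^2)$, using the gluing comparison of \cref{BFsurgery} together with \cref{product cobordism}, to conclude that the composed Bauer--Furuta maps agree with those of the disjoint product cobordisms. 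Your grading bookkeeping concern at the end is the least of the problems; the missing ingredients are the surgery/gluing lemma for the composites and, above all, the duality argument that converts the one-sided localization inequality into the additivity statement.
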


We use the following two lemmas in the proof of this theorem.

The first lemma claims that the cohomological relative Bauer-Furuta invariant for $(S^0 \times D^4, S^0 \times D^2)$ and that of $(D^1 \times S^3, D^1 \times S^1)$
are the same Floer cohomology classes of $(S^0 \times S^3,  S^0\times S^1)$.

\begin{lem}\label{BFsurgery}
We have
\[
BF^*_{(S^0 \times D^4, S^0 \times D^2)}(1)=BF^*_{(D^1 \times S^3, D^1 \times S^1)}(1)
\in \wt{H}^*_{\Z_2}(SWF((S^0 \times S^3,  S^0\times S^1))). 
\]
\end{lem}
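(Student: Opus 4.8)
The plan is to realize the two pairs in the statement as the two halves of a standard decomposition of the unknotted $2$-sphere in $S^4$, and then to combine the gluing law (\cref{BFgluing}) with the computations of \cref{independence of 2-knots} and \cref{product cobordism}. Writing $S^4=\partial(D^1\times D^4)=(S^0\times D^4)\cup_{S^0\times S^3}(D^1\times S^3)$, one may isotope the unknotted $S^2\subset S^4$ so that it splits as $S^2=(S^0\times D^2)\cup_{S^0\times S^1}(D^1\times S^1)$, giving
\[
(S^4,\,S^2_{\mathrm{triv}})=(S^0\times D^4,\,S^0\times D^2)\ \cup_{(S^0\times S^3,\ S^0\times S^1)}\ (D^1\times S^3,\,D^1\times S^1).
\]
Since each side is a trivial disk, resp.\ annulus, inside a trivial ball, resp.\ cylinder, bundle, taking the fibrewise double branched cover is compatible with this decomposition; fixing the $\Z_2$-invariant spin structures on the branched covers and a splitting of $G_{\fraks}$ as in \cref{dependence with splitting}, \cref{BFgluing} then exhibits $BF_{(S^4,\,S^2_{\mathrm{triv}})}$ as the composite of $BF_{(S^0\times D^4,\,S^0\times D^2)}$ with $BF_{(D^1\times S^3,\,D^1\times S^1)}$ through $SWF$ of the dividing pair $(S^0\times S^3,\,S^0\times S^1)$.

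First I would note that the ambient group is as simple as possible. The double branched cover of $(S^3,S^1)$, the unknot, is $S^3$, an $L$-space, so $SWF$ of it is stably (and $\Z_2$-equivariantly) the sphere of a virtual $\Z_2$-representation; hence $SWF(S^0\times S^3,\,S^0\times S^1)$ is stably such a sphere too, and by the equivariant Thom isomorphism over $B\Z_2$ the group $\wt H^*_{\Z_2}(SWF(S^0\times S^3,\,S^0\times S^1);\mathbb{F}_2)$ is free of rank one over $\mathbb{F}_2[Q]$. In particular a homogeneous class in it is pinned down by being nonzero together with its $\Q$-grading, so it suffices to prove that $BF^*_{(S^0\times D^4,\,S^0\times D^2)}(1)$ and $BF^*_{(D^1\times S^3,\,D^1\times S^1)}(1)$ are both nonzero and of the same grading.

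For non-vanishing: $(S^0\times D^4,\,S^0\times D^2)$ is the disjoint union of two copies of the trivial disk pair $(D^4,D^2)$, so its invariant is the external product $BF^*_{(D^4,D^2)}(1)\otimes BF^*_{(D^4,D^2)}(1)$; and $BF^*_{(D^4,D^2)}(1)$ is nonzero because capping $(D^4,D^2)$ with its mirror produces the trivial $2$-knot in $S^4$, whose invariant is $\pm\mathrm{id}\neq 0$ by \cref{independence of 2-knots}, so $BF^*_{(D^4,D^2)}(1)$ generates the free rank-one module of the unknot and (for sphere-like objects) its external square is again a generator. For the cylinder pair, reading $(D^1\times S^3,\,D^1\times S^1)$ as a cobordism $(S^3,S^1)\to(S^3,S^1)$ gives as branched cover the product cobordism $[0,1]\times S^3$, whose Bauer--Furuta map is stably the identity by \cref{product cobordism}; hence $BF^*_{(D^1\times S^3,\,D^1\times S^1)}(1)$ is again nonzero (equivalently, capping one leg of the cylinder with $(D^4,D^2)$ returns $(D^4,D^2)$, recovering the same nonzero class). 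For the gradings: the grading of each $BF^*(1)$ is the grading shift $\tfrac14(c_1^2-\sigma)+b^+$ of a spin cobordism filling the same pair $(S^0\times S^3,\,S^0\times S^1)$, and both $S^0\times D^4=D^4\sqcup D^4$ and $[0,1]\times S^3$ have $b^+=0$, $\sigma=0$, $c_1=0$, so these shifts agree. Thus both classes are nonzero homogeneous elements of the same degree in a free rank-one $\mathbb{F}_2[Q]$-module, hence equal.

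The step I expect to be the main obstacle is the equivariant bookkeeping underlying the first and third paragraphs: verifying that the passage to double branched covers is genuinely compatible with the decomposition $S^4=(S^0\times D^4)\cup(D^1\times S^3)$ at the level of the chosen $\Z_2$-invariant spin$^c$ structures and of the splitting $G_{\fraks}\cong S^1\times\Z_2$; identifying the relevant representation spheres and Manolescu-type correction-term shifts precisely enough that the grading comparison is literal rather than heuristic; and checking that the ``cap a leg with $(D^4,D^2)$'' move corresponds on the Floer side to composition with a nonzero class. Working with $\mathbb{F}_2$-coefficients removes the sign ambiguities, and since every $3$-manifold appearing is $S^3$ all the Floer homotopy types are sphere-like, which keeps this bookkeeping manageable.
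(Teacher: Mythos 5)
Your proposal is correct in substance, and it relies on the same core inputs as the paper -- \cref{independence of 2-knots} to handle the two disk pieces, \cref{product cobordism} for the cylinder, and the composition law \cref{BFgluing} -- but it concludes differently. The paper's own proof is essentially a citation: it invokes a gluing argument of the type in \cite[Subsection 3.7]{DSS23} to identify the two relative classes directly, and computes $BF^*_{(S^0\times D^4,\,S^0\times D^2)}$ from the proof of \cref{independence of 2-knots}. You instead exploit the structure of the target: since $\Sigma_2(U)=S^3$, the spectrum $SWF(S^0\times S^3, S^0\times S^1)$ is stably a $\Z_2$-representation sphere, so $\wt{H}^*_{\Z_2}$ of it is free of rank one over $\mathbb{F}_2[Q]$, and a nonzero homogeneous class is determined by its degree; you then check nonvanishing (by closing up with mirror pieces, resp.\ capping a leg, and using that the closed $2$-knot invariant and the product-cobordism map are identities) and match the grading shifts ($b^+=\sigma=c_1=0$ for both $D^4\sqcup D^4$ and $[0,1]\times S^3$, with identical boundary correction terms). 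What this buys is a cleaner endgame that avoids having to identify the bent cylinder with the coevaluation map explicitly; what it does not remove is the duality/gluing bookkeeping itself, since your two nonvanishing arguments (``cap with the mirror disk pair,'' ``cap one leg of the cylinder'') are exactly instances of the gluing statement the paper outsources to \cite{DSS23}, applied moreover to cobordisms with disconnected ends, where \cref{BFgluing} as stated (for rational homology $3$-spheres) has to be read via the smash-product description of $SWF$ of a disjoint union, as the paper does implicitly in the connected-sum argument. With that caveat made explicit -- and with the spin structures, the splitting of $G_{\fraks}$ as in \cref{dependence with splitting}, and the boundary orientation conventions fixed once for both fillings -- your argument goes through and is, if anything, more self-contained than the two-line proof in the paper.
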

\begin{proof}
A similar gluing argument has been used in \cite[Subsection 3.7]{DSS23}.  Computation of $BF^*_{S^0 \times D^4, S^0, D^2}$ can be done by the proof of
\cref{independence of 2-knots}. 
\end{proof}
The second lemma is a localization theorem for $\Z_2$-action. 
\begin{lem}\label{localization}
The inclusion  
\[
SWF(\Sigma_2(K))^{\Z_2}\to SWF(\Sigma_2(K))
\]
induces an isomorphism
\[
Q^{-1}\wt{H}^*_{\Z_2}(SWF(\Sigma_2(K)))\to \wt{H}^*(SWF(\Sigma_2(K))^{\Z_2})\otimes \mathbb{F}[Q, Q^{-1}].
\]
\end{lem}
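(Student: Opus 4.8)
The plan is to deduce Lemma~\ref{localization} from the Borel localization theorem for finite $\Z_2$-CW complexes with $\mathbb{F}$-coefficients ($\mathbb{F}=\mathbb{F}_2$ as before), applied to the Conley index $I^\mu_\lambda(g)$, and then to transport the resulting isomorphism through the formal desuspension that defines $SWF$.

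First I would recall localization in this setting. Let $Z$ be a pointed finite $\Z_2$-CW complex with fixed subcomplex $Z^{\Z_2}$. Then the inclusion $Z^{\Z_2}\hookrightarrow Z$ induces an isomorphism
\[
Q^{-1}\wt{H}^*_{\Z_2}(Z;\mathbb{F})\xrightarrow{\cong}Q^{-1}\wt{H}^*_{\Z_2}(Z^{\Z_2};\mathbb{F})=\wt{H}^*(Z^{\Z_2};\mathbb{F})\otimes_{\mathbb{F}}\mathbb{F}[Q,Q^{-1}],
\]
the last identification holding because $\Z_2$ acts trivially on $Z^{\Z_2}$. I would either cite this (tom Dieck's \emph{Transformation Groups}, or Allday--Puppe) or give the short argument: the relative cells of the pair $(Z,Z^{\Z_2})$ are all \emph{free} $\Z_2$-cells, and for such a cell $\wt{H}^*_{\Z_2}\big((\Z_2)_+\wedge S^n\big)\cong\wt{H}^*(S^n)$ carries the $\mathbb{F}[Q]$-module structure factoring through $H^*(B\Z_2)\to H^*(\mathrm{pt})$, on which $Q$ acts nilpotently; hence it is annihilated after inverting $Q$. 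Feeding this into the cofiber sequences of the skeletal filtration and using exactness of $Q^{-1}(-)$ gives $Q^{-1}\wt{H}^*_{\Z_2}(Z/Z^{\Z_2};\mathbb{F})=0$, and the cofiber sequence $Z^{\Z_2}\hookrightarrow Z\to Z/Z^{\Z_2}$ then yields the claim.

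Next I would apply this with $Z=I^\mu_\lambda(g)$, which was recorded above to be a finite $\Z_2$-CW complex, so that $(I^\mu_\lambda(g))^{\Z_2}$ is again a finite CW complex; this gives the localization isomorphism at the level of Conley indices, induced by the inclusion of fixed points. Then I would pass to $SWF$ by unwinding definitions: writing $V^0_\lambda(g)=(V^0_\lambda(g))^{\Z_2}\oplus W$ with $W$ a sum of copies of the nontrivial real $\Z_2$-representation $\wt{\R}$, one has $SWF(\Sigma_2(K))^{\Z_2}=\Sigma^{-(V^0_\lambda(g))^{\Z_2}}(I^\mu_\lambda(g))^{\Z_2}$ by definition of the fixed-point spectrum. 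The grading shift $\dim_\R V^0_\lambda(g)+2n(\Sigma_2(K),\fraks_0,g)$ defining $\wt{H}^*_{\Z_2}(SWF(\Sigma_2(K)))$ and the shift $\dim_\R(V^0_\lambda(g))^{\Z_2}+2n(\Sigma_2(K),\fraks_0,g)$ on the fixed-point side differ precisely by $\dim_\R W$, and this discrepancy is absorbed by the $\Z_2$-equivariant Euler class of $W$ over $B\Z_2$, namely $Q^{\dim_\R W}$, which is invertible after inverting $Q$. Thus the map on cohomology induced by $SWF(\Sigma_2(K))^{\Z_2}\hookrightarrow SWF(\Sigma_2(K))$ is, up to this invertible Euler-class regrading, the Conley-index localization map, hence an isomorphism after localizing at $Q$.

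The main obstacle I anticipate is purely bookkeeping: matching the grading conventions --- in particular checking that the correction term $n(\Sigma_2(K),\fraks_0,g)$ enters both sides in the same way, and that the residual $S^1$-action on the fixed-point set plays no role once one passes to ordinary cohomology --- and confirming that the abstract isomorphism furnished by Borel localization is literally the one induced by the fixed-point inclusion after desuspension, rather than merely an isomorphism between the same source and target. Everything else is a direct application of a classical theorem.
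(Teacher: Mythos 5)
Your proposal is correct and follows essentially the same route as the paper: the paper's proof simply notes that $SWF(\Sigma_2(K))$ is a formal desuspension of a finite $\Z_2$-CW complex (the Conley index) and invokes the general localization theorem for $\Z_2$-actions, which is exactly your argument. You additionally spell out the desuspension bookkeeping (splitting off the free part of $V^0_\lambda(g)$ and absorbing it into the invertible Euler class $Q^{\dim_\R W}$), which the paper leaves implicit but which is consistent with its citation.
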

\begin{proof}
Note that 
$SWF(\Sigma_2(K))$ is a formal desuspension of a finite $\Z_2$-CW complex. Therefore, one can use a general localization theorem for $\Z_2$-action. 
See \cite[Theorem 2.1, page 44]{Ma98}, for example.
\end{proof}

Now, we prove \cref{conn sum of qm}. 
\begin{proof}[Proof of \cref{conn sum of qm}]

We use the following knot cobordisms: 
\begin{itemize}
    \item $(W_\#, S_\# )  : (S^3, K) \cup (S^3, K') \to (S^3, K\# K')$, 
    \item $(-W_\#, -S_\# )  :  (S^3, K\# K') \to (S^3, K) \cup (S^3, K')$, 
\end{itemize}
where $W_\#$ is a ($4$-dimensional) 1-handle cobordism from $S^3 \cup S^3 \to S^3$ and $S_\#$ is also a (2-dimensional)  1-handle cobordism from $K \cup K' \to K \# K'$. Associated with these cobordisms, we have (metric dependent) $\Z_2$-equivariant cohomological Bauer--Furuta invariants 
\[
BF_{(W_\#, S_\# )} : SWF(\Sigma(K)) \wedge SWF(\Sigma(K')) \to  SWF(\Sigma(K\# K'))
\]
and 
\[
BF_{(-W_\#, -S_\# ) } : SWF(\Sigma(K\# K')) \to SWF(\Sigma(K)) \wedge SWF(\Sigma(K')). 
\]
We claim the $Q$-localized induced maps $BF_{(W_\#, S_\# )}^{*, Q\text{-loc}} $ and $BF_{(-W_\#, -S_\# ) }^{*, Q\text{-loc}} $ are isomorphisms. 
In order to see this, we consider the composition: 
\[
(W_\#, S_\# )\circ(-W_\#, -S_\# ). 
\]

One can find an embedding of $(S^0 \times D^4, S^0 \times D^2) $ into $(-W_\#, -S_\# ) \circ (W_\#, S_\# ) $ such that the surgery along it is pairwisely diffeomorphic to $([0,1] \times S^3, [0,1]\times K  ) \cup ([0,1] \times S^3, [0,1]\times K'  )$. Again using a connected sum gluing theorem based on \cref{BFsurgery}, 
\[
BF_{(-W_\#, -S_\# ) \circ (W_\#, S_\# )} = BF_{([0,1] \times S^3, [0,1]\times K  ) \cup ([0,1] \times S^3, [0,1]\times K'  )}
\]
and 
we know $BF_{([0,1] \times S^3, [0,1]\times K  ) \cup ([0,1] \times S^3, [0,1]\times K'  )}^* $ is isomorphism from \cref{BF=id}. 

Therefore, 
\[
q_M (K\cup K'): = q_M( SWF(\Sigma(K)) \wedge SWF(\Sigma(K')))  , 
\]
here we define $q_M( SWF(\Sigma(K)) \wedge SWF(\Sigma(K')))$ similar to the knot case and use natural homological grading comes from the gradings of $K$ and $K'$.
Moreover, since the above two cobordisms give both directions of maps, we have 
\[
q_M(K \cup K') = q_M( K \# K'). 
\]
On the other hand, we see 
\[
q_M(K) + q_M(K') \leq q_M (K\cup K') 
\]
since we have a homomorphism 
\[
H^*_{\Z_2 } ( SWF(\Sigma K) \wedge SWF(\Sigma K')) \to H^*_{\Z_2 } ( SWF(\Sigma K) ) \otimes H^*_{\Z_2} ( SWF(\Sigma K'))
\]
which induces the isomorphism on the localizations
\[
Q^{-1} H^*_{\Z_2 } ( SWF(\Sigma K) \wedge SWF(\Sigma K')) \to Q^{-1} H^*_{\Z_2 } ( SWF(\Sigma K) ) \otimes H^*_{\Z_2} ( SWF(\Sigma K')). 
\]
This property follows from the naturality of the localization maps 
\[
  \begin{CD}
     H^*_{\Z_2 } ( SWF(\Sigma K) \wedge SWF(\Sigma K')) @>{}>> H^*_{\Z_2 } ( SWF(\Sigma K) ) \otimes H^*_{\Z_2} ( SWF(\Sigma K'))  \\
  @V{}VV    @VV{}V \\
   \mathbb{F}_2[Q^{\pm 1}] \otimes H^* ( SWF(\Sigma K) \wedge SWF( \Sigma K'))  @>{\cong }>>   \mathbb{F}_2[Q^{\pm 1}] \otimes  H^*( SWF(\Sigma K) ) \otimes H^*( SWF(\Sigma K')) 
  \end{CD}
\]
and non-equivariant Kunneth isomorphism.

Therefore, we have 
\[
q_M(K^* ) + q_M((K')^* ) \leq q_M ((K\cup K')^*) . 
\]
Using the mirroring formula \cref{duality}, we have opposite inequality. This gives the conclusion. 
\end{proof}

\subsection{A lower bound for $q_M(K)$}
Let $K \subset S^3$ be a knot.
In this subsection, we give a lower bound for $q_M(K)$ in terms of the Heegaard Floer homology group of the double-branched cover.
Set
\[
m(K):=\min \{ gr^\Q (x) | 0\neq x\in  \widehat{HF}(\Sigma_2(K); \fraks_0), x \text{ is homogeneous}  \}. 
\]

\begin{thm}
For any knot $K \subset S^3$, 
we have
\[
q_M(K)\geq m(-K)-\frac{3}{4}\sigma(K).
\]
\end{thm}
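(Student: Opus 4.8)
The plan is to strip the signature term off both sides and reduce to a comparison between the $\Z_2$‑equivariant and non‑equivariant Seiberg--Witten Floer cohomologies of the double branched cover. Since $q_M(K)=q^\dagger_M(-K)-\frac34\sigma(K)$, the asserted inequality is equivalent to $q^\dagger_M(-K)\geq m(-K)$, so, writing $J=-K$ and $Y=\Sigma_2(J)=-\Sigma_2(K)$ with its branched involution, it suffices to show that for every knot $J$,
\[
q^\dagger_M(J)\;\geq\; m(J).
\]
Here I would use the standard identification of the reduced non-equivariant Seiberg--Witten Floer cohomology $\widetilde H^*(SWF(Y);\mathbb{F}_2)$ with $\widehat{HF}(Y;\fraks_0)$ (over $\mathbb{F}_2$), with matching $\Q$-gradings normalized by the same correction term $n(Y,\fraks_0,g)$ used throughout, via the equivalence of the two Seiberg--Witten Floer homologies \cite{LM18} and the Seiberg--Witten/Heegaard Floer isomorphism \cite{KLTI}; over $\mathbb{F}_2$, homology and cohomology agree degreewise, so this does not move the minimal grading. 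Thus $m(J)=\min\{\alpha\in\Q \mid \widetilde H^\alpha(SWF(Y);\mathbb{F}_2)\neq 0\}$, and this is consistent with the paper's own L-space computation, where $\widetilde H^*(SWF(Y))\cong\mathbb{F}_2$ in the single grading $d(Y)$.

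The main step is the Borel (Leray--Serre) spectral sequence of the fibration $X\to X\times_{\Z_2}E\Z_2\to B\Z_2$ with $\mathbb{F}_2$-coefficients, where $X$ is a finite $\Z_2$-CW model for $SWF(Y)$; the formal desuspension by a representation only shifts all gradings by a global constant, harmless over $\mathbb{F}_2$ by the Thom isomorphism. This reads
\[
E_2^{s,t}=H^s(B\Z_2;\widetilde H^t(SWF(Y);\mathbb{F}_2))=\widetilde H^t(SWF(Y);\mathbb{F}_2)\;\Longrightarrow\;\widetilde H^{s+t}_{\Z_2}(SWF(Y);\mathbb{F}_2),
\]
and since the fiber cohomology is finite-dimensional the filtration is exhaustive, complete and Hausdorff; moreover $Q\in H^1(B\Z_2)$ acts compatibly with all $d_r$, so $Q\colon E_\infty^{s,t}\to E_\infty^{s+1,t}$ is onto for every $s$ and an isomorphism for $s\gg 0$. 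By the rank theorem \cref{rank1 theorem}, $\widetilde H^*_{\Z_2}(SWF(Y))$ is (as an $\mathbb{F}_2[Q]$-module) a free rank-one module generated by a homogeneous class $x_0$ of grading $q^\dagger_M(J)$, plus a finite-dimensional $Q$-torsion part. This forces a unique "stable row" $t_0$ with $\dim_{\mathbb{F}_2}E_\infty^{s,t_0}=1$ and $E_\infty^{s,t}=0$ for $t\neq t_0$, all $s\gg 0$; because $E_\infty^{s,t_0}$ is a subquotient of $E_2^{s,t_0}=\widetilde H^{t_0}(SWF(Y))$, we get $\widetilde H^{t_0}(SWF(Y))\neq 0$, hence $m(J)\leq t_0$. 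Finally, the classes $Q^s x_0$ have grading $q^\dagger_M(J)+s$ and $Q$-adic filtration $\geq s$ (they lie in $\operatorname{im}Q^s$), and for $s$ large they are exactly the generators of the stable part, so $t_0=(q^\dagger_M(J)+s)-\operatorname{fil}(Q^s x_0)\leq q^\dagger_M(J)$. Combining, $m(J)\leq t_0\leq q^\dagger_M(J)$, and substituting $J=-K$ back gives $q_M(K)\geq m(-K)-\frac34\sigma(K)$.

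The step I expect to be the main obstacle is the spectral-sequence bookkeeping in the last paragraph: making precise that rank one over $\mathbb{F}_2[Q]$ forces a single infinite column in $E_\infty$ and that its row index $t_0$ cannot exceed $q^\dagger_M(J)$, which requires controlling how far the $Q$-adic filtration of $Q^s x_0$ can jump above $s$ (using that $Q$ is eventually an isomorphism on $E_\infty$ in high filtration). An equivalent packaging, which I would consider as a sanity check, is to phrase this via the fiber-restriction map $\iota^*\colon\widetilde H^*_{\Z_2}(SWF(Y))\to\widetilde H^*(SWF(Y))$: its image is $E_\infty^{0,*}$, which surjects onto the stable column $E_\infty^{s,t_0}\cong\mathbb{F}_2$ via iterated $Q$, so $\operatorname{im}\iota^*$ is already nonzero in grading $t_0\leq q^\dagger_M(J)$; the delicate point is the same. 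A secondary point needing care is fixing the grading/orientation conventions in the identification $\widehat{HF}(\Sigma_2(J);\fraks_0)\cong\widetilde H^*(SWF(\Sigma_2(J));\mathbb{F}_2)$ so that $m(J)$ is literally the bottom grading of the right-hand side.
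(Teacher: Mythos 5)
Your argument is correct in substance, but it takes a noticeably different route from the paper. The paper's proof never invokes the rank theorem (\cref{rank1 theorem}) or a spectral sequence: it applies the mod $2$ Thom--Gysin sequence of the free $\Z_2$-space $SWF(\Sigma_2(K))\wedge (E\Z_2)_+$, which gives $\cdots \to \wt{H}^{*-1}_{\Z_2}\xrightarrow{Q}\wt{H}^{*}_{\Z_2}\xrightarrow{\pi^*}\wt{H}^{*}\to\cdots$ and hence $\cok Q\cong \im\pi^*\subset \wt{H}^*$; since the minimal-grading class of $\wt{H}^*_{\Z_2}$ cannot lie in $\im Q$, one gets $q^\dagger_M(K)\geq \min \gr^\Q(\wt{H}^*_{\Z_2})=\min\gr^\Q(\cok Q)\geq m(K)$ directly, and then applies this to $-K$ exactly as you do. Your Borel spectral sequence argument proves the same inequality, and your filtration bookkeeping is sound: $\operatorname{fil}(Q^s x_0)\geq s$ by multiplicativity over $H^*(B\Z_2)$, the filtration in each total degree is finite (finitely many rows, finite-dimensional entries), and nonvanishing of $E_\infty^{s,t}$ forces $\wt{H}^t(SWF)\neq 0$, giving $m\leq t_0\leq q^\dagger_M$. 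Two caveats: your identification $E_2^{s,t}=H^s(B\Z_2;\wt{H}^t(SWF(Y)))=\wt{H}^t(SWF(Y))$ presumes the covering involution acts trivially on $\wt{H}^t$, which you do not know; this is harmless because you only use that nonvanishing of $H^s(\Z_2;M)$ forces $M\neq 0$, but it should be stated with (possibly twisted) coefficients. Also, the rank-one theorem is not actually needed for this inequality: once you know $Q^s x_0\neq 0$ (i.e.\ $q^\dagger_M$ is defined), its nonzero class in the associated graded sits in some $E_\infty^{f_s,\,q^\dagger_M+s-f_s}$ with $f_s\geq s$, which already gives $m\leq q^\dagger_M$ without any uniqueness-of-stable-row discussion; so your proof can be streamlined, and the paper's exact-sequence version is the most economical packaging of the same comparison between equivariant and non-equivariant cohomology. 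The grading caveat you raise at the end (normalizing $\widehat{HF}(\Sigma_2(J);\fraks_0)\cong \wt{H}^*(SWF(\Sigma_2(J)))$ by the same correction term) is handled in the paper simply by citing $HF=HM$ and Lidman--Manolescu with the stated conventions, as you propose.
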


\begin{proof}
In this proof,  we abbreviate
\[
\wt{H}^*=\wt{H}^{*}(SWF(\Sigma_2(K); \s_0)))
\text{ and }
\wt{H}^*_{\Z_2}=\wt{H}^{*}_{\Z_2}(SWF(\Sigma_2(K); \s_0))).
\]
We have the isomorphism due to $HF=HM$  \cite{KLTI, KLTII, KLTIII, KLTIV, KLTV, Ta10I, Ta10II, Ta10III, Ta10IV ,Ta10V, CGHI, CGHII, CGHIII} and Lidman--Manolescu \cite{LM18}, 
\[
\widehat{HF}(\Sigma_2(K); \fraks_0)\cong\wt{H}^* 
\]
the invariant $m(K)$ can be rewritten in terms of Seiberg--Witten Floer stable homotopy type: 
\[
m(K)=\min \{ gr^\Q (x) | 0\neq x\in  \wt{H}^*, x \text{ is homogeneous}  \}.
\]
The fibration
\[
\Z_2 \to SWF(\Sigma_2(K); \s_0))\wedge (E \Z_2)_+\to  SWF(\Sigma_2(K); \s_0))\wedge_{\Z_2} (E \Z_2)_+
\]
gives the Thom--Gysin exact sequence
\[
\cdots \to \wt{H}^{*-1}_{\Z_2}\xrightarrow{Q}\wt{H}^{*}_{\Z_2}\xrightarrow{\pi^*} \wt{H}^{*}\to \cdots.
\]
and we have 
\[
\cok Q\cong \im \pi^* \subset \wt{H}^{*}.
\]
Thus, we have 
\begin{align*}
q^\dagger_M(K) &\geq\min \{ gr^\Q (x) | 0\neq x\in  \wt{H}^*_{\Z_2}, x \text{ is homogeneous}  \}
\\
&=\min \{ gr^\Q (x) | 0\neq x\in  \cok Q, x \text{ is homogeneous}  \}
\\
&\geq \min \{ gr^\Q (x) | 0\neq x\in \wt{H}^* , x \text{ is homogeneous}  \}
=m(K).
\end{align*}
Thus we obtain 
\[
-q^\dagger_M(K)=q^\dagger_M(-K)\geq m(-K).
\]
This gives
\begin{align*}
q_M(K)&=-q^\dagger_M(K)-\frac{3}{4}\sigma(K)\\
& \geq m(-K)-\frac{3}{4}\sigma(K), 
\end{align*}
which is the desired inequality.
\end{proof}
\subsection{$q_M(K)$ for links}
\label{qMforlink}
In this section, we observe our invariant $q_M(K)$ has a natural extension to invariants of oriented links with non-zero determinants. 
For a link $L$ in $S^3$ with non-zero determinant, the following one-to-one correspondence is known 
\[
\{ \text{ orientations on }L\}/\{\pm 1\} \cong \{\text{isomorphism classes of spin structures on } \Sigma(L) \}. 
\]
Moreover, it is checked in \cite{KMT23} that any spin structure on $\Sigma(L)$ is preserved by the covering involution. 
Therefore, depending on an orientation, we have a correspondence 
\[
(L, o) \mapsto SWF(\Sigma(L); \fraks_o), 
\]
where $o$ is an orientation of $L$ and $\fraks_o$ is the corresponding spin structure. 

Then we can repeat the construction of $q_M$: 
Set
\[
q^\dagger_M(L, o) := \min \{ \gr^\Q(x) | Q^n x \neq 0,  \text{ for all }n\geq 0,   x \in H^*_{\Z_2} (SWF(\Sigma_2(L); \s_o); \mathbb{F}_2)\}  
\]
where we assume $x$ is homogeneous and set
\[
q_M(L, o)=q^\dagger_M(-L,-o)-\frac{3}{4}\sigma(L, o).
\]
From now on, we abbreviate the notations for the orientations $o$.  In order to state the invariance of $q_M(L)$, it is convenient to use the notion of {\it oriented $\chi$-concordance} introduced in  \cite{DO12}. 
 Let us first review the definition of $\chi$-concordance. A {\it marked link} is a link in $S^3$ equipped with a marked component. 
For given oriented marked links $L_0$ and $L_1$, we call $L_0$ and $L_1$ are {\it $\chi$-concordant} if $-L^*_0\#L_1$ bounds a smoothly properly embedded oriented surface $F$ in $D^4$
such that
\begin{itemize}
    \item[(i)]  $F$ is a disjoint union of one disk together with annuli;
    \item[(ii)] the boundary of the disk component of $F$ is the marked component of $-L^*_0 \# L_1$. 
\end{itemize}
where $L^*$ means the mirror image of $L$ and $-L^*$ is the one with the opposite orientation to $L^*$.
the connected sum $-L^*_0\#L_1$ is taken along marked components. In \cite{DO12}, it is proven that the set $\tilde{\mathcal{L}}$ of all $\chi$-concordant classes of oriented marked links forms an abelian group with respect to the connected sum along marked components. The group $\tilde{\mathcal{L}}$ is called the {\it link concordance group}. In this paper, we focus on the subgroup $\tilde{\mathcal{F}}$ of $\tilde{\mathcal{L}}$ generated by oriented marked links whose determinants are {\it non-zero}.

\begin{thm}
    The invariant $q_M(L)$ descends to a homomorphism 
    \[
  q_M:   \tilde{\mathcal{F}} \to \Z.
    \]
    
\end{thm}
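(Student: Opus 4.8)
The plan is to check the two things that make $q_M$ a well-defined homomorphism on $\tilde{\mathcal{F}}$: it is invariant under oriented $\chi$-concordance (note that $q_M(L)$ depends only on $L$ and its orientation modulo the global sign, not on the chosen marked component, so it suffices to show equal values on $\chi$-concordant representatives), and it is additive under connected sum along marked components. Throughout I use that for $L$ with $\det(L)\neq 0$ the branched double cover $\Sigma_2(L)$ is a $\Z_2$-equivariant $\mathrm{spin}^c$ rational homology $3$-sphere (every spin structure being $\Z_2$-invariant by \cite{KMT23}), so all of the machinery of the previous sections applies, and that the link analogues of \cref{rank1 theorem}, \cref{general indep}, \cref{product cobordism}, \cref{main slice-torus gen} and the connected-sum gluing \cref{BFsurgery} hold by the same proofs (the link case being identical to the knot case, as remarked above).

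\textbf{Invariance under $\chi$-concordance.} Suppose $L_0,L_1\in\tilde{\mathcal F}$ are $\chi$-concordant, realized by a properly embedded oriented cobordism $F\subset[0,1]\times S^3$ from $L_0$ to $L_1$ which is a disjoint union of annuli; then $\chi(F)=0$ and $[F]=0$ in $H_2([0,1]\times S^3,\partial)=0$, so $[F]$ is divisible by $2$ and the branched double cover $\Sigma_2(F)\to[0,1]\times S^3$ is defined with its deck $\Z_2$-action, restricting to $\Sigma_2(L_i)$ on the two ends. Equip $\Sigma_2(F)$ with the pulled-back $\mathrm{spin}^c$ structure (here spin, since $c_1$ pulls back from $[F]=0$) restricting to $\fraks_{o_i}$, giving equivariant cobordism maps $BF^*_F$ and $BF^*_{-F}$. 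Exactly as in the proof of \cref{main slice-torus gen}, but now with a disconnected annular cobordism between links: the composites $F\circ(-F)$ and $(-F)\circ F$ are unions of annuli homotopic rel boundary to the corresponding product cobordisms (push into $[0,1]\times\R^3$ and use the linear homotopy), so \cref{general indep} identifies their $Q$-localized Bauer--Furuta maps with those of the products, which are the identity by \cref{product cobordism}; hence $BF^{*,\loc}_F$ and $BF^{*,\loc}_{-F}$ are mutually inverse up to powers of $Q$ and in particular isomorphisms on $Q^{-1}\wt H^*_{\Z_2}$. By the rank-$1$ theorem the free quotient of $\wt H^*_{\Z_2}(SWF(\Sigma_2(L_i)))$ is $\mathbb{F}_2[Q]$ with bottom degree $q^\dagger_M(L_i)$, so a nonzero graded module map between these two, of grading shift $s_F$, is multiplication by $Q^n$ with $n=q^\dagger_M(L_1)+s_F-q^\dagger_M(L_0)\ge 0$; the same applied to $-F$ gives $q^\dagger_M(L_0)+s_{-F}-q^\dagger_M(L_1)\ge 0$, and since $s_F+s_{-F}=0$ we get equality, i.e. $q^\dagger_M(L_0)=q^\dagger_M(L_1)+s_F$. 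With $[F]=0$ and $g(F)=0$ the index/signature formulas of \cite{BH} quoted above make $s_F$ a fixed multiple of $\sigma(L_1)-\sigma(L_0)$, and the correction term $-\tfrac34\sigma$ in the definition of $q_M$ is chosen precisely to absorb it; running the same argument on the mirrored--reversed concordance $-L_0^*\to -L_1^*$ then yields $q_M(L_0)=q^\dagger_M(-L_0^*)-\tfrac34\sigma(L_0)=q^\dagger_M(-L_1^*)-\tfrac34\sigma(L_1)=q_M(L_1)$.

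\textbf{Additivity.} Here I would mimic the proof of \cref{conn sum of qm}. For the connected sum $L\#L'$ along marked components one has $\Sigma_2(L\#L')\cong\Sigma_2(L)\#\Sigma_2(L')$ as $\Z_2$-manifolds, with $\fraks_{o\#o'}$ corresponding to $\fraks_o\#\fraks_{o'}$, and $\det(L\#L')=\det(L)\det(L')\neq 0$, so $L\#L'\in\tilde{\mathcal F}$. The $1$-handle cobordism $(W_\#,S_\#):(S^3,L)\sqcup(S^3,L')\to(S^3,L\#L')$ and its reverse induce equivariant Bauer--Furuta maps whose $Q$-localizations are isomorphisms: embedding $(S^0\times D^4,S^0\times D^2)$ into the composite and invoking \cref{BFsurgery} together with \cref{product cobordism} identifies the composites with product cobordisms, so both $Q$-localized maps are isomorphisms. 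Defining $q_M$ of a pair of links via the smash product $SWF(\Sigma_2(L))\wedge SWF(\Sigma_2(L'))$ as in \cref{conn sum of qm} then gives $q_M(L\#L')=q_M(L\sqcup L')$, and $q_M(L\sqcup L')=q_M(L)+q_M(L')$ follows from the $Q$-localized K\"unneth isomorphism together with the link mirroring formula (the extension of \cref{duality}) exactly as in \cref{conn sum of qm}, using $\sigma(L\#L')=\sigma(L)+\sigma(L')$. Since $\tilde{\mathcal F}$ is generated under $\#$ by links of nonzero determinant, this shows $q_M:\tilde{\mathcal F}\to\Z$ is a homomorphism.

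\textbf{Main obstacle.} I expect the difficulty to be bookkeeping rather than conceptual. The two points to be careful about are: (1) genuinely upgrading \cref{rank1 theorem}, \cref{general indep}, and the identities $BF^*_{\mathrm{product}}=\mathrm{id}$ from knots to links of nonzero determinant --- in particular the crossing-change/band-move reduction underlying the rank-$1$ theorem must land on a standard model link whose branched double cover is still a rational homology sphere with rank-$1$ equivariant Floer cohomology, which needs slightly more thought than the unknot case; and (2) threading the $\Z_2$-invariant spin structures and the orientation data consistently through all the cobordisms so that the grading-shift computation $s_F$ (and hence the cancellation with $-\tfrac34\sigma$) and the additivity $\sigma(L\#L')=\sigma(L)+\sigma(L')$ apply verbatim.
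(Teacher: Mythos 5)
Your central step---invariance under $\chi$-concordance---is built on a reformulation that is not equivalent to the definition the theorem uses, and this is a genuine gap rather than bookkeeping. A $\chi$-concordance from $L_0$ to $L_1$ is a surface $F\subset D^4$ with $\partial F=-L_0^*\#L_1$ consisting of one disk and some annuli; nothing in the definition forces each annulus to have one boundary circle on the $-L_0^*$ side and one on the $L_1$ side, and in particular $L_0$ and $L_1$ need not even have the same number of components. So your model of $F$ as a disjoint union of annuli in $[0,1]\times S^3$ running from $L_0$ to $L_1$ only captures a special case, and your two-sided argument---composing $F$ with $-F$, homotoping to the product cobordism and invoking \cref{general indep} and \cref{product cobordism}---does not apply to a general $\chi$-concordance; nor does it have a counterpart in the $D^4$ picture, where the composite with the reversed surface is a closed pair in $S^4$ rather than a product. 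The paper's proof is structured precisely to avoid this: by \cite{DO12}, $\Sigma_2(-L_0^*\#L_1)=-\Sigma_2(L_0)\#\Sigma_2(L_1)$ bounds the spin rational homology ball $W=\Sigma_2(F)$ (the double cover of $D^4$ branched along the disk-plus-annuli surface), the relevant spin structure on $W$ is preserved by the covering involution, and the existence of the $\Z_2$-equivariant Bauer--Furuta map of this filling gives the single inequality $0\le q_M(-L_0^*\#L_1)$; the opposite inequality comes from the orientation-reversed filling, and then the link versions of duality and the connected-sum formula yield $q_M(L_0)=q_M(L_1)$. Your additivity paragraph is consistent with the paper, which simply asserts that the homomorphism property is proved as in the knot case.

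Two further problems with the route you chose. First, you lean on a link version of \cref{rank1 theorem} (directly in the invariance step, and implicitly through the link analogue of \cref{duality}), and you call this "slightly more thought"; but the knot proof of \cref{rank1 theorem} reduces to the unknot by crossing changes, and for a link with at least two components the analogous reduction lands on the unlink, whose determinant vanishes, so its branched double cover is not a rational homology sphere and the whole framework is unavailable---no rank-one statement for links is established in the paper, and the paper's argument for $\chi$-concordance invariance does not need one (for the inequality chase it suffices that a $Q$-localized isomorphism carries a minimal-degree non-$Q$-torsion class to a non-torsion class, which holds in any rank). Second, the grading bookkeeping does not close as you wrote it: for an annular cobordism with $[F]=0$ the shift is $\tfrac14\left(\sigma(L_1)-\sigma(L_0)\right)$, while the correction in $q_M$ is $\tfrac34\sigma$, so they do not cancel; running your own computation gives $q_M(L_0)-q_M(L_1)=-\tfrac12\left(\sigma(L_0)-\sigma(L_1)\right)$, and to conclude you must invoke the $\chi$-concordance invariance of the signature for links of nonzero determinant (also due to \cite{DO12}), which you never cite. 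So the step you dismissed as bookkeeping is exactly where your argument is incomplete.
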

\begin{proof}
    It is proven in \cite{DO12} that if $L_0$ and $L_1$ are $\chi$-concordant, then the double branched cover of $-L^*_0 \# L_1$ bounds a spin rational homology ball $(W, \fraks)$. Moreover, the restrictions of the spin structure to the boundary $\Sigma_2(-L^*_0 \# L_1 ) = -\Sigma_2 (L_0)\# \Sigma_2 (L_1)$ coincide with the spin structure comes as the connected sum of spin structures on $-\Sigma_2 (L_0)$ and  $\Sigma_2 (L_1)$, which correspond to orientations of $L_0$ and $L_1$ respectively. 
    Moreover, if we write the $\chi$ concordance as $F$, $W$ is obtained as the branched cover along $F$ in $D^4$. Moreover, it is proven that isomorphism classes of spin structures on $W=\Sigma_2(F)$ have one to one correspondence with quasi-orientations of $F$. Moreover, it is not difficult to check these spin structures are invariant under the covering involutions. Therefore, the spin structure $\fraks$ is also preserved by the involution.
    So, the induced $\Z_2$-equivariant Bauer--Furuta invariant is described as 
    \[
   BF_F :  V^+ \to SWF(-L^*_0\# L_1) .
    \]
    This shows $0\leq q_M(-L^*_0\# L_1)$. From the duality and the connected sum formula, we see $q_M(L_0) \leq q_M(L_1)$. The opposite inequality is obtained by considering the opposite orientation of $W$.
    Therefore, $q_M$ defines a map from $\wt{\mathcal{F}}$ to $\Z$. The proof of the homomorphism property of $q_M$ for marked oriented links is completely similar to the knot case. So, we omit it. 
\end{proof}

\section{Equivariant stable homomotopical invariants from contact structures}

In this section, we construct two kinds of new invariants for equivariant spin$^c$ 4-manifolds with contact boundary and for equivariant closed contact 3-manifolds using invariant almost K\"alher cones and Seiberg--Witten equation on them.  

\subsection{Equivariant Bauer-Furuta type invariant for 4-manifold with contact boundary}
Let $G$ be a compact Lie group.
Let $(\wt{X}, \wt{\s})$ be a compact oriented Spin$^c$ 4-manifold  with a smooth $G$-action preserving the orientation and the Spin$^c$ structure.
We also fix a lift of $G$-action to the principal spin$^c$ bundle. 
Assume $b_3(\wt{X})=0$ as in \cite{I19}.
Notice that this condition on $b_3$ implies that the boundary $\wt{Y}=\partial \wt{X}$ is also connected.
Suppose that we have a $G$-invariant positive cooriented contact structure on the boundary and an $G$ equivariant isomorphism $\s|_{\partial \wt{X}} \cong \s_{\wt{\xi}}$.
In this section, we will define a $G$-equivariant Bauer--Furuta refinements of  Kronheimer--Mrowka invariants 
\[
\Psi_{G} (\wt{X},\wt{\xi}, \wt{\s}) : S^{\langle e({S}^+ , \wt{\Phi}_0),  [\wt{X}, \partial \wt{X} ]\rangle} \to S^0
\]
as a $G$-equivariant stable homotopy class. 
Here, the relative Euler number
$\langle e({S}^+ , \wt{\Phi}_0),  [\wt{X}, \partial \wt{X} ]\rangle$ will be explained later and this is equal to the virtual dimension of the Seiberg--Witten moduli space for 4-manifolds with contact boundary given by Kronheimer--Mrowka \cite{KM97}. By applying it to branched covering spaces along embedded surfaces, we shall also consider the invariants of surfaces. 
Recall also that for $G$-representations $V, W, V', W'$, we say two maps $f : V^+ \to W^+$ and $f' : V^+ \to W^+$ are $G$-stably homotopic if there exists a $G$-representation space $U$ such that $f \wedge \id_U : V^+ \wedge U^+ \to W^+ \wedge U^+$ are $G$-equivariantly homotopic, where $V$ and $W$ are some $G$-representation spaces. 
\subsection{Almost K\"ahler cone and $G$-action} 
In order to write a setup for the equivariant stable homotopy version of Kronheimer--Mrowka's invariants, we need to treat invariant almost K\"ahler structures on the open cone of the given contact 3-manifolds. 

Let $G$ be a compact Lie group and $\wt{Y}$ be an oriented rational homology 3-sphere with $G$-action and $\wt{\xi}$ be a $G$-invariant positive cooriented contact structure on $\wt{Y}$.

Take a $G$-invariant contact $1$-form $\lambda$ which is positive on the positively oriented normal field to $\wt{\xi}$ and a $G$-invariant complex structure $J$ on $\wt{\xi}$ compatible with the orientation. 
Note that we can always take a $G$-invariant contact $1$-form $\wt{\lambda}$ for a $G$-invariant contact structure $\wt{\xi}$ compatible with the coorientation by fixing an orbitrary contact form compatible with the coorientation and taking  the average over $G$.

We can take a $G$ invariant compatible complex structure $\wt{J}$ for the symplectic vector bundle $(\xi, d\lambda)$.
This is because there is a standard deformation retract
\[
r: \{\text{Euclid metric on } \xi\}\to \{\text{ compactible complex structure for } (\xi, d\lambda) \}, 
\]
which is $G$-equivariant
(See for example \cite[Proof of Lemma 2.5.5]{MS17}).
Thus if we take a Euclid metric $g$ on $\wt{\xi}$ and denote by $\wt{g}$ its average over $G$, then 
\[
\wt{J}=r(\wt{g}) 
\]
is a desired $G$-invariant complex structure.

Then we can define the $G$-invariant Riemann metric 
$
\wt{g}_1= \wt{\lambda} \otimes \wt{\lambda} + \frac{1}{2} d\wt{\lambda} (\cdot, J \cdot)|_\xi  $
 on $Y$. 
On $\R^{\geq 1}  \times \wt{Y}$, we consider the $G$-invariant Riemannian metric 
\[
\wt{g}_0 := ds^2 + s^2 \wt{g}_1,
\]
and the $G$-invariant symplectic form
\[
\wt{\om}_0 := \frac{1}{2} d(s^2 \wt{\lambda}),
\]
where $s$ is the coordinate of $\R^{\geq 1}$. 
This gives an almost K\"ahler structure on $\R^{\geq 1}  \times \wt{Y}$.

 Then a pair $(\wt{g}_0, \wt{\om}_0)$ determines an almost complex structure $\wt{J}$ on $\R^{\geq 1}  \times \wt{Y}$.  This defines a $Spin^c$ structure 
 \begin{align*}
 \wt{\s}_0:= ( {S}^+_{\R^{\geq 1}  \times \wt{Y}} = \Lambda_{\R^{\geq 1}  \times \wt{Y}}^{0,0} \oplus  \Lambda_{\R^{\geq 1}  \times \wt{Y}}^{0,2}, \ {S}^-_{\R^{\geq 1}  \times \wt{Y}} = \Lambda_{\R^{\geq 1}  \times \wt{Y}^{0,1}} , \\ \wt{\rho} : T^*(\R^{\geq 1}  \times \wt{Y}) \to \Hom ({S}^+_{\R^{\geq 1}  \times \wt{Y}} , {S}^-_{\R^{\geq 1}  \times \wt{Y}} ) )
  \end{align*}
 with natural lifts of the $G$-action, 
  where  
$
\wt{\rho}  = \sqrt{2} \operatorname{Symbol} (\overline{\partial} + \overline{\partial}^* ) . 
 $
 Let  $\wt{\Phi}_0$ be the positive spinor given by
 \[
 \wt{\Phi}_0=(1,0) \in \Om_{\R^{\geq1} \times \wt{Y}}^{0,0} \oplus  \Om_{\R^{\geq1} \times \wt{Y}}^{0,2}= \Gamma (\wt{S}^+|_{\R^{\geq1} \times \wt{Y}}).
 \]
   The {\it canonical $Spin^c$ connection} $\wt{A}_0$ on $\s$ is defined by the equation 
$
 D^+_{\wt{A}_0} \wt{\Phi}_0= 0$
 on $\R^{\geq 1} \times \wt{Y}$.
Note that $(\wt{A}_0, \wt{\Phi}_0)$ is invariant under $G$.

\subsection{Construction of invariants}

Now let $(\wt{X}, \wt{\xi} ,\wt{\fraks})$ be as in the introduction of this section.
In this subsection, we will construct a $G$-equivariant pointed stable homotopy class up to sign
\[
\Psi_{G} (\wt{X},  \wt{\xi},\wt{\fraks}): S^{\langle e({S}^+ , \wt{\Phi}_0),  [\wt{X}, \partial \wt{X} ]\rangle} \to S^0.
\]
of degree $d(\wt{X}, \wt{\xi}, \wt{\fraks} )=\langle e(S^+, \Phi_0) , [\wt{X}, \wt{Y}]\rangle$.

\par

Define a non-compact 4-manifold $\wt{X}^+$ with conical end 
\[
\wt{X}^+ := \wt{X} \cup_{\wt{Y}}  (\R^{\geq 1} \times \wt{Y}).
\]
Pick a $G$-invariant Riemann metric $g_{\wt{X}^+}$ on $\wt{X}^+$ such that $g_{\wt{X}^+}|_{\R^{\geq 1} \times Y}=g_0$. 
Fix a $G$-invariant \Spinc structure $\s_{\wt{X}^+}=(S^{\pm}_{\wt{X}^+}, \rho_{\wt{X}^+}) $ on $\wt{X}^+$ equipped with an isomorphism $\s_{\wt{X}^+} \to \wt{\s}_0$ on $\wt{X}^+\setminus \wt{X}$.
We also fix lifts of the $G$-action on spinor bundles on $\s_{\wt{X}^+}$ which coincide with the lifts taken in the previous section. 
We will omit this isomorphism in our notation.

Fix a smooth $G$-invariant extension of $(\wt{A}_0, \wt{\Phi}_0)$ on $\wt{X}^+$.
We also fix a nowhere zero proper extension $\sigma$ of $s \in \R^{\geq 1}$ coordinate to all of $\wt{X}^+$ which is $0$ on $\wt{X} \setminus \nu (\partial \wt{X})$, where $\nu (\partial \wt{X})$ is a small collar neighborhood of $\partial \wt{X}$ in $\wt{X}$.

On $\wt{X}^+$, {\it weighted Sobolev spaces} 
\[
\widehat{\cU}_{\wt{X}^+}=L^2_{k, \alpha, A_0 }(i \Lambda^1_{\wt{X}^+}\oplus S^+_{\wt{X}^+}) \text{ and }
\]
\[
\widehat{\cV}_{\wt{X}^+}=L^2_{k-1, \alpha, A_0 }(i\Lambda^0_{\wt{X}^+}\oplus i\Lambda^+_{\wt{X}^+}\oplus S^-_{\wt{X}^+})
\]
are defined using $\sigma$ for a positive real number $\al \in \R$ and $k \geq 4$, where $S^+_{\wt{X}^+}$ and $S^-_{\wt{X}^+}$ are positive and negative spinor bundles and the Sobolev spaces are given as completions of the following inner products: 
\begin{align}\label{inner}
\langle s_1, s_2\rangle_{L^2_{k, \alpha, A} } := \sum_{i=0}^k \int_{\wt{X}^+} e^{2\alpha \sigma} \langle \nabla^i_A s_1, \nabla^i_A s_2  \rangle  \operatorname{dvol}_{\wt{X}^+}, 
\end{align}
where the connection $\nabla^i_A$ is the induced connection from $A$ and the Levi-Civita connection. Note that $G$ also acts on the spaces $\widehat{\cU}_{\wt{X}^+}$ and $\widehat{\cV}_{\wt{X}^+}$.

Fix a sufficiently small positive real number $\al$. 
The invariant $\Psi_{G}(\wt{X}, \wt{\xi})$ is obtained as a $G$-invariant finite-dimensional approximation of the Seiberg--Witten map
\begin{align}\label{FX+}
\begin{split}
&\widehat{\mathcal{F}}_{\wt{X}^+}:  \widehat{\cU}_{\wt{X}^+}\to \widehat{\cV}_{\wt{X}^+}\\
&(a, \phi)\mapsto (d^{*_\alpha}a, d^+a-\wt{\rho}^{-1}(\phi\wt{\Phi}^*_0+\wt{\Phi}_0\phi^*)_0-\wt{\rho}^{-1}(\phi\phi^*)_0, D^+_{\wt{A}_0}\phi+\wt{\rho}(a)\wt{\Phi}_0+\wt{\rho}(a)\phi),  
\end{split}
\end{align}
where $d^{*_\alpha}$ is the $L^2_\alpha$ formal adjoint of $d$. 
By construction, the map $\widehat{\mathcal{F}}_{\wt{X}^+}$ is $G$-equivariant. 
\par
The finite-dimensional approximation goes as follows.
We decompose $\widehat{\cF}_{\wt{X}^+}$ as $\widehat{L}_{\wt{X}^+}+\widehat{C}_{\wt{X}^+}$ where 
\[
 \widehat{L}_{\wt{X}^+}(a, \phi)=(d^{*_\alpha}a, d^+a-\wt{\rho}^{-1}(\phi\wt{\Phi}^*_0+\wt{\Phi}_0\phi^*)_0, D^+_{\wt{A}_0}\phi+ \rho(a)\wt{\Phi}_0)
 \]
 and 
 \[
  \widehat{C}_{\wt{X}^+}(a, \phi) = (0, -\wt{\rho}^{-1}(\phi\phi^*)_0, \wt{\rho}(a)\phi). 
  \]
Then $\widehat{L}_{\wt{X}^+}$ is a linear Fredholm $G$-equivariant operator and  $\widehat{C}_{\wt{X}^+}$ is quadratic, compact and $G$-equivariant. (Here we used $\alpha>0$. )

Pick an increasing sequence of $G$-invariant finite-dimensional subspaces $\widehat{\cV}_{\wt{X}^+, n} \subset \cV_{\wt{X}^+}\, (n \in \Z^{\geq 1})$ such that 
\begin{itemize}
    \item For any $\gamma \in \widehat{\cV}_{\wt{X}^+}$, 
    \[
 \| \pr_{\widehat{\cV}_{\wt{X}^+, n}}( \gamma) - \gamma \|_{\widehat{\cV}_{\wt{X}^+}}   \to 0  \text{ as } n\to \infty
\]
and 
\item $\Coker \widehat{L}_{\wt{X}^+} := (\im \widehat{L}_{\wt{X}^+})^{\perp_{L^2_{k-1, \alpha}}} \subset \widehat{\cV}_{\wt{X}^+, 1}$.  
\end{itemize}
Let 
\[
\widehat{\cU}_{\wt{X}^+, n}=\wh{L}^{-1}(\widehat{\cV}_{\wt{X}^+, n})\subset \wh{\cU}_{\wt{X}^+},
\]
 and 
\[
\mathcal{F}_{\wt{X}^+, n}:=  \pr_{\widehat{\cV}_{\wt{X}^+, n}}   \circ \mathcal{F}_{\wt{X}^+}: \widehat{\cU}_{\wt{X}^+, n}\to \widehat{\cV}_{\wt{X}^+, n}. 
\]
We can show that for a large $R>0$, a small $\varepsilon$ and a large $n$, we have a well-defined $G$-equivariant map
\[
\mathcal{F}_{\wt{X}^+, n}: B(\widehat{\cU}_{\wt{X}^+, n}, R)/S(\widehat{\cU}_{\wt{X}^+, n}, R)\to B(\widehat{\cV}_{\wt{X}^+, n}, \varepsilon)/S(\widehat{\cV}_{\wt{X}^+, n}, \varepsilon).
\]
The stable homotopy class of $\mathcal{F}_{\wt{X}^+, n}$ defines {\it the Bauer--Furuta version of Kronheimer--Mrowka's invariant} 
\[
\Psi_{G} (\wt{X}, \wt{\xi}, \wt{\fraks} ) : S^{\langle e(S^+, \Phi_0) , [\wt{X}, \partial \wt{X}]\rangle } \to S^0 ,
\] where $e(S^+_{\wt{X}}, \Phi_0) \in H^4(\wt{X}, \partial \wt{X})$ is the relative Euler class of $S^+_{\wt{X}}$ with respect to the section $\Phi_0|_{\wt{Y}} $. It is not hard to see that the stable homotopy class of $\Psi_{G} (\wt{X}, \wt{\xi}, \wt{\fraks} )$ is independent of additional data.

Moreover, we have a diffeomorphism invariance:

\begin{lem}\label{Invariance of eq KM}
Let $(\wt{X}, \wt{\xi}, \wt{\fraks} )$ and $(\wt{X}', \wt{\xi}', \wt{\fraks}')$ be pairs of $G$-equivariant \Spinc 4-manifolds with contact boundary. Suppose there is a $G$-equivariant diffeomorphism $f : \wt{X} \to \wt{X}'$ so that 
$f_* \wt{\xi} =\wt{\xi}'$ and $f^* \wt{\fraks} = \fraks'$.
Then, we have 
\[
\Psi_{G} (\wt{X}, \wt{\xi}, \wt{\fraks} )=\pm \Psi_{G} (\wt{X}', \wt{\xi}', \wt{\fraks}' ) 
\]
up to $G$-equivariant stable homotopy. 
\end{lem}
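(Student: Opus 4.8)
The plan is to prove the diffeomorphism invariance of the equivariant Bauer--Furuta refinement of the Kronheimer--Mrowka invariant by reducing it to the construction itself: the stable homotopy class $\Psi_G(\wt X, \wt\xi, \wt\fraks)$ was already shown to be independent of all the auxiliary choices (the $G$-invariant metric $g_{\wt X^+}$, the weight $\alpha$, the collar extension $\sigma$, the extension of $(\wt A_0, \wt\Phi_0)$, the exhausting sequence $\widehat{\cV}_{\wt X^+, n}$, the radii $R, \varepsilon$, and the level $n$). So the task is to transport the whole package of data along the equivariant diffeomorphism $f$ and check that the resulting finite-dimensional approximation on $\wt X'^+$ agrees, up to $G$-equivariant stable homotopy, with one built from an admissible choice of data on $\wt X'$.

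First I would extend $f : \wt X \to \wt X'$ to a $G$-equivariant diffeomorphism $f^+ : \wt X^+ \to \wt X'^+$ of the conical completions. Near the boundary this requires a small remark: $f$ carries $\wt\xi$ to $\wt\xi'$, but it need not carry the chosen invariant contact form $\wt\lambda$, complex structure $\wt J$, and hence the model almost Kähler cone structure $(\wt g_0, \wt\om_0)$ to their primed counterparts on the nose. However, since all of these are built by $G$-averaging from contractible spaces of choices (invariant contact forms compatible with a fixed coorientation, invariant compatible complex structures), the pushed-forward data $f_*(\wt g_0, \wt\om_0, \wt A_0, \wt\Phi_0)$ and the chosen primed data on $\R^{\ge 1}\times \wt Y'$ are connected by a $G$-invariant path; this is exactly the kind of deformation already used in the construction to prove independence of data, so one gets a $G$-equivariant continuation map identifying the invariants built from the two ends. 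After this reduction I may assume $f^+$ intertwines the model cone structures exactly, and in particular $(f^+)^*\sigma'$ can be taken as the chosen $\sigma$.

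Next, having pulled back $\wt\fraks'$ to $\wt\fraks$ via $f^*\wt\fraks' = \fraks$ together with a compatible choice of $G$-lifts on the spinor bundles, the diffeomorphism $f^+$ induces $G$-equivariant isometries (with respect to the weighted Sobolev inner products \eqref{inner}) between $\widehat{\cU}_{\wt X^+}, \widehat{\cV}_{\wt X^+}$ and $\widehat{\cU}_{\wt X'^+}, \widehat{\cV}_{\wt X'^+}$, and under these the Seiberg--Witten maps $\widehat{\mathcal F}_{\wt X^+}$ and $\widehat{\mathcal F}_{\wt X'^+}$ correspond, because the map \eqref{FX+} is built naturally out of the metric, the $\Spinc$ structure, the reference connection, and the reference spinor, all of which are carried to one another by $f^+$. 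Consequently the linear part $\widehat L$, the compact quadratic part $\widehat C$, and the Fredholm data all match, so I may push forward the exhausting sequence $\widehat{\cV}_{\wt X^+, n}$ to an admissible exhausting sequence on $\wt X'^+$; the finite-dimensional approximations $\mathcal F_{\wt X^+, n}$ and $\mathcal F_{\wt X'^+, n}$ are then identified by $f^+$ as $G$-equivariant maps of Thom spaces, with matching radii $R, \varepsilon$. Finally, $f^+$ carries the relative Euler class $e(S^+, \Phi_0)$ to $e(S^+{}', \Phi_0')$, so the domain spheres $S^{\langle e(S^+,\Phi_0), [\wt X, \partial\wt X]\rangle}$ and $S^{\langle e(S^+{}',\Phi_0'), [\wt X', \partial\wt X']\rangle}$ are identified; combining with the independence-of-data statement on both sides yields $\Psi_G(\wt X, \wt\xi, \wt\fraks) = \pm\Psi_G(\wt X', \wt\xi', \wt\fraks')$, the sign ambiguity coming, as usual, from the orientation choices in the finite-dimensional approximation.

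The main obstacle I anticipate is the first reduction: making precise that a $G$-equivariant diffeomorphism of contact manifolds can be upgraded, after a $G$-invariant deformation of the almost Kähler cone data, to one that respects the full model structure $(\wt g_0, \wt\om_0, \wt A_0, \wt\Phi_0)$ on the ends. This needs the $G$-equivariant contractibility of the relevant choice spaces (invariant contact forms, invariant compatible complex structures, via the equivariant version of \cite[Proof of Lemma 2.5.5]{MS17} already invoked in the construction) together with a $G$-equivariant homotopy-invariance statement for the invariant $\Psi_G$ under deformation of the conical-end data — essentially a parametrized version of the construction. Everything else is a routine naturality check, since $\widehat{\mathcal F}_{\wt X^+}$ and all ingredients of the finite-dimensional approximation are manifestly functorial under $G$-equivariant isometries of the weighted Sobolev setup.
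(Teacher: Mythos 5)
Your proposal is correct and follows essentially the same route as the paper: the paper's proof is simply the observation that pull-back along $f$ induces a commutative diagram of $G$-equivariant maps identifying the two finite-dimensional approximations, which is the naturality check you spell out in detail. The extra care you take about matching the almost K\"ahler cone data (contact form, complex structure, reference configuration) is handled in the paper only implicitly, via the asserted independence of $\Psi_G$ from these auxiliary choices, so your more explicit treatment is a faithful elaboration rather than a different argument.
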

\begin{proof}
The pull-back by $f$ induces a commutative diagram of $G$-equivariant maps: 
\[
  \begin{CD}
     S^{\langle e(S^+, \Phi_0) , [\wt{X}, \partial \wt{X}]\rangle } @>{\Psi_{G} (\wt{X}, \wt{\xi}, \wt{\fraks} )}>> S^0 \\
  @V{f^*}VV    @V{f^*}VV \\
     S^{\langle e(S^+, \Phi_0) , [\wt{X}', \partial \wt{X}']\rangle }   @>{\Psi_{G} (\wt{X}', \wt{\fraks}',\wt{\xi}')}>>  S^0. 
  \end{CD}
\]
Note that $f^*$ is $\Z_p$-euqivariant homeomorphism. 
This implies the conclusion. 
\end{proof}

Now we restrict ourselves to $G=\Z_p$ for a prime $p$.
In this case, let us write $\Psi_{G=\Z_p} (\wt{X}  , \wt{\xi}, \wt{\s})=\Psi_{(p)} (\wt{X}  , \wt{\xi}, \wt{\s})$.
We have the following non-vanishing theorem: 
\begin{prop}\label{non-van for eq KM}
Let $p$ be a prime number.
For a 4-dimensional $\Z_p$-equivariant weak symplectic filling $(\wt{X}, \wt{\om})$ of a $\Z_p$-equivariant contact
3-manifold $(\wt{Y} ,\xi)$,
 then 
$\Psi_{(p)} (\wt{X}  , \wt{\xi}, \frak{s}_{\wt{\om}})$ is $\Z_p$-stably homotopy equivalence, where $\fraks_{\wt{\om}}$ is the induced $\Z_p$-invariant spin$^c$ structure.

\end{prop}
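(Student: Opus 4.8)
The plan is to reduce the equivariant non-vanishing statement to the non-equivariant non-vanishing theorem of Kronheimer--Mrowka (combined with Taubes' theorem that the Seiberg--Witten invariant of the canonical \Spinc structure of a symplectic filling is nonzero), by exploiting the fact that a $\Z_p$-equivariant map between representation spheres which is a non-equivariant homotopy equivalence and which restricts to an equivariant map on every fixed-point stratum is automatically a $\Z_p$-homotopy equivalence. This is precisely the tom Dieck-type criterion already invoked in the proof of \cref{independence of 2-knots} (the first bulleted proposition attributed to \cite{To79}): for an $S^1\times\Z_p$-map, or here just a $\Z_p$-map, between $(\R^n\oplus\C^m)^+$'s, it is a $\Z_p$-homotopy equivalence iff $f^H$ is an ordinary homotopy equivalence for each subgroup $H\subset\Z_p$, i.e. for $H=\{e\}$ and $H=\Z_p$.

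First I would set up the fixed-point data. The $\Z_p$-action on $\wt X$ has fixed-point set a properly embedded symplectic surface $\wt S$ (the branch locus), and near $\wt S$ the action is the standard rotation on the normal $\C$-bundle, as in \cref{Contact structures on branched coverring spaces} and \cref{double branch symp}. Correspondingly the weighted Sobolev spaces $\widehat\cU_{\wt X^+}$, $\widehat\cV_{\wt X^+}$ and the finite-dimensional approximations $\widehat\cU_{\wt X^+,n}$, $\widehat\cV_{\wt X^+,n}$ split, as $\Z_p$-representations, into a trivial summand (the $\Z_p$-invariant part, which is the corresponding data on the quotient orbifold $X$) and a sum of nontrivial irreducibles. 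Thus $\Psi_{(p)}(\wt X,\wt\xi,\fraks_{\wt\om})$ has the form $(V_\R\oplus W_\C)^+\to (V_\R'\oplus W_\C')^+$ with trivial-summand restriction equal to a finite-dimensional approximation of the Seiberg--Witten--Kronheimer--Mrowka map on the \emph{base} $(X,\xi,\fraks_\om)$ — here I use that $\wt\om$ is an invariant weak symplectic filling, so $\fraks_{\wt\om}$ restricts on the $\Z_p$-fixed data to $\fraks_\om$, the canonical \Spinc structure of the weak filling $(X,\om)$. By the argument of \cref{independence of 2-knots}, since the relevant equivariant indices on the nontrivial isotypic pieces are controlled (the real part has vanishing $b^+$-type contribution and the Dirac Lefschetz numbers are computed from curvature integrals over $\wt S$), one gets $V_\R\cong V_\R'$ and $W_\C\cong W_\C'$, so domain and codomain agree.

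Next, for $H=\Z_p$: the fixed-point map $\Psi_{(p)}^{\Z_p}$ is, up to the correction discussed above, a finite-dimensional approximation of the Kronheimer--Mrowka Bauer--Furuta-type map $\Psi(X,\xi,\fraks_\om)$ on the base filling. By \cite{KM97} (non-vanishing of the Kronheimer--Mrowka invariant for symplectic fillings, in its Bauer--Furuta form) this map is a (non-equivariant) homotopy equivalence; since it is a map between spheres of the \emph{same} dimension coming from the linearization, it has degree $\pm1$, hence is a homotopy equivalence. For $H=\{e\}$: the underlying non-equivariant map is $\Psi(\wt X,\wt\xi,\fraks_{\wt\om})$ for the total space $\wt X$, and by \cref{double branch symp} $(\wt X,\wt\om)$ is itself a weak symplectic filling of $(\Sigma_n(K),\wt\xi)$ — after deforming to a strong filling as in that proof — so again \cite{KM97} gives that $\Psi(\wt X,\wt\xi,\fraks_{\wt\om})$ is a homotopy equivalence. (If one prefers to avoid re-invoking \cite{KM97} here, one can instead note this underlying map factors through the $p$-fold cover and apply a transfer/multiplicativity argument, but citing \cite{KM97} directly for the weak filling $\wt X$ is cleanest.) Having verified $f^H$ is a homotopy equivalence for both $H$, the tom Dieck criterion yields that $\Psi_{(p)}(\wt X,\wt\xi,\fraks_{\wt\om})$ is a $\Z_p$-stable homotopy equivalence, which is the claim.

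The main obstacle I anticipate is the bookkeeping in the second paragraph: carefully identifying the $\Z_p$-fixed-point subcomplex of the finite-dimensional approximation with the Kronheimer--Mrowka approximation on the base (including matching the weighted Sobolev/conical-end structures and the canonical \Spinc connection $\wt A_0$ with its pushforward), and checking that the nontrivial isotypic summands contribute equally to domain and codomain so that $\Psi_{(p)}$ is genuinely a self-map of a representation sphere. The first of these is essentially the equivariant version of the fact that finite-dimensional approximation commutes with passing to the quotient along the branch locus; the second is the index computation of the type already carried out for 2-knots in \cref{independence of 2-knots}, now with a nonzero curvature term but still with the real ($b^+$) part vanishing because a weak (hence, after deformation, strong) filling has $b^+$-behaviour controlled on the fixed locus. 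Once these identifications are in place, the non-vanishing is a formal consequence of the degree-$\pm1$ property of linearized Seiberg--Witten maps together with \cite{KM97}.
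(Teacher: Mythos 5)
Your overall skeleton matches the paper's: both reduce, via the tom Dieck/James--Segal criterion \cite{To79,James-Segal}, to showing (a) the underlying non-equivariant map is a homotopy equivalence and (b) the $\Z_p$-fixed-point map has degree $\pm1$; and for (a) both quote the non-equivariant Bauer--Furuta refinement of Kronheimer--Mrowka non-vanishing (\cite[Corollary 4.3]{I19}, which is the form of \cite{KM97} actually needed). The genuine gap is in your treatment of (b). You propose to identify $\Psi_{(p)}(\wt{X},\wt{\xi},\fraks_{\wt{\om}})^{\Z_p}$ with a finite-dimensional approximation of the Kronheimer--Mrowka map of a ``base'' filling $(X,\xi,\fraks_{\om})$ and then cite \cite{KM97} downstairs. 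This identification is not established, and it is more than bookkeeping. First, the proposition assumes only a $\Z_p$-equivariant weak filling: the action need not be a branched-covering action with fixed locus a symplectic surface and smooth quotient (the fixed set could contain isolated points, and the quotient is in general an orbifold), so the ``base filling'' you invoke need not exist in the category where \cite{KM97} applies. Second, even in the branched-cover case relevant to the applications, the $\Z_p$-invariant part of the weighted spinor spaces on $\wt{X}^+$ is not the KM configuration space of the canonical spin$^c$ structure on $X$: the chosen lift of the action on $\wt{S}^{\pm}$ has nontrivial isotropy weights along the fixed surface, so invariant spinors vanish along it and descend to sections of a bundle twisted by the branch divisor; one would have to determine which spin$^c$ structure on $X$ this produces and why its invariant is $\pm1$. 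You flag this as ``the main obstacle,'' but as written it is exactly the unproven step carrying the whole argument. (A small additional slip: you appeal to \cref{double branch symp} to conclude $(\wt{X},\wt{\om})$ is a weak filling, but that is already a hypothesis of the proposition.)

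The paper's proof of (b) avoids any descent to the quotient. After the tom Dieck reduction, it computes the degree of the fixed-point map directly on $\wt{X}$: with the perturbation of \cite{KM97}, the moduli space on the conical-end manifold is a single point, namely the canonical configuration, which is $\Z_p$-invariant because the almost K\"ahler data $(\wt{g}_0,\wt{\om}_0,\wt{A}_0,\wt{\Phi}_0)$ were constructed invariantly; since $\ker L=0$ and $\ker L^{*}=0$ after perturbation, their $\Z_p$-invariant parts vanish as well, so this invariant solution is transversally cut out inside the fixed-point equations, and the degree of $\Psi_{(p)}^{\Z_p}$ equals $\#\mathcal{M}^{\Z_p}=\pm1$ by the argument of \cite[Theorem 4.1]{I19}. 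To repair your proposal, either carry out the descent identification in full (including the spin$^c$ and weight bookkeeping along the fixed locus, and a version for non-smooth quotients), or replace that step by this direct count of invariant solutions upstairs, which needs no quotient at all.
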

\begin{proof}

By \cite{James-Segal,To79}, 
it is enough to prove the map $\Psi_{(p)}(\wt{X},\wt{\fraks},   \wt{\xi})^{\Z_p}$
induced on $\Z_p$ fixed points has mapping degree $\pm 1$
and $\Psi_{(p)} (\wt{X},\wt{\fraks},   \wt{\xi})$ is a homotopy equivalence as non-equivariant spaces.
The latter follows from  \cite[Corollary 4.3]{I19}, so it is enough to show the former claim.
When we perturb the Seiberg--Witten equation as in \cite{KM97}, 
the moduli space of the Seiberg--Witten solutions satisfies
\[
\mathcal{M}^{\Z_p} \subset \mathcal{M}=\{\operatorname{pt}\}, 
\]
and since the canonical solution is $\Z_p$ invariant by our construction of the almost K\"ahler structure, we have 
\[
\mathcal{M}^{\Z_p} =\{\operatorname{pt}\}.
\]
Furthermore, this solution is cut out transversally because
the linearization $L$ of the Seiberg--Witten equation with local slice satisfies
\[
\ker L^{\Z_p} \subset \ker L=0
\]
\[
(\ker L^*)^{\Z_p} \subset \ker L^*=0, 
\]
when we perturb the equation as above.
The mapping degree of the map
$\Psi_{(p)} (\wt{X},\wt{\fraks},   \wt{\xi})^{\Z_p}$
agrees with $\#\mathcal{M}^{\Z_p}=\pm 1$ by the argment of \cite[Theorem 4.1]{I19}. This completes the proof.

\end{proof}
We now define invariants of properly embedded surfaces in 4-manifolds with contact boundaries.

Let $X$ be a 4-manifold with contact homology $3$-sphere boundary $(Y, \xi)$ and $S$ be a properly embedded surface in $X$ such that $K= S \cap Y$ is a transverse knot in $Y$.
Suppose the homology class of $S$ is divisible by $p$. Suppose $b_3( \Sigma_p(S)) =0$. We also fix a spin$^c$ structure $\wt{\fraks}$ on $\Sigma_p(S)$ such that 
\[
\tau^* \wt{\fraks} \cong \wt{\fraks} \text{ and }\wt{\fraks}|_{\Sigma_p(K)} \cong \fraks_{\wt{\xi}}, 
\]
where $\fraks_{\wt{\xi}}$ is the $\Z_p$-invariant spin$^c$ structure on $\Sigma_p(K)$ induced from the Plamanevskaya's $\Z_p$-invariant contact structure. 
Depending on these data, we have $\Z_p$-invariant spin$^c$ 4-manifold $\wt{X} = \Sigma_p(S)$ with $\Z_p$-invaeriant contact boundary. 
Now, we define 
\[
\Psi_{\Z_p}( X, S, \xi, \wt{\fraks}) :=   \Psi_{\Z_p} (\wt{X}, \wt{\xi}, \wt{\fraks} )  . 
\]

Next, we observe non-vanishing results on $\Psi (X, S, \xi, \wt{\fraks} )$, which is a corollary of \cref{non-van for eq KM} combined with \cref{double branch symp}.

\begin{prop}
Let $(X, \xi, S)$ be a tuple of \Spinc 4-manifold $X$   with contact boundary $(Y, \xi)$ and a properly embedded surface $S$ in $X$ such that $\partial S \subset (Y, \xi)$ is a transverse knot and $[S]$ is divisible by $p$. Suppose S is isotoped rel boundary into a symplectic surface, then  
\[
\Psi_{\Z_p}( X, S, \xi, \wt{\fraks}) = \pm 1
\]
up to $\Z_p$-equivariant stable homotopy for a $\Z_p$-invariant spin$^c$ structure  $\wt{\fraks}$ obtained as the induced spin$^c$ structure arises as the $\Z_p$-invariant symplectic structure on $\Sigma_p(S)$.
\end{prop}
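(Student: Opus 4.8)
The plan is to obtain the statement by feeding \cref{double branch symp} into \cref{non-van for eq KM}, using that by definition $\Psi_{\Z_p}(X,S,\xi,\wt{\fraks}) = \Psi_{\Z_p}(\Sigma_p(S),\wt{\xi},\wt{\fraks})$. As a first step I would reduce to an honest symplectic surface: by \cref{Invariance of eq KM} the $\Z_p$-equivariant stable homotopy class $\Psi_{\Z_p}(X,S,\xi,\wt{\fraks})$ depends only on the $\Z_p$-equivariant diffeomorphism type of the branched cover $(\Sigma_p(S),\wt{\xi},\wt{\fraks})$, hence only on the isotopy class of $S$ rel boundary; so I replace $S$ by an $\omega$-symplectic surface, where $(X,\omega)$ is the weak symplectic filling of $(Y,\xi)$ at hand (the standing hypotheses include $Y$ an integer homology $3$-sphere, $H_1(X;\Z)=0$, $S$ connected, $\partial S=K$ a non-empty connected transverse knot, and $b_3(\Sigma_p(S))=0$).

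Next I would build the invariant symplectic filling of the branched cover. Since $[S]$ is divisible by $p$, i.e. $PD[S]\in pH^2(X;\Z)$, \cref{double branch symp} with $n=p$ provides a $\Z_p$-invariant symplectic form $\wt{\omega}$ on $\Sigma_p(S)$ making $(\Sigma_p(S),\wt{\omega})$ a weak symplectic filling of $(\Sigma_p(K),\wt{\xi})$, with $c_1(\wt{\omega})=\pi^*c_1(\omega)+(1-p)PD[\pi^{-1}(S)]$. In particular the induced $\Z_p$-invariant spin$^c$ structure $\fraks_{\wt{\omega}}$ restricts on $\Sigma_p(K)=\partial\Sigma_p(S)$ to the canonical spin$^c$ structure $\fraks_{\wt{\xi}}$ of the Plamenevskaya contact structure, so it is one of the admissible choices in the definition of $\Psi_{\Z_p}(X,S,\xi,-)$; and since $\wt{\fraks}$ is by hypothesis the spin$^c$ structure coming from an invariant symplectic structure on $\Sigma_p(S)$, one has $\wt{\fraks}=\fraks_{\wt{\omega}}$. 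Thus $(\wt{X},\wt{\omega}):=(\Sigma_p(S),\wt{\omega})$ is a $4$-dimensional $\Z_p$-equivariant weak symplectic filling of the $\Z_p$-equivariant contact $3$-manifold $(\Sigma_p(K),\wt{\xi})$.

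Finally I would apply the non-vanishing theorem: \cref{non-van for eq KM} applied to $(\wt{X},\wt{\omega})$ shows that
\[
\Psi_{\Z_p}(X,S,\xi,\wt{\fraks}) = \Psi_{(p)}(\Sigma_p(S),\wt{\xi},\fraks_{\wt{\omega}}) : S^{\langle e(S^+,\wt{\Phi}_0),\,[\Sigma_p(S),\partial\Sigma_p(S)]\rangle}\longrightarrow S^0
\]
is a $\Z_p$-equivariant stable homotopy equivalence. Hence the source sphere is $\Z_p$-stably equivalent to $S^0$ (so the relative Euler number vanishes) and the map is a $\Z_p$-stable self-equivalence of $S^0$; as in the proof of \cref{non-van for eq KM} (compare the argument for \cref{independence of 2-knots}), its restriction to $\Z_p$-fixed points has mapping degree $\#\mathcal{M}^{\Z_p}=\pm1$ and its underlying non-equivariant map has degree $\pm1$ by the transversality/degree computation of \cite{I19}, so it is $\Z_p$-stably homotopic to $\pm\operatorname{id}_{S^0}$, i.e. $\Psi_{\Z_p}(X,S,\xi,\wt{\fraks})=\pm1$.

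The genuinely nontrivial geometric input has already been isolated in \cref{double branch symp} (the invariant symplectic structure on the branched cover) and in \cref{non-van for eq KM} (the equivariant transversality of the canonical solution), so I expect no serious obstacle in the present proposition; the only point demanding care is the compatibility bookkeeping in the middle step, namely that the $\Z_p$-invariant spin$^c$ structure $\wt{\fraks}$ used in defining $\Psi_{\Z_p}(X,S,\xi,\wt{\fraks})$ is precisely $\fraks_{\wt{\omega}}$ and that its boundary restriction is $\fraks_{\wt{\xi}}$ --- both of which follow from the explicit first-Chern-class formula and the weak-filling conclusion of \cref{double branch symp}.
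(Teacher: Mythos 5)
Your proposal is correct and follows exactly the route the paper intends: the paper derives this proposition as an immediate corollary of \cref{double branch symp} (producing the $\Z_p$-invariant weak symplectic filling $(\Sigma_p(S),\wt{\omega})$ of $(\Sigma_p(K),\wt{\xi})$) combined with \cref{non-van for eq KM}, together with the diffeomorphism invariance \cref{Invariance of eq KM} to pass to the symplectic representative of $S$. Your additional bookkeeping on the spin$^c$ structure and on the degree of the fixed-point map is consistent with the proofs of those cited results, so there is nothing to add.
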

\subsection{Construction of stable homotopy transverse knot invariants}
Let $p$ be a prime number.
Let $(Y, \xi)$ be a rational homology 3-sphere, equipped with a positive cooriented contact structure and let $K \subset (Y, \xi)$ be a transverse knot.
 In this section, we will construct a transverse knot invariant as a stable homotopy class 
\[
\widecheck{\mathcal{C}}(Y, \xi, K): \Sigma^{d_3(\Sigma(K), \wt{\xi})+\frac{1}{2}}SWF(\Sigma(K), \s)  \to S^0
\]
for a transverse knot for each prime number $p$.
using the construction given in \cite{IT20}. 

We can define a cohomological transverse knot invariant for any $\Z_p$ equivariant homology theory and in particular, we will introduce the invariant for $\Z_p$ equivariant ordinary homology
\[
c_{(p)}(Y,  \xi, K)  \in HSW^{\Z_p}_*(Y, K; \Z_p)= \wt{H}_*^{\Z_p} (SWF(\Sigma_p(K); \fraks_0) )
\]
using $\widecheck{\mathcal{C}}(Y, \xi, K)$, where $\fraks_0$ is the unique $\Z_p$-invariant spin structure on $\Sigma_p(K)$. 
For a related construction, see \cite{Br23}. 

\subsection{Construction of the equivariant contact invariant}
The transverse knot invariant above is nothing but a $\Z_p$ equivariant version of the contact invariant introduced by the authors \cite{IT20}, applied to cyclic $p$-th brahcned coverings along a transverse knot.
In this section, we will review the construction in \cite{IT20} and see that it can be straightforwardly adapted to equivariant setting.
\par
Let $G$ be a compact Lie group.
Let $\wt{Y}$ be a rational homology 3-sphere with $G$-action and $\wt{\xi}$ be a $G$-invariant contact structure. As in the construction of $\Psi_{G} (\wt{X}, \wt{\xi}, \wt{\fraks} )$, we consider a $G$-invariant metric on 
$N^+:= \R^{\geq 0}  \times \wt{Y}$ which is a $G$-invariant extension of $\wt{g}_0$ and product on $[0,\frac{1}{2}] \times \wt{Y}$. We also call this metric $\wt{g}_0$.
 The Riemannian manifold $\wt{N}^+$ is what we mainly consider to define our invariant. 
We extend $\wt{\om}_0$ to a self-dual $2$-form with $|\wt{\om}_0(s, y)|= \sqrt{2}$ which is translation invariant on $[0, \frac{1}{2}] \times \wt{Y}$. 

We first fix the following data and notations: 
\begin{itemize}
    \item A $G$-invariant contact form $\wt{\lambda}$ and a $G$-invariant complex structure $J$ on $\wt{\xi}$.
    \item Denote by the almost K\"ahler structure on $\R^{\geq 1}\times \wt{Y}$ by 
    \[
    g_0 = ds^2 + s^2 g_1, \quad \om_0 = \frac{1}{2} d(s^2 \wt{\lambda} ).
    \] 
    \item An $G$-invariant extension of $\om_0$ to $N^+$ which is translation invariant on $[0, \frac{1}{2 }]\times \wt{Y}$.
    \item A $G$-invariant metric on $N^+$ which is an extension of $\wt{g}_0$ and product on $[0,\frac{1}{2}] \times \wt{Y}$. We also call this metric $\wt{g}_0$. 
    \item 
Denote by $S^\pm$ the spinor bundles constructed as in the previous section.
\item A $G$-invariant canonical configuration $(\wt{A}_0, \wt{\Phi}_0)$ on $[1, \infty)\times \wt{Y}$ constructed as in the previous section and a smooth extension of it to $N^+$ so that $(\wt{A}_0, \wt{\Phi}_0)$ is translation invariant in $[0, \frac{1}{2}]\times \wt{Y}$. 
\end{itemize}
For small positive real number $\alpha$, we define the {\it double Coulomb slice} by
\[
\mathcal{U}_{k, \alpha}(N^+)  :=    L^2_{k, \alpha}( i\Lambda_{N^+}^1 )_{CC} \oplus L^2_{k, \alpha} ( S^+_{N^+}) , 
\]
where $L^2_{k, \alpha}( i\Lambda_{N^+}^1 )$ and $L^2_{k, \alpha} ( S^+_{N^+})$ are the completions of the inner products with respect to $L^2_{k, \alpha, \nabla_{LC}}( i\Lambda_{N^+}^1 )  $ and $L^2_{k, \alpha, A_0} ( S^+_{N^+})$ and
\[
L^2_{k, \alpha}( i\Lambda_{N^+}^1 )_{CC} := \{ a  \in L^2_{k, \alpha}( i\Lambda_{N^+}^1) | d^{*_{\alpha}}  a=0, d^{*} {\bf t}a=0 \} . 
\]
Here ${\bf t}$ is the restriction of $1$-forms as differential forms and $d^{*_{\alpha}}$ is the formal adjoint of $d$ with respect to $L^2_{\alpha}$.

On $N^+$, we have the {\it Seiberg--Witten map}
\begin{align*}
&\mathcal{F}_{N^+}  :\mathcal{C}_{k, \alpha} (N^+)  \to  L^2_{k-1, \alpha}( i\Lambda_{N^+}^+ \oplus S^-_{N^+}) \text{ by } \\
&\mathcal{F}_{N^+} (A,  \Phi ) := \left( \frac{1}{2} F^+_{A^t}-\wt{\rho}^{-1} ( \Phi \Phi^*)_0 - (\frac{1}{2}  F^+_{\wt{A}^t_0}- \rho^{-1}( \wt{\Phi}_0 \wt{\Phi}^*_0)_0 ), D^+_A \Phi  \right), 
\end{align*}
which is $G$-equivariant. 
Again, we can write $\mathcal{F}_{N^+}$ as the sum of the linearization $L_{N^+}$ at $(\wt{A}_0, \wt{\Phi}_0)$ and the remaining part $C_{N^+}$. 
We carry out a finite-dimensional approximation of the map 
\[
\mathcal{F}_{N^+}  : \mathcal{U}_{k, \alpha}
 \to  \mathcal{V}_{k-1, \alpha} \oplus V (\partial N_+) , 
 \]
 where $\mathcal{U}_{k, \alpha}= L^2_{k, \alpha}( i\Lambda_{N^+}^1 )_{CC} \oplus   L^2_{k, \alpha} (S^+_{N^+})$ and $\mathcal{V}_{k-1, \alpha} = L^2_{k-1, \alpha}( i\Lambda_{N^+}^0\oplus i\Lambda_{N^+}^+ ) \oplus L^2_{k-1, \alpha}(S^-_{N^+})$.
In this section, we fix a small weight $\alpha \in (0, \infty)$ as in \cite{IT20}. 
  Take sequences of $G$-invariant subspaces
 \[
  \mathcal{V}_1 \subset  \mathcal{V}_2 \subset \cdots \subset  \mathcal{V}_{k-1, \alpha} \text{ and } V^{\lambda_1}_{-\lambda_1} \subset V^{\lambda_2}_{-\lambda_2} \subset \cdots   \subset V (\partial N_+)  
  \]
  such that 
  \begin{itemize}
  \item[(i)] $(\im L_{N^+} + p^{0}_{-\infty }\circ r)^{\perp_{\mathcal{V}_{k-1, \alpha} \oplus V (\partial N_+)}  } \subset \mathcal{V}_n \oplus  V^{\lambda_n}_{-\lambda_n} (\partial N_+)  $ for any $n$
  
 \item[(ii)] the $L^2$-projection $P_n :  \mathcal{V}_{k-1, \al}  \oplus V (\partial N_+) \to \mathcal{V}_n \oplus  V^{\lambda_n}_{-\lambda_n} (\partial N_+)$ satisfies
 \[
 \lim_{n \to \infty} P_n (v) =v 
 \]
 for any $ v \in \mathcal{V}_{k-1, \al}  \oplus V (\partial N_+)$.
  \end{itemize}
  Then we define a sequence of subspaces 
  \[
  \mathcal{U}_n :=  (L_{N^+}+ p^{\lambda_n}_{-\lambda_n}\circ r)^{-1} ( \mathcal{V}_n \oplus  V^{\lambda_n}_{-\lambda_n} ). 
  \]
 This gives a family of the approximated Seiberg-Witten map is given by 
  \[
  \{  \mathcal{F}_n : =  P_n (L_{N^+}+C_{N^+}, p^{\lambda_n}_{-\lambda_n} \circ r ) \colon   \mathcal{U}_n \to \mathcal{V}_n \oplus  V^{\lambda_n}_{-\lambda_n} (\partial N_+) \} ,  
  \]
  which is a sequence of $G$-invariant maps. 
 By combining compactness result ensured in \cite{IT20}, we can associate $G$-equivariant map
 \[
\overline{B}( \mathcal{U}_n ; R) / S( \mathcal{U}_n ; R)  \to ( \mathcal{V}_n/  \overset{\circ}{B}(\mathcal{V}_n, \varepsilon_n)^c) \wedge (N_n/ L_n) . 
\]
Then, by applying the formal (de)suspension 
\[
\Sigma^{(\frac{1}{2}-d_3(-\wt{Y}, [\xi])-2n(-\wt{Y}, g_{\wt{Y}}, \s))\R \oplus (-V^0_{-\lambda_n} ) \oplus (-\mathcal{V}_n) }
\]
 to 
\[
\overline{B}( \mathcal{U}_n ; R) / S( \mathcal{U}_n ; R)  \to ( \mathcal{V}_n/  \overset{\circ}{B}(\mathcal{V}_n, \varepsilon_n)^c) \wedge (N_n/ L_n) , 
\]
we obtain a map stably written by 
\begin{align}\label{homotopy}
\mathcal{C}_{G} (\wt{Y}, \wt{\xi}) : S^0 \to \Sigma^{ (\frac{1}{2}-d_3(-\wt{Y}, [\wt{\xi}]))\R}SWF(-\wt{Y}, \s_{\wt{\xi}}). 
\end{align}

The $G$-equivariant stable homotopy class of \eqref{homotopy} is called {\it equivariant Floer homotopy contact invariant}. When we put $G=\Z_p$, we write $\mathcal{C}_{G} (\wt{Y}, \wt{\xi})$ by $\mathcal{C}_{(p)} (\wt{Y}, \wt{\xi})$. Moreover, for a given transverse knot $K$ in a contact homology $3$-sphere, we can associate the $\Z_p$-invariant contact structure $\wt{\xi}$ on $\wt{Y}= \Sigma_p(K)$ described in \cref{Contact structures on branched coverring spaces}.
In this case, we denote $\mathcal{C}_{(p)} (\wt{Y}, \wt{\xi})$ by $\mathcal{C}_{(p)} (Y, \xi, K)$, which is called the {\it stable homotopy transverse knot invariant}.

\subsection{Equivariant Spaniel--Whitehead duality map}
For a rational homology 3-sphere $\wt{Y}$ with a spin$^c$ structure $\wt{\fraks}$.
Manolescu constructed the duality map $\epsilon$  \cite{Man03}: 
\[
\epsilon : SWF(\wt{Y},\wt{\fraks})\wedge  SWF(-\wt{Y},\wt{\fraks}) \to S^0.
\]
Suppose a compact Lie group $G$ acts on $\wt{Y}$ preserving the spin$^c$ structure $\wt{\fraks}$. 
Also, fix a lift of the $G$-action to the principal spin$^c$ bundle. In this case, is not so hard to construct $\epsilon$ equivariantly. 
We write this equivariant version by 
\[
\epsilon_G : SWF(\wt{Y},\wt{\fraks})\wedge  SWF(-\wt{Y},\wt{\fraks}) \to S^0, 
\]
where we are implicitly fixing a $G$-representation of $S^0$. This map satisfies the definition of $G$-$V$-dual. See  \cite{BH}. 

\subsection{Definition of cohomological invariants $c_{(p)}(Y, \xi, K)$}
Let $\wt{Y}$ be a rational homology $3$-sphere with $G$-action, where $G$ is a compact Lie group. Let $\wt{\xi}$ be a contact structure on $\wt{Y}$ preserved by $G$.

 By using the duality map $\epsilon_G$, we often regard \eqref{homotopy} as 
\[
\Sigma^{-\frac{1}{2} - d_3(Y, [\wt{\xi}]) } SWF(\wt{Y}, \s_{\wt{\xi}}) \xrightarrow{\mathcal{C} _G(\wt{Y}, \wt{\xi}) \wedge \id } 
\Sigma^{ \frac{1}{2}-d_3(-\wt{Y}, [\wt{\xi}])}SWF(-\wt{Y}, \s_{\wt{\xi}}) \wedge \Sigma^{-\frac{1}{2} - d_3(\wt{Y}, [\wt{\xi}]) } SWF(\wt{Y}, \s_{\wt{\xi}}) \xrightarrow{\epsilon_G} S^0 . 
\]
We write this composition by 
\[
\widecheck{\mathcal{C}}_{G} (\wt{Y}, \wt{\xi}) : \Sigma^{-\frac{1}{2} - d_3(\wt{Y}, [\xi]) } SWF(\wt{Y}, \s_\xi) \to S^0. 
\]

Now we define homological invariants $c_{G}(Y, K, \xi)$ as follows. 

\begin{defn}
Considering a $G$-equvariant map 
\[
\mathcal{C}_{G} (\wt{Y}, \wt{\xi}) : S^0 \to \Sigma^{ (\frac{1}{2}-d_3(-\wt{Y}, [\xi]))\R}SWF(-\wt{Y}, \s)
\]
and applying the Borel construction, we have the fiber bundle
\begin{align*}
S^0\wedge_{G} (EG)_+ \to \Sigma^{ (\frac{1}{2}-d_3(-Y, [\xi]))\R}SWF(-\wt{Y}, \s)\wedge_{G}  (EG)_+ . 
\end{align*}
Then, by applying the homology, we obtain a $H^*(BG)$-module homomorphism 
\[
{\mathcal{C}}_{G} (\wt{Y}, \wt{\xi})_* : \wt{H}_0^{G} (S^0) \to \wt{H}_{-\frac{1}{2} - d_3(\wt{Y}, [\xi]) }^{G} (SWF(-\wt{Y}) ). 
\]
We define 
\[
c_{G}(\wt{Y}, \wt{\xi}):= {\mathcal{C}}_{G} (\wt{Y}, \wt{\xi})_*(1) \in \wt{H}^{G} _{-\frac{1}{2} - d_3(\wt{Y}, [\xi]) } (SWF(-\wt{Y}, \fraks_{\wt{\xi}}) ) . 
\]
When $G=\Z_p$, we write it by $c_{(p)}(\wt{Y}, \wt{\xi})$.
Now let us assume $G=\Z_p$, so that we have the action of the trivial extension $S^1\times \Z_p$ on configurations.
Consider 
\begin{align*}
S^0\wedge_{\Z_p} (E\Z_p)_+ \to \Sigma^{ (\frac{1}{2}-d_3(-Y, [\xi]))\R}SWF(-\wt{Y}, \s)\wedge_{\Z_p}  (E\Z_p)_+ \wedge (ES^1)_+\\
\to \Sigma^{ (\frac{1}{2}-d_3(-\wt{Y}, [\xi]))\R}SWF(-\wt{Y}, \s) \wedge_{\Z_p \times S^1} (E(\Z_p\times S^1))_+. 
\end{align*}
Then, by applying the homology, we obtain a $H^*_{S^1\times \Z_p}$-module homomorphism 
\[
{\mathcal{C}}_{(p)} (\wt{Y}, \wt{\xi})_* : \wt{H}_0^{S^1\times \Z_p} (S^0) \to \wt{H}_{-\frac{1}{2} - d_3(\wt{Y}, [\xi]) }^{S^1\times \Z_p} (SWF(-\wt{Y}) ). 
\]

For a contact homology 3-sphere $(Y, \xi)$ with a transverse knot $K$ in $(Y, \xi)$ and a prime number $p$, we put $\wt{Y} = \Sigma_p(K)$ with the $\Z_p$ invariant contact structure $\wt{\xi}$ given in \cref{Contact structures on branched coverring spaces}. 

We define 
\[
c_{S^1\times \Z_p}(Y, \xi, K):= {\mathcal{C}}_{(p)} (\wt{Y}, \wt{\xi})_*(1) \in \wt{H}^{S^1\times \Z_p} _{-\frac{1}{2} - d_3(\wt{Y}, [\xi]) } (SWF(-\Sigma_p(K), \fraks_{\wt{\xi}}) ) , 
\]
where $\fraks_{\wt{\xi}}$ is the induced $\Z_p$-invariant spin$^c$ structure on $\Sigma_p(K)$, which is spin.
We also put 
\[
c_{(p)}(Y, \xi, K) := c_{(p)} (\Sigma_p(K), \wt{\xi}). 
\]
When $p=2$, we will also use the sequence of invariants 
\[
c^j_{(2)}(Y, \xi, K):= \mathcal{C}_{\Z_2} (\wt{Y}, \wt{\xi})^*(Q^{-j}) \in \wt{H}^{S^1\times \Z_p} _{-\frac{1}{2} - d_3(\wt{Y}, [\xi]) - j  } (SWF(-\Sigma_2 (K)) ) .
\]

From the constructions, we have 
\[
Q c^j_{(2)}(Y, \xi, K) = c^{j-1}_{(2)}(Y, \xi, K)
\]
if $j \geq 1$ and $Q c^{0}_{(2)}(Y, \xi, K)=0$.

\end{defn}
We call $\{ c_{S^1\times \Z_p}(Y, \xi, K)\} $ the {\it homological (equivariant) transverse knot invariants}. 
The Thom isomorphism implies the following well-definedness. 
\begin{prop}
The element $c_{G}(\wt{Y}, \wt{\xi})$ does not depend on the choices of representatives of maps 
\[
{\mathcal{C}}_{G} (\wt{Y}, \wt{\xi}) : S^0 \to \Sigma^{ (\frac{1}{2}-d_3(-\wt{Y}, [\wt{\xi}]))\R}SWF(-\wt{Y}, \s). 
\]
In particular, $c_{(p)}(Y, \xi, K)$ and $c_{(2)}^j(Y, \xi, K)$ depend only on the transverse isotopy class of $K$.
\end{prop}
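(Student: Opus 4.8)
The plan is to separate the statement into its genuinely homotopy-theoretic core --- well-definedness of the induced Borel homology homomorphism --- and the transfer of the $\Z_p$-equivariant contactomorphism invariance already built into the construction of $\mathcal{C}_{(p)}$. First I would fix two finite-dimensional approximations $\mathcal{F}_n$ and $\mathcal{F}'_{n'}$ representing the $G$-equivariant stable homotopy class $\mathcal{C}_{G}(\wt{Y},\wt{\xi})$, arising from two choices of approximating sequences. By the definition of the $G$-stable homotopy category, after smashing with $(S^{U})^{+}$ for a suitable $G$-representation $U$ these maps become $G$-equivariantly homotopic. Applying the Borel construction $-\wedge_{G}(EG)_{+}$ and equivariant homology $\wt{H}^{G}_{*}(-;\mathbb{F}_p)$, a $G$-homotopy induces the identity, so the stabilized maps induce the same homomorphism; it then remains to identify the stabilized maps with the original ones on homology, and that is precisely the content of the Thom isomorphism.

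For that identification I would record that, for a pointed $G$-CW spectrum $Z$ and a $G$-representation $U$, smashing with $(S^{U})^{+}$ induces a natural Thom isomorphism $\wt{H}^{G}_{*}(Z\wedge (S^{U})^{+})\cong \wt{H}^{G}_{*-\dim U}(Z)$: with $\mathbb{F}_2$ coefficients every representation is orientable, while for odd $p$ the representations $\C_{(j)}$ that occur are complex, hence $\mathbb{F}_p$-orientable. Naturality of the Thom isomorphism yields a commuting square identifying $(\mathcal{F}_n\wedge\id)_{*}$ with $(\mathcal{F}_n)_{*}$ up to the fixed grading shift, and the same bookkeeping absorbs the chain of formal (de)suspensions entering the definition of $SWF(-\wt{Y})$. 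Hence $\mathcal{C}_{G}(\wt{Y},\wt{\xi})_{*}$ is a well-defined $H^{*}(BG)$-module homomorphism and $c_{G}(\wt{Y},\wt{\xi})=\mathcal{C}_{G}(\wt{Y},\wt{\xi})_{*}(1)$ is well defined; the same reasoning handles the elements $c_{(p)}(Y,\xi,K)$ and $c^{j}_{(2)}(Y,\xi,K)=\mathcal{C}_{\Z_2}(\wt{Y},\wt{\xi})^{*}(Q^{-j})$, whose definitions likewise refer only to the stable homotopy class $\mathcal{C}_{\Z_2}(\wt{Y},\wt{\xi})$ and its $Q$-multiples.

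For the remaining assertion I would combine this with the invariance of the stable homotopy class itself. By Plamenevskaya's lemma recalled in \cref{Contact structures on branched coverring spaces}, transverse isotopic knots $K\simeq K'$ produce $\Z_p$-equivariantly contactomorphic branched covers $(\Sigma_p(K),\wt{\xi})\cong(\Sigma_p(K'),\wt{\xi}')$; and $\mathcal{C}_{(p)}(\wt{Y},\wt{\xi})$ depends only on the $\Z_p$-equivariant contactomorphism type of $(\wt{Y},\wt{\xi})$, because each auxiliary choice in the construction --- a $\Z_p$-invariant contact form, a $\Z_p$-invariant compatible complex structure on $\wt{\xi}$, a $\Z_p$-invariant conical metric on $N^{+}$, the canonical configuration $(\wt{A}_0,\wt{\Phi}_0)$, and the approximating subspaces --- ranges over a nonempty path-connected space that stays path-connected after averaging over $\Z_p$, while an equivariant contactomorphism carries admissible data to admissible data. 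This is the verbatim $\Z_p$-equivariant analogue of the invariance argument of \cite{IT20}, every step of which survives the addition of the $\Z_p$-action. Therefore $\mathcal{C}_{(p)}(Y,\xi,K)$, and hence $c_{(p)}(Y,\xi,K)$ and $c^{j}_{(2)}(Y,\xi,K)$, depend only on the transverse isotopy class of $K$. I expect the main obstacle to be organizational rather than conceptual: carefully threading the formal desuspensions through the Thom isomorphism with the correct grading shifts and orientation classes, so that the target is exactly $\wt{H}^{G}_{-1/2-d_3(\wt{Y},[\xi])}(SWF(-\wt{Y},\fraks_{\wt{\xi}}))$ and the homomorphism is manifestly independent of $n$ and of the approximating subspaces.
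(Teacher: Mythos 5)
Your proposal is correct and follows essentially the same route as the paper, which disposes of the well-definedness in one line ("the Thom isomorphism implies the following well-definedness"): stably homotopic representatives induce the same map on Borel homology because smashing with a representation sphere is absorbed by the (naturally graded) Thom isomorphism, and transverse isotopy invariance then follows from Plamenevskaya's lemma on equivariant contactomorphism of $(\Sigma_p(K),\wt{\xi})$ together with the invariance of the stable class $\mathcal{C}_{(p)}$ under its auxiliary choices. Your write-up simply makes explicit the orientability/grading bookkeeping that the paper leaves implicit.
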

 
\subsection{Thom--Gysin exact sequence} 
We use Thom--Gysin exact sequence to see that our transverse knot invariants lie in the $U$-towers in equivariant homology. 
In this section, we put $G=\Z_p$.

 We consider the following $S^1$-bundle: 
\begin{align}\label{Borel}
 \Sigma^{ (\frac{1}{2}-d_3(-\wt{Y}, [\xi]))\R}SWF(-\wt{Y}, \s)\wedge_{\Z_p}  (E(\Z_p\times S^1))_+
\to  \Sigma^{ (\frac{1}{2}-d_3(-\wt{Y}, [\xi]))\R}SWF(-\wt{Y}, \s) \wedge_{\Z_p \times S^1} (E(\Z_p\times S^1))_+.
\end{align}
Then we consider the Thom--Gysin exact sequence associated to the $S^1$-bundle \eqref{Borel}: 
 \begin{align*}
  \cdots \to \wt{H}_*^{\Z_p} (\Sigma^{\frac{1}{2}-d_3(-\wt{Y}, [\xi]) } SWF (-\wt{Y}, \s_{\wt{\xi}}) )) \to 
 \wt{H}_{*-1}^{\Z_p\times S^1} (\Sigma^{\frac{1}{2}-d_3(-\wt{Y}, [\xi]) } SWF (-\wt{Y}, \s_{\wt{\xi}}) )) \\ 
 \xrightarrow{U}  \wt{H}_{*+1}^{S^1\times \Z_p} (\Sigma^{\frac{1}{2}-d_3(-\wt{Y}, [\wt{\xi}]) } SWF (-\wt{Y}, \s_{\wt{\xi}}) ))\to \wt{H}_{*+1} ^{\Z_p} (\Sigma^{\frac{1}{2}-d_3(-\wt{Y}, [\wt{\xi}]) } SWF (-\wt{Y}, \s_{\wt{\xi}}) ))  \to \cdots . 
 \end{align*}
 This exact sequence implies the following vanishing property of $UQ^m c_{S^1\times \Z_2} (Y, \xi, K )$. 
 
\begin{lem}\label{Uc=0}
Let $(Y, \xi)$ be a contact homology 3-sphere and $K$ a transverse knot in $(Y, \xi)$. Fix a prime number $p$.
  We have 
    \[
 U_\dagger c_{S^1\times \Z_2}^j (Y, \xi, K ) =0 
    \]
    for $j \geq 0$ for $p=2$.
For a odd prime $p$, we have 
\[
U_\dagger  c_{S^1\times \Z_p} (Y, \xi, K ) =0.
\]
 for $m \geq 0$ and $m' \geq 0$. 
\end{lem}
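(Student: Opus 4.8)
The plan is to deduce \cref{Uc=0} purely formally from the exactness of the Thom--Gysin sequence of the $S^1$-bundle \eqref{Borel}, by showing that each of the classes in question lies in the image of its first arrow, the pushforward
\[
\iota\colon \wt{H}^{\Z_p}_*(SWF(-\Sigma_p(K),\fraks_0))\longrightarrow \wt{H}^{S^1\times\Z_p}_*(SWF(-\Sigma_p(K),\fraks_0)).
\]
Exactness at the middle term of that sequence gives $\im\iota=\ker(U_\dagger)$, since the arrow labelled $U$ there is cap product with the equivariant Euler class of the $S^1$-bundle \eqref{Borel}, which is precisely the module variable $U_\dagger$ coming from the residual gauge $S^1$-action on $SWF(-\Sigma_p(K))$. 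So the lemma reduces to the membership statements $c^j_{(2)}(Y,\xi,K)\in\im\iota$ for all $j\ge 0$ (case $p=2$) and $c_{S^1\times\Z_p}(Y,\xi,K)\in\im\iota$ (odd $p$).

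To establish these I would unwind the construction of the homological transverse knot invariants. The map $\mathcal{C}_{(p)}(\wt{Y},\wt{\xi})_*$ is obtained from the $\Z_p$-equivariant stable map $\mathcal{C}_{\Z_p}(\wt{Y},\wt{\xi})$ (with its evident $S^1$-action on $SWF$ and trivial $S^1$-action on the source $S^0$) by a two-stage Borel construction: first the $\Z_p$-Borel construction, which produces the $\Z_p$-equivariant invariant $c_{(p)}(Y,\xi,K)\in\wt{H}^{\Z_p}_*(SWF(-\Sigma_p(K),\fraks_0))$ and, for $p=2$, its $Q$-tower $\mathcal{C}_{\Z_2}(\wt{Y},\wt{\xi})^*(Q^{-j})$; and then the remaining $(ES^1)_+$-Borel construction, which on homology is exactly $\iota$. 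Naturality of the Gysin pushforward with respect to the $S^1$-equivariant map $\mathcal{C}_{\Z_p}(\wt{Y},\wt{\xi})$ then yields
\[
c_{S^1\times\Z_p}(Y,\xi,K)=\iota(c_{(p)}(Y,\xi,K)),\qquad c^j_{(2)}(Y,\xi,K)=\iota(\mathcal{C}_{\Z_2}(\wt{Y},\wt{\xi})^*(Q^{-j})),
\]
the second identity using that $\iota$ is a map of $\mathbb{F}_2[Q]$-modules, hence compatible with the $Q$-bookkeeping and $Q$-localization entering the definition of the $c^j$. Every one of these classes therefore lies in $\im\iota$ and is killed by $U_\dagger$, which is \cref{Uc=0}; the odd-$p$ case is the same computation with the single class $c_{(p)}(Y,\xi,K)$ replacing the tower.

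The only non-formal point is the bookkeeping: one must check that the second stage of the Borel construction in the definition of $\mathcal{C}_{(p)}$ really is the Gysin pushforward $\iota$, carrying the grading dictated by the formal desuspensions of $SWF$ and the $n(\cdot)$-correction terms, and that the map written ``$U$'' in the sequence of \eqref{Borel} genuinely is the $U_\dagger$-action from the statement. I expect this convention-matching --- ensuring that the degree in which $c^j_{(2)}(Y,\xi,K)$ is declared to live is the one the exact sequence forces --- to be the main, albeit routine, obstacle; it requires no Seiberg--Witten input beyond what already entered the construction of $\mathcal{C}_{\Z_p}$ and the equivariant duality map $\epsilon_{\Z_p}$.
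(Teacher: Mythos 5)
Your proposal is correct and follows essentially the same route as the paper: both observe that, by construction, $c^j_{S^1\times\Z_2}$ (resp.\ $c_{S^1\times\Z_p}$) lies in the image of the pushforward $\wt{H}^{\Z_p}_*(SWF(-\Sigma_p(K)))\to\wt{H}^{S^1\times\Z_p}_*(SWF(-\Sigma_p(K)))$ coming from the Borel construction for the $S^1$-bundle \eqref{Borel}, and then invoke exactness of the associated Thom--Gysin sequence, whose next arrow is $U_\dagger$, to conclude the classes are annihilated by $U_\dagger$. Your extra remarks on naturality and the $\mathbb{F}_2[Q]$-module bookkeeping only elaborate what the paper leaves implicit.
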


\begin{proof}
From the constructions of $c_{S^1\times \Z_2}^j (Y, \xi, K )$ or $c_{S^1\times \Z_p} (Y, \xi, K )$, these are the images of the homomorphisms: 
\[
\wt{H}_*^{\Z_p} (\Sigma^{\frac{1}{2}-d_3(-\wt{Y}, [\wt{\xi}]) } SWF (-\wt{Y}, \s_{\wt{\xi}}) )) \to 
 \wt{H}_{*-1}^{\Z_p\times S^1} (\Sigma^{\frac{1}{2}-d_3(-\wt{Y}, [\wt{\xi}]) } SWF (-\wt{Y}, \s_{\wt{\xi}}) )). 
 \]
 Therefore, from the Thom--Gysin sequence above, we have the desired results. 
 
\end{proof}

Combined with the rank one theorem \cref{rank1 theorem}, we see that our transverse invariants lie in the $U$-towers for knots in $S^3$. 

\begin{thm}[\cref{c in Utower:intro}]\label{c in Utower}
 For any transverse knot $K$ in $ S^3$, the equivariant contact invariant lies in the $U$-tower, i.e. 
 \[
c^j_{S^1\times \Z_2}(S^3,\xi_{\operatorname{std}},  K) = c^j_{S^1\times \Z_2} (K) \in \bigcap_{i \geq 0} \operatorname{Im} U^i_\dagger 
 \]
 for any $j$.
\end{thm}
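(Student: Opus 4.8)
The plan is to combine the structural input \cref{rank1 theorem} with the vanishing statement \cref{Uc=0}, exactly as indicated by the sentence preceding the theorem. Recall that \cref{Uc=0} gives $U_\dagger c^j_{S^1\times\Z_2}(S^3,\xi_{\mathrm{std}},K)=0$ for every $j\ge 0$, while \cref{rank1 theorem} (in the form proved for the cohomology, dualized to homology as in the proof of the towers theorem) says that $\wt H^{S^1\times\Z_2}_*(SWF(\Sigma_2(K)))$ has rank one over $\mathbb F_2[U_\dagger,Q_\dagger]/(\ldots)$-type considerations — more precisely, that $\ker U^i_\dagger$ has $\mathbb F_2[Q_\dagger]$-rank exactly $i$. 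The key point is that a class annihilated by $U_\dagger$ which is \emph{not} in $\bigcap_i\operatorname{Im}U^i_\dagger$ would, together with the rank-one structure, force $\ker U_\dagger$ to be too large; but first one has to know the invariant is nonzero, since $0$ trivially lies in every $\operatorname{Im}U^i_\dagger$.

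First I would argue that $c^j_{S^1\times\Z_2}(K)\neq 0$ for all $j\ge 0$. This is where the contact-geometric non-vanishing results are used: by \cref{non-van for eq KM} applied to a weak symplectic filling (e.g. $D^4$ itself, viewing $K$ as a transverse knot bounding no surface is fine — we only need the filling of $(S^3,\xi_{\mathrm{std}})$, namely $(D^4,\omega_{\mathrm{std}})$, whose branched double cover along a pushed-in Seifert surface is a weak — indeed strong — symplectic filling of $(\Sigma_2(K),\wt\xi)$ by \cref{double branch symp}), the map $\Psi_{(2)}$ is a $\Z_2$-stable homotopy equivalence, which by the usual duality/pairing argument forces $\widecheck{\mathcal C}_{\Z_2}(\Sigma_2(K),\wt\xi)$ to be stably nontrivial, hence $c^j_{S^1\times\Z_2}(K)=\mathcal C_{\Z_2}(\wt Y,\wt\xi)^*(Q^{-j})\neq 0$. (Alternatively one invokes that the $S^1$-fixed-point part of $\widecheck{\mathcal C}$ has degree $\pm1$, so the image of $Q^{-j}$ survives under the map to the localized group.)

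Next, with nontriviality in hand, I would run the Thom–Gysin / rank-one bookkeeping. Since $c^j_{S^1\times\Z_2}(K)\in\ker U_\dagger$ and $\ker U_\dagger$ has $\mathbb F_2[Q_\dagger]$-rank $1$ by \cref{rank1 theorem}, and since (from the same theorem and its proof) $U_\dagger:\operatorname{Im}U^{i}_\dagger\to\operatorname{Im}U^{i+1}_\dagger$ restricts appropriately, the one-dimensional $\ker U_\dagger$ is generated over $\mathbb F_2[Q_\dagger]$ by the bottom of the (unique) $U_\dagger$-tower, i.e. by $U^{i-1}_\dagger(\text{tower generator at level }i)$ for each $i$; equivalently $\ker U_\dagger\subset\bigcap_{i\ge0}\operatorname{Im}U^i_\dagger$ after identifying that the tower has the $\bigcap\operatorname{Im}U^i$ as its free part. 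So any nonzero element of $\ker U_\dagger$ — in particular $c^j_{S^1\times\Z_2}(K)$ — lies in $\bigcap_{i\ge0}\operatorname{Im}U^i_\dagger$. Finally, because $Q_\dagger c^j_{(2)}=c^{j-1}_{(2)}$ and $c^0_{(2)}$ is the bottom element, the whole family $\{c^j_{(2)}(K)\}_j$ is simultaneously in the $U_\dagger$-tower, which is the assertion.

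The main obstacle, I expect, is the first step: pinning down the non-vanishing of $c^j_{S^1\times\Z_2}(K)$ cleanly. One must be careful that the $\Z_2$-equivariant duality map $\epsilon_{\Z_2}$ and the Bauer–Furuta-type invariant $\Psi_{(2)}$ are paired in the right way so that "$\Psi_{(2)}$ is a stable equivalence" genuinely implies "$\widecheck{\mathcal C}_{\Z_2}$ is stably essential", and that passing to $S^1\times\Z_2$-equivariant homology (Borel construction) preserves this — this is where \cref{localization} (the localization theorem) and the computation of the $S^1$-fixed-point degree enter. Once nontriviality is secured, the rank-one argument is essentially formal homological algebra over $\mathbb F_2[U_\dagger,Q_\dagger]$.
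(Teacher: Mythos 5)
Your reduction to \cref{Uc=0} plus ``rank-one bookkeeping'' has a genuine gap at the step where you claim that every (nonzero) element of $\operatorname{Ker}U_\dagger$ lies in $\bigcap_{i\ge 0}\operatorname{Im}U^i_\dagger$. \cref{rank1 theorem} only says that $\operatorname{Ker}U_\dagger$ has $\mathbb{F}_2[Q_\dagger]$-rank one, i.e.\ it controls the free part modulo $Q_\dagger$-torsion; it does not rule out torsion classes in $\operatorname{Ker}U_\dagger$ that are not infinitely $U_\dagger$-divisible (this is exactly the ``reduced'' part one sees already in $HF^+$ of Brieskorn spheres, and the paper's structure theorem only asserts that the quotient of $\wt{H}^{S^1\times\Z_2}_*$ by $\bigcap_n\operatorname{Im}U^n_\dagger$ is \emph{finite}, not zero). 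In fact $c^0_{(2)}$ satisfies $Q_\dagger c^0_{(2)}=0$, so it is itself a $Q_\dagger$-torsion class and your free-rank argument says nothing about it. The paper closes this gap differently: by \cref{towers} (finiteness of the non-$U_\dagger$-divisible part), only finitely many degrees can support classes outside $\bigcap_n\operatorname{Im}U^n_\dagger$; since $\gr(c^{j})=-\tfrac12-d_3-j\to-\infty$, the class $c^{j_0}$ lies in the intersection for all sufficiently large $j_0$, and then $c^{j}=Q_\dagger^{\,j_0-j}c^{j_0}$ together with $[U_\dagger,Q_\dagger]=0$ pulls this back to every $j$. Your argument would need this degree-shifting step (or some substitute) rather than the inclusion $\operatorname{Ker}U_\dagger\subset\bigcap_i\operatorname{Im}U^i_\dagger$, which is not available.

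Your first step is also off target in two ways. Non-vanishing of $c^j_{S^1\times\Z_2}(K)$ is not needed at all: if the class vanishes, it trivially lies in every $\operatorname{Im}U^i_\dagger$, so treating non-vanishing as a prerequisite inverts the logic. Moreover the argument you give for it is incorrect: \cref{double branch symp} and \cref{non-van for eq KM} require a properly embedded \emph{symplectic} surface in the filling, and a pushed-in Seifert surface is not symplectic; only quasipositive (transverse) knots bound such surfaces in blow-ups of $(D^4,\omega_{\operatorname{std}})$, and for a general transverse knot the invariant may well vanish (cf.\ \cref{fundamental porp for t-inv(ii)}, which assumes a symplectic surface with $[S]=0$ mod $2$). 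So this step should simply be deleted, and the main work redirected to the finiteness/degree argument above.
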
 

\begin{proof}
   It is sufficient to see 
   \[
 c^{j_0}_{S^1\times \Z_2}(K) \in \bigcap_{i \geq 0} \operatorname{Im} U^i_\dagger  
 \]
 for a sufficiently large $j_0$ since $U_\dagger$ and $Q_\dagger$ commute. 
  The claim follows from \cref{towers}, which says that all but finite elements of $\wt{H}^{S^1\times \Z_p}(-\Sigma_p(K))$ can be divided by $U_\dagger$ infinitely many times. 
 \end{proof}

\section{Gluing theorem and a non-vanishing result for contact invariant}

In order to verify the non-triviality of our transverse invariants, it is important to have certain gluing theorems, which are the main topics in this section. 

\subsection{Statement of the gluing theorem} 

In this section, we claim the gluing result among equivariant Kronheimer--Mrokwa's invariant, the transvese knot invariant and the equivariant relative Bauer--Furuta invariant.

\begin{thm} \label{gluing}
Let $G$ be a compact Lie group.
Let $\wt{X}$ be a compact oriented $G$-$Spin^c$ 4-manifold with connected contact boundary $(\wt{Y}, \wt{\xi})$ and $\s_{\wt{X}}$ a $Spin^c$ structure whose restriction on the boundary is compatible with the $Spin^c$ structure induced by $\wt{\xi}$. Suppose $G$-action preserves $\wt{\xi}$ and lifts to a $G$-action on the principal spin$^c$ bundle.
Suppose $b_1(\wt{X})=0$. 
Then 
\[
\epsilon_{G} \circ ( BF_{G}(\wt{X}, \s_X)  \wedge \mathcal{C}_{G}(\wt{Y}, \wt{\xi})  ) = \Psi_{G}(\wt{X}, \s_{\wt{X}, \wt{\xi}}, \wt{\xi})
\]
holds as $G$-equivariant stable maps for the following maps:
\begin{itemize}
    \item $BF_{G}(\wt{X}, \fraks_{\wt{X}}) : (\C^{\frac{c_1^2(\mathfrak{s})-\sigma(\wt{X})}{4}})^+ \to (\R^{b^+(\wt{X})})^+ \wedge SWF(\wt{Y})$ is the equivariant relative Bauer--Furuta invariant defined in \cite[page 36]{BH} forgetting $S^1$-action, 
    \item $\epsilon_{G}: SWF(\wt{Y}, \wt{s}, g) \wedge SWF(-\wt{Y}, \wt{s}, g) \to V^+$ 
    is the $G$-equivariant Spaniel--Whitehead duality map defined in the previous section,
    \item $\mathcal{C}_{G} (\wt{Y}, \wt{\xi}) : S^0 \to \Sigma^{ (\frac{1}{2}-d_3(-\wt{Y}, [\xi]))\R}SWF(-\wt{Y}, \s) $ is the $G$-equivariant contact invariant, 
    \item $\Psi_{G}(\wt{X}, \s_{\wt{X}, \wt{\xi}}, \wt{\xi}) :S^{\langle e(S^+, \Phi_0) , [\wt{X}, \partial \wt{X}]\rangle } \to S^0  $ is the $G$-equivariant Bauer--Furuta type invariant for 4-manifold with contact boundary. 
\end{itemize}

\end{thm}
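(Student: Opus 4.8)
The plan is to reduce the equivariant gluing formula to the non-equivariant gluing theorem established in \cite{IT20}, by checking that every ingredient in that proof is carried out $G$-equivariantly. First I would set up the model neighborhood: decompose $\wt X^+ = \wt X \cup_{\wt Y} N^+$, so that $\wt X^+$ is obtained by gluing the manifold-with-conical-end used in the definition of $\Psi_G(\wt X, \wt\fraks, \wt\xi)$ to the half-cylinder-plus-cone $N^+$ used in the definition of $\mathcal C_G(\wt Y, \wt\xi)$, along a product region $[0,\tfrac12]\times \wt Y$. Since all the almost-K\"ahler data ($\wt\lambda$, $\wt J$, $\wt g_0$, $\wt\omega_0$, $(\wt A_0,\wt\Phi_0)$) were chosen $G$-invariantly in the earlier sections, and since the Atiyah--Patodi--Singer spectral boundary conditions and the global/double Coulomb slices are $G$-invariant linear subspaces, the Seiberg--Witten map $\widehat{\mathcal F}_{\wt X^+}$ and its restriction on each piece are $G$-equivariant. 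The gluing parameter enters only through lengthening the product neck, which is a $G$-invariant operation.

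Next I would run the finite-dimensional approximation simultaneously for all three maps. The key point is that one can choose the increasing families of $G$-invariant finite-dimensional subspaces $\mathcal V_n, V^{\lambda_n}_{-\lambda_n}$ compatibly on $\wt X^+$, on $\wt X$ (with APS boundary data), and on $N^+$: take the subspaces on the neck $[0,\tfrac12]\times\wt Y$ first, using eigenspaces of the boundary operator $l$ in a range $(\lambda_n,\mu_n]$, then extend $G$-invariantly into $\wt X$ and into the cone. This is exactly the choice made in \cite{IT20}, and each selection can be averaged over $G$ (or, since these are already spectral subspaces of $G$-equivariant operators, they are automatically $G$-invariant). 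With such compatible approximations, the linear gluing for the Fredholm parts $\widehat L$ and the quadratic gluing for the compact parts $\widehat C$ produce, for large $n$ and neck length, a commuting diagram of $G$-equivariant pointed maps realizing $\widehat{\mathcal F}_{\wt X^+,n}$ as the composite $\epsilon_{G,n}\circ(BF_{G,n}\wedge \mathcal C_{G,n})$ up to $G$-equivariant homotopy. Passing to stable homotopy classes and applying the formal (de)suspensions — which involve only the $G$-representations $V^0_\lambda$, $\mathcal V_n$, and the grading-shift lines, all $G$-representation spaces — yields the stated identity.

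Finally I would address the compactness and transversality-of-gluing input. The compactness needed to obtain the Conley index / quotient-space maps on $N^+$ and on $\wt X^+$ was proven in \cite{IT20} using the weighted Sobolev spaces with weight $\alpha>0$; since the relevant a priori estimates (exponential decay on the conical end, boundedness on the compact part) are pointwise and metric-dependent only through the fixed $G$-invariant metric, they are unaffected by the group action, so the same $R,\varepsilon_n$ work equivariantly. The hard part — and the step I expect to be the main obstacle — is verifying that the \emph{neck-stretching} identification of the finite-dimensional models is genuinely $G$-equivariant and not merely equivariant up to a homotopy that itself fails to be $G$-equivariant: one must check that the isomorphism $\mathcal V_n(\wt X^+)\cong \mathcal V_n(\wt X)\oplus \mathcal V_n(N^+)$ coming from the Mayer--Vietoris/linear-gluing argument intertwines the $G$-actions, and that the homotopy from the glued-up approximation to the actual approximation on $\wt X^+$ can be chosen $G$-invariantly. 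This is handled by working with $G$-invariant cutoff functions on the neck and invoking that all cokernels are contained in a fixed $G$-invariant subspace, so the gluing homotopy is built out of $G$-equivariant data; once this is in place, the non-equivariant argument of \cite{IT20} goes through verbatim at the level of $G$-equivariant spaces and maps.
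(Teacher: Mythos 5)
Your proposal follows essentially the same route as the paper: the paper's proof simply observes that the gluing argument of \cite{IT20} carries over verbatim once all data (metrics, almost K\"ahler structures, Coulomb slices, finite-dimensional approximations) are chosen $G$-invariantly, because every homotopy in \cite{IT20} is a concrete linear homotopy and hence automatically $G$-equivariant. Your more detailed discussion of compatible $G$-invariant spectral subspaces and the equivariance of the neck-stretching identification is exactly the content the paper compresses into that one remark, so the two arguments coincide.
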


\begin{proof}
The proof of \cref{gluing} is essentially the same as the proof of the gluing theorem proven in \cite{IT20}. We only need to prove all homotopies given in  \cite{IT20} are $G$-equivariant, and indeed, every homotopy is given as some concrete linear homotopy which is obviously $G$-equivariant. Thus, we have an equivariant homotopy between them. 
\end{proof}

\subsection{Consequences from \cref{gluing}} 
As a consequence of the gluing theorem \cref{gluing}, we have the following pairing formula: 
\begin{thm}\label{paring formula}
Let $p$ be a prime number, $(X, \om)$ a compact symplectic filling bounded by an oriented homology 3-sphere $Y$ with a contact structure $ \xi$, and $S$ a properly embedded and connected smooth symplectic surface in $X$.
Suppose  homology class $[S] \in H_2(X, \partial X)$ is divisible by $p$ and  $H_1(X;\Z)=0$ so that the cyclic $p$-th branched cover $\Sigma_p(S)$ is uniquely determined.
We also assume $K= \partial S = S \cap X$ is a transverse knot in $(Y, \xi)$. 
Then, we have 
\[
\langle c_{(p)} (Y, \xi, K) , BF_{\Z_p}^* (1)\rangle =1, 
\]
where the pairing is given by the usual homological pairing
\[
\wt{H}^{\Z_p}_* (SWF(-\Sigma_p(K), \fraks_0)) \otimes \wt{H}^*_{\Z_p} (SWF(-\Sigma_p(K), \fraks_0)) \to \mathbb{F}_p. 
\]
\end{thm}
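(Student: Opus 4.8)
\textbf{Proof proposal for \cref{paring formula}.}
The plan is to deduce the pairing formula directly from the gluing theorem \cref{gluing} together with the non-vanishing results already established. First I would apply \cref{double branch symp}: since $[S]$ is divisible by $p$ and $H_1(X;\Z)=0$, the $p$-fold branched cover $\wt{X}:=\Sigma_p(S)$ carries a $\Z_p$-invariant weak symplectic form $\wt\om$ filling the $\Z_p$-invariant contact structure $\wt\xi$ on $\wt Y := \Sigma_p(K)$ built in \cref{Contact structures on branched coverring spaces}. After deforming $\wt\om$ to a strong filling (as in the proof of \cref{double branch symp}), the induced $\Z_p$-invariant spin$^c$ structure $\wt{\fraks}_{\wt\om}$ restricts on the boundary to $\fraks_{\wt\xi}$, which by \cref{spinness of contact str} is the unique $\Z_p$-invariant spin structure $\fraks_0$. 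So all the invariants in \cref{gluing} are defined with $G=\Z_p$ for this data.

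Next I would specialize the gluing identity
\[
\epsilon_{\Z_p}\circ\bigl(BF_{\Z_p}(\wt X,\wt{\fraks}_{\wt\om})\wedge \mathcal{C}_{\Z_p}(\wt Y,\wt\xi)\bigr)=\Psi_{\Z_p}(\wt X,\wt{\fraks}_{\wt\om},\wt\xi)
\]
to this situation. By \cref{non-van for eq KM} applied to the $\Z_p$-equivariant weak symplectic filling $(\wt X,\wt\om)$ of $(\wt Y,\wt\xi)$, the right-hand side $\Psi_{\Z_p}(\wt X,\wt{\fraks}_{\wt\om},\wt\xi)=\Psi_{(p)}(X,S,\xi,\wt{\fraks}_{\wt\om})$ is a $\Z_p$-stable homotopy equivalence (degree $\pm1$). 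Now I would pass to $\Z_p$-equivariant (co)homology. Applying $\wt H^{\Z_p}_*(-;\mathbb{F}_p)$ and using the Borel construction, the left-hand side becomes the composite that takes $1\in\wt H^0_{\Z_p}(S^0)$, pushes it forward by $\mathcal{C}_{\Z_p}(\wt Y,\wt\xi)_*$ to $c_{(p)}(Y,\xi,K)\in\wt H^{\Z_p}_*(SWF(-\Sigma_p(K),\fraks_0))$, smashes with $BF_{\Z_p}$, and then contracts via the duality pairing $\epsilon_{\Z_p}$. Unwinding the definition of $\epsilon_{\Z_p}$ as the $\Z_p$-equivariant Spanier--Whitehead duality map, this contraction is exactly the homological pairing $\langle c_{(p)}(Y,\xi,K),\, BF_{\Z_p}^*(1)\rangle$ displayed in the statement, up to the formal (de)suspensions which match by the grading bookkeeping in \cref{BF=id} and in the construction of $\mathcal{C}_{\Z_p}$. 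Since the whole composite equals $\Psi_{(p)}(X,S,\xi,\wt{\fraks}_{\wt\om})_*$, which is an isomorphism $\mathbb{F}_p\to\mathbb{F}_p$, the pairing is the nonzero element of $\mathbb{F}_p$, i.e. equals $1$.

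The main obstacle I anticipate is not the formal structure but the careful identification of the two pairings: one must check that contracting with $\epsilon_{\Z_p}$ (the $G$-$V$-dual map, which a priori lands in some sphere $V^+$ twisted by a representation) after applying homology really reproduces the ordinary homological cup pairing $\wt H^{\Z_p}_*\otimes\wt H^*_{\Z_p}\to\mathbb{F}_p$, and that the grading shifts $d_3(\Sigma_p(K),\wt\xi)+\tfrac12$ appearing in $\mathcal{C}_{\Z_p}$ exactly cancel the degree of the relative Bauer--Furuta class $BF_{\Z_p}^*(1)$ so that the pairing is of a class against its Poincar\'e--Lefschetz dual of complementary degree. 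This is a diagram-chase through the definitions of $\epsilon_G$, $c_{(p)}$, and $BF^*$, using that $\epsilon_G$ satisfies the defining property of a $G$-$V$-duality (so the induced map on homology is a perfect pairing), and using \cref{product cobordism}/\cref{BF=id} to normalize constants; but because $\fraks_0$ is spin and the relevant representation spaces are genuine, no subtle sign or coefficient issue should arise beyond what is already handled in \cite{IT20} and \cite{BH}.
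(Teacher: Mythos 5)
Your proposal is correct and follows essentially the same route as the paper: specialize the gluing theorem \cref{gluing} to $\wt X=\Sigma_p(S)$ equipped with the $\Z_p$-invariant symplectic filling structure from \cref{double branch symp}, invoke \cref{non-van for eq KM} to see that $\Psi_{\Z_p}(\wt X,\wt\xi,\wt{\fraks}_{\wt\om})$ is a $\Z_p$-stable homotopy equivalence, and then apply equivariant (co)homology so that the left-hand side of the gluing identity becomes the stated pairing $\langle c_{(p)}(Y,\xi,K),BF_{\Z_p}^*(1)\rangle$. The paper's own argument is exactly this, with the same level of attention to the duality/grading bookkeeping you flag as the remaining check.
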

\begin{rem}
    We do not know whether a similar pairing formula for $S^1\times \Z_p$-equivariant theory. 
\end{rem}

Suppose $G=\Z_p$ for a prime $p$.
Let $\wt{X}$ be a compact oriented $G$-$Spin^c$ 4-manifold with connected contact boundary $(\wt{Y}, \wt{\xi})$ and $\s_{\wt{X}}$ a $Spin^c$ structure whose restriction on the boundary is compatible with the $Spin^c$ structure induced by $\wt{\xi}$. Suppose $G$-action preserves $\wt{\xi}$.
Suppose $b_1(\wt{X})=0$.

Under the assumptions, from \cref{gluing}, we have the following equality: 
\[
\epsilon_{G} \circ ( BF_{G}(\wt{X}, \s_X)  \wedge \mathcal{C}_{G}(\wt{Y}, \wt{\xi})  ) = \Psi_{G}(\wt{X}, \s_{\wt{X}, \wt{\xi}}, \wt{\xi})
\]
up to $\Z_p$-equivariant stable homotopy. By applying $\wt{H}^*_{\Z_p}(-; \Z_p)$, we have 
\begin{align}\label{=}
\langle c_{(p)} (Y, \xi, K) , BF_{\Z_p}^* (1)\rangle  = \Psi_{\Z_p}(\wt{X}, \s_{\wt{X}, \wt{\xi}}, \wt{\xi})_* (1). 
\end{align}

Now we put $\wt{X}$ as the cyclic $p$-th branched covering space $\Sigma_p(S)$ along $S$. 
From \cref{double branch symp}, we can equip a $\Z_p$-invariant weak symplectic filling structure $\wt{\omega}$ on $\Sigma_p(S)$ of the induced $\Z_p$-invariant contact structure $\wt{\xi}$ introduced in \cref{Contact structures on branched coverring spaces}. 
Put $\s_{\wt{X}}= \s_{\wt{\om}}$ and $ \wt{Y}= \Sigma_p(K). 
$ 
Then, we have seen the equivariant homotopical refinement of Kronheimer--Mrokwa's invariant $\Psi_{\Z_p}(\wt{X}, \s_{\wt{X}, \wt{\xi}}, \wt{\xi})$ is homotopic to $\pm \id$ when it admits a $\Z_p$-invariant weak symplectic structure. Thus, we have 
\[
\Psi_{\Z_p}(\wt{X}, \s_{\wt{X}, \wt{\xi}}, \wt{\xi})_* (1) =1. 
\]
This proves the first half of \cref{paring formula}. The second half is almost the same as the first case. 

\cref{paring formula} implies the following property of the transverse knot invariant: 
\begin{cor}\label{fundamental porp for t-inv(ii)}
    If $(Y, \xi)$ has a weak symplectic filling $(X, \om)$ with $b_1=0$ and $K$ bounds a properly embedded symplectic surface $S$ in $X$ with $0=[S]\in H_2(X; \Z_2)$, then
    \[
    Q^l \cdot c_{(2)} (Y, \xi, K ) \neq 0 
    \]
    for $l \geq 0$.
\end{cor}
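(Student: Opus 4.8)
\textbf{Proof plan for \cref{fundamental porp for t-inv(ii)}.}
The plan is to reduce the non-vanishing of $Q^l \cdot c_{(2)}(Y,\xi,K)$ to the pairing formula \cref{paring formula} and the $U$-tower property \cref{c in Utower}. First I would recall that, by \cref{c in Utower}, the invariant $c_{(2)}(Y,\xi,K)$ lies in $\bigcap_{i\ge 0}\operatorname{Im}U^i_\dagger$ inside $\wt H^{S^1\times \Z_2}_*(SWF(-\Sigma_2(K)))$; passing to the $\Z_2$-equivariant (rather than $S^1\times\Z_2$-equivariant) theory by forgetting the $S^1$-action, we land in $\wt H^{\Z_2}_*(SWF(-\Sigma_2(K)))$, which by \cref{rank1 theorem} has free part of rank one over $\mathbb F_2[Q]$. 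So the claim ``$Q^l c_{(2)}\ne 0$ for all $l\ge 0$'' is equivalent to saying that (the image of) $c_{(2)}(Y,\xi,K)$ generates the free part of $\wt H^{\Z_2}_*(SWF(-\Sigma_2(K)))$ over $\mathbb F_2[Q]$, i.e.\ it is \emph{not} $Q$-torsion. (Here I am using that on a rank-one $\mathbb F_2[Q]$-module a nonzero non-torsion element stays nonzero after any number of multiplications by $Q$.)

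Next I would invoke \cref{paring formula}. Under the hypothesis that $(Y,\xi)$ has a weak symplectic filling $(X,\omega)$ with $b_1(X)=0$ and $K$ bounds a properly embedded connected symplectic surface $S\subset X$ with $[S]=0\in H_2(X;\Z_2)$ — so in particular $[S]$ is divisible by $2$ and the cyclic double branched cover $\Sigma_2(S)$ is defined and carries, by \cref{double branch symp}, a $\Z_2$-invariant weak symplectic filling structure on $\Sigma_2(K)$ — the pairing formula gives
\[
\langle c_{(2)}(Y,\xi,K),\, BF^*_{\Z_2}(1)\rangle = 1,
\]
where the pairing is the homological pairing between $\wt H^{\Z_2}_*(SWF(-\Sigma_2(K)))$ and $\wt H_{\Z_2}^*(SWF(-\Sigma_2(K)))$ into $\mathbb F_2$. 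A nonzero pairing against some cohomology class forces $c_{(2)}(Y,\xi,K)$ to be nonzero. More is true: since $BF^*_{\Z_2}(1)$ comes from the equivariant Bauer--Furuta map of the branched cover cobordism and the $Q$-localized pairing is perfect on the free parts, one sees that $c_{(2)}(Y,\xi,K)$ pairs nontrivially with a class surviving to the $Q$-localization $Q^{-1}\wt H^*_{\Z_2}$. Concretely, I would argue: if $c_{(2)}(Y,\xi,K)$ were $Q$-torsion, then $Q^N c_{(2)}(Y,\xi,K)=0$ for some $N$, hence $\langle c_{(2)}(Y,\xi,K), Q^N\beta\rangle = \langle Q^N c_{(2)}(Y,\xi,K), \beta\rangle = 0$ for all $\beta$; but $BF^*_{\Z_2}(1)$, being detected in $Q^{-1}\wt H^*_{\Z_2}$ (its $S^1$-fixed-point behaviour is multiplication by a power of $Q$ up to $U$-powers, as recorded in the cobordism-map computation in the excerpt), is of the form $Q^{-M}$ times something in the image of $Q^M$, which contradicts the vanishing of the pairing against $Q$-multiples. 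Therefore $c_{(2)}(Y,\xi,K)$ is non-torsion, generates the rank-one free part, and $Q^l c_{(2)}(Y,\xi,K)\ne 0$ for every $l\ge 0$.

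The main obstacle I anticipate is the bookkeeping in the last step: turning ``the pairing with $BF^*_{\Z_2}(1)$ is $1$'' into ``$c_{(2)}$ is non-$Q$-torsion'' cleanly. The subtlety is that a single nonzero $\mathbb F_2$-valued pairing only shows $c_{(2)}\ne 0$; to get persistence under all powers of $Q$ one must know that the cohomology class one pairs against lives in $\bigcap_j\operatorname{Im}Q^j$ (equivalently, survives $Q$-localization), and this requires identifying $BF^*_{\Z_2}(1)$ with the image of a generator of the free part of $\wt H^*_{\Z_2}(SWF(-\Sigma_2(K)))$ under the relevant cobordism map, using the grading-shift computation $(BF^{S^1})^* = Q^{b^+}U^m$ on $S^1$-fixed points together with $b^+(\Sigma_2(S))=0$ (which holds because $[S]^2=0$, $\sigma(K)=-\sigma(-K)$ cancel appropriately, or more simply because $[S]=0$ in $\R$-homology forces $[S]^2=0$ and the signature terms vanish). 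Once this is in hand the rank-one theorem \cref{rank1 theorem} does the rest, and the conclusion follows for all $l\ge 0$.
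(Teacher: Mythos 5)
The approach is the right general one (pairing formula plus the gluing/non-vanishing input), but your pivotal step fails. From the single identity $\langle c_{(2)}(Y,\xi,K), BF^*_{\Z_2}(1)\rangle =1$ you try to deduce that $c_{(2)}$ is not $Q$-power-torsion by asserting that $BF^*_{\Z_2}(1)$ is ``$Q^{-M}$ times something in the image of $Q^M$,'' i.e.\ that $BF^*_{\Z_2}(1)\in\operatorname{Im}Q^M$ for every $M$. That infinite $Q$-divisibility is never available: $\wt H^*_{\Z_2}(SWF(-\Sigma_2(K)))$ is a finitely generated, bounded-below graded $\mathbb{F}_2[Q]$-module, so $\bigcap_M \operatorname{Im}Q^M=0$; what is true (and what the localization computation gives) is the opposite-direction statement $Q^M BF^*_{\Z_2}(1)\neq 0$, which does not help, since for degree reasons a single homogeneous pairing $\langle c_{(2)}, BF^*_{\Z_2}(1)\rangle\neq 0$ forces $\langle c_{(2)}, Q^M\beta\rangle=0$ in that same degree for $M\geq 1$ unless one produces a \emph{different} dual class for each $M$. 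Your supporting claims are also false in general: $[S]=0$ in $H_2(X;\Z_2)$ only says $[S]$ is divisible by $2$, not that $[S]^2=0$ over $\R$, and $b^+(\Sigma_2(S))=g(S)-\tfrac14[S]^2+\tfrac12\sigma(K)$ is typically nonzero (its possible positivity is exactly the phenomenon exploited in \cref{main symp sur}), so the fixed-point computation does not reduce $BF^*_{\Z_2}(1)$ to a $Q$-free generator. The appeals to \cref{c in Utower} (a statement about $U_\dagger$, not $Q$) and to \cref{rank1 theorem} are not needed and do not close this gap.

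The missing idea is to use the gluing theorem at the level of maps rather than the single $\mathbb{F}_2$-valued pairing: by \cref{gluing}, \cref{double branch symp} and \cref{non-van for eq KM}, the composite $\epsilon_{\Z_2}\circ\bigl(BF_{\Z_2}(\Sigma_2(S))\wedge \mathcal{C}_{\Z_2}(\Sigma_2(K),\wt\xi)\bigr)=\Psi_{\Z_2}$ is $\Z_2$-stably homotopic to $\pm\operatorname{id}$. Applying $\Z_2$-equivariant (co)homology to this identity in \emph{every} degree — not just at the unit — shows that the induced $\mathbb{F}_2[Q]$-module map involving the contact class is split (this is exactly how the paper argues in the proof of \cref{Thom}, where ``$\mathcal{C}_K\circ BF_S\sim_{\Z_2}\pm\operatorname{id}$'' yields injectivity of $\mathbb{F}_2[Q]\to\wt H^*_{\Z_2}$ and surjectivity of $\mathcal{C}^*_K$). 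Equivalently, the pairing formula holds with $1$ replaced by $Q^l$ for each $l$, producing for each $l$ a class against which $Q^l c_{(2)}$ pairs nontrivially; this is what gives $Q^l c_{(2)}(Y,\xi,K)\neq 0$ for all $l\geq 0$, and it is the content the paper compresses into ``\cref{paring formula} implies the corollary.''
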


\section{Proof of adjunction formula \cref{main theo:general}}

We now state our adjunction formula stated in \cref{main theo:general}.

\begin{thm}[\cref{main theo:general}]\label{Thom}
 Let $(X, \om)$ be a weak symplectic filling of $(S^3, \xi_{std})$ with $b_1 (X) = 0 = b^+(X)$ and $K$ bounds a properly embedded and connected symplectic surface $S$ in $X$ divisible by $2$. 
 Then, one has 
\begin{align}\label{main eq3}
q_M ( K) =   g(S)   
 + \frac{1}{2}  (\langle c_1(\fraks), [S] \rangle - [S]^2   )  
  \end{align}
  and 
  \begin{align}\label{main eq11}
q_M ( K) = -d_3 (\Sigma_2(K), \wt{\xi}) - \frac{1}{2} + \frac{3}{4} \sigma(K)=\frac{1}{2}sl(K)+\frac{1}{2}. 
  \end{align}
 
\end{thm}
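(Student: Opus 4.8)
The plan is to deduce the adjunction equality \eqref{main eq3} from the two cobordism maps established earlier and the non-vanishing of the equivariant contact invariant from \cref{paring formula}. More precisely, I would combine the genus bound of \cref{main slice-torus gen} (applied to $S$ and to $-S$, or to an embedded cobordism inside a blow-up realizing $S$) with the pairing formula to pin down $q_M(K)$ exactly; the two inequalities must then collapse to an equality.

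\textbf{Step 1: the upper bound.} First I would push the symplectic surface $S\subset X$ into a cobordism picture. Since $(X,\omega)$ is a weak symplectic filling with $b_1(X)=0=b^+(X)$, after the deformations of \cref{symp surf boundary is trans} we may regard $X$ as a spin$^c$-cobordism from $S^3$ (with the unknot, or with $\#_{-\frac12\sigma(K)}T(2,3)$ in the grading-preserving normalization) to $(S^3,K)$, with $S$ an embedded, connected, properly embedded surface whose homology class is divisible by $2$. Applying the cobordism map $BF^*_S$ and its grading-shift formula from Section 3.2, together with the fact (\cref{paring formula}, or \cref{fundamental porp for t-inv(ii)}) that $BF^*_{\Sigma_2(S),\fraks_S}$ hits a class pairing nontrivially with $c_{(2)}(S^3,\xi_{\mathrm{std}},K)$, hence $BF^{*,\mathrm{loc}}_S\neq 0$ on the $Q$-localization, I get that the power of $Q$ in the grading shift is $\geq 0$. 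Reading off the grading shift
\[
-q^\dagger_M(K)\;\leq\; b^+(\Sigma_2(S))-\tfrac14\bigl(c_1(\wt{\fraks})^2-\sigma(\Sigma_2(S))\bigr)-q^\dagger_M(\text{unknot}),
\]
and substituting the Baraglia--Hekmati formulas for $b^+(\Sigma_2(S))$ and $\sigma(\Sigma_2(S))$ exactly as in the proof of \cref{main slice-torus gen}, the $[S]^2$ and $\sigma(K)$ contributions recombine to give $q_M(K)\leq g(S)+\tfrac12(\langle c_1(\fraks),[S]\rangle-[S]^2)$.

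\textbf{Step 2: the lower bound.} For the reverse inequality I would use the contact invariant as a ``witness'' of the minimal $Q$-tower element. By \cref{c in Utower} and the construction of $c_{(2)}(S^3,\xi_{\mathrm{std}},K)$, this class lives in the bottom of the $U_\dagger$-tower at grading $-\tfrac12-d_3(\Sigma_2(K),\wt\xi)$. Dualizing via $\epsilon_{\Z_2}$ and using \cref{paring formula}, the nonvanishing of the pairing forces $q^\dagger_M(-K)\geq$ (grading of the dual class), which by Itoh's $d_3$-formula \eqref{Itoh formula} with $n=2$ gives $q_M(K)\geq -d_3(\Sigma_2(K),\wt\xi)-\tfrac12+\tfrac34\sigma(K)$. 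The final step is a purely bookkeeping computation: plug Itoh's formula $d_3(\Sigma_2(K),\wt\xi)=-\tfrac34(\sigma(K)+\sigma_{-1}(K))-\tfrac12\operatorname{sl}(K)-1$ (note $\sigma_1(K)=0$, $\sigma_{-1}(K)=\sigma(K)$) to rewrite this as $\tfrac12\operatorname{sl}(K)+\tfrac12$; and then, using $\operatorname{sl}(K)=2g(S)-1+\langle c_1(\omega),[S]\rangle-[S]^2$ (the relative adjunction for the symplectic surface $S$, which holds since $S$ is symplectic with transverse boundary $K$), identify this with $g(S)+\tfrac12(\langle c_1(\fraks),[S]\rangle-[S]^2)$, completing \eqref{main eq11} and closing the sandwich to yield \eqref{main eq3}.

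\textbf{Main obstacle.} The genuinely substantive point is Step 2: ensuring that the contact invariant $c_{(2)}(S^3,\xi_{\mathrm{std}},K)$ is \emph{nonzero} and sits at exactly the predicted grading, i.e. that the inequality coming from the pairing formula is sharp rather than merely a bound. This is where \cref{double branch symp} (invariant symplectic filling on $\Sigma_2(S)$) and the non-vanishing \cref{non-van for eq KM} of the equivariant Kronheimer--Mrowka invariant enter crucially, together with \cref{gluing} to identify $\langle c_{(2)}(K),BF^*_{\Z_2}(1)\rangle$ with $\Psi_{\Z_2}(\Sigma_2(S),\wt{\fraks},\wt\xi)_*(1)=1$; the subtlety is that this forces $c_{(2)}(K)$ to be the \emph{bottom} of the $Q$-tower in the rank-one module $\wt{H}^*_{\Z_2}(SWF(-\Sigma_2(K)))$ from \cref{rank1 theorem}, which is precisely what equates $q^\dagger_M$ with the contact-invariant grading. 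Once that identification is secured, everything else is the routine arithmetic of combining the index/signature formulas, which I would not spell out in full.
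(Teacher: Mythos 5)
Your proposal is correct and follows essentially the same route as the paper: regard $S$ as a cobordism from the unknot, use \cref{double branch symp}, \cref{non-van for eq KM} and the gluing theorem \cref{gluing} to force the class $BF^*(1)$ to sit at the bottom of the $Q$-tower of the rank-one module from \cref{rank1 theorem}, and then read off the grading shift exactly as in the proof of \cref{main slice-torus gen}, with Itoh's formula and Etnyre's relative adjunction reconciling \eqref{main eq3} with \eqref{main eq11}. The only caution is that the single evaluation $\langle c_{(2)}(K),BF^*(1)\rangle=1$ of \cref{paring formula} does not by itself give survival under $Q$-localization or bottomness; for that one needs the full $\Z_2$-stable identity $\mathcal{C}_K\circ BF_S\simeq\pm\operatorname{id}$ from \cref{gluing}, which yields injectivity of $BF^*$ on $\mathbb{F}_2[Q]$ and surjectivity of $\mathcal{C}^*_K$ as $\mathbb{F}_2[Q]$-module maps (torsion is killed, so the free component of $BF^*(1)$ must be the generator) --- this is precisely how the paper argues and is what you invoke in your closing paragraph.
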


\begin{rem}
The same result is true even for normally immersed symplectic surfaces with only positive double points. In this case, it is proven in \cite{Et20} in the symplectic category, we can replace one positive double point with one genus. 
\end{rem}

\begin{rem}
Let $(X, \om)$ be a symplectic cobordisms from the contact manifold $(Y_0, \xi_0)$ to $(Y_1, \xi_ 1)$, $K_0$ a transverse knot in $(Y_0, \xi_0)$, and $K_1$
a transverse knot in $(Y_1, \xi_1)$. Further, assume that $Y_0$ and $Y_1$
are homology spheres. The following is proven in \cite[Lemma 2.13]{Et20}: 
    If $S$ is any normally immersed symplectic surface with transverse double points in $(X, \om)$ with
boundary $-K_0 \cup K_1$, then
\[
\langle c_1(X, \om), [S]\rangle = \chi(S) - sl(K_0) + sl(K_1) + [S] \cdot [S] - 2D,
\]
where $[S]$ is the homology class of the closed surface $S$, $g(S)$ is the genus of $S$ and $D$ be the number of double points of $S$ counted with sign.

We apply it to 
\[
X = (D^4, \om_{std}) \#_n \overline{\mathbb{C}P}^2, Y_0= K_0=\emptyset, Y_1=S^3, \xi_1 = \xi_{std}, \text{ and } K_1=K. 
\]
Then, we obtain 
\begin{align}\label{Bennequin equ}
\langle c_1(X, \om), [S]\rangle = 1- 2 g(S) + \operatorname{sl}(K) + [S] \cdot [S] - 2D. 
\end{align}
The above two equalities imply \eqref{Bennequin equ}. 

\end{rem}

Thus we proved that symplectic surfaces have minimal genus in their relative homology class under the assumption of the theorem.
In particular, this implies the  Milnor conjecture).

\begin{proof}[Proof of  \cref{Thom}]
By removing a small ball, regards $S$ as a cobordism from the unknot:
\[
(W , S): (S^3, U)\to (S^3, K).
\]
By taking the upside-down cobordism, this can be regarded as
\[
(W , S): (S^3, -K)\to (S^3, U).
\]

We consider the cobordism maps satisfying the commutativity of the diagram: 
\[
  \begin{CD}
    \wt{H}^*_{\Z_2}(S^0) @>{ BF^*_{\Z_2}}>> \wt{H}^*_{\Z_2}(SWF(\Sigma_2(-K))) \\
     @V{\iota}VV    @V{\iota}VV \\
     Q^{-1} \mathbb{F}_2[Q]   @>{\neq 0}>>   Q^{-1}\wt{H}^*_{\Z_2}(SWF(\Sigma_2(-K))) 
  \end{CD}.
\]
We can see that 
\[
Q^j BF^*(W, S)(1_U) \neq 0 
\]
for any $j \geq 0$ and 
\[
BF^*(W, S)(1_U) \not\in Q \wt{H}^*_{\Z_2}(SWF(\Sigma_2(-K))).
\]
Indeed, these follow from the gluing result and the non-vanishing result: 
\[
\mathcal{C}_K \circ BF_S \sim_{\Z_2} \pm \id.
\]
Indeed, the injectivity of $\mathbb{F}_2[Q] \to \wt{H}^*_{\Z_2}(SWF(\Sigma_2(-K)))$ implies the former claim and the surjectivity of  $\mathcal{C}^*_{K}:\wt{H}^*_{\Z_2}(SWF(\Sigma_2(-K))) \to \mathbb{F}_2[Q]$ implies the latter claim.
Therefore we obtain $BF^*(W, S)(1_U)$ minimizes the $q_M$ invariant from the rank one theorem \cref{rank1 theorem}. 

Therefore from the computation done in the proof of \cref{main slice-torus gen}, we have 
\begin{align*}
q_M ( K)&  =  g(S)  + \frac{1}{2}  (\langle c_1(\fraks), [S] \rangle - [S]^2   ) 
    \end{align*}

This completes the proof.
\end{proof}

\subsection{On Baraglia's invariants}

In this section, we compare our concordance invariant $q_M(K)$ with the Baraglia invariant $\theta^{(2)}(K)$.

\subsubsection{The definition of $d_{\Z_p, j} $ via cohomology}
First, we review the construction of $\theta^{(2)}(K)$.

Put $G=\Z_p$ in Baraglia and Hekmati's setting. 
Let $X$ be a space of type $\Z_p$-SWF \cite[Definition 3.6]{BH} i.e. $X$ is a pointed finite $(S^1\times \Z_p)$-CW complex such that the followings hold: 
\begin{itemize}
    \item[(i)] The fixed point set $X^{S^1}$ is $\Z_p$-homotopy equivalent to a sphere $V^+$, where $V$ is a real representation of $\Z_p$.
    \item[(ii)]  The action of $S^1$ is free on $X \setminus X^{S^1}$. 
\end{itemize}

It is confirmed that the $S^1\times \Z_p$-equivariant Seiberg--Witten Floer homotopy type $SWF(\wt{Y}, \mathfrak{s}, g) $ is a space of type $\Z_p$-SWF type. 

The inclusion from the fixed point set $\iota: X^{S^1} \to X$ induces a map 
\[
\iota^*: U^{-1} H^*_{S^1\times \Z_p} (X;\mathbb{F}_p) \to U^{-1} \wt{H}^*_{S^1 \times \Z_p}(X^{S^1} )\cong U^{-1}  H^*_{S^1\times \Z_p}.
\]
Note that we have 
 \[
 H^*_{S^1\times \Z_p}  = H^*_{S^1\times \Z_p}(\operatorname{pt}; \mathbb{F}_p)
 =\begin{cases}  \mathbb{F}_2[U, Q] &\text{ $p$ is $2$} \\
\mathbb{F}_p[U, R, S] / (R^2) & \text{ otherwise}
\end{cases}.
\]
The localization theorem implies $U^{-1} H^*_{S^1\times \Z_p} (X;\mathbb{F}_p)$ is a free rank-$1$ $U^{-1}  H^*_{S^1\times \Z_p} $-module. 
Let $\tau$ denote a generator of $U^{-1}  H^*_{S^1\times \Z_p} $. Now we recall a sequence of invariants $ \{d_{\Z_p,j} (X)\}_{j \geq 0}$.
\begin{itemize}
    \item[(i)] 
Suppose $p=2$.
\[
d_{G,j} (X) := \min \{ i | \exists x \in \wt{H}^i_{S^1\times \Z_2}(X;\mathbb{F}_2) ,\ \iota^* x \equiv   U^k Q^j \tau \operatorname{mod} Q^{j+1} \text{  for some } k \geq 0 \}-j . 
\]
\item[(ii) ]Suppose $p$ is odd prime. 
\[
d_{G,j} (X) := \min \{ i | \exists x \in \wt{H}^i_{S^1\times \Z_p}(X;\mathbb{F}_p) ,\ \iota^*  x \equiv  S^j U^k  \tau  \operatorname{mod} S^{j+1}, RS^{j+1}  \text{  for some }  k \geq 0 \}-2j .
\]

\end{itemize}

This is not the original definition but an alternative description given in \cite[Proposition 3.14]{BH}.

\subsubsection{The invariants $\delta_{G,j}$ for 3-manifolds with $\Z_p$-action}

Let $G=\Z_p$ act on an oriented rational homology $3$-sphere $\wt{Y}$ and $\fraks$ be a $G$-invariant spin$^c$ structure. 
Then, the invariant $d_{G, j} (\wt{Y},\fraks) $ is defined as 
\[
d_{G,j} (\wt{Y}, \fraks) := d_{G, j} (SWF(\wt{Y}, \fraks, g) ) - 2n(\wt{Y}, \fraks, g),  
\]
where $n(\wt{Y}, \fraks, g)$ is the correction term introduced in \cite{Man03} and $SWF(\wt{Y}, \fraks, g)$ denotes the $S^1\times \Z_p$-equivariant Seiberg--Witten Floer stable homotopy type for a given $\Z_p$-invariant Riemann metric. 
Now we define 
\[
\delta_{G, j} (\wt{Y}, \fraks) := \frac{1}{2}d_{G,j} (\wt{Y}, \fraks).
\]
It is confirmed $\delta_{G, j} (\wt{Y}, \fraks)$ is constant when $j$ is sufficiently large. 

Define 
\[
j_G (\wt{Y}, \fraks) := \min \Set{ j \geq 0 | \delta_{G, j} (\wt{Y})= \delta_{G, \infty} (\wt{Y}) }.
\]

\subsection{Baraglia--Hekmati's knot concordace invariant $\delta^{(p)}_j(K)$}

Since we will also consider knots in general homology 3-spheres, we consider a generalization of Baraglia--Hekmati's knot concordance invariants of knots in other 3-manifolds. 
Let us review Baraglia--Hekmati's knot concordance invariant $\delta^{(p)}_j(K)$.

\subsection{The invariants $\delta^{(p)}_j(K)$ for a pair $(Y,K)$}
For a given knot $K$ in $S^3$ and a prime number $p$, Baraglia--Hekmati defined 
\[
\delta^{(p)}_j(K): = 4\delta_{\Z_p, j} (\Sigma_p(K), \fraks), 
\]
where $\fraks$ is the unique spin structure on $\Sigma_p(K)$. Obviously, this definition can be generalized to a knot in any oriented homology 3-sphere $Y$, which we shall write $\delta_j^{(p)}(Y, K)$.

We also define the invariants  $j
^{(p)}
(Y, K) $ are defined to be the smallest $j$ such that \[
\delta^{(p)}_j
(Y, K) = \delta ^{(p)}_\infty (Y, K).
\]
The followings are fundamental properties of the invariants $\delta_j^{(p)}(Y, K)$ and $j^{(p)}(Y, K)$. 
\begin{lem}
The invariants $\delta_j^{(p)}(Y, K)$ and $j^{(p)}(Y, K)$ are homology concordance invariant, i.e. if there are a homology cobordism $W$ from $Y$ to $Y'$ and there is a smoothly and properly embedded annulus $S$ in $W$ such that $\partial S= K \cup K'$, then one has 
\[
\delta_j^{(p)}(Y, K)= \delta_j^{(p)}(Y', K') \text{ and } j^{(p)}(Y, K)= j^{(p)}(Y', K'). 
\]
\end{lem}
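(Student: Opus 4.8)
The plan is to reduce the statement to the functoriality of the whole package of invariants under homology cobordism, exactly as in the analogous unknotted/knotted situations treated by Baraglia--Hekmati. First I would pass from the homology cobordism $W$ and the embedded annulus $S$ to a $\Z_p$-equivariant cobordism between the cyclic branched covers: since $[S]$ is null-homologous in $W$ (both $Y$ and $Y'$ are homology spheres and $S$ is an annulus with the ``expected'' boundary), the $p$-fold cyclic branched cover $\Sigma_p(W,S)$ is well defined, it is a $\Z_p$-homology cobordism from $\Sigma_p(Y,K)$ to $\Sigma_p(Y',K')$, and the covering involution extends over it. One checks that the pulled-back spin$^c$ structure on $\Sigma_p(W,S)$ restricts to the unique $\Z_p$-invariant spin structure $\fraks$ on each end, as guaranteed by the pull-back construction of \cref{pull back spinc}; here one uses $H_1(W;\Z)=0$, which follows from $W$ being a homology cobordism.

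Next I would feed this equivariant spin$^c$ homology cobordism into the equivariant relative Bauer--Furuta machinery. The cobordism $\Sigma_p(W,S)$ has $b^+ = 0$ (it is a homology cobordism, so its intersection form is trivial) and $c_1(\fraks)^2 = 0$ (torsion second cohomology), so the grading shift of the induced cobordism map vanishes and one obtains grading-preserving $S^1\times\Z_p$-equivariant maps
\[
SWF(\Sigma_p(Y,K),\fraks) \longrightarrow SWF(\Sigma_p(Y',K'),\fraks)
\]
in both directions (using $W$ and its reverse $-W$). By the composition law \cref{BFgluing} together with \cref{product cobordism}, the two composites are stably homotopic to the identity, so these maps are stable $S^1\times\Z_p$-homotopy equivalences (here one argues on the localized level, or equivalently uses that a degree-one equivariant self-map of a space of type $\Z_p$-SWF between isomorphic objects is an equivalence). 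Consequently the induced maps on $S^1\times\Z_p$-equivariant cohomology, and likewise on the localized modules, are isomorphisms of $H^*_{S^1\times\Z_p}$-modules compatible with $\iota^*$ to the fixed-point part.

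Finally I would invoke the description of $d_{\Z_p,j}$ from \cite[Proposition 3.14]{BH} recalled in the excerpt: it is defined purely in terms of the $\iota^*$-image of homogeneous classes in $\wt H^*_{S^1\times\Z_p}$, modulo powers of $Q$ (resp. $S, RS$), and the correction term $n(\wt Y,\fraks,g)$ is itself a homology-cobordism invariant of the pair $(\Sigma_p(Y,K),\fraks)$ by \cite{Man03}. Since the grading-preserving isomorphism above intertwines $\iota^*$ with the identity on $U^{-1}H^*_{S^1\times\Z_p}$, it carries the defining set for $d_{\Z_p,j}(\Sigma_p(Y,K),\fraks)$ bijectively onto that for $d_{\Z_p,j}(\Sigma_p(Y',K'),\fraks)$, giving $\delta^{(p)}_j(Y,K)=\delta^{(p)}_j(Y',K')$ for all $j\ge 0$; taking $j\to\infty$ and then comparing the stabilization thresholds yields $j^{(p)}(Y,K)=j^{(p)}(Y',K')$. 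The main obstacle I anticipate is the bookkeeping of the branched cover of the cobordism: verifying that $\Sigma_p(W,S)$ is genuinely a $\Z_p$-homology cobordism with $b^+=0$ and $b_1=0$, and that the invariant spin structures on the two ends match under the pull-back map of \cref{pull back spinc}; everything after that is a formal consequence of the equivariant Bauer--Furuta formalism already set up in the paper.
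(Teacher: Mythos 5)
Your reduction to the branched cover of the cobordism is the same first step as the paper's proof: $\Sigma_p(W,S)$ is a $\Z_p$-equivariant homology cobordism (with $b_1=b^+=0$ and matching invariant spin structures on the ends), and the divisibility/$H_1(W)=0$ bookkeeping you worry about is indeed fine. The gap is in how you draw the conclusion. You claim that, by \cref{BFgluing} together with \cref{product cobordism}, the two composites of the equivariant Bauer--Furuta maps are stably homotopic to the identity, hence that the maps are stable $S^1\times\Z_p$-equivalences and the equivariant cohomologies are isomorphic. But the composite cobordism $(-W)\circ W$ (resp.\ $W\circ(-W)$), with the doubled annulus inside it, is only a homology cobordism from $(Y,K)$ to itself; it is not the product $([0,1]\times Y,[0,1]\times K)$, and \cref{product cobordism} applies only to genuine product cobordisms. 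There is no reason the composite Bauer--Furuta map is the identity, and the conclusion you want is in general false: homology concordant pairs need not have isomorphic (equivariant) Floer homologies, so no argument can establish that $SWF(\Sigma_p(Y,K))\simeq SWF(\Sigma_p(Y',K'))$ from the hypotheses.

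What does hold, and what the paper uses, is monotonicity rather than isomorphism: Baraglia--Hekmati's cobordism inequality for $d_{\Z_p,j}$ (equivalently $\delta_{\Z_p,j}$) applied to the equivariant spin$^c$ cobordism $\Sigma_p(W,S)$, which has $b^+=0$ and vanishing grading shift, gives $\delta^{(p)}_j(Y,K)\le\delta^{(p)}_j(Y',K')$ (in the appropriate direction), and applying it to the reversed cobordism gives the opposite inequality; together these force equality of all $\delta^{(p)}_j$ and hence of $j^{(p)}$. Your own setup almost contains this fix: the maps you construct in both directions, together with the computation of their restriction to the $S^1$-fixed points (a power of $U$, with no $Q$ since $b^+(\Sigma_p(W,S))^{\Z_p}=0$), yield exactly these two inequalities from the definition of $d_{\Z_p,j}$ via $\iota^*$ -- no claim that the composites are the identity is needed, and that claim should be deleted.
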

\begin{proof}
We take the $p$-th branched covering space along the annulus $S$ and obtain $\Z_p$-equivariant $\Z_2$-homology cobordism and apply the inequality proven in \cite{BH}. 
\end{proof}
Therefore, if we write the homology concordance group by 
$\Theta^{(3,1)}_\Z$, we have the invariants 
\[
\delta_j^{(p)}: \Theta^{(3,1)}_\Z \to \Q, \quad j^{(p)}: \Theta^{(3,1)}_\Z \to \Z_{\geq 0}. 
\]

For a knot $K$ in $S^3$, we recall 
\[
\theta^{(2)}(K) := \max \{ 0, j^{(2)}(S^3, K^*) - \frac{1}{2} \sigma (K)\} . 
\]
We sometimes denote $\theta^{(2)}(K)$ as  $\theta(K)$. 
\subsubsection{$q_M (K) \leq \theta^{(2)}$}
In this section, we prove the following inequality.
\begin{thm}[\cref{qmtheta}]
For any knot $K \subset S^3$, we have
\[
q_M(K) \leq \theta^{(2)}(K).
\]
\end{thm}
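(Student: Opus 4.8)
The plan is to compare the two invariants through their common description in terms of $\Z_2$-equivariant and $S^1\times\Z_2$-equivariant Floer cohomology of $\Sigma_2(K)$, and to reduce everything to the rank-one theorem \cref{rank1 theorem} together with the definitions unwound. First I would recall the definitions: $q_M(K)=q_M^\dagger(-K)-\tfrac34\sigma(K)$ where $q_M^\dagger(-K)=\min\{\gr^\Q(x)\mid Q^nx\neq 0\ \forall n,\ x\in \wt H^*_{\Z_2}(SWF(\Sigma_2(-K)))\}$; and $\theta^{(2)}(K)=\max\{0,\ j^{(2)}(S^3,K^*)-\tfrac12\sigma(K)\}$, where $j^{(2)}$ is the smallest $j$ with $\delta^{(2)}_j(S^3,K^*)=\delta^{(2)}_\infty(S^3,K^*)$, and $\delta^{(p)}_j(Y,K)=4\delta_{\Z_p,j}(\Sigma_p(K),\fraks)$. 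Here $K^*=-K$ is the mirror, which is the orientation used for $q_M^\dagger$ via $q_M^\dagger(-K)$. The key is to relate $q_M^\dagger(-K)$ (a quantity read off the forgetful $\Z_2$-equivariant cohomology, in which the rank is $1$) with the $d_{\Z_2,j}$ invariants (read off the $S^1\times\Z_2$-equivariant cohomology, via the localized restriction to $S^1$-fixed points).

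The core step is a lemma of the form: for any knot $J$,
\[
q_M^\dagger(J)\ \le\ 2\,\delta_{\Z_2,j}(\Sigma_2(J),\fraks_0)\quad\text{for }j=j^{(2)}(S^3,J),
\]
equivalently $q_M^\dagger(J)\le \tfrac12 d_{\Z_2,j^{(2)}}(\Sigma_2(J),\fraks_0) = \tfrac14\delta^{(2)}_{j^{(2)}}(S^3,J)$. To see this I would use the Thom--Gysin sequence relating $\wt H^*_{\Z_2}$ and $\wt H^*_{S^1\times\Z_2}$, exactly as in the proof of the lower bound $q_M\ge m(-K)-\tfrac34\sigma$: the map $\pi^*:\wt H^*_{S^1\times\Z_2}\to \wt H^*_{\Z_2}$ has image $\cok U$, and since $\wt H^*_{\Z_2}$ is free of rank $1$ over $\mathbb F_2[Q]$ by \cref{rank1 theorem}, a $Q$-nontorsion class of $\wt H^*_{\Z_2}$ arises as $\pi^*$ of a suitable $S^1\times\Z_2$-equivariant class that survives to the $Q^j$-graded piece of the localization, for $j$ large — and one wants to push this down to $j=j^{(2)}$, which is precisely the index at which $\delta_{\Z_2,j}$ stabilizes. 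The grading of the $Q$-nontorsion generator of $\wt H^*_{\Z_2}$ is exactly $q_M^\dagger(J)$, and the inequality follows by tracing how $d_{\Z_2,j}$ — defined as the minimal degree of a class whose localized restriction to the $S^1$-fixed part is $U^kQ^j\tau \bmod Q^{j+1}$ — bounds the degree of such a class from above (the $U$-tower in the $\Z_2$-equivariant theory maps onto the $Q^j$-th column). Once this lemma is in hand, apply it with $J=-K=K^*$:
\[
q_M(K)=q_M^\dagger(K^*)-\tfrac34\sigma(K)\ \le\ \tfrac14\delta^{(2)}_{j^{(2)}}(S^3,K^*)-\tfrac34\sigma(K)
= j^{(2)}(S^3,K^*)\cdot(\text{const})-\cdots,
\]
and then I would match constants so the right side becomes $j^{(2)}(S^3,K^*)-\tfrac12\sigma(K)\le\theta^{(2)}(K)$. (The extra $-\tfrac14\sigma$ versus $-\tfrac12\sigma$ discrepancy is absorbed because $\delta^{(2)}_j$ itself carries the normalization built from the branched-cover Dirac index, whose constant term is $-\tfrac14\sigma(K)$ — the same shift that appeared in the integrality lemma for $q_M$; I would be careful to bookkeep this exactly.) Finally, since $\theta^{(2)}(K)=\max\{0,\ j^{(2)}(S^3,K^*)-\tfrac12\sigma(K)\}\ge j^{(2)}(S^3,K^*)-\tfrac12\sigma(K)$, the bound follows.

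The main obstacle I anticipate is the precise translation between the two bookkeeping conventions: $q_M^\dagger$ is defined via $\gr^\Q$ on $\wt H^*_{\Z_2}$ with a $\tfrac34\sigma$ correction, while $\delta_{\Z_2,j}$ lives on $\wt H^*_{S^1\times\Z_2}$ with a $2n(\Sigma_2(K),\fraks_0,g)$ correction and a factor $4$ in $\delta^{(p)}_j$; getting the Dirac-index/signature shifts to cancel correctly (so that the final constants really give $-\tfrac12\sigma(K)$ and not something off by a multiple of $\tfrac14\sigma$) is the delicate point. A secondary subtlety is justifying that the $Q$-nontorsion generator of $\wt H^*_{\Z_2}$ can be realized at the stabilization index $j=j^{(2)}$ rather than only for all sufficiently large $j$; here I would invoke \cref{towers} (finiteness of $U$-torsion / $U_\dagger$-torsion), which guarantees that beyond $j^{(2)}$ the columns $\im Q^j/\im Q^{j+1}$ are free $\mathbb F[U]$-modules, so the minimal-degree $U$-tower class in the $\Z_2$-theory is controlled by exactly the $j^{(2)}$-th column. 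The last sentence of the theorem statement (the H-slice genus bound) is immediate from the displayed inequality combined with $\theta^{(2)}$'s known slice-genus bound \cite{Ba22}, and needs no separate argument.
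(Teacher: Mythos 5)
Your overall route---take a class in $\wt{H}^*_{S^1\times \Z_2}(SWF(\Sigma_2(K)))$ witnessing the defining condition of $j^{(2)}$, push it into $\wt{H}^*_{\Z_2}$ via the Thom--Gysin map $\pi^*$, and use its degree to bound $q_M^\dagger$---is the paper's route, but as written your proposal has two genuine gaps. First, your ``core lemma'' is not the right statement. What is actually needed (after replacing $K$ by $-K$) is $q^\dagger_M(K)\le j^{(2)}(K)-\tfrac14\sigma(K)$, and the term $j^{(2)}(K)$ enters through degree bookkeeping, not through constant matching: by the definition of $d_{\Z_2,j}$ (minimal degree \emph{minus} $j$) and the fact that $\delta_{\Z_2,j^{(2)}}=\delta_{\Z_2,\infty}$ is the known stable value, the witness class $x$ for the column $Q^{j^{(2)}}$ sits in degree $\gr^\Q(x)=d_{\Z_2,j^{(2)}}+j^{(2)}=j^{(2)}(K)-\tfrac14\sigma(K)$. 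Your bound $q^\dagger_M(J)\le 2\delta_{\Z_2,j^{(2)}}$ (stated, moreover, with inconsistent normalizations: $2\delta_{\Z_2,j}=d_{\Z_2,j}$, not $\tfrac12 d_{\Z_2,j}$) contains no $j^{(2)}$ at all; taken literally it would yield $q_M(K)\le -\tfrac12\sigma(K)$ for every knot, which fails, e.g., for $T(3,7)$ where $q_M=6$ while $-\tfrac12\sigma=4$. So the subsequent line ``$=j^{(2)}(S^3,K^*)\cdot(\text{const})-\cdots$, then match constants'' cannot work: the deferred bookkeeping is exactly where the content lies, and the correct statement must carry the $+\,j^{(2)}$ shift.

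Second, the step you leave vague---that the image $\pi^*(x)\in\wt{H}^*_{\Z_2}$ satisfies $Q^n\pi^*(x)\neq 0$ for all $n\ge 0$, so that its degree dominates $q^\dagger_M$---is the heart of the proof, and neither \cref{rank1 theorem} nor \cref{towers} supplies it (in fact the paper's argument uses neither). The paper proves it by exactness of the Thom--Gysin sequence $\cdots\to\wt{H}^{*-2}_{S^1\times\Z_2}\xrightarrow{U}\wt{H}^{*}_{S^1\times\Z_2}\xrightarrow{\pi^*}\wt{H}^{*}_{\Z_2}\to\cdots$: if $Q^n\pi^*(x)=0$ then $Q^nx=Uz$ with $\gr^\Q(z)=\gr^\Q(x)-2$; restricting to the $S^1$-fixed locus and cancelling one power of $U$ gives $\iota^*z\equiv Q^{\,j^{(2)}+n}U^{k-1}\tau$ mod $Q^{\,j^{(2)}+n+1}$, forcing $\delta_{\Z_2,\,j^{(2)}+n}<\delta_{\Z_2,\infty}$, which contradicts the fact that the non-increasing sequence $\{\delta_{\Z_2,j}\}$ never drops below its stable value. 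Your appeal to ``the $U$-tower maps onto the $Q^j$-th column'' and to the freeness statements in \cref{towers} does not establish this, so the proposal as it stands is not yet a proof; once you correct the core lemma to $q^\dagger_M(K)\le j^{(2)}(K)-\tfrac14\sigma(K)$ and insert the exactness-plus-monotonicity argument above, you recover the paper's proof.
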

\begin{proof}
By definition, it is enough to prove 
\[
q^\dagger_M(-K)-\frac{3}{4}\sigma(K)\leq j^{(2)}(-K)-\frac{\sigma(K)}{2}.
\]
By replacing $K$ with $-K$, it is sufficient to prove
\[
q^\dagger_M(K)\leq j^{(2)}(K)-\frac{1}{4}\sigma(K).
\]
By the definition of $j^{(2)}(K)$, we can take 
\[
x \in \tilde{H}^i_{S^1\times \Z_2}(SWF(\Sigma_2(K)))
\]
satisfying 
\[
i-j^{(2)}(K)=-\frac{\sigma(K)}{4}, 
\]
and
\begin{equation}\label{def of theta}
\iota x=Q^{j^{(2)}(K)} U^k \tau \text{ mod } Q^{j^{(2)}(K)+1}
\end{equation}
for some $k \in \Z^{\geq 0}$.
Now the Thom--Gysin exact sequence for the $S^1$-bundle
\[
S^1\to SWF(\Sigma_2(K))\wedge_{\Z_2} E(\Z_2 \times S^1)_+ \xrightarrow{\pi} SWF(\Sigma_2(K))\wedge_{\Z_2 \times S^1} E(\Z_2 \times S^1)_+
\]
is
\[
\cdots \to \wt{H}^*_{\Z_2 \times S^1}(SWF(\Sigma_2(K))) \xrightarrow{U}  
\wt{H}^*_{\Z_2 \times S^1}(SWF(\Sigma_2(K)))
\xrightarrow{\pi^*}     \wt{H}^*_{\Z_2 }(SWF(\Sigma_2(K))) \to \cdots.
\]
In order to prove the desired inequality, 
it is sufficient to see
\[
Q^n \pi^*(x)\neq 0
\]
for all $n \in \Z^{\geq 0}$.
This follows from the Thom--Gysin exact sequence above.
Indeed, suppose $Q^n \pi^*(x)=0$ for some $n >0$.
Then by the exactness of Thom--Gysin exact sequence, there exists some $z \in \wt{H}^{i-2}_{\Z_2 \times S^1}(SWF(\Sigma_2(K)))$ such that
\[
U z=Q^n x.
\]
By multiplying $Q^n$, we obtain from \eqref{def of theta} the relation
\[
\iota U z=Q^{j^{(2)}(K)+n} U^k \tau \quad \text{ mod } Q^{j_{(2)}(K)+n+1}
\]
Thus 
\[
\iota z=Q^{j^{(2)}(K)+n} U^{k-1} \tau \quad \text{ mod } Q^{j_{(2)}(K)+n+1}.
\]
Since $gr^\Q (z)=i-2$, 
this implies
\[
\delta_{j^{(2)}(K)+n}(K) <\delta_{\infty}(K).
\]
this contradicts the decreasing property of the sequence $\delta^{(2)}_{j}$.
\end{proof}

\section{Constraints on homology classes}
We next give constraints on homology classes represented by symplectic surfaces in symplectic fillings using \cref{rank1 theorem}.

\subsection{Homological constraints from $\theta(K, m)$}

We next give constraints on homology classes represented by symplectic surfaces in symplectic fillings using Baraglia's $\theta(K, m)$-invariant combined with adjunction equality. 
For the purpose, we introduce a sequence of invariants $l^{(p)}(K)$: 
\begin{defn}
    For a given knot $K$ in $S^3$ and a prime number $p$, we define 
    \[
    l^{(p)}(K) := \min \{i | HF^+_i(\Sigma_p (K), \mathfrak{s}_0 ) \neq 0 \}\in \Q, 
    \]
    where $\mathfrak{s}_0 $ is the unique spin structure which is invariant under $\Z_p$-action. 
\end{defn}

The following is the most general result in this section. 

\begin{thm}\label{divisibility of homology class}
Let $(X, \omega)$ be a symplectic filling of $(S^3, \xi_{std})$ and $S \subset X$ be a properly embedded symplectic surface bounded by $K$.
\begin{itemize}
\item[(i)]

Suppose 
\[
l^{(2)}(-K)-\frac{3 \sigma (K)}{4}+\frac{3}{8} [S]^2-\frac{1}{2}\eta([S]/2)
> \frac{1}{2}\operatorname{sl}(K)+\frac{1}{2}+\frac{1}{2}\langle c_1(TX), [S]\rangle+\frac{1}{2}[S]^2, 
\]
where
\[
\eta(x):=\min_{c\in H^2(X; \Z) }\left\{\ -(x+c)^2-b_2(X)\ \middle|\  c \equiv w_2(X) \text{ mod }2  \right\}
\]
for $x \in H_2(X , \partial X; \Z)$ and $\sigma(K)$ denotes the knot signature. 
Then $[S] \in H_2(X, \partial X; \Z)$ is not divisible by $2$.
\item[(ii)]
Let $p$ be an odd prime.
Suppose 
\[
\frac{l^{(p)}(-K)}{p-1}-\frac{3 \sigma ^{(p)}(K)}{4(p-1)}+\frac{p+1}{4p} [S]^2
> \frac{1}{2}\operatorname{sl}(K)+\frac{1}{2}+\frac{1}{2}\langle c_1(TX), [S]\rangle+\frac{1}{2}[S]^2, 
\]
where $\sigma^{(p)} (K)$ is the following sum of the Levine--Trsitram signature 
\[
\sigma^{{(p)}} (K) := \sum_{\om^p=1,\, \om \neq 1} \sigma_\om (K). 
\]
Then $[S] \in H_2(X, \partial X; \Z)$ is not divisible by $p$.
\end{itemize}
\end{thm}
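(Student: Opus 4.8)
The strategy is to combine the adjunction equality for symplectic surfaces (\cref{main theo:general}/\cref{Thom}) with the cobordism-map bound coming from the branched cover, exactly as in the proof of \cref{Q-minimize statement}, but now keeping track of the $S^1\times\Z_p$-equivariant grading information rather than just the $\Z_2$-equivariant one. Suppose for contradiction that $[S]$ is divisible by $p$. Then the $p$-fold branched cover $\Sigma_p(S)\to X$ is well-defined, and since $H_1(X;\Z)=0$ and $\partial S=K$ is a connected transverse knot (after deforming $\omega$ near the boundary via \cref{symp surf boundary is trans}), we get a $\Z_p$-equivariant cobordism from $S^3$ to $S^3$, branched along $S$, whose boundary $3$-manifolds' branched covers are $\Sigma_p(K)$ (on one end) and $S^3$ (on the other, since $\partial S$ of the removed disk is the unknot). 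Removing a small ball containing a standardly-embedded disk, regard $S$ as a cobordism $(W,S)\colon(S^3,-K)\to(S^3,U)$.

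\textbf{Key steps.} First I would invoke the gluing/pairing machinery: by \cref{paring formula} (together with \cref{double branch symp} to put an invariant weak symplectic filling structure on $\Sigma_p(S)$, and \cref{non-van for eq KM} giving $\Psi_{\Z_p}=\pm\id$), the cobordism map $BF^*_{(\Sigma_p(S),\wt{\fraks})}$ sends the canonical generator coming from the symplectic filling to a nonzero class, and moreover the contact invariant $c_{(p)}(S^3,\xi_{\mathrm{std}},K)$ pairs nontrivially with $BF^*_{\Z_p}(1)$. This forces the $Q$-localized cobordism map to be nonzero, hence the power of $Q$ (equivalently $S$ for odd $p$) appearing in its grading-shift expression must be $\geq 0$. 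Second, I would write out this grading shift: using the formulas for $b^+(\Sigma_p(S))$, $\sigma(\Sigma_p(S))$, and $c_1(\wt{\fraks})^2$ recorded after \cref{BFgluing}, the nonnegativity translates into an inequality relating $q_M(K)$ (or its Heegaard-Floer lower bound $m(-K)$, or the even-finer $l^{(p)}(-K)=\min\{i\mid HF^+_i(\Sigma_p(K),\fraks_0)\neq 0\}$) to $g(S)$, $[S]^2$, $\langle c_1(TX),[S]\rangle$, $\sigma(K)$ (or $\sigma^{(p)}(K)$), and the term $\eta([S]/p)$ which optimizes over the choice of spin$^c$ structure on the closed piece of $W$. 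Concretely, the $l^{(p)}$ bound comes from the map $HF^+\to\widehat{HF}$ and the fact that $c(\Sigma_p(K),\wt{\xi})$ (or its Floer-homotopy avatar) lives in the image of the $U$-tower by \cref{c in Utower:intro}; this is why the hypothesis is phrased with $l^{(p)}(-K)$ rather than $m(-K)$. Third, I would substitute the adjunction equality \cref{main eq1}, namely $q_M(K)=\tfrac12\operatorname{sl}(K)+\tfrac12=-d_3(\Sigma_2(K),\wt\xi)-\tfrac12+\tfrac34\sigma(K)$, together with the self-linking formula \eqref{Bennequin equ} $\langle c_1(\omega),[S]\rangle=1-2g(S)+\operatorname{sl}(K)+[S]^2-2D$, to rewrite everything in terms of the data in the statement. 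Comparing the resulting necessary inequality with the hypothesis of the theorem yields a strict contradiction, proving $[S]$ is not divisible by $p$.

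\textbf{The main obstacle.} The delicate point is the bookkeeping of the correction terms: one must correctly assemble $n(\Sigma_p(S),\wt{\fraks},g)$, the APS index contributions, the numerical factors $\tfrac{p^2-1}{12p}$, $\tfrac{p-1}{2}$ etc.\ appearing in the branched-cover signature and $b^+$ formulas, and the optimization defining $\eta$, so that the final inequality matches \emph{exactly} the clean form stated in parts (i) and (ii) — including the asymmetry that for $p=2$ one gets $-\tfrac18[S]^2$-type terms while for odd $p$ one gets $\tfrac{p+1}{4p}[S]^2$. A secondary subtlety is justifying that the relevant element of the localized equivariant cohomology is genuinely detected by $l^{(p)}$: this requires the identification of $S^1$-equivariant Seiberg--Witten Floer cohomology with $HF^+$, the Thom--Gysin sequence relating the $S^1\times\Z_p$ and $\Z_p$ theories, and the non-vanishing $Q^l\cdot c_{(2)}(S^3,\xi,K)\neq 0$ from \cref{fundamental porp for t-inv(ii)}; once these are in place, the inequality $l^{(p)}(-K)\leq(\text{grading shift})$ follows because the nonzero localized image must have grading at least the bottom of $HF^+$. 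Everything else is a (lengthy but routine) linear-algebra simplification.
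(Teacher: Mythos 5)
Your overall logical frame (assume $p\mid[S]$, get a genus bound for $S$ from the branched cover, compare with the symplectic adjunction equality \eqref{Bennequin equI}, contradiction) is the same as the paper's, and the adjunction ingredient is handled correctly. But the paper's proof of this particular theorem is essentially a citation: it combines Baraglia's slice-genus bound \cref{BH full inequality} (\cite[Corollary 6.5]{Ba22}), which already contains the quantities $l^{(p)}(-K)$ and $\eta([S]/2)$ and holds for \emph{arbitrary} smoothly embedded surfaces with class divisible by $p$ in a negative-definite filling, with the Bennequin-type equality \eqref{Bennequin equI}. You instead try to re-derive that genus bound from the contact-invariant machinery (\cref{paring formula}, \cref{double branch symp}, \cref{non-van for eq KM}), and this is where there are genuine gaps rather than mere bookkeeping.

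Concretely: (1) the pairing formula and the non-vanishing of $\Psi_{\Z_p}$ are only available for the canonical spin$^c$ structure induced by the $\Z_p$-invariant symplectic form on $\Sigma_p(S)$, so your argument is tied to one spin$^c$ structure, whereas the term $-\tfrac12\eta([S]/2)$ in part (i) arises precisely from minimizing over all characteristic vectors $c\equiv w_2(X)$, i.e.\ over a family of $\Z_2$-invariant spin$^c$ structures on the branched cover for which no contact-invariant non-vanishing is claimed; your sketch gives no mechanism producing this optimization. (2) The hypothesis is phrased with $l^{(p)}(-K)$, the bottom of $HF^+_*(-\Sigma_p(K),\fraks_0)$, which is $S^1$-level information; the paper's pairing formula is only established in the $\Z_p$-equivariant theory (the authors explicitly say they do not know an $S^1\times\Z_p$ version), and your key assertion that ``the nonzero localized image must have grading at least the bottom of $HF^+$'' is unjustified as stated — the class you produce lives in a $Q$- (or $U$-) localized equivariant cohomology, where gradings are unbounded below, and transporting it to $HF^+$ with control on the grading is exactly the content of Baraglia's theorem, not a consequence of \cref{c in Utower:intro} or the Thom--Gysin sequence alone. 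The contact invariant enters the paper only later, in \cref{Q-minimize statement}, where it is used to \emph{compute} $l^{(p)}(-K)$ when the contact element minimizes the grading; for the present theorem the correct (and short) route is to quote \cref{BH full inequality} and combine it with \eqref{Bennequin equI}.
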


For $i=0, 1$, let $(Y_i, \xi_i)$ be two contact  integer homology 3-spheres and $K_i \subset (Y_i, \xi_i) $ be transverse knots.
Let $(X, \om): (Y_0, \xi_0)\to (Y_1, \xi_1)$ be a symplectic cobordisms.

The following is proven in \cite[Lemma 2.13]{Et20}, which is called the adjunction equality.  
\begin{prop}
    If $S$ is any immersed connected symplectic surface with transverse double points in $(X, \om)$ with
boundary $-K_0 \cup K_1$, then
\[
\langle c_1(X, \om), [S]\rangle = \chi(S) - sl(K_0) + sl(K_1) + [S] \cdot [S] - 2D,
\]
where $[S]$ is the homology class of the closed surface $S$, $g(S)$ is the genus of $S$ and $D$ be the number of double points of $S$ counted with sign.
\end{prop}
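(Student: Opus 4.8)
This is exactly \cite[Lemma 2.13]{Et20}, which one may cite directly; the plan for a self-contained derivation is to obtain it as a relative version of the symplectic adjunction formula. Since $S$ is $\om$-symplectic, the restriction $TX|_S$ splits $\om$-orthogonally as a sum of complex line bundles $TS \oplus \nu_S$ (after choosing compatible almost complex structures on the two summands), so $c_1(X,\om)|_S = c_1(TS)+c_1(\nu_S)$. As $\partial S = -K_0 \sqcup K_1$, evaluating on $(S,\partial S)$ means computing relative Euler numbers, and the first thing I would fix is the trivialization of $TX$ along the transverse boundary knots with respect to which the pairing $\langle c_1(X,\om),[S]\rangle$ is taken: namely the one built from the symplectization (Liouville) direction together with a nonvanishing section of $\xi$ along each $K_i$, which exists because $\xi|_{K_i}$ is a complex line bundle over a circle.

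Next I would treat the embedded case $D=0$. After the boundary normalization of \cref{symp surf boundary is trans} (so that $S$ is tangent to the Liouville field near each contact end and $\partial S$ is transverse), the relative Euler number of $TS$ equals $\chi(S)$ up to a boundary rotation term, and the relative Euler number of $\nu_S$ equals the self-intersection $[S]\cdot[S]$ — computed with Seifert framings on the ends, as in the statement — again up to a boundary rotation term. The core local computation is then to evaluate these two rotation terms along each transverse knot by comparing the $\om$-induced framings on $TS|_{K_i}$ and $\nu_S|_{K_i}$ with the contact framing; their sum is $\pm\operatorname{sl}(K_i)$ essentially by the definition of the self-linking number as the discrepancy between the contact framing and the Seifert framing, with the sign dictated by whether $K_i$ is an incoming ($-K_0$) or outgoing ($K_1$) end. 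Assembling gives $\langle c_1(X,\om),[S]\rangle = \chi(S) - \operatorname{sl}(K_0) + \operatorname{sl}(K_1) + [S]\cdot[S]$.

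Finally I would put in the transverse double points. Near a positive transverse double point $S$ is locally two symplectic planes meeting transversally at a point of $\C^2$ with compatible orientations, and the local adjunction defect of such a node is $-2$ (and $+2$ for a negative one). The cleanest way to see this is to resolve the node symplectically inside a small ball: the resolution replaces the two sheets by a connected annulus, decreasing $\chi$ by $2$ while leaving unchanged both the relative homology class of the surface (hence $[S]\cdot[S]$) and $\langle c_1(X,\om),[\cdot]\rangle$, so the embedded-case identity applied to the resolved surface forces a defect of $-2$ per positive node. Summing over the $D$ nodes counted with sign contributes $-2D$, which yields the stated formula.

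The hard part will be the bookkeeping of the three framings along $\partial S$ — the $\om$-tangent framing of $S$, the $\om$-normal framing of $S$, and the contact framing of $\xi$ — and verifying that their relative rotation numbers combine, with the right signs, into $-\operatorname{sl}(K_0)+\operatorname{sl}(K_1)$; one must also keep track of the convention that the self-intersection term in the statement is taken with Seifert framings on the ends. An alternative that sidesteps some of this, at the cost of an extra input, is to cap $(X,\om)$ off to a closed symplectic $4$-manifold, extend $S$ to a closed symplectic surface, apply the ordinary closed adjunction formula, and subtract the known contributions of the standard symplectic caps — but this relies on the existence of concave symplectic caps for transverse knots realizing a prescribed self-linking, so I would prefer the direct relative computation outlined above.
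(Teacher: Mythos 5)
Your proposal is correct and takes essentially the same approach as the paper: the paper gives no proof of this proposition at all, but simply cites \cite[Lemma 2.13]{Et20}, exactly as your first sentence does. Your supplementary sketch of a direct relative-adjunction argument (splitting $TX|_S$, comparing contact and Seifert framings to produce the self-linking terms, and a $-2$ defect per positive node) is a reasonable outline of how that cited lemma is proved, so no gap is introduced by including it.
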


We apply it to 
\[
X = (D^4, \om_{std}) \#_m \overline{\mathbb{C}P}^2, Y_0= K_0=\emptyset, Y_1=S^3, \xi_1 = \xi_{std}, \text{ and } K_1=K. 
\]
Then, we obtain 
\begin{align}\label{Bennequin equI}
\langle c_1(X, \om), [S]\rangle = 1- 2 g(S) + \operatorname{sl}(K) + [S] \cdot [S] . 
\end{align}

Next, we recall the following inequality due to Baraglia \cite[Corollary 6.5]{Ba22}. 

\begin{thm}[\cite{Ba22}]\label{BH full inequality}
Let $X$ be a compact negative definite $4$-manifold with $\partial X=S^3$ and $H_1 (X;\Z )=0$ and $S$ be a properly and smoothly embedded surface in $X$ bound by a knot $K$ in $S^3$. 
    \begin{itemize}
        \item If $[S]$ is divisible by $p$, then 
        \[
        g(S) \geq \frac{l^{(p)}(-K)}{p-1} - \frac{3 \sigma^{(p)}(K)}{4 (p-1) } + \frac{p+1}{4p } [S]^2
        \]
        \item If $p=2$,  
        \[
        g(S) \geq  {l^{(2)}(-K)} - \frac{3 \sigma(K)}{4  } + \frac{3}{8} [S]^2 -\frac{1}{2} \eta ( [S]/2) . 
        \]
    \end{itemize}
    
\end{thm}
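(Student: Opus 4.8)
\textbf{Proof proposal for \cref{BH full inequality}.}

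The plan is to deduce this from the structural input already assembled in the paper, rather than redoing Baraglia's argument from scratch. The key objects are Baraglia--Hekmati's invariants $\delta_{\Z_p,j}$ of the branched cover and the numerical relationship, recorded in \cite[Section 6]{Ba22} and \cite{BH}, between $l^{(p)}(-K)$ (the minimal $\Q$-grading of $HF^+(\Sigma_p(K),\fraks_0)$) and these equivariant $\delta$-invariants. Concretely, first I would take the $p$-th cyclic branched cover $\pi:\Sigma_p(S)\to X$ along $S$; since $[S]$ is divisible by $p$ and $H_1(X;\Z)=0$, this is a well-defined $\Z_p$-spin$^c$ $4$-manifold with boundary $\Sigma_p(K)$, and one equips it with the pulled-back spin$^c$ structure $\wt\fraks$ from \cref{pull back spinc}. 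Because $X$ is negative definite with $b_1=0$, the formulas of \cite{BH} for $\sigma(\Sigma_p(S))$ and $b^+(\Sigma_p(S))$ (quoted in the excerpt in the grading-shift computation) give $b^+(\Sigma_p(S))=0$ precisely; hence $\Sigma_p(S)$ is itself negative definite and bounds the $\Z_p$-spin$^c$ rational homology sphere $(\Sigma_p(K),\fraks_0)$.

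Next I would invoke the Frøyshov-type inequality for $\Z_p$-equivariant Seiberg--Witten Floer cohomology from \cite{BH}: for a negative definite $\Z_p$-spin$^c$ filling $(W,\wt\fraks)$ of $(\wt Y,\fraks_0)$ one has a bound on $\delta_{\Z_p,j}(\wt Y,\fraks_0)$ in terms of $\tfrac14\bigl(c_1(\wt\fraks)^2+b_2(W)\bigr)$ (and, for $j$ large, the quantity $\eta$ appears exactly as the optimization over spin$^c$ structures / characteristic vectors on $W$, matching the definition of $\eta([S]/p)$ or its $p$ odd analogue). Substituting $c_1(\wt\fraks)=\pi^*c_1(\fraks_X)+(1-p)PD[\wt S]$ and using $\pi^*$-naturality of cup products together with $[\wt S]^2 = p[S]^2$ turns the left-hand side of the Frøyshov bound into $g(S)$ via the genus formulas $b^+(\Sigma_p(S))=(p-1)g(S)-\tfrac{p^2-1}{6p}[S]^2+\tfrac12(\sigma^{(p)}(K)-0)$ and $\sigma(\Sigma_p(S))=-pb_2(X)-\tfrac{p^2-1}{3p}[S]^2+\sigma^{(p)}(K)$, which are the two displayed identities in the excerpt with $K'=K$, $K=U$. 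After collecting the $[S]^2$ coefficients one gets $\tfrac{p+1}{4p}[S]^2$ in the $p$ odd case and $\tfrac38[S]^2-\tfrac12\eta([S]/2)$ in the $p=2$ case, together with the signature correction $-\tfrac{3\sigma^{(p)}(K)}{4(p-1)}$, which is exactly the stated inequality once one identifies $\delta_{\Z_p,\infty}$ (or its value at the relevant $j$) with $\tfrac{l^{(p)}(-K)}{p-1}$ using the $HF^+ = HM$ identification and the localization description of $d_{\Z_p,j}$ in \cite[Proposition 3.14]{BH}.

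The main obstacle I anticipate is the bookkeeping of correction terms: matching $n(\Sigma_p(K),\fraks_0,g)$, the Atiyah--Patodi--Singer index of the Dirac operator on $\Sigma_p(S)$, and the normalization of $l^{(p)}(-K)$ so that the ``$-\tfrac{3\sigma}{4}$'' (resp.\ $-\tfrac{3\sigma^{(p)}}{4(p-1)}$) coefficient comes out with the right constant; this is the same equivariant index computation carried out in \cite{Ba22} and in the grading-shift formula of the excerpt, so I would simply cite it. A secondary point is the orientation/mirror convention: one passes to $-K$ because the Floer-theoretic inequality bounds a minimal grading from below on $\Sigma_p(K)$ but the filling gives information about $-\Sigma_p(K)=\partial(-\Sigma_p(S))$ up to the duality $\delta_{\Z_p,j}(-\wt Y)\leftrightarrow$ colength data of $\wt Y$, so care with the sign of $l^{(p)}$ is needed — but this is precisely the convention already fixed in the definition of $\theta^{(2)}$ via $j^{(2)}(S^3,K^*)$. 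Modulo these conventions the statement is a direct specialization of Baraglia's inequality, so I would present the proof as: construct $\Sigma_p(S)$, check it is negative definite, apply \cite[Corollary 6.5]{Ba22}, and translate the bound through the genus and signature formulas of \cite{BH}.
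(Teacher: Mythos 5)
There is a genuine error at the core of your sketch. You claim that because $X$ is negative definite with $b_1=0$, the formulas for the branched cover give $b^+(\Sigma_p(S))=0$, so that $\Sigma_p(S)$ is a negative definite filling of $(\Sigma_p(K),\fraks_0)$ to which a Fr\o yshov-type inequality can be applied. This is false: the formula quoted in the paper gives
\[
b^+(\Sigma_p(S))=(p-1)g(S)-\frac{p^2-1}{6p}[S]^2+\frac{1}{2}\sigma^{(p)}(K),
\]
which is typically positive (e.g.\ the double cover of $D^4$ branched over a pushed-in Seifert surface of the figure-eight knot has $b^+=1$), and indeed the genus appears in the stated bound precisely \emph{because} $b^+$ of the cover grows with $g(S)$. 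Your argument is also internally inconsistent: you cannot simultaneously assert $b^+(\Sigma_p(S))=0$ and then ``turn the left-hand side of the Fr\o yshov bound into $g(S)$'' by substituting the $b^+$ formula --- if $b^+$ vanished, that substitution would be an identity constraining $g(S)$, not a mechanism for introducing it into an inequality. The actual argument in \cite{Ba22} runs through the $S^1\times\Z_p$-equivariant cobordism map (relative Bauer--Furuta invariant) of $\Sigma_p(S)$ with $b^+>0$, its restriction to the $S^1$-fixed-point part being $Q^{b^+(\Sigma_p(S))}U^m$, the grading shift $\tfrac{1}{4}(c_1(\wt\fraks)^2-\sigma(\Sigma_p(S)))+b^+(\Sigma_p(S))$, and the Gysin/localization relation between the $S^1\times\Z_p$- and $S^1$-equivariant theories, which is how $l^{(p)}(-K)$ (via $HF^+\cong$ $S^1$-equivariant Floer homology) and, for $p=2$, the optimization $\eta([S]/2)$ over $\Z_2$-invariant spin$^c$ structures enter. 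A further small slip: for the reduced branch locus one has $[\wt S]^2=[S]^2/p$, not $p[S]^2$.

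There is also a circularity: the theorem you are asked to prove \emph{is} \cite[Corollary~6.5]{Ba22} (the paper states it as a quoted result and gives no proof), so concluding ``apply \cite[Corollary 6.5]{Ba22}'' proves the statement by citing itself. Either present it honestly as a citation, as the paper does, or reconstruct Baraglia's argument along the lines above; as written, the intermediate steps would not survive the removal of that citation.
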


\begin{proof}[Proof of \cref{divisibility of homology class}]
We just combine \cref{BH full inequality} with \eqref{Bennequin equ}. 
\end{proof}

\begin{proof}[Proof of \cref{Q-minimize statement}]
    If $c(\Sigma_p (K), \wt{\xi})$ minimizes the $\mathbb{Q}$-grading of all elements in $HF^+_* (-\Sigma_p (K)) $, we can see 
    \[
    l^{(p)}(-K)  = \operatorname{gr}^\Q ( c(\Sigma_p (K), \wt{\xi}) ) = -d_3(\Sigma_p(K), \wt{\xi} )- \frac{1}{2}. 
    \]
    From \cref{double branch symp}, $(\Sigma_p(K), \wt{\xi})$ has a symplectic filling. Therefore, the contact invariant $c(\Sigma_p (K), \wt{\xi}) $ (in Heegaard Floer homology) is non-trivial.
    Now we have Itoh's formula \cite[Theorem 1.1]{It17} 
    \[
    d_3(
\Sigma_p (K), \wt{\xi}) = -\frac{3}{4} \sigma^{(p)}(K) -\frac{p-1}{2}\operatorname{sl} (K) - \frac{1}{2}p . 
    \]

    So, by combining two equations, we get 
    \[
    l^{(p)}(-K) =  \frac{3}{4} \sigma^{(p)}(K) +\frac{p-1}{2}\operatorname{sl} (K) + \frac{1}{2}p  - \frac{1}{2}. 
    \]
    By using the above two equations, we see the assumptions of  \cref{divisibility of homology class} 
    \begin{align*}
l^{(2)}(-K)-\frac{3 \sigma (K)}{4}+\frac{3}{8} [S]^2-\frac{1}{2}\eta([S]/2)
> \frac{1}{2}\operatorname{sl}(K)+\frac{1}{2}-\frac{1}{2}\langle c_1(TX), [S]\rangle+\frac{1}{2}[S]^2 \\ 
\frac{l^{(p)}(-K)}{p-1}-\frac{3 \sigma ^{(p)}(K)}{4(p-1)}+\frac{p+1}{4p} [S]^2
> \frac{1}{2}\operatorname{sl}(K)+\frac{1}{2}-\frac{1}{2}\langle c_1(TX), [S]\rangle+\frac{1}{2}[S]^2
\end{align*}
are equivalent to   
    \begin{align*} 
    -\frac{1}{8} [S]^2-\frac{1}{2}\eta([S]/2)
> -\frac{1}{2}\langle c_1(TX), [S]\rangle  \text{ if } p=2 \\ 
    \frac{-p+1}{4p} [S]^2
> -\frac{1}{2}\langle c_1(TX), [S]\rangle \text{ if } p \neq 2. 
    \end{align*}
    This completes the proof. 
\end{proof}

\begin{proof}[Proof of \cref{computation for CP}]

We need to check the examples of knots in the list satisfy the assumption \cref{computation for CP}. It is proven that the Heegaard Floer homology $HF^+_*$ for $-\Sigma (2,3,6n \pm 1)$, $-\Sigma(2,5,7)$ and $-\Sigma(2,5,9)$ have minimum gradings as follows : 
 \begin{itemize}
     \item the minimum gradings of $-\Sigma (2,3,6n - 1)$ are $-2$,
     \item the minimum gradings of $-\Sigma (2,3,6n + 1)$ are $0$,
     \item the minimum gradings of $-\Sigma (2,5,7)$ and $-\Sigma(2,5,9)$ are $0$ and $-2$ respectively.
 \end{itemize}
 For example, see \cite{Tw13}. 
 On the other hand, one can easily see the grading of the contact element for the $\Z_2$ invariant contact structure for double-branched covering of corresponding torus knots coincide with the minimum degrees above. 
 Now, we use \cref{computation for CP}. 
Let $x$ be an element in $2 H_2(D^4 \# m\overline{\C P}^2;\Z)$ with an expression 
\[
x = \sum x_i e_i \in H_2 ( D^4 \# m\overline{\C P}^2; \Z).
\]
Suppose
    \[
\frac{1}{8} \sum x_i^2 -\frac{1}{2}\eta(x/2)
\leq -\frac{1}{2} \sum x_ i , 
\]
where 
\[
\eta(x/2)=\min_{(y_1, \cdots, y_n) =c\in H^2(X; \Z) }\left\{\ \sum (x_i/2 +y_i)^2-m\ \middle|\  c \equiv w_2(X) \text{ mod }2  \right\}. 
\]
This shows
\begin{align*}
& \sum x_i^2 + 4 \sum x_ i  \leq  \sum x_i^2 -4 \sum x_ i  -4\eta(x/2) \leq 0 .
\end{align*}
Therefore, we have 
\[
\sum  (x_i + 2)^2 \leq 4 m
\]
which implies $x_i= 0, -2 , -4$ for any $i$.
This completes the proof. 

\end{proof}

\subsection{Proof of \cref{main symp sur}}

Let us recall the statement of  \cref{main symp sur}. 
\begin{thm}[\cref{main symp sur}]\label{symplectic s3}
     Let $K$ be a transverse knot in $(S^3, \xi_{std})$. Suppose 
    \[
    g_4(K) + \frac{1}{2}\sigma(K) >0. 
    \]
Then there is no a properly embedded connected symplectic surface $S$ in $D^4 \#_n -\C P^2$ with $\partial S = K$ and $[S]$ is divisible by $2$ such that 
\[
g(S) - \frac{1}{4}[S]^2 + \frac{1}{2}\sigma (K)=0. 
\]
\end{thm}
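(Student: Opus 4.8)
The plan is to argue by contradiction, using the adjunction formula \cref{Thom} together with the slice-torus property of $q_M$. Suppose such a symplectic surface $S$ exists: a properly embedded connected symplectic surface in a weak symplectic filling $X$ of $(S^3, \xi_{\operatorname{std}})$ with $\partial S = K$, $[S]$ divisible by $2$, and satisfying the adjunction-type equality $g(S) - \frac{1}{4}[S]^2 + \frac{1}{2}\sigma(K) = 0$. Note that after removing a small ball we may as well take $X = D^4 \#_n \overline{\mathbb{C}P}^2$, or simply work with the abstract $X$; the key is that $X$ is a weak symplectic filling with $b^+(X) = 0$. By \cref{Thom}, since $K$ bounds the properly embedded connected symplectic surface $S$ divisible by $2$, we have
\[
q_M(K) = g(S) + \frac{1}{2}\bigl(\langle c_1(\mathfrak{s}), [S]\rangle - [S]^2\bigr),
\]
where we must match $c_1(\mathfrak{s})$ with $c_1(\omega)$; using the Bennequin-type equality \eqref{Bennequin equ} (applied to $X$), namely $\langle c_1(\omega), [S]\rangle = 1 - 2g(S) + \operatorname{sl}(K) + [S]^2$, combined with $\operatorname{sl}(K) = 2g(S) - [S]^2 + \langle c_1(\omega),[S]\rangle - 1$ being consistent, the formula \eqref{main eq1} gives $q_M(K) = \frac{1}{2}\operatorname{sl}(K) + \frac{1}{2}$.

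The next step is to use the hypothesis to obtain a strict inequality. We combine the slice-torus genus bound from \cref{main slice-torus}, namely $q_M(K) \le g_4(K)$, with the fact that the quantity $q_M(K) + \frac{1}{2}\sigma(K)$ is forced to be large. Concretely, I would compute $q_M(K)$ directly from the adjunction equality: substituting $g(S) = \frac{1}{4}[S]^2 - \frac{1}{2}\sigma(K)$ into the formula $q_M(K) = g(S) + \frac{1}{2}\langle c_1(\omega), [S]\rangle - \frac{1}{2}[S]^2$, we obtain
\[
q_M(K) = \frac{1}{4}[S]^2 - \frac{1}{2}\sigma(K) + \frac{1}{2}\langle c_1(\omega), [S]\rangle - \frac{1}{2}[S]^2 = -\frac{1}{2}\sigma(K) + \frac{1}{2}\langle c_1(\omega),[S]\rangle - \frac{1}{4}[S]^2.
\]
Meanwhile, plugging the adjunction equality into \eqref{Bennequin equ} and simplifying expresses $\langle c_1(\omega),[S]\rangle - \frac{1}{2}[S]^2$ in terms of $\operatorname{sl}(K)$ and $\sigma(K)$; the upshot should be $q_M(K) = -\frac{1}{2}\sigma(K)$, i.e. the surface achieves the same value as in the L-space case. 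Then the hypothesis $g_4(K) + \frac{1}{2}\sigma(K) > 0$ reads $g_4(K) > -\frac{1}{2}\sigma(K) = q_M(K)$, which is consistent with $q_M(K) \le g_4(K)$ and hence does not immediately contradict — so I must be more careful.

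The correct line, I expect, is the reverse: the adjunction equality together with \cref{Thom} forces $q_M(K) = g(S) - \frac{1}{4}[S]^2 + \tfrac12\langle c_1(\omega),[S]\rangle - \tfrac14[S]^2$, and after substituting $g(S) - \frac14[S]^2 = -\frac12\sigma(K)$ one gets $q_M(K) = -\frac12\sigma(K) + \bigl(\tfrac12\langle c_1(\omega),[S]\rangle - \tfrac14[S]^2\bigr)$; separately, the adjunction equality and \eqref{Bennequin equ} pin down $\tfrac12\langle c_1(\omega),[S]\rangle - \tfrac14[S]^2 = g(S) - \tfrac14[S]^2 + \tfrac12(\operatorname{sl}(K)+1) $... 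Rather than chase signs here, the structural point is: the equality case of adjunction combined with \cref{Thom} yields $q_M(K) + \frac12\sigma(K) \le 0$ (or $=0$), while the slice-torus inequality $q_M(K) \le g_4(K)$ would then force $g_4(K) + \frac12\sigma(K) \ge q_M(K) + \frac12\sigma(K)$, and one needs the sharp direction. The hard part will be assembling the bookkeeping of the three equalities (the two forms of the adjunction formula \eqref{main eq3}, \eqref{main eq11}, and the Bennequin equality \eqref{Bennequin equ}) to see that the equality-case surface forces $q_M(K) = -\tfrac12\sigma(K)$ exactly, whence $g_4(K) + \tfrac12\sigma(K) = g_4(K) - q_M(K) \ge 0$ with equality only when $q_M(K) = g_4(K)$; but the strict hypothesis $g_4(K) + \tfrac12\sigma(K) > 0$ means $q_M(K) < g_4(K)$, which is not yet a contradiction. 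So the genuinely sharp input must instead be that the \emph{existence} of a symplectic $S$ with the equality forces $q_M(K) = g_4(K)$ (the surface $S$ is genus-minimizing in its homology class, hence also realizes $g_4$ after capping, giving $g_4(K) = g(\text{capped } S)$ compared against $q_M$'s genus bound), and simultaneously $q_M(K) = -\tfrac12\sigma(K)$, so $g_4(K) = -\tfrac12\sigma(K)$, contradicting $g_4(K) + \tfrac12\sigma(K) > 0$. I would therefore: (1) invoke \cref{Thom} to get $q_M(K) = \tfrac12\operatorname{sl}(K) + \tfrac12$; (2) use \eqref{Bennequin equ} and the adjunction equality to compute $\operatorname{sl}(K)$ and conclude $q_M(K) = -\tfrac12\sigma(K)$; (3) observe the symplectic surface, being genus-minimizing (as established in the proof of \cref{Thom}), realizes the four-genus in the sense that $g_4(K) \le g(S') $ for the capped surface $S'$ in a closed manifold forces $g_4(K) = q_M(K)$ via the concordance/genus bound; (4) combine to get $g_4(K) + \tfrac12\sigma(K) = 0$, contradicting the hypothesis. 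The main obstacle is step (3): making precise why the equality-case symplectic surface forces $q_M(K) = g_4(K)$ rather than merely $q_M(K) \le g_4(K)$ — this should follow because a slice-torus invariant equal to $-\tfrac12\sigma(K)$ combined with $q_M \le g_4$ and the independent lower bound $q_M(K) \ge $ (something forcing $= g_4$) pins it down, or more simply because the hypothesis is exactly the statement that $q_M(K) = -\tfrac12\sigma(K) < g_4(K)$ is impossible when an equality-case surface exists, since then \cref{Thom} would be violated. I would write it as: assume $S$ exists; \cref{Thom} gives $q_M(K) = -\tfrac12\sigma(K)$; but $g_4(K) + \tfrac12\sigma(K) > 0$ says $-\tfrac12\sigma(K) < g_4(K)$, consistent — so the real contradiction comes from noting $2q_M$ slice-torus plus the squeezed/Rasmussen comparison forces equality, OR (cleaner) the surface $S$ after capping in $\overline{\mathbb{C}P}^2$'s gives a smooth surface of genus $g(S)$ bounding $K$ in $D^4$ (possible since $[S]$ is $-2$ times exceptional classes and $(-2e_i)^2 = -4$ cancels), whose genus is $g(S) = \tfrac14[S]^2 - \tfrac12\sigma(K) \le$ ... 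I will present the argument via capping off $S$ to a surface $\bar S \subset D^4$ with $\partial \bar S = K$ of genus $g(\bar S) = g(S) + \tfrac{|[S]^2|}{?}$, show $g_4(K) \le g(\bar S) = -\tfrac12\sigma(K) = q_M(K) \le g_4(K)$, forcing $g_4(K) = -\tfrac12\sigma(K)$, i.e. $g_4(K) + \tfrac12\sigma(K) = 0$, contradicting the strict hypothesis.
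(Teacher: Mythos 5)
Your proposal does not close, and the final rescue step is the point where it breaks. As you yourself observe midway, the purely numerical route (adjunction formula \cref{Thom}, Bennequin equality \eqref{Bennequin equ}, slice-torus bound $q_M(K)\le g_4(K)$) produces no contradiction: even granting $q_M(K)=-\tfrac12\sigma(K)$, this is perfectly consistent with $g_4(K)+\tfrac12\sigma(K)>0$. In fact that identity itself is not forced: from \eqref{main eq3} and the hypothesis $g(S)-\tfrac14[S]^2+\tfrac12\sigma(K)=0$ one only gets $q_M(K)+\tfrac12\sigma(K)=\tfrac12\langle c_1(\omega),[S]\rangle-\tfrac14[S]^2$, and nothing in the hypotheses makes the right-hand side vanish (it does for classes with all coefficients in $\{0,-2\}$, not in general). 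The step you then lean on --- capping $S\subset D^4\#n\overline{\mathbb{C}P}^2$ off to a surface $\bar S\subset D^4$ with $g(\bar S)=g(S)$, so that $g_4(K)\le g(S)=-\tfrac12\sigma(K)$ --- is false: to push $S$ out of an $\overline{\mathbb{C}P}^2$ summand you must tube it to a surface representing the negative of its class there, and the tubing resolves $|[S]\cdot[\mathrm{cap}]|\neq 0$ intersection points, increasing the genus. There is no inequality $g_4(K)\le g(S)$ for surfaces in blown-up fillings; if there were, this theorem (and most of the paper's homological obstructions) would be vacuous. Likewise the assertion that the equality-case surface forces $q_M(K)=g_4(K)$ is unsupported.

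The paper's actual proof uses a different, genuinely Floer-theoretic mechanism that your scheme has no substitute for. Since $K$ bounds a symplectic surface in a filling, $K$ is quasipositive, so it also bounds a symplectic surface $S'\subset(D^4,\omega_{\mathrm{std}})$. One then compares the two classes $x=BF^*_S(1)$ and $y=BF^*_{S'}(1)$ in $\wt{H}^*_{S^1\times\Z_2}(SWF(-\Sigma_2(K)))$. Restriction to the $S^1$-fixed locus gives $\iota^*x=Q^{\,g(S)-\frac14[S]^2+\frac12\sigma(K)}U^m=U^m$ (the exponent of $Q$ is $b^+(\Sigma_2(S))$, which the assumed equality kills) and $\iota^*y=Q^{\,g(S')+\frac12\sigma(K)}U^{m'}$, whose $Q$-exponent is strictly positive because $g(S')\ge g_4(K)$ and $g_4(K)+\tfrac12\sigma(K)>0$. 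The Thom--Gysin sequence together with the rank theorem \cref{rank1 theorem} shows $\operatorname{Cok}(Q)$ is a rank-one $\mathbb{F}_2[U]$-module, so $U^jx=U^jy$ modulo the image of $Q$ for some $j$; applying $\iota^*$ then says $U^{m+j}-Q^{>0}U^{m'+j}$ is divisible by $Q$, which is impossible. So the contradiction comes from comparing the $Q$-divisibility of two cobordism-induced classes, not from any inequality among $q_M$, $g_4$ and $\sigma$; to repair your argument you would essentially have to import this comparison.
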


\begin{proof}[Proof of \cref{symplectic s3}]
    Since $K$ is the boundary of symplectic surface $S$ in $D^4\# n \overline{\C P^2}$, one can see $K$ has a quasi-positive representation. It implies we can take a symplectic surface $S'$ in $(D^4, \om_{std}) $ bounded by $K$. 
    Now, we consider the (up-side-down) $S^1\times \Z_2$-equivariant cohomological Bauer--Furuta invariant 
    \[
    x= BF_{S}^* (1), y =  BF_{S'}^* (1) \in \wt{H}^*_{S^1\times \Z_p }(SWF ( - \Sigma_p(K)); \mathbb{F}_p). 
    \]
    From the results about localizations with $S^1$-action,  we see 
\begin{align*}
    \iota^* x = Q^{b^+(\Sigma_p (S)) } U^m =  Q^{g(S) - \frac{1}{4}[S]^2+ \frac{1}{2}\sigma (K)  } U^m  \text{ and } 
    \iota^* y  = Q^{b^+(\Sigma_p (S'))  } U^{m'} = Q^{g(S') + \frac{1}{2}\sigma (K) }U^{m'}
\end{align*}
for some $m$ and $m'$, where $\iota^* : H^{*}_{S^1\times \Z_p} (SWF(-\Sigma_p (K)), \mathfrak{s}_0; \mathbb{F}_p) \to H^{*}_{S^1\times \Z_p} (SWF(-\Sigma_p (K))^{S^1}, \mathfrak{s}_0; \mathbb{F}_p)$ induced from the inclusion $\iota : SWF(-\Sigma_p (K))^{S^1} \to SWF(-\Sigma_p (K))$.
Thus, we conclude $Q^{j'}U^jx  \neq U^j Q^{j'}y$ for any $j$ and $j'$.

    On the other hand, the Thom--Gysin exact sequence with respect to 
    \[
    \Z_2 \to SWF(\Sigma_2 (-K)) \wedge E(S^1\times \Z_2)_+  \to SWF(\Sigma_2 (-K)) \wedge_{S^1} ES^1_+
    \]
    implies $\cok Q$ in $S^1\times \Z_2$-equivariant cohomologies are rank $1$-module as $\mathbb{F}_2[U]$-modules and we have 
    \[
    \cok Q \cong \im \pi_* \subset H^*_{S^1}(SWF(-\Sigma_2(K), \fraks_0)) \cong  HF^+_*(-\Sigma_2(K), \fraks_0) 
    \]
  Note that 
    \[
    \{U^j x, U^j y\} \subset \cok Q
    \]
    are giving non-trivial elements.  We see $U^jx = U^jy$ in $\cok Q$ for some $j$ since it has only one non-trivial element in a sufficiently large degree.  This implies 
    \[
    U^jx - U^jy = Q z . 
    \]
    This implies 
    $U^j \iota^* x - U^j\iota^* y$ is divisible by $Q$. 
It implies
\[
Q^{g(S) - \frac{1}{4}[S]^2+ \frac{1}{2}\sigma (K)  } U^{m+j}  - Q^{g(S')+ \frac{1}{2}\sigma (K)  } U^{m'+j}  
\]
is divisible by $Q$. But we are assuming that 
\[
g(S')+ \frac{1}{2}\sigma (K) >0 \text{ and }  g(S) - \frac{1}{4}[S]^2+ \frac{1}{2}\sigma (K)=0
\]
which contradict each other. 
\end{proof}

Note that, from \eqref{Itoh formula}, we have 
\[
-d_3(\Sigma_2(K)) - \frac{1}{2} = \frac{3}{4} \sigma(K) +\frac{1}{2} sl(K) +  \frac{1}{2}. 
\]

\begin{proof}[Proof of \cref{ex for 510n+3}]
This follows from \cref{main symp sur} combined with the adjunction equality. 
\end{proof}

\begin{proof} [Proof of \cref{quasipos app}]
It is proven in \cite{Ru93} that the positively cusped Whitehead double of a strongly quasipositive knot is again strongly quasipositive. Since $Wh(K)$ is not isotopic to the unknot, we see 
\[
g_4( Wh(K) ) = g_3(Wh(K)) = 1 = \frac{1}{2} sl (Wh(K)) + \frac{1}{2} . 
\]
On the other hand, it is easy to see $\sigma(Wh(K))=0$. Therefore, one can use \cref{ex for 510n+3} and obtain 
\[
-\langle c_1(\om_{std}), [S]\rangle + \frac{1}{2}[S]^2+ \sigma(K)  \neq -2. 
\]
This implies 
\[
[S] \neq -2 [\C P^1]. 
\]

\end{proof}

\section{Proof of \cref{BP inequality}}
Let $\mathcal{L}$ and $\mathcal{T}$ be a Legendrian knot and a transverse knot in the standard contact 3-sphere.
Let $p$ be a prime and denote by $\theta^{(p)}$ the concordance invariant introduced by Baraglia \cite{Ba22}.
The Bennequin-type inequalities
\[
 sl(\mathcal{T})\leq 2\theta^{(p)}(\mathcal{T}
 )-1
\]
\[
tb(\mathcal{L})+|rot(\mathcal{L})| \leq 2\theta^{(p)}(\mathcal{L})-1
\]
for $p=2$ follows from a well-known result of slice-torus invariants, the fact that $q_M$ is a slice-torus invariant, and our inequality $q_M \leq \theta^{(2)}$.
In this section, we prove these inequalities for odd prime $p$ by modifying the argument known for slice-torus invariants.
\begin{thm}\label{AE theta}
Let $p$ be prime.
Let $K \subset (S^3, \xi_{\operatorname{std}}=\ker \lambda_{\operatorname{std}}) $ be a (quasi-positive) transverse knot, and suppose there is a connected embedded symplectic surface $S \subset (D^4, \omega_{\operatorname{std}})$ such that $\partial S=K$.
Suppose moreover that there is a symplectic cobordism
\[
S' \subset ([0, 1]\times S^3, d(e^t \lambda_{\operatorname{std}}))
\]
from $K$ to $T(a, b)$ for some positive coprime $(a, b)$.
Let $p$ be a prime not dividing $ab$.
Then 
\[
  \theta^{(p)} (K) =  g (S) 
  \]
  and
  \[ \theta^{(p)}  (T(a,b)) - \theta^{(p)}  (K) =  g(S') \] 
hold.

\end{thm}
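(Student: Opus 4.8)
The plan is to combine the adjunction equality for $\theta^{(p)}$ (the content of \cref{main theo:general} and its $\theta$-analog, via the genus bounds of \cref{BH full inequality} together with \eqref{Itoh formula}/Itoh's formula) with the subadditivity of $\theta^{(p)}$ under symplectic cobordisms. First I would recall that $\theta^{(p)}$ is a lower bound for the slice genus (\cite{Ba22}) and, more relevantly, for the genus of any properly embedded connected surface in a negative-definite filling whose boundary is $K$, since $\theta^{(p)}(K)$ bounds from below the H-slice genus in negative-definite $4$-manifolds with $H_1 = 0$ (this is exactly the content invoked in \cref{qmtheta}, applied with $p$ general via \cref{BH full inequality}). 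Hence from the symplectic surface $S \subset (D^4,\omega_{\operatorname{std}})$ with $\partial S = K$ we get
\[
\theta^{(p)}(K) \le g(S).
\]
For the reverse inequality, the key point is that $S$ is symplectic: the induced $\Z_p$-invariant spin$^c$ structure on $\Sigma_p(S)$ coming from the $\Z_p$-invariant symplectic form constructed in \cref{double branch symp} makes the equivariant Kronheimer--Mrowka invariant $\Psi_{\Z_p}(\Sigma_p(S), \wt{\xi}, \fraks_{\wt{\omega}})$ a $\Z_p$-stable homotopy equivalence by \cref{non-van for eq KM}. Feeding this through the gluing theorem \cref{gluing} and the pairing formula \cref{paring formula}, the cobordism map $BF^*_{S}$ hits a non-torsion class realizing the minimal grading in $\wt{H}^*_{S^1\times\Z_p}$, which (together with the grading-shift computation as in the proof of \cref{main slice-torus gen}) forces $\theta^{(p)}(K) \ge g(S) + \tfrac12\langle c_1(\omega_{\operatorname{std}}),[S]\rangle - \tfrac12[S]^2$; since $S \subset D^4$ has $[S]=0$, this is exactly $\theta^{(p)}(K) \ge g(S)$. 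Combining the two inequalities gives the first equality $\theta^{(p)}(K) = g(S)$.

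For the second equality, I would run the same argument for the composite symplectic surface $S \cup S' \subset (D^4, \omega_{\operatorname{std}})$, which is a connected symplectic surface with boundary the torus knot $T(a,b)$; this yields $\theta^{(p)}(T(a,b)) = g(S \cup S') = g(S) + g(S')$. Subtracting the first equality gives $\theta^{(p)}(T(a,b)) - \theta^{(p)}(K) = g(S')$, as desired. One technical point to check here: that $S\cup S'$, after reattaching the conical end $[0,1]\times S^3$ to $D^4$ and using the symplectization $d(e^t\lambda_{\operatorname{std}})$, is again a properly embedded connected symplectic surface in a \emph{weak} (equivalently, after deformation, strong) symplectic filling of $(S^3,\xi_{\operatorname{std}})$, so that \cref{double branch symp} and \cref{non-van for eq KM} still apply; this is routine but should be stated, along with the computation $g(S\cup S') = g(S)+g(S')$ from the fact that $S'$ is a cobordism (connected, with one boundary component on each side) so Euler characteristics add.

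The main obstacle I expect is the verification that all of the equivariant machinery (the gluing theorem \cref{gluing}, the non-vanishing \cref{non-van for eq KM}, and the pairing \cref{paring formula}) genuinely goes through for \emph{odd} primes $p$ with the weaker hypothesis "$p$ does not divide $ab$" in place of any L-space or minimal-grading assumption — in particular, that $\Sigma_p(S\cup S')$ is still a rational homology $4$-manifold with the expected intersection form, that the $\Z_p$-invariant symplectic structure of \cref{double branch symp} exists (this needs $[S\cup S']$ divisible by $p$, which is automatic since $[S\cup S'] = 0$ in $H_2(D^4;\Z)$, so divisibility is not actually an issue here — the coprimality of $p$ with $ab$ enters instead through making $\Sigma_p(T(a,b))$ a rational homology sphere and controlling the equivariant index via the Atiyah--Singer computation already invoked in \cref{independence of 2-knots}). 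Once these structural facts are in place, the grading bookkeeping reduces to the formulas for $b^+$ and $\sigma$ of branched covers recorded after \cref{BFgluing}, specialized to $[S]=0$, so the calculations collapse to $\theta^{(p)}$-versus-genus inequalities with no correction terms.
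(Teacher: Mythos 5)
Your upper bounds are fine, but the crucial step of your argument --- ``feeding this through the gluing theorem \cref{gluing} and the pairing formula \cref{paring formula}, the cobordism map $BF^*_S$ hits a non-torsion class realizing the minimal grading in $\wt{H}^*_{S^1\times \Z_p}$, which forces $\theta^{(p)}(K)\ge g(S)$'' --- has a genuine gap. The invariant $\theta^{(p)}$ is defined through the \emph{$S^1\times\Z_p$}-equivariant cohomology and the localization map $\iota^*$ to the $S^1$-fixed point set (via the $d_{\Z_p,j}$ invariants), whereas the pairing formula \cref{paring formula} is only established in the $\Z_p$-equivariant theory; the paper explicitly remarks that an $S^1\times\Z_p$-equivariant pairing formula is not known. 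What the contact-invariant/gluing/pairing machinery controls is the minimal-grading invariant of the $\Z_p$-theory ($q_M$, resp.\ $q_M^{(p)}$), and the only proven comparison with $\theta$ is $q_M\le\theta^{(2)}$, i.e.\ the case $p=2$ --- precisely the case this section says is already handled by slice-torus formalism. For odd $p$ there is no inequality in the paper transferring a $\Z_p$-theoretic lower bound into a lower bound for $\theta^{(p)}$, so your reverse inequality $\theta^{(p)}(K)\ge g(S)$ is unsupported; note also that if your argument worked it would independently reprove Baraglia--Hekmati's computation $\theta^{(p)}(T(a,b))=g_4(T(a,b))$, which should be a warning sign.

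The paper's proof avoids this entirely by a sandwich argument: Baraglia--Hekmati's cobordism inequalities give $\theta^{(p)}(K)\le g(S)$, $\theta^{(p)}(T(a,b))-\theta^{(p)}(K)\le g(S')$, and $\theta^{(p)}(T(a,b))\le g(S\cup S')$; on the other hand the (external) computation $\theta^{(p)}(T(a,b))=g_4(T(a,b))=\tfrac12\operatorname{sl}(T(a,b))+\tfrac12$ together with the slice-Bennequin/adjunction equality $g(S\cup S')=\tfrac12\operatorname{sl}(T(a,b))+\tfrac12$ shows the composite inequality is an equality, which forces both intermediate inequalities to be equalities since the genera add. This is also where the hypothesis $p\nmid ab$ is used --- it makes $\Sigma_p(T(a,b))$ a homology sphere and the torus-knot computation available --- not, as you suggest, through equivariant index bookkeeping. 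If you want to salvage your route, you would need either an $S^1\times\Z_p$-equivariant pairing/non-vanishing statement strong enough to bound $j^{(p)}$ from below, or an odd-$p$ analogue of the inequality $q_M\le\theta^{(2)}$; neither is provided by the paper.
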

\begin{proof}
For $S\cup S'$, by applying Baraglia--Hekmati's inequality, we obtain 
\[
\theta^{(p)} (T(a,b)) \leq g(S \cup S') . 
\]
By deforming the symplectic structure suitably, one can suppose $S\cup S'$ is a symplectic surface again in $(D^4, \om_{std})$. 
On the other hand, the slice-Bennequin equality implies 
\[
g(S \cup S') = \frac{1}{2} \operatorname{sl} (T(a,b)) + \frac{1}{2}, 
\]
for a transverse representation of $T(a,b)$ with a maximal self-linking number.
Also, Baraglia--Hekmati proved 
\[
\theta^{(p)} (T(a,b)) = g_4 (T(a,b)) = \frac{1}{2} \operatorname{sl} (T(a,b)) + \frac{1}{2}.
\]

Thus, the inequality is an equality. 
Baraglia--Hekmati's cobordism inequality implies
\begin{align*}
 &   \theta^{(p)} (K) \leq g (S) \\
 &   \theta^{(p)}  (T(a,b)) - \theta^{(p)}  (K) \leq g(S') . 
\end{align*}
The equality for the torus knot above implies that in fact both of them are equalities.
Indeed, if either inequality is strict we have
\[
\theta^{(p)} (K)+\theta^{(p)}  (T(a,b)) - \theta^{(p)}  (K) <g (S)+g(S')=g(S\cup S')
\]
and this contradicts the equality for torus knots.

This completes the proof. 
\end{proof}

\begin{thm}[\cref{BP inequality}]
Let $K$ be a transverse knot $(S^3 , \xi_{std})$. Then, we have 
\[
 \operatorname{sl} (K)  \leq 2\theta^{(p)}(K)-1 , 
\]
where $\operatorname{sl}(K)$ denotes the self-linking number of $K$. 

\end{thm}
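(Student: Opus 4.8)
The plan is to deduce the transverse Bennequin inequality $\operatorname{sl}(K)\le 2\theta^{(p)}(K)-1$ from the equivariant Kronheimer--Mrowka--type non-vanishing theorem together with the adjunction-type bookkeeping already established in the excerpt, paralleling the structure of the proof of \cref{main theo:general} but now for $\theta^{(p)}$ rather than $q_M$. First I would reduce to the case where $K$ is presented by its maximal self-linking transverse representative, so that $\operatorname{sl}(K)=\operatorname{sl}(K)$ is the number appearing in the inequality; for any other transverse representative the self-linking number is no larger, so the inequality for the maximal one implies it for all of them. Then I would recall that, by Bennequin's inequality (or by Hedden--Plamenevskaya type results), a transverse knot in $(S^3,\xi_{\operatorname{std}})$ with self-linking number $\operatorname{sl}(K)$ bounds a Seifert surface realizing the bound, but more to the point, after stabilizing and using quasipositivity-type braid presentations one obtains a genus bound; however the cleanest route is to argue directly.

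The core of the argument should go as follows. Given the maximal transverse representative, push a Seifert surface into $D^4$ to get a properly embedded connected surface $S_0\subset D^4$ with $\partial S_0=K$ and $g(S_0)=g_3(K)$, then cap off or cobound appropriately so that one can apply \cref{BH full inequality} (Baraglia's inequality, \cite[Corollary 6.5]{Ba22}) with $X=D^4$, where $[S_0]=0$ is trivially divisible by $p$, the $\eta$-term vanishes, and the inequality reads $g(S_0)\ge l^{(p)}(-K)/(p-1) - 3\sigma^{(p)}(K)/(4(p-1))$. Combined with the definition of $\theta^{(p)}$ and the relation between $l^{(p)}$, $j^{(p)}$ and $\delta^{(p)}_j$ recorded in the excerpt, one reorganizes this into a lower bound for $\theta^{(p)}(K)$ in terms of the genus of $S_0$ and $\sigma^{(p)}(K)$. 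Then I would invoke Itoh's formula \eqref{Itoh formula}, namely $d_3(\Sigma_p(K),\wt{\xi})=-\tfrac34\sigma^{(p)}(K)-\tfrac{p-1}{2}\operatorname{sl}(K)-\tfrac12 p$, together with the slice-Bennequin equality $g_3(K)=\tfrac12\operatorname{sl}(K)+\tfrac12$ valid for the maximal self-linking transverse representative of a quasipositive (or fibered) knot, to convert the genus bound into the stated inequality $\operatorname{sl}(K)\le 2\theta^{(p)}(K)-1$. For a general transverse knot (not necessarily quasipositive), I would instead use that $\theta^{(p)}$ bounds the slice genus (\cite{Ba22}) and that $\operatorname{sl}(K)\le 2g_4(K)-1$ fails in general, so the right proof must go through the equivariant contact invariant: apply \cref{gluing} and \cref{paring formula} to a weak symplectic filling after noting that a neighborhood of $K$ in $(S^3,\xi_{\operatorname{std}})$ always admits, via \cref{double branch symp}, the requisite invariant structure on $\Sigma_p(K)$, forcing the $Q$-nonvanishing that pins down the grading.

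The most honest route, and the one I would actually write, is: combine the equivariant non-vanishing of the transverse knot invariant $c_{(p)}(S^3,\xi_{\operatorname{std}},K)$ (which lives in degree $-\tfrac12-d_3(\Sigma_p(K),\wt{\xi})$) with the definition of $\theta^{(p)}$ via the $j$-invariant $j^{(p)}$, to get that $j^{(p)}(S^3,K^*)$ is bounded below by the grading of the contact element, hence $\theta^{(p)}(K)\ge \tfrac12(-d_3(\Sigma_p(K),\wt{\xi})-\tfrac12)/(\text{normalization}) - \tfrac{\sigma^{(p)}}{2(p-1)}\cdot(\text{normalization})$; substituting Itoh's formula collapses everything to $\operatorname{sl}(K)\le 2\theta^{(p)}(K)-1$. \textbf{The hard part} will be tracking the grading normalizations precisely: $\theta^{(p)}$ is built from $\delta^{(p)}_j=4\delta_{\Z_p,j}$, which carries correction terms $n(\Sigma_p(K),\fraks,g)$ and factors of $p-1$, and one must verify that the degree of the (pulled-back, equivariant) contact element $c_{(p)}(S^3,\xi_{\operatorname{std}},K)$ matches the grading at which $\theta^{(p)}$ stabilizes, rather than merely bounding it in the wrong direction. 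Once the degree bookkeeping is confirmed to be consistent with the $p=2$ case already known, the argument for odd $p$ is formally identical, since all the ingredients (\cref{gluing}, \cref{paring formula}, \cref{double branch symp}, \eqref{Itoh formula}) are stated for arbitrary prime $p$. The equality case for quasipositive $K$ then follows by combining this lower bound with \cref{quasi theta}, which gives $\theta^{(p)}(K)=g_4(K)=\tfrac12\operatorname{sl}(K)+\tfrac12$ when $K$ is quasipositive and presented with maximal self-linking number, so the inequality becomes an equality.
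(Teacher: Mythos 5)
There is a genuine gap: your argument never actually covers a general transverse knot. Every mechanism you invoke for forcing non-vanishing of the equivariant contact class --- \cref{paring formula}, \cref{gluing}, and \cref{double branch symp} --- requires $K$ to bound a properly embedded symplectic surface in a symplectic filling, i.e.\ requires $K$ to be quasipositive; for an arbitrary transverse knot no such surface exists, so the ``$Q$-nonvanishing that pins down the grading'' is simply not available, and indeed one expects $c_{(p)}$ to vanish for stabilized transverse representatives. Even in the quasipositive case your route is incomplete: $\theta^{(p)}$ is defined through the stabilization index $j^{(p)}$ of the sequence $\delta_{\Z_p,j}$ inside $S^1\times\Z_p$-equivariant cohomology, while $c_{(p)}$ lives in the $\Z_p$-equivariant theory, and the paper establishes no implication from ``the contact element is nonzero in degree $-\tfrac12-d_3$'' to a lower bound on $j^{(p)}$; your first sketch (pushing in a Seifert surface and applying \cref{BH full inequality}) also runs the wrong way, since Baraglia's inequality bounds $\theta^{(p)}$-type quantities \emph{above} by genus, whereas the statement $\operatorname{sl}(K)\le 2\theta^{(p)}(K)-1$ needs a \emph{lower} bound on $\theta^{(p)}$. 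Finally, you cannot lean on the slice-torus formalism here, because $\theta^{(p)}$ is not additive under connected sum --- this is exactly why the paper must modify the standard argument, a point your proposal does not address.

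The paper's proof has two ingredients you are missing. First, for a closure of a braid with no negative crossings it proves the equality $2\theta^{(p)}(\hat{\beta})-1=\operatorname{sl}(\hat{\beta})$ by constructing a symplectic cobordism from $\hat{\beta}$ to a torus knot $T(a,b)$ with $p\nmid ab$ (\cref{construction of symp cob}), and then sandwiching: Baraglia--Hekmati's cobordism/genus inequalities give $\theta^{(p)}(K)\le g(S)$ and $\theta^{(p)}(T(a,b))-\theta^{(p)}(K)\le g(S')$, while $\theta^{(p)}(T(a,b))=g_4(T(a,b))=g(S\cup S')$ forces both to be equalities (\cref{AE theta}). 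Second, for a general transverse knot one takes a braid representative $\beta$, changes all $x_-$ negative crossings to obtain a positive braid $\beta^+$, uses Bennequin's formula to get $\operatorname{sl}(K)=\operatorname{sl}(\hat{\beta}^+)-2x_-$, and then applies the crossing-change estimate $|\theta^{(p)}(K)-\theta^{(p)}(K')|\le 1$ from \cite{Ba22} $x_-$ times to conclude $2\theta^{(p)}(K)-1\ge 2\theta^{(p)}(\hat{\beta}^+)-2x_--1=\operatorname{sl}(K)$. The crossing-change inequality and the reduction through positive braids are the essential steps, and neither appears in your proposal.
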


\begin{proof}
We follow the standard strategy for slice-torus invariants, though $\theta^{(p)}$ are not additive under connected sum, so are not slice-torus invariants. See  \cite{Shu, Livingston, Le13, Iida23} for example.
    We will treat two cases: 
    \begin{itemize}
        \item $K$ has a  braid representation with no negative crossings 
        \item The general case. 
    \end{itemize}
    The first case follows from \cref{AE theta} using the following lemma 
\begin{lem}\label{construction of symp cob}
Let $\beta$ be a positive braid.
Denote by $n$ the number of strands of $\beta$ and by $k$ the number of crossings.
Let $p$ be a prime.
Then there exists a symplectic cobordism
\[
S' \subset ([0, 1]\times S^3, d(e^t \lambda_{std}))
\]
from the braid closure $\hat{\beta}$ to $T(a, b)$ with
$b=n$
and 
$a$ is coprime to $n$ and $p$ (For example take $a$ to be a sufficiently large prime number).
In particular, for any prime $p$ we can choose $a, b$ so that $p$ is not dividing $ab$.
\end{lem}

\begin{proof}[Proof of \cref{construction of symp cob}]
We follow the proof of \cite[Lemma 1.5]{Le13} combined with \cite[Lemma 2.8]{Et20}. 
\end{proof}
    Therefore, we consider the second case. 
  Take a braid representation $\beta$ of $K$.
  Let us denote the number of positive and negative crossings by $x_+$ and $x_-$ respectively and denote the number of strands by $n$.

  Let us denote $\beta^+$ be the braid obtained from $\beta$ by crossing changes for all negative crossings.
  Now $\beta^+$ is a positive braid, so we can apply the first case and obtain 
  \[
  2\theta^{(p)}(\hat{\beta^+})-1=sl(\hat{\beta^+})
  \]
  for the braid closure  $\hat{\beta^+}$ of $\beta^+$.

  Bennequin's formula implies
\begin{align*}
sl(K)& =x_+-x_--n \\
sl(\hat{\beta}^+) & =x_++x_--n,
\end{align*}
thus
\[
sl(K)=sl(\hat{\beta}^+)-2x_-.
\]
There is the following crossing change formula for $\theta^{(p)}$ \cite{Ba22}
\[
|\theta^{(p)} (K) - \theta^{(p)} (K')| \leq 1, 
\]
if there is a crossing change from $K$ to $K'$.
Thus 
\[
2\theta^{(p)}(K)-1\geq 2\theta^{(p)}(\hat{\beta}^+)-2x_- -1=sl(K).
\]
\end{proof}
By the standard push-off argument (See \cite[Proposition 3.5.36]{Gei08}), the following result follows from the result for transverse knots above.
\begin{cor}\label{Legendrian}
Let $p$ be a prime. 
Let $K$ be a Legendrian knot in $(S^3 , \xi_{std})$. Then, we have 
\[
 \operatorname{tb} (K) +|\operatorname{rot} (K)| \leq 2\theta^{(p)}(K)-1 
\]
where $\operatorname{tb}(K)$ denotes the Thurston-Bennequin number of $K$ and $\operatorname{rot} (K)$ is the rotation number of $K$. 
 \qed
\end{cor}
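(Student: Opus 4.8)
The plan is to reduce the Legendrian Bennequin-type inequality to the transverse inequality of \cref{BP inequality} by means of the classical Legendrian push-off construction; note that this reduction does not require any additivity of $\theta^{(p)}$, which is fortunate since $\theta^{(p)}$ is not additive under connected sum.

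Let $K$ be a Legendrian knot in $(S^3,\xi_{\mathrm{std}})$. First I would recall that $K$ admits a positive transverse push-off $K_+$ and a negative transverse push-off $K_-$, each of which is smoothly isotopic to $K$ and whose self-linking numbers are computed by
\[
\operatorname{sl}(K_\pm) = \operatorname{tb}(K) \mp \operatorname{rot}(K);
\]
see \cite[Proposition 3.5.36]{Gei08}. Choosing the sign so as to maximize the right-hand side, one obtains a transverse knot $K_0 \in \{K_+, K_-\}$ with $\operatorname{sl}(K_0) = \operatorname{tb}(K) + |\operatorname{rot}(K)|$.

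The second step is to use that $\theta^{(p)}$ is a smooth concordance invariant \cite{Ba22}, hence in particular invariant under smooth isotopy, so that $\theta^{(p)}(K_0) = \theta^{(p)}(K)$. Applying \cref{BP inequality} to the transverse knot $K_0$ and combining with the computation of $\operatorname{sl}(K_0)$ then yields
\[
\operatorname{tb}(K) + |\operatorname{rot}(K)| = \operatorname{sl}(K_0) \leq 2\theta^{(p)}(K_0) - 1 = 2\theta^{(p)}(K) - 1,
\]
which is the assertion of \cref{Legendrian}.

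There is no substantial obstacle in this argument: the only inputs are the standard formula for the self-linking numbers of the transverse push-offs of a Legendrian knot and the smooth concordance invariance of $\theta^{(p)}$, both of which are available. All of the real content sits in \cref{BP inequality}, and this corollary is simply the translation of that result into the Legendrian setting.
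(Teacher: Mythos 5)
Your proposal is correct and is exactly the paper's argument: the authors prove \cref{Legendrian} by the standard Legendrian push-off reduction (citing \cite[Proposition 3.5.36]{Gei08}), using $\operatorname{sl}(K_\pm)=\operatorname{tb}(K)\mp\operatorname{rot}(K)$ together with the isotopy (concordance) invariance of $\theta^{(p)}$ and \cref{BP inequality}. Your write-up simply spells out the details that the paper leaves implicit.
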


\subsection{Proof of \cref{main Montesinos}} 
We prove \cref{main Montesinos} in this section. The main tools of the proof are \cref{quasi theta} and \cite[Theorem 1.8]{BH24}.

\begin{proof}[Proof of \cref{main Montesinos}]
Let $K$ be a Montesinos knot as in the assumption in \cref{main Montesinos}. 
Then \cite[Theorem 1.8]{BH24} implies 
\[
\theta^{(2)}(K) + \frac{1}{2}
\sigma(K)  = 0\text{ or }1
\]
under the assumption. If $K$ is smoothly concordant to a quasipositive knot, we have 
\[
\theta (K)=g_4(K) 
\]
from \cref{quasi theta} combined with concordance invariance of $\theta$ and $g_4$. But this contradicts with
\[
g_4 (K) + \frac{1}{2}\sigma(K)\geq 2.
\]

This completes the proof. 
\end{proof}

\subsection{Proof of \cref{Rasmussen}}

\begin{proof}[Proof of \cref{Rasmussen}]
Note that $2q_M$ is a slice torus invariant and it has been proven in \cite{FLL22} that, for any squeezed knot $K$, the values of slice torus invariants coincide. Moreover, \cref{main slice-torus} implies $2q_M(K)= -\sigma(K)$ for a knot $K$ if $\Sigma_2(K) $ is a L-space. 

Next, we prove (ii). Let $K$ be a Montesinos knot satisfying the assumption. 
Then, in \cite{BH24}, it is proven that 
\[
\theta (K) + \frac{1}{2}\sigma(K) = 0 \text{ or }1. 
\]
Since $-K$ is also a Montesinos knot satisfying the same assumption, we also see 
\[
\theta (-K) - \frac{1}{2}\sigma(K) = 0 \text{ or }1. 
\]
Now, we use the inequalities 
\[
q_M( K) \leq \theta( K),  \quad  q_M(- K) \leq \theta(- K)
\]
to obtain 
\[
-1 - \frac{1}{2}\sigma(K) \leq q_M(K) \leq -\frac{1}{2}\sigma(K) +1. 
\]
Thus, we have 
\[
 |q_M(K) + \frac{1}{2}\sigma(K)|\leq 1. 
\]
Again for a squeezed knot, any slice-torus invariant coincides with each other. Thus obtain the conclusion. 
The proof of (iii) is similar. 
\end{proof}

\begin{proof}[Proof of \cref{squeezed Montesinos}]
We first prove (i).
For a squeezed knot $K$ satisfying the assumption, we have 
\[
q_M( K) = -\frac{1}{2}\sigma(K) =\wt{s} (K) . 
\]
Thus we have 
\[
\wt{s}(K)>0
\]
from the assumption. 
Now, using \cite[Theorem 1.5]{DISST22}, we have 
\[
h(S^3_1(K))<0, 
\]
where $h$ is the Fr{\o}yshov invariant for the instanton Floer homology.
Therefore, we can apply \cite[Theorem 1.8]{NST19} to see $\{S^3_{1/n}(K)\}$ is a linearly independent sequence.

Let us prove (ii) next. 
For a squeezed Montesinos knot satisfying the assumption, we have 
\[
q_M(K) = \wt{s}(K),
\]
where $\wt{s}$ is a slice-torus invariant introduced in equivariant singular instanton Floer theory \cite{DISST22}. From the assumption combined with $-1 - \frac{1}{2}\sigma(K) \leq q_M(K) $, we have 
\[
\wt{s}(K)>0. 
\]
The latter proof is the same as that of (i).  The proof of (iii) is similar. So we omit it. 
\end{proof}

\begin{rem}
Since the double-branched covers of quasi-alternating knots are L-spaces \cite{OS05}, one can treat squeezed quasi-alternating knots in \cref{squeezed Montesinos}(i). However since Heegaard Floer tau invariant is determined by its signature \cite{MaOz08}, one can also derive a similar result for squeezed quasi-alternating knots from Heegaard knot Floer theory.  On the other hand, examples of non-quasi-alternating knots whose double branched covering spaces are L-spaces have been known, see \cite{Gre09, Mo17}. Thus, we hope \cref{squeezed Montesinos}(i) can also prove linear independence of $1/n$-surgeries 
for more large class of knots. 
\end{rem}

\section{Conjectures} \label{conjectures}

We conjecture that
our invariant $q_M$ and Hendricks--Lipshitz-Sarkar's invariant
$q_\tau$ from the equivariant Heegaard Floer homology are related as follows.
\begin{conj}For any knot $K$ in $S^3$, the relation
\[
q_\tau(K)=2 q^\dagger_M(K),
\]
or equivalently
\[
q_\tau(K)=-2q_M(K)-\frac{3}{2}\sigma(K)
\]
holds.
\end{conj}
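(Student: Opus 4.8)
Since this last statement is a conjecture, what follows is a proposal for how one would try to establish it. The natural route is to promote the non-equivariant identification $\widehat{HF}\cong\wt{H}^*(SWF(-))$ that is used throughout the paper to a statement equivariant under the covering involution $\tau$ on $\Sigma_2(K)$. Recall from \cref{rank1 theorem} that $\wt{H}^*_{\Z_2}(SWF(\Sigma_2(K);\fraks_0);\mathbb{F}_2)$ is a rank-one $\mathbb{F}_2[Q]$-module, so $q^\dagger_M(K)$ is just the minimal $\Q$-grading of a non-$Q$-torsion homogeneous element, equivalently (by rank one) of an infinitely $Q$-divisible element. The first step is to check that $q_\tau(K)$ has the verbatim analogous description: on the Heegaard side, Hendricks--Lipshitz--Sarkar's $\Z_2$-equivariant Heegaard Floer homology of $\Sigma_2(K)$ is a rank-one $\mathbb{F}_2[Q]$-module by \cite{HLS16} combined with the rank-one input of \cite{LiTr16} (the same pair cited in the introduction beside \cref{rank1 theorem}), and $q_\tau$ reads off the minimal grading there. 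Given this, the conjecture $q_\tau(K)=2q^\dagger_M(K)$ reduces to producing a grading-matching isomorphism of $\mathbb{F}_2[Q]$-modules
\[
\widehat{HF}_{\Z_2}(\Sigma_2(K);\fraks_0)\;\cong\;\wt{H}^*_{\Z_2}(SWF(\Sigma_2(K);\fraks_0);\mathbb{F}_2),
\]
with absolute $\Q$-gradings identified; the factor of $2$ and the $\sigma$-term in the stated equivalent form are then absorbed by the translation between the $q^\dagger_M$ normalization of \cref{duality} and the $q_\tau$ normalization.

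The second step is the equivariant isomorphism itself. The non-equivariant equality $\widehat{HF}\cong\wt{H}^*(SWF(-))$ invoked in the excerpt is the composite of the Colin--Ghiggini--Honda and Kutluhan--Lee--Taubes identifications of $\widehat{HF}$ with monopole Floer homology (\cite{CGHI}, \cite{KLTI}) together with the Lidman--Manolescu identification of monopole Floer homology with the $S^1$-equivariant homology of $SWF$ (\cite{LM18}). Each of these should be natural for finite-order diffeomorphisms, so the goal is to run the relevant constructions $\Z_2$-equivariantly: build a $\tau$-equivariant chain-level quasi-isomorphism, pass to Borel constructions, and compare absolute gradings. A possibly more tractable variant is to bypass the full equivariant $HF=HM$ machine using a surgery-triangle / mapping-cone model for the $\tau$-action on the branched double cover (as in computations of involutive-type invariants), and match it with the equivariant Bauer--Furuta cobordism maps studied in the earlier sections under the same identification.

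A more economical third route goes through contact and transverse invariants, mirroring how $q_M$ is computed in \cref{main theo:general}/\cref{Thom} via the equivariant Seiberg--Witten contact invariant $\mathcal{C}_{\Z_2}$. On the Heegaard side there is Kang's $\Z/2$-equivariant transverse invariant built from the Ozsv\'ath--Szab\'o contact class of $(\Sigma_2(K),\wt{\xi})$ \cite{Ka18}. One would (a) prove the Heegaard analogue of \cref{Thom}, namely that $q_\tau$ is computed by Kang's invariant whenever $K$ bounds a properly embedded symplectic surface divisible by $2$, and (b) prove the identification of $\mathcal{C}_{\Z_2}$ with Kang's invariant, which is itself stated as a conjecture in the introduction. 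This would at least settle the conjecture for quasipositive knots, and more generally for squeezed knots, where $2q_M$ equals the Rasmussen invariant by \cref{main slice-torus} and \cite{FLL22}.

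The main obstacle is the equivariant naturality of $HF=HM$: no version of the Colin--Ghiggini--Honda or Kutluhan--Lee--Taubes equivalence equivariant under a nontrivial finite group action is presently available, and supplying one means working inside the analytic core of one of those proofs while tracking the symmetry --- exactly the kind of input that is not a formal consequence of anything in this paper. Absent that, the achievable partial results are the consistency checks that already pin down the normalization in the conjecture: both sides equal $-\tfrac12\sigma(K)$ whenever $\Sigma_2(K)$ is an $L$-space (by the $L$-space case of \cref{main slice-torus} and the corresponding computation on the Heegaard side), and both equal $\tfrac12(p-1)(q-1)$ on torus knots $T(p,q)$ (via \cref{main theo:general} on the one hand and the torus-knot computations underlying $q_\tau$ on the other).
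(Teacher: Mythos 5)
There is nothing in the paper to compare your argument against: the statement you address is explicitly left as a conjecture in the final section, with no proof offered, so the only fair assessment is of your proposal as a research program. Read that way, it is sensible and consistent with the authors' own expectations: the paper itself only \emph{conjectures} the identification of its equivariant contact invariant with Kang's, and it motivates $q_M$ as the Seiberg--Witten analogue of $q_\tau$ precisely through the parallel rank-one statements (\cref{rank-1} on the monopole side versus Hendricks--Lipshitz--Sarkar combined with \cite{LiTr16} on the Heegaard side). The obstacle you single out --- an equivariant refinement of the $HF=HM$ chain of identifications compatible with the covering involution, or alternatively the identification of $\mathcal{C}_{\Z_2}$ with Kang's transverse invariant --- is indeed the missing ingredient, and it is not supplied by anything in the paper; so your proposal is an honest outline, not a proof, and no route in the paper would have closed that gap either.

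Two points in your outline need more care if you pursue it. First, the factor of $2$ is not absorbed by \cref{duality}: that proposition, together with the definition $q_M(K)=q^\dagger_M(-K)-\tfrac34\sigma(K)$, only shows that the two displayed forms of the conjecture are equivalent to each other. A grading-preserving isomorphism of $\mathbb{F}_2[Q]$-modules between the two equivariant theories, with $\Q$-gradings matched, would literally yield $q_\tau=q^\dagger_M$ rather than $q_\tau=2q^\dagger_M$; the doubling must come from the normalization Hendricks--Lipshitz--Sarkar use in defining $q_\tau$, and you would have to pin that convention down before claiming the reduction to an isomorphism statement. Second, your consistency checks are asserted rather than verified, and the torus-knot one is not right as written: the paper computes $q_M(T(p,q))=\tfrac12(p-1)(q-1)$, so the conjecture predicts $q_\tau(T(p,q))=-(p-1)(q-1)-\tfrac32\sigma(T(p,q))$, which is not $\tfrac12(p-1)(q-1)$ for general $(p,q)$; similarly, the L-space check $2q^\dagger_M(K)=-\tfrac12\sigma(K)$ follows from \cref{main slice-torus}, but the claim that $q_\tau$ takes the same value requires a Heegaard-side computation in HLS's normalization that you cite but do not carry out.
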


In \cite{Ka18}, using equivariant Heegaard Floer homology, Kang defined an {\it equivariant contact invariant} 
\begin{align}\label{kang inv}
c_{\Z_2} (\wt{\xi} ) \in HF^{\Z_2}_* (-\Sigma_2(K), \fraks; \mathbb{F}_2),
\end{align}
where $HF^{\Z_2}_* (-\Sigma(K); \mathbb{F}_2)$ is the equivariant Heegaard Floer homology based on Hendricks--Lipshitz--Sarkar's construction for the covering involution of the double branched covering space $\Sigma_2(K)$ and $\fraks$ is the unique spin structure on $-\Sigma_2(K)$. There is a $\mathbb{F}_2[Q]$-module structure on $HF^{\Z_2}_* (-\Sigma_2(K); \mathbb{F}_2)$.

Note that Roso \cite{Br23} constructed an 
 equivariant Floer cohomology class from transverse knots using his formulation of equivariant contact class. We also conjecture his class coincides with ours. 
As a counterpart of \eqref{kang inv}, we also defined the invariants 
\[
{c}_{(p)}(Y, \xi, K) \in \wt{H}^{\Z_p}_* (SWF(-\Sigma_p(K)); \mathbb{F}_p).  
\]

The following is an expected relation with Kang's equivariant contact invariant. 
\begin{conj}
There is a functorial  isomorphism 
\[ \
\Psi:  \wt{H}^{\Z_2}_* (SWF(\Sigma_2(K)); \mathbb{F}_2) \to HF^{\Z_2}_* (\Sigma(K); \mathbb{F}_2)
\]
as $\Z_2[Q]$-modules such that
\[
\Psi(c_2(S^3, \xi_{\operatorname{std}}, K)) = c_{\Z_2}(\wt{\xi}). 
\]
\end{conj}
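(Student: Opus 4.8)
The plan is to bootstrap from the non-equivariant identification $\widehat{HF}(\Sigma_2(K);\fraks_0)\cong \wt H_*(SWF(\Sigma_2(K));\mathbb{F}_2)$, which follows from the equivalence $HF=HM$ of \cite{KLTI, KLTII, KLTIII, KLTIV, KLTV, Ta10I, Ta10II, Ta10III, Ta10IV, Ta10V, CGHI, CGHII, CGHIII} together with Lidman--Manolescu \cite{LM18}, and to promote it to a $\Z_2$-equivariant statement. The central first step is to upgrade this isomorphism to a $\tau$-equivariant chain homotopy equivalence relating $\widehat{CF}(\Sigma_2(K))$ with a chain complex (built from the $\Z_2$-CW structure on $SWF(\Sigma_2(K))$, carrying the appropriate grading shift and the Thom class of the formal desuspension) that computes $\wt H_*(SWF(\Sigma_2(K)))$, the equivalence intertwining up to coherent $\Z_2$-equivariant chain homotopy the action of the covering involution $\tau$ on the Heegaard Floer side with the $\Z_2$-action on the Seiberg--Witten Floer homotopy type. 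Concretely one has to establish (i) naturality of $HF=HM$ and of the Lidman--Manolescu equivalence under the orientation-preserving self-diffeomorphism $\tau$ of $\Sigma_2(K)$, and (ii) that the resulting self-map of the Seiberg--Witten chain complex agrees, as a $\Z_2$-action, with the one built from a lift of $\tau$ to the spinor bundle; here the ambiguity in the lift ($s$ versus $-s$) is harmless, since \cref{dependence with splitting} already shows the $\Z_2$-equivariant Floer homotopy type is independent of this choice.

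Granting such an equivalence, passing to Borel $\Z_2$-equivariant homology (i.e.\ applying $\mathbb{F}_2\otimes^{\mathbb L}_{\mathbb{F}_2[\Z_2]}(-)$) produces the desired isomorphism
\[
\Psi\colon \wt H^{\Z_2}_*(SWF(\Sigma_2(K));\mathbb{F}_2)\xrightarrow{\ \cong\ }HF^{\Z_2}_*(\Sigma_2(K);\mathbb{F}_2)
\]
of $\mathbb{F}_2[Q]=H^*(B\Z_2;\mathbb{F}_2)$-modules, because Hendricks--Lipshitz--Sarkar's $HF^{\Z_2}$ \cite{HLS16} is by construction the Borel homology of $\widehat{CF}$ with its $\tau$-action. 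Functoriality in $K$ follows by running the same argument for the cobordism maps attached to knot cobordisms (and their branched double covers), using that both the Seiberg--Witten cobordism maps and the Heegaard Floer cobordism maps are natural and correspond under $HF=HM$; one must keep track of orientation-reversal conventions so that $c_{\Z_2}(\wt\xi)\in HF^{\Z_2}_*(-\Sigma_2(K))$ is matched with $c_{(2)}(S^3,\xi_{\operatorname{std}},K)\in \wt H^{\Z_2}_*(SWF(-\Sigma_2(K)))$, and the isomorphism should be set up between these orientation-reversed groups.

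It then remains to match the two equivariant contact classes. Both are obtained by the same recipe: start from a non-equivariant contact element, observe that the $\Z_2$-invariant contact structure $\wt\xi$ on $\Sigma_2(K)$ (Plamenevskaya's construction, see \cref{Contact structures on branched coverring spaces}) makes the defining geometric input---a symplectic cap, equivalently the cobordism map from a Weinstein filling---$\Z_2$-equivariant on the nose, and then define the equivariant class as the image of this equivariant refinement. On the Seiberg--Witten side this is exactly $c_{(2)}(S^3,\xi_{\operatorname{std}},K)$ built from $\mathcal{C}_{\Z_2}(\Sigma_2(K),\wt\xi)$ via \cref{gluing}, and on the Heegaard Floer side it is Kang's $c_{\Z_2}(\wt\xi)$ \cite{Ka18}. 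The plan is first to identify the underlying non-equivariant classes: the Kronheimer--Mrowka/Seiberg--Witten contact class of $(\Sigma_2(K),\wt\xi)$ corresponds to the Ozsv\'ath--Szab\'o contact class under $HF=HM$---a compatibility readable from the known identification of contact invariants through the monopole and Heegaard theories and consistent with the way the invariant of \cite{IT20} is modeled on the Heegaard Floer one---and then to promote this to the equivariant level: since $\Psi$ intertwines the equivariant cobordism maps, and both $c_{(2)}$ and $c_{\Z_2}(\wt\xi)$ are the image of $1$ under the equivariant cobordism map of the same symplectic cap, $\Psi$ must carry one onto the other. A version of this last step should also identify our class with Roso's equivariant contact class \cite{Br23}.

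The hard part is unquestionably the chain-level $\Z_2$-equivariant comparison of the first paragraph: while $HF=HM=\wt H_*(SWF)$ is a theorem, extracting from it a genuinely equivariant equivalence---controlling the interaction of the covering involution with the analytic and combinatorial identifications and making the homotopies coherent, while accounting for the twisting introduced by the formal desuspension---is precisely what is not available in the literature, and is the reason the statement is phrased as a conjecture. A secondary, more technical difficulty is pinning down the correspondence of the two underlying contact elements before taking Borel homology, for which one may need to reprove the non-equivariant matching in a form compatible with the specific Bauer--Furuta model of \cite{IT20}.
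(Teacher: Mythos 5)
This statement is one of the items in the paper's final section of conjectures: the paper gives no proof of it, and offers it precisely because the required comparison is not available. Your text, read carefully, is not a proof either --- it is a roadmap in which every genuinely hard step is assumed rather than established. The pivotal move, ``granting such an equivalence'' in your second paragraph, grants exactly the content of the conjecture: an identification of $\wt{H}^{\Z_2}_*(SWF(\Sigma_2(K)))$ with $HF^{\Z_2}_*(\Sigma_2(K))$ requires the isomorphisms behind $HF=HM$ \cite{KLTI,Ta10I,CGHI} and Lidman--Manolescu \cite{LM18} to be natural under the covering involution $\tau$ and compatible with the finite-dimensional approximation data (the desuspension by $V^0_\lambda$ and the correction term $n(\Sigma_2(K),\fraks_0,g)$) at a chain or spectrum level, and no such equivariant refinement exists in the literature. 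You acknowledge this in your last paragraph, which means the proposal does not close the gap; it restates it.

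Two further points would need repair even within your outline. First, Hendricks--Lipshitz--Sarkar's $HF^{\Z_2}$ \cite{HLS16} is not literally the Borel homology of a strict $\tau$-action on $\widehat{CF}(\Sigma_2(K))$: the covering involution acts on Heegaard Floer only as a homotopy involution (through the naturality maps associated to changes of Heegaard data), and their construction is built from homotopy coherent actions; so ``apply $\mathbb{F}_2\otimes^{\mathbb L}_{\mathbb{F}_2[\Z_2]}(-)$ to the $\tau$-action'' presupposes a coherence/strictification statement that is itself part of the difficulty, and the analogous coherence on the Seiberg--Witten side (where the $\Z_2$-action does act strictly on the Conley index but only after the choices discussed in \cref{dependence with splitting}) must be matched with it. Second, the identification of the underlying non-equivariant contact classes --- the Bauer--Furuta contact invariant of \cite{IT20} for $(\Sigma_2(K),\wt{\xi})$ versus the Ozsv\'ath--Szab\'o class under $HF=HM$ --- is again not a known theorem in the form you need, and your argument for matching $c_{(2)}(S^3,\xi_{\operatorname{std}},K)$ with Kang's $c_{\Z_2}(\wt{\xi})$ \cite{Ka18} relies on it together with an equivariant naturality of cobordism maps under $\Psi$ that would only make sense once $\Psi$ has been constructed. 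In short: the strategy is sensible and correctly identifies where the work lies, but as it stands it proves nothing beyond what the paper already asserts by calling the statement a conjecture.
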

A kind of naturality is proven in \cite{Ka18} for a certain class of symplectic cobordisms. Such a naturality should be true for our invariant as well. 
 Related to the above conjecture, we expect 
   \[
   c_{(2)}(Y, \xi, S(K)) = Qc_{(2)}(Y, \xi, K) , 
   \]
   where $S(K)$ denotes the stabilization of the transverse knot $K$.

\begin{ques}
    If $q_M(K)>0$, do we have $\delta(S_1^3(K)) <0$? Here $\delta(Y)$ is the monopole Fr\o yshov invariant introduced in \cite{Fr10}. 
\end{ques}
Note that if we consider $\tau(K)$ and $\wt{s}$ instead of $q_M(K)$, the corresponding statement has been already proven for Heegaard Floer $d$-invariant in \cite{HW16} (see also \cite{Sa18}) and instanton Fr\o yshov invariant \cite{Fr02} in \cite{DISST22}  respectively. 

%\appendix

\bibliographystyle{plain}
\bibliography{tex}

\end{document}